\let\ams@starttoc\@starttoc
\let\@starttoc\ams@starttoc
\patchcmd{\@starttoc}{\makeatletter}{\makeatletter\parskip\z@}{}{}
\numberwithin{equation}{section}
  \newtheorem{theorem}{Theorem}[section]
  \newtheorem{proposition}[theorem]{Proposition}
  \newtheorem{lemma}[theorem]{Lemma}
   \newtheorem{corollary}[theorem]{Corollary}
\theoremstyle{definition}
\newtheorem{definition}[theorem]{Definition}
\theoremstyle{remark}
\newtheorem{remark}[theorem]{Remark}
\let\d\delta
\newcommand{\lb}{\mleft(}
\newcommand{\rb}{\mright)}
\newcommand{\lbb}{\mleft [}
\newcommand{\rbb}{\mright ]}
\newcommand{\labs}{\mleft |}
\newcommand{\rabs}{\mright |}
\newcommand{\lbrb}[1]{\lb #1 \rb}
\newcommand{\lbbrbb}[1]{\lbb#1\rbb}
\newcommand{\labsrabs}[1]{\labs#1\rabs}
\newcommand{\lbbrb}[1]{\lbb#1\rb}
\newcommand{\lbrbb}[1]{\lb#1\rbb}
\newcommand{\lbcurly}{\mleft\{}
\newcommand{\rbcurly}{\mright\}}
\newcommand{\lbcurlyrbcurly}[1]{\lbcurly#1\rbcurly}
\newcommand{\lrfloor}[1]{\lfloor#1\rfloor}
\newcommand{\simi}{\stackrel{\infty}{\sim}}
\newcommand{\sima}{\stackrel{a}{\sim}}
\newcommand{\simo}{\stackrel{0}{\sim}}
\newcommand{\intervalOI}{\lbrb{0,\infty}}
\newcommand{\abs}[1]{\labs#1\rabs}
\newcommand{\curly}[1]{\lbcurly#1\rbcurly}
\newcommand{\bo}[1]{\mathrm{O}\lbrb{#1}}
\newcommand{\so}[1]{\mathrm{o}\lbrb{#1}}
\newcommand{\Pbb}[1]{\Pb\lb #1\rb}
\newcommand{\Ebb}[1]{\Eb\lbb #1\rbb}
\newcommand{\LL}{L\'{e}vy }
\newcommand{\BG}{Bernstein\allowbreak-gamma }
\newcommand{\LLP}{L\'{e}vy process }
\newcommand{\LLPs}{L\'{e}vy processes }
\newcommand{\LLK}{\LL\!\!-Khintchine }
\newcommand{\PiPlus}{\overline{\Pi}_+}
\newcommand{\PiMinus}{\overline{\Pi}_-}
\newcommand{\PiH}{\overline{\Pi}_\Hc}
\newcommand{\AH}{A_\Hc}
\newcommand{\Rez}{\Re\lbrb{z}}
\newcommand{\Imz}{\Im\lbrb{z}}
\newcommand{\limi}[1]{\lim_{#1\to \infty}}
\newcommand{\limsupi}[1]{\limsup_{#1\to \infty}}
\newcommand{\liminfi}[1]{\liminf_{#1\to \infty}}
\newcommand{\limo}[1]{\lim_{#1\to 0}}
\newcommand{\limsupo}[1]{\limsup_{#1\to 0}}
\newcommand{\Cb}{\mathbb{C}}
\newcommand{\C}{\mathbb{C}}
\newcommand{\Eb}{\mathbb{E}}
\newcommand{\N}{\mathbb{N}}
\newcommand{\Rb}{\mathbb{R}}
\newcommand{\R}{\mathbb{R}}
\newcommand{\Pb}{\mathbb{P}}
\newcommand{\Zb}{\mathbb{Z}}
\renewcommand{\P}{\mathbb{P}}
\newcommand{\Bc}{\mathcal{B}}
\newcommand{\Hc}{\mathcal{H}}
\newcommand{\Mcc}{\mathcal{M}}
\newcommand{\Wc}{\mathcal{W}}
\newcommand{\ind}[1]{\mathbbm{1}_{\lbcurlyrbcurly{#1}}}
\newcommand{\indset}[1]{\mathbbm{1}_{#1}}
\newcommand{\IntOI}{\int_{0}^{\infty}}
\newcommand{\IntOIc}{\int_{[0, \infty)}}
\newcommand{\IntOIo}{\int_{(0, \infty)}}
\newcommand{\Oiclosed}{\R_+}
\newcommand{\Wp}{W_\phi}
\newcommand{\Wkap}{W_\kappa}
\newcommand{\WkapP}{W_{\kappa_+}}
\newcommand{\WkapN}{W_{\kappa_-}}
\newcommand{\CbOI}{\Cb_{\intervalOI}}
\newcommand{\IPsiq}[1]{I_{\Psi_{#1}}}
\newcommand{\MPsiq}{\Mcc_{I_{\Psi_q}}}
\newcommand{\D}{\mathrm{d}}
\newcommand{\phiplus}[1]{\phi_+({#1})}
\newcommand{\phiplusderiv}[2]{\phi_+^{{(#2)}}\lbrb{{#1}}}
\newcommand{\MBG}[2]{M_\Psi\lbrb{{#1}, {#2}}}
\newcommand{\partialder}[3]{\frac{\partial^{#1} }{\partial {#2}^{#1}}{#3}}
\newcommand{\myitem}[1]{%
\item[#1]\protected@edef\@currentlabel{#1}%
}
\newcommand{\jump}[2]{#1^{(#2)}}
\DeclareMathSymbol{\mrq}{\mathord}{operators}{`'}
\newcommand{\fakesubsection}[1]{%
  \par\refstepcounter{subsection}
  \subsectionmark{#1}
  \addcontentsline{toc}{subsection}{\protect\numberline{\thesubsection}#1}
}
\newcommand{\minusone}{(-1)}
\renewcommand{\Im}{\mathtt{Im}}
\renewcommand{\Re}{\mathtt{Re}}
\let\f\frac
\begin{document}
	\date{}
\author[,1]{M. Minchev \thanks{
Email: mjminchev@fmi.uni-sofia.bg.}}
\author[,1,2]{M. Savov \thanks{ 
Email: msavov@fmi.uni-sofia.bg, mladensavov@math.bas.bg.}
}
	
\affil[1]{Faculty of Mathematics and Informatics, Sofia University "St. Kliment Ohridski", 5,
James Bourchier blvd., 1164 Sofia, Bulgaria}

\affil[2]{Institute of Mathematics and Informatics,  Bulgarian Academy of Sciences, Akad.  Georgi Bonchev str., 
	Block 8, Sofia 1113, Bulgaria}

\title{
Bivariate
\BG functions, potential measures, and asymptotics of exponential functionals of \LLPs}

	

\maketitle

\begin{abstract}
Let $\xi$ be a L\'{e}vy process and $I_\xi(t):=\int_{0}^te^{-\xi_s}\D s$, $t\geq 0,$ be the exponential functional of  L\'{e}vy processes  on deterministic horizon. Given that $\limi{t}\xi_t=-\infty$, we evaluate for general functions $F$ an upper bound on the rate of decay of $\Ebb{F(I_\xi(t))}$ based on an explicit integral criterion. When $\Ebb{\xi_1}\in\lbrb{-\infty,0}$ and  $\Pbb{\xi_1>t}$ is regularly varying of index $\alpha>1$ at infinity, we show that the law of $I_\xi(t)$, suitably normed and rescaled,  converges weakly to a probability measure stemming from a new generalisation of the product factorisation of classical exponential functionals. These results substantially improve upon the existing literature and are obtained via a novel combination between Mellin inversion of the Laplace transform of $\Ebb{I^{-a}_{\xi}(t)\ind{I_{\xi}(t)\leq x}}$, $a\in (0,1)$, $x\in(0,\infty],$ and Tauberian theory augmented for integer-valued $\alpha$ by a suitable application of the one-large jump principle in the context of the de Haan theory.  The methodology rests upon the representation of the aforementioned Mellin transform in terms of the recently introduced bivariate \BG functions for which we develop the following new results of independent interest (for  general $\xi$): we link these functions to the $q$-potentials of $\xi$; we show that their derivatives at zero are finite upon the finiteness of the aforementioned integral criterion; we offer neat estimates of those derivatives along complex lines. These results are useful in various applications of the exponential functionals themselves and in different contexts where properties of bivariate Bernstein-gamma functions are needed. $\xi$ need not be non-lattice.
\end{abstract}

	Keywords: Exponential functionals of \LL processes, Special functions,
 Bernstein and bivariate \BG functions, Mellin and Laplace transforms, Tauberian theorems, de Haan theory, Regular variation, Asymptotic behaviour

	MSC2020 Classification: 60G51, 60E07, 40E05, 33E03, 44A10, 44A99
  \tableofcontents

\tableofcontents
\section{Introduction and motivation}\label{sec:intro}
Let $\xi=\lbrb{\xi_s}_{s\geq0}$ be a one-dimensional \LL process, and for any $t\in\lbrb{0,\infty}$, set
\begin{equation}\label{def:exp}
    I_{\xi}(t):=\int_0^te^{-\xi_s} \D s
\end{equation}
for the exponential functional of \LLPs on deterministic horizon.
When $t=\infty$ or $t=\mathbf{e}_q$ for some $q>0$ and $\mathbf{e}_q\sim Exp(q)$ independent of $\xi$, we have the classical exponential functional of \LL processes.
Due to their role in different areas of probability theory, the latter objects have been extensively studied in the last thirty years, see \cite{ArRiv23,BerYor05,KuzPar12,Maulik-Zwart-06,MinSav23,PardoPatieSavov2012,Pat_2012,PatieSavov2012, PatieSavov2013, PatieSavov2018,Urban95,Yor01}
for a list of references covering the main results in the area and the different methods by which they have been obtained.
Compared to the breadth and depth of the results concerning  $I_\xi(\infty)$ and $I_\xi(\mathbf{e}_q)$, the knowledge about the distributional properties of $I_{\xi}(t)$ for fixed $t$ is quite meagre despite their widespread appearance in probability theory as well.
Its law has a closed but unwieldy form when $\xi$ is a Brownian motion with drift, see \cite{AlMatSh01,Yor92} and the survey \cite{MatYor05}.
The moments of $I_{\xi}(t)$, including complex ones,
are only available as a series for a class of subordinators, i.e. $\xi$ being a non-decreasing \LL process,
see \cite{BarkerSavov2021, PalSarSav23,SalVos18}.
In fact, \cite{SalVos18} considers stochastic processes with independent increments only,
but the explicit formulae are for the positive integer moments of $I_\xi(t)$ when $\xi$ is a subordinator.
Paper \cite{PalSarSav23} also offers some new convolutional identities for the moments of  $I_\xi(t)$  when $\xi$ is a general \LLP which yield that $\Ebb{I^{-1/2}_\xi(t)}$ is independent of $\xi$ when the latter is a symmetric \LL process, which is reminiscent of the invariance of the law of the first ladder time of symmetric random walks.
The most advanced knowledge for exponential functionals of \LLPs on  deterministic horizon is available for 
the asymptotic behaviour of distributional quantities of $I_\xi(t)$,
as $t\to\infty$, provided $I_\xi(\infty)=\infty$ almost surely.
Besides being of theoretical interest, these results have applications, especially in the area of branching processes and diffusions in \LL random environments, see e.g. \cite{BanParSm21},\cite[Section 2]{PalauPardoSmadi2016}
and \cite[Section 5]{Xu2021}.
Whilst postponing the detailed discussion of these results to Section
\ref{sec:previous results}, we highlight that our main contributions
on exponential functionals of \LLPs on deterministic 
horizon are in the case when $\xi$ drifts to minus infinity and therefore $I_\xi(\infty)=\infty$.
Under the additional assumptions that  $\Ebb{\xi_1}\in\lbrb{-\infty,0}$ and $\Pbb{\xi_1>t}$ is regularly varying at infinity with index $\alpha>1$,
we show that the measures 
\begin{equation}\label{eq:res}
    \frac{y^{-a}\Pbb{I_{\xi}(t)\in \D y}}{\Pbb{\xi_1>t}},\quad 
    a\in\lbrb{0,1},
\end{equation}
converge weakly, as $t\to\infty$,
to a finite positive measure supported by $\lbrb{0,\infty}$. Via Mellin inversion, we give a semi-explicit analytic description of the cumulative distribution function of the limit measure, 
which also yields the asymptotic behaviour of $\Ebb{I^{-a}_\xi(t)}$, see Theorem \ref{thm:main} and the ensuing corollaries. The latter allows us to derive a neat probabilistic representation of the normed limit measure which turns out to be the law of a curious generalisation of the main product factorisation for classical exponential functionals, see Theorem \ref{thm:mainPr}.
As a result, we improve substantially upon the recent work of W. Xu \cite{Xu2021},
which obtains results in the very same context, but under several additional assumptions, which do not seem to allow for the deduction of the aforementioned weak convergence, see 
Section \ref{sec:previous results} for a detailed discussion. Moreover, via the probabilistic factorisation, we obtain novel analytic properties for the limit measure such as existence and smoothness of its density. Also, dropping the requirement for regular variation of the upper tail and finiteness of the expectation, we obtain general upper bounds for the decay of quantities of the type $\Ebb{F(I_{\xi}(t))}$, as $t\to\infty$, which frequently suffice for applications such as some of those in \cite{BanParSm21,
PalauPardoSmadi2016,Xu2021}. The rate of decay depends on the finiteness of an explicit integral criterion, see Theorem \ref{thm:upperB}.  We also offer a very different methodology to the one in \cite{Xu2021}. It is based on a mixture of analytic and probabilistic techniques. We discuss it in more detail in Section \ref{sec: methodology} and touch upon it further below in this part.

 One of the main ingredients in the proof of the aforementioned results is the new information, which is of independent interest, that we obtain on the functional-analytic
 properties of the recently introduced in \cite{BarkerSavov2021} class of special functions called bivariate \BG functions.
 It is well-known that the bivariate \BG functions generated
 by the two Wiener-Hopf factors (Bernstein functions) of the underlying \LLP $\xi$
 represent the Mellin transform of the classical exponential functional, and therefore in turn the Laplace transform in $t$ of the exponential functional on deterministic horizon, see 
 \cite{BarkerSavov2021,PalSarSav23,PatieSavov2018}.
 For this reason, bivariate \BG functions with their functional-analytical properties have proved a key tool for
 the study of both classical exponential functionals and 
 exponential functionals on deterministic horizon, see e.g. \cite{BarkerSavov2021, PalSarSav23,PatieSavov2018}.
 In fact, being  a natural extension to the classical Gamma function in the context of right-continuous self-similar Markov processes,  these functions, predominantly through their role in the understanding of the exponential functionals of \LL processes,  are very useful in various other areas of probability,
 see e.g. for applications in the spectral theory of non-self-adjoint Markov semigroups \cite{PatSav17,PatieSavov2021,PatSav19}
 and \cite{LoePatSav19} for the study of extinction times in non-Markovian setting.
 In this work we show that bivariate \BG functions related to the Wiener-Hopf factors of transient \LLPs are $n$ times
 differentiable at zero in the first variable if and only if
an explicit integral criterion is finite, see Theorem \ref{thm:derW}. Moreover, we link the derivatives of these functions to the convolutions and the derivatives of the $q$-potential
 measures of the underlying \LL process, see Lemma \ref{lem:repW}, thereby 
 deriving some new results for as basic  
 objects as these potentials.
 This connection is not unexpected as classical exponential functionals, whose Mellin transform is represented in terms of \BG functions, are related to the potential measure in 
 the recent work \cite{ArRiv23}, which offers an integral equation for the density of the exponential functional in which the potential measure appears. When the potential measure has a bounded density, we obtain upper bounds for the densities of the convolutions of the potential measure, see Theorem \ref{thm:convo}. Using these results we get universal and seemingly quite handy estimates for the derivatives of the Mellin transforms of the classical exponential functionals and their decay along complex lines, see Corollary \ref{cor:der_Mq}.

The results on the derivatives of bivariate \BG functions play a crucial role in a convoluted Tauberian approach we adopt in order to derive the weak 
convergence of the measures defined in \eqref{eq:res}. The method  requires a particularly intricate analysis  when the regular 
variation index $\alpha$ of $\Pbb{\xi_1>t}$ is an integer. Then we have to appeal to the de Haan theory and in order to utilize
it successfully we have to resort to probabilistic considerations which complement the analytical approach.
The latter in turn reveals that one may have a more direct way to study the moments of the measure \eqref{eq:res} but at the cost of a loss of explicitness and precision.
Similar probabilistic approach has been adopted in the predecessor of our work, see \cite{Xu2021}. We discuss further our methodology in Section \ref{sec: methodology} 

We summarise the main outcomes of the manuscript. From probabilistic perspective, in the regular variation setting, we derive a scaling limit for the law of $I_\xi(t)$, together with fine asymptotic estimates in much broader generality. The scaling limit is identified a via novel factorisation. From analytical and special functions viewpoint, we link the derivatives of bivariate Bernstein–gamma functions to convolutions of potential measures, leading to new results and sharp estimates for these derivatives along complex lines. From potential theory perspective, we get bounds on densities of convolutions of potential measures. Methodologically, we combine Tauberian theorems, including de Haan’s theory, with the one-large-jump principle to overcome challenges, e.g. lack of monotone density theorem. Overall, our work affirms the central role of Bernstein–gamma functions, which encompass many classical special functions, in understanding exponential functionals which are in turn related to positive self-similar Markov processes. For this reason, these results may serve as a starting point for studying more general self-similar Markov processes by investigating matrix-valued extensions of bivariate Bernstein–gamma functions; see \cite{Doring-Trottner-Watson-2024, Kyprianou-Rivero-Sengul-Yang-2020} for related context.

The paper is organised as follows: Section \ref{sec:prelim} introduces the necessary notation, main
objects of interest and
previous results, outlines our
methodology, and offers a discussion on an open problem. The results on
finiteness of the
derivatives of bivariate \BG functions
are presented in
Section \ref{sec:W} and proved
in Section \ref{sec:proofs BG}.
In Section \ref{sec: proof thm main}
we prove our main result, Theorem
\ref{thm:main},
the factorisation
Theorem \ref{thm:mainPr}, and the general estimates of Theorem \ref{thm:upperB}. Section \ref{sec: auxiliary}  
provides the proofs of auxiliary results.
The \hyperref[sec:append]{Appendix} contains
classic facts about slowly varying functions and Tauberian theorems,
as well as it presents the de Haan class of
slowly varying functions.

\section{Preliminaries,  methodology, background, literature review, and an open problem}\label{sec:prelim}

\subsection{Notation}
We use $\Cb$ to denote the complex plane with $z$ standing for a complex variable with a real part $\Re(z)$ 
and an imaginary part $\Imz$, and
$\R_+$ for the set
of non-negative real numbers, $[0, \infty)$. For a set $A \subset \R$, we employ $ \Cb_A$ for the subset of the complex plane
$ \Cb_A := \lbcurlyrbcurly{z\in\Cb:\Re(z)\in A}$. Further, unless stated otherwise, $n, k, l$ are positive integers. For any non-negative random variable $X$, we denote by 
$\Mcc_X(z):=\Ebb{X^{z-1}}$ its Mellin transform, which is defined at least on the complex line 
$\Cb_{\lbcurlyrbcurly{1}} =\lbcurlyrbcurly{z\in\Cb:\Re(z)=1}$. If
$\mu$ is a measure, we define for
better readability $\mu(a,b) := \mu((a,b))$. Next, for a function $f$, we formally define its 
Laplace transform, for $q \geq 0$, as $\widehat{f}(q) := \int_0^\infty e^{-qt} f(t) \D t$.
For asymptotic behaviour, we use 
the classic
Landau notation and
also $f\sima g$ to denote $\lim_{x\to a}f(x)/g(x)=1$.
We recall that a function $\phi$ is a Bernstein function if and only if it can be represented as
\begin{equation}\label{eq:Bern}
    \phi(z)=\phi(0)+dz+\IntOI\lbrb{1-e^{-zy}}\mu( \D y),\quad  z\in \Cb_{\lbbrb{0,\infty}},
\end{equation}
where $\phi(0)\ge0$, $d\ge0$, and $\mu$ is positive and $\sigma$-finite, with $\int_0^\infty\min\{1,y\}\,\mu(\mathrm{d}y)<\infty$. Note that each Bernstein function is the Laplace exponent of a potentially killed subordinator with killing rate $\phi(0)$, linear drift $d$ and jump measure $\mu$. In the same fashion, a bivariate Bernstein function $\kappa$, given by, for
$ (\zeta,z)\in \Cb_{\lbbrb{0,\infty}}\times\Cb_{\lbbrb{0,\infty}}$,
\begin{equation}\label{eq:Bern2}
    \kappa(\zeta,z)=\kappa(0,0)+d_1\zeta+d_2z+\IntOI
    \IntOI
    \lbrb{1-e^{-\zeta y_1-zy_2}}\mu(\D y_1,\D y_2)
\end{equation}
is the Laplace exponent of a potentially killed bivariate subordinator, where $\kappa(0,0)$,
$d_1$ and $d_2$ are non-negative and have the same meaning as above, and $\mu$ is the bivariate jump measure, which satisfies $\IntOI\IntOI
\min\{1,  \sqrt{y_1^2 + y_2^2} \}\mu( \D y_1, \D y_2)<\infty$.

\subsection{Bivariate Bernstein-Gamma functions and Mellin transform of exponential functionals}
From \cite[Definition 2.5, Theorem 2.8]{BarkerSavov2021}, we know that for any given bivariate Laplace exponent $\kappa\not\equiv 0$,  the equation
\begin{equation}\label{eq:rec}
    \Wkap(\zeta,z+1)=\kappa(\zeta,z)\Wkap(\zeta,z),\quad
    \quad
 \Wkap(\zeta,1)=1, \zeta\in \Cb_{\lbbrb{0,\infty}}, z\in\CbOI,
\end{equation}
has a unique bivariate holomorphic solution in
$\CbOI\times\CbOI
$. Moreover, for any $q\geq 0$, the function 
$\Wkap(q,\cdot)$
is analytic on $\CbOI$, 
and it is a Mellin transform
of a positive random variable.  Note that due to \eqref{eq:rec} the bivariate \BG functions are 
a natural extension to  both the classical Gamma function with multiplier a general bivariate Bernstein function and the already well-known (univariate) Bernstein-gamma functions.
As mentioned in the introduction, these functions have arisen from the studies of non-continuous self-similar Markov processes.

From now on, we work with the case where the
bivariate \BG functions are based on the Wiener-Hopf factors of \LL processes.
Recall that a \LLP $\xi$ with a \LLK 
exponent $\Psi$ is 
killed at rate $q\geq 0$ if, for some independent of $\xi$ exponential random variable $\mathbf{e}_q\sim Exp(q)$, we set $\xi_t=\infty$ for $t\geq \mathbf{e}_q$ with $\mathbf{e}_0=\infty$ a.s. The \LLK exponent of the killed \LL process  is given by, for $z\in i\Rb$,
\begin{equation}\label{eq:LLK}
    \Psi_q(z)=\log_0\Ebb{e^{z\xi_1}}=\Psi(z)-q=\gamma z+\frac{\sigma^2}2z^2+\int_{-\infty}^{\infty}\lbrb{e^{zy}-1-zy\ind{|y|\leq 1}}\!\Pi( \D y)-q,
\end{equation}
where $\log_0$ is the principal branch of
the complex logarithm, $\gamma\in\Rb$
and $\sigma^2\geq 0$ are 
respectively the linear term and
the Brownian component of $\xi$,  $\Pi$ is the \LL measure of the process and satisfies
$\int_{-\infty}^{\infty}\min\{1, y^2\}\Pi(\D y)<\infty$,  and $q$ is the killing rate. For any potentially killed \LL process, we have the Wiener-Hopf factorisation
\begin{equation}\label{eq:WH}
	\begin{split}
		&\Psi_q(z)=\Psi(z)-q=-\phi_{+}(q,-z)\phi_{-}(q,z)=-h(q)\kappa_{+}\lbrb{q,-z}\kappa_{-}\lbrb{q,z},  \quad
  z \in i\R,
	\end{split}
\end{equation}
where, for $q \geq 0$,
\begin{equation}\label{eq:h}
	\begin{split}
		&h(q) := \exp
  \lbrb{-\int_{0}^{\infty}
  \lbrb{\frac{e^{-t} -
  e^{-qt}}{t}}\Pbb{\xi_t=0}\D t},
	\end{split}
\end{equation}
which in the case of a transient \LLP $\xi$
is non-zero by \cite[Theorem 12]{Bertoin96},  and, for $q\geq 0$ and $\Re(z)\geq 0$,
\begin{equation}\label{eq:kap}
	\begin{split}
		&\kappa_{\pm}(q,z)=
  c_{\pm}
\exp
\lbrb
{\IntOI\int_{\lbbrb{0,\infty}}\lbrb{
\frac{e^{-t}-e^{-qt-zx}}{t}}\Pbb{\pm\xi_t\in \D x}\D t}	\end{split}
\end{equation}
are the bivariate Laplace exponents 
of the ascending/descending ladder
times and ladder heights processes which form 
bivariate subordinators, see  
\cite[Corollary VI.10]{Bertoin96} 
and \cite[Section 4]{Doney2007} for more
information. The latter reference also includes expressions for the Lévy measures $\mu_{\pm}$ in the context of \eqref{eq:Bern} of $\kappa_\pm$. Moreover, it is  worth noting that
the factorisation \eqref{eq:WH} is indeed unique up to multiplication with a scalar as proved recently in full generality in \cite{Doring-Savov-Trottner-Watson-24}.

Further, the constants
$c_\pm > 0$ depend
on the choice of the local time which we can choose
appropriately so that
$c_+ c_- = 1$. Furthermore,
we can choose $\phi_{+}(q,z)=\kappa_{+}\lbrb{q,z}/c_+$ and $\phi_{-}(q,z)=h(q)\kappa_{-}\lbrb{q,z}/c_-$,
i.e.,
for $q\geq 0$ and $\Re(z)\geq 0$,
\begin{equation}\label{eq:phi_+}
	\begin{split}
		&\phi_{+}(q,z)=
\exp
\lbrb
{\IntOI\IntOIc\lbrb{
\frac{e^{-t}-e^{-qt-zx}}{t}}\Pbb{\xi_t\in \D x}\D t},\\
&\phi_{-}(q,z)=
\exp
\lbrb
{\IntOI\int_{\lbrb{0,\infty}}\lbrb{
\frac{e^{-t}-e^{-qt-zx}}{t}}\Pbb{\xi_t\in -\D x}\D t}.
	\end{split}
\end{equation}
We note that
$h(q)\equiv 1$ provided that $\xi$
is not a compound Poisson process (CPP),
and that in this case $h(q)\equiv 1$ and $\phi_\pm\equiv \kappa_\pm/c_\pm$ are 
bivariate Laplace exponents as well. 
When $h(q)\not\equiv 1$, for any 
fixed $q\geq 0$, $\phi_{q,\pm}
(z):=\phi_\pm(q,z)$ are Laplace 
exponents of univariate subordinators. 

Let us denote, for $q \geq 0$, $\IPsiq{q}:=I_{\xi}(\mathbf{e}_q)=\int_0^{e_q} e^{-\xi_s}\D s$. From \cite[Theorem 2.4]{PatieSavov2018} we know that,
at least for $z \in \Cb_{(0,1)}$,
\begin{equation}
\label{eq:M_I_psi}
   \MPsiq(z)=\phi_{-}(q, 0)\frac{\Gamma(z)}{W_{\phi_{q,+}}(z)}W_{\phi_{q,-}}(1-z)
    =: \phi_{-}(q, 0)\MBG{q}{z},
\end{equation}
where  for the fixed $q$,  the functions $W_{\phi_{q,\pm}}$ solve
\eqref{eq:rec}, and therefore they are  Bernstein-gamma functions.
To provide a link between the Bernstein-gamma functions
associated to the Wiener-Hopf factors $\phi_\pm$
and the probabilistic $\kappa_\pm$, we use that
by \cite[Theorem 4.1 (5)]{PatieSavov2018},
for any $c>0$, $z \in \Cb_{(0, \infty)}$, and a Bernstein function $\phi$, $W_{c\phi}(z)=c^{z-1}\Wp(z)$, 
so
we obtain from the choice of $\phi_{q,\pm}$ made above that
\begin{equation}\label{eq:Wk=Wp}
    W_{\phi_{q,+}}(z) = c_+^{1-z}\WkapP(q,z),
    \quad
    \text{and}
    \quad
    W_{\phi_{q,-}}(z)=c_-^{1-z} h^{z-1}(q)\WkapN(q,z).
\end{equation}
Substituting in \eqref{eq:M_I_psi}, we get 
\begin{equation}\label{eq:M_I_psi1}
    \MPsiq(z)=h^z(q)\kappa_-(q,0)\frac{\Gamma(z)}{W_{\kappa_+}(q,z)}W_{\kappa_{-}}(q, 1-z).
\end{equation}

\subsection{Methodology}
\label{sec: methodology} 
Recall that $I_\xi(t) := \int_0^t e^{-\xi_s} \D s$. From \eqref{eq:M_I_psi}, for $a\in\lbrb{0,1}$, we have that the Laplace transform for the moments in $t$ is given by
\begin{equation}\label{eq:LT}
    \begin{string}
     \IntOI e^{-qt}\Ebb{I_\xi^{-a}(t)}\D t=\frac{\MPsiq(1-a)}{q}=\frac{\MBG{q}{1-a}}{\phi_+(q,0)},
    \end{string}
\end{equation}
where we have utilised that from \eqref{eq:WH} it holds that $q=\phi_+(q,0)\phi_-(q,0)$, and we have employed \eqref{eq:M_I_psi}. Relation \eqref{eq:LT}  is the starting point of our proof as it opens the way to apply Tauberian theorems to determine the asymptotic behaviour of $\Ebb{I_\xi^{-a}(t)}$.
However, in the setting $\Ebb{\xi_1}<0$ from Theorem \ref{thm:upperB}, it is valid that
\[\IntOI\Ebb{I_\xi^{-a}(t)}\D t<\infty,\]
 and a direct application of Tauberian theorems is impossible. For this purpose, depending on the index of regular variation $\alpha>1$ of $\Pbb{\xi_1>t}$, we resort to investigating several repeated tails,
e.g. when $\alpha\in\lbrbb{n,n+1}$, $n\geq 1$, we consider  \[V(t)=\int_t^\infty \int_{s_1}^\infty \dots \int_{s_{n-1}}^ \infty 
  \Ebb{I_\xi^{-a}(s_n)} \D s_n \dots \D s_1 \]
  and via Tauberian methods we deduct the asymptotic behaviour of $V(t),$ as $t\to\infty$. When $\alpha\neq n+1$  by the monotone density theorem we infer the asymptotics of $\Ebb{I_\xi^{-a}(t)}$.
  Dealing with Laplace transforms of repeated tails requires the consideration of the derivatives in $q$ of $\MBG{q}{1-a}$, see \eqref{eq:LT}, including at $q=0$, which through \eqref{eq:M_I_psi} depend on the derivatives in $q$ of the involved bivariate \BG functions.
  Via an integral representation of the bivariate \BG functions which links these functions to the convolutions and the derivatives of the $q$-potential measures of $\xi$, we derive results of independent interest for both the aforementioned derivatives and  the potential measures. 

The outstanding case is when $\alpha=n+1$ is an integer. Then the results concerning the repeated tails can be deduced as above, but the monotone density theorem fails. Indeed, $\int_0^t V(s)\D s$ is slowly varying at infinity, see beneath \eqref{eq:def_g_an}, and Theorem \ref{thm:Monotone density, 1.7.2} does not hold. In this case, we are forced to resort to the more complicated de Haan theory, which requires a preliminary estimate of the type $\Ebb{I_\xi^{-a}(t)}=\bo{\Pbb{\xi_1>t}}$, and which we derive by purely probabilistic means such as for example the one-large jump principle. This allows us to use a generalised monotone density theorem, see Theorem \ref{thm:de Haan monotone density, 3.6.8}. The rest is similar to the non-integer  $\alpha$ case.

Having established the asymptotics of $\Ebb{I_\xi^{-a}(t)}$, we proceed to study the behaviour of \[\frac{\Ebb{I_\xi^{-a}(t)\ind{I_\xi(t)\leq x}}}{\Pbb{\xi_1>t}}\quad 
\text{for any fixed } x\in\lbrb{0,\infty}.\]
We do so via a Mellin inversion representation of the Laplace transform
\begin{equation*}
    \begin{split}
       x\mapsto \frac{1}{q}\Ebb{I_\xi^{-a}(\mathbf{e}_q)\ind{I_\xi(\mathbf{e}_q)\leq x}}
       &=\IntOI e^{-qt}\Ebb{I_\xi^{-a}(t)\ind{I_\xi(t)\leq x}} \D t\\
       &=-\frac{1}{2\pi i \phiplus{q,0}}\int_{\Re(z) = b}\frac{x^{-z}}{z}\MBG{q}{ z+1-a} \D z,
    \end{split}
\end{equation*}
and a subsequent investigation of $n$th repeated tails of $\Ebb{I_\xi^{-a}(t)\ind{I_\xi(t)\leq x}}$ as in the case above, which corresponds to the limit case $x=\infty$. Note that here the differentiation of the Laplace transform involves differentiation under the sign of the integral. This requires the additional understanding of the growth rate of the derivatives of bivariate \BG functions along complex contours which we develop in Theorem \ref{thm:derW} and Corollary \ref{cor:der_Mq}. The case when $\xi$ is a compound Poisson process with positive drift requires further refinement of the aforementioned estimates, see item (\ref{it:cor_der_Mq_iii}) of Corollary \ref{cor:der_Mq}. Then we are able to apply the same Tauberian, including the de Haan theory, arguments as in the case of the moments above. We emphasise that, for $\alpha\in\lbrbb{n,n+1}$, the main carrier of the asymptotics is the behaviour of $\lbrb{\phiplus{q,0}}^{(n)}$ at zero. As a result, we prove that 
\[\frac{\Ebb{I_\xi^{-a}(t)\ind{I_\xi(t)\leq x}}}{\Pbb{\xi_1>t}}\]
converges to the cumulative distribution function of a finite measure $\nu_a$ and compute its Mellin transform, see Theorems \ref{thm:main} and \ref{thm:mainPr}. The latter allows us to deduce analytic properties for the density of the limiting measure and  to obtain a product factorisation of the random variable behind the normed $\nu_a$, that is $\tilde{\nu}_a$, in terms of well-known independent random variables, namely the classical exponential functional of the subordinator pertaining to the Wiener-Hopf factor $\phi_{0,+}$ and the remainder term pertaining to $\phi_{0,-}$, see Theorem \ref{thm:mainPr}.

The bounds on the rate of decay of $\Ebb{\abs{F(I_{\xi}(t))}}$ in Theorem \ref{thm:upperB} follow straightforwardly from \eqref{eq:LT}, once the finiteness of the derivatives at $q=0$ on the right-hand side of \eqref{eq:LT} is established. This, in turn, ensures the finiteness of the derivatives of the integral on the left-hand side.

\subsection{Background and previous results}\label{subsec:back}
\label{sec:previous results}
We discuss the existing results on the asymptotic behaviour of exponential functionals of Lévy processes over deterministic horizons.

When $\Ebb{\xi_1}=0$, the \LLP oscillates and $I_\xi\lbrb{\infty}=\infty$. In this case, under the assumptions that $\Ebb{\xi^2_1}<\infty$ and the existence of some positive exponential moments for $\xi$, the authors of \cite{PalauPardoSmadi2016} show that $\limi{t}\sqrt{t}\Ebb{I^{-a}_{\xi}(t)}=C\in\lbrb{0,\infty}$. Furthermore, under the additional assumption for the existence of some negative exponential moments, they have obtained a result of the type $\limi{t}\sqrt{t}\Ebb{F\lbrb{I_{\xi}(t)}}=C_F$ for a limited class of functions $F$. Similar results in the same setting are obtained in \cite{LiXu18}. The requirements for the existence of exponential moments, for the finiteness of the variance, and for the limited form of $F$  have been completely removed in \cite[Theorem 2.20]{PatieSavov2018}. Therein even oscillating \LLPs $\xi$ with no first moment and with Spitzer's condition are studied in full generality. 

When $\xi$ is symmetric with infinite activity, it is shown in \cite[Corollary 2.3.]{PalSarSav23} that $\Ebb{I^{-1/2}_\xi(t)}=1/\sqrt{t}$. This is one of the few cases where a precise evaluation in a closed form of a fractional moment of $I_\xi(t)$ is available. It fits the asymptotics mentioned in the previous paragraph.

The case where $\Ebb{\xi_1}<0$ and $\xi$ drifts to minus infinity is considered in \cite{PalauPardoSmadi2016}. 
Under an assumption for the existence of negative exponential moments of some order, they show that the quantities considered in the paragraph above have explicit rate of decay. The case $\Ebb{\xi_1}<0$ and $\Pbb{\xi_1>t}$ being regularly varying at infinity of index $\alpha>1$, which is precisely our setting, is considered in \cite{Xu2021}. 
However, there the author assumes additionally that $\Pbb{\xi_1\in\lbbrb{t,t+\delta}}$ is regularly varying at infinity for any $\delta>0$, that is $\Pbb{\xi_1>t}$ is additionally \textit{locally} regularly varying. Then upon further assuming that $F$ is bounded, positive, non-increasing, locally-Lipschitz, and $\sup_{x>0}x^aF(x)<\infty$ for some $a\in\lbrb{0,1}$, see \cite[Conditions 4.1 and 4.2]{Xu2021}, then \eqref{eq:weak} holds true. 
We note that in our Corollary \ref{thm:main} we only assume $\sup_{x>0}x^aF(x)<\infty$ for some $a\in\lbrb{0,1}$ and $F$ being continuous. Since the result in \cite{Xu2021} is proved in a probabilistic way, the constant has a probabilistic representation, see \cite[(4.3)]{Xu2021}. We offer a characterisation of the limiting measure and the related constants in terms of special functions, which in many cases are easier to evaluate. Moreover, the Mellin inversion representation of the limiting measure yields analytical properties of this measure that cannot be captured by probabilistic means and offers in turn even a neat new probabilistic representation of the normed limiting measure as the law of the product of two independent random variables, which are well-known in the literature
and are easier to study. We do not discard lattice \LL processes either which would be necessitated by \textit{local} regular variation. We wish to highlight that \cite{Xu_23} obtains results similar to those of \cite{Xu2021} for exponential functionals of random walks. The respective conditions in \cite{Xu_23} match those in \cite{Xu2021}.

In all the aforementioned works that deal with asymptotic behaviour of the expectation $\Ebb{F(I_{\xi}(t))}$ there are results that explicitly or implicitly yield some bounds on the rate of decay of $\Ebb{F(I_{\xi}(t))}$. However, they require as a minimum the existence of exponential moments or fine structure of the tail $\Pbb{\xi_1>t}$ as in \cite{Xu2021}. Here, we offer such estimates on the rate of decay based on the finiteness of an explicit general integral criterion.

We are not aware of any existing results linking the derivatives of bivariate Bernstein–gamma functions to the convolutions and derivatives of the $q$-potential measures of the underlying Lévy process. We expect that this connection may prove useful in other contexts.

\subsection{Open problems}
A close inspection of our proof, see Section \ref{sec: proof thm main}, shows that $\Pbb{\xi_1>t}$ being regularly varying at infinity with index $\alpha>1$ implies that $\Pbb{\xi_t>0}$ is regularly varying with index $\alpha-1$, see \eqref{eq:asymp_xi_t_0}, which in turn yields that the derivative of the \LLK exponent of the ascending ladder time is regularly varying at zero, that is $\phi^{(n)}_+(q,0)$ is regularly varying of order $\curly{\alpha}-1$, see \eqref{eq:ln_kappa_case1}. A natural question is whether we can start only with the assumption of the behaviour of the latter derivative. In fact,  the only point where we need the regular variation of $\Pbb{\xi_1>t}$ is when we apply the de Haan theory via the probabilistic estimate, see the proof of Lemma \ref{lemma:bound_E}, which relies on the one large jump principle. Should the latter be avoided,
then one might prove results like ours only upon requirements on some derivative of $\phi_+(q,0)$. Due to the link to bivariate \BG functions, we believe this is a worthy line for future research which could further expand our knowledge not only on exponential functionals but also on bivariate \BG functions themselves too. 

The generalisation of the product factorisation of exponential functionals (see Theorem \ref{thm:mainPr}) raises the question of whether such a factorisation holds more universally, beyond the regular tail assumption. Classical factorisations are indeed universal, but our approach does not provide a path to answer this.

\section{Main results on exponential functionals}
\label{sec:main results}
We state the main result on $I_\xi(t)$, which is proven in Section \hyperref[proof thm 3.1]{\ref*{sec: proof thm main}}.
\begin{theorem} \label{thm:main}
	        Let $\xi$ be a \LL process with a finite negative mean and \begin{equation}\label{eq:reg_var_asymp}
\P\lbrb{\xi_1 > t} \simi  \frac{\ell(t)}{t^{\alpha}},
\quad
\text{or equivalently}
\quad
\Pi(t,\infty) \simi \frac{\ell(t)}{t^{\alpha}}
         \end{equation}
for $\alpha > 1$ and $\ell$ a slowly varying at infinity function.
	Then, for any $a \in (0,1)$,
\begin{equation} \label{eq:result_thm}
\frac{\mathbb{P}(I_\xi(t) \in \mathrm{d}y) t^{\alpha}}{y^a \ell(t)} \xrightarrow[t \to \infty]{w}\nu_a(\mathrm{d}y),
\end{equation}
where $\nu_a$ is a finite measure,
supported on $(0, \infty)$, with a distribution function given by
\begin{equation}
    \label{eq: nu_a}
\nu_a\lbrb{(0,x]} = 
\frac{1}{C}\int_{\Rez = b}\frac{x^{-z}}z \MBG{0}{ z+1-a} \D z,
\,\, \text{with}
\,\, C =-2 \pi i \phi_+(0,0)\lbrb{-\Ebb{\xi_1}}^{\alpha}\end{equation}
     and for all $b \in \Cb_{(a-1,0)}$.
	    \end{theorem}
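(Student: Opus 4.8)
The plan is to derive \eqref{eq:result_thm} from the asymptotic behaviour of the normed integrated quantities $\Ebb{I_\xi^{-a}(t)\ind{I_\xi(t)\leq x}}$ for each fixed $x\in(0,\infty]$, and then invoke a weak-convergence criterion. More precisely, write $\alpha=n+\{\alpha\}$ with $n=\lfloor\alpha\rfloor\geq 1$, and set $a\in(0,1)$ throughout. First I would fix $x\in(0,\infty]$ and start from the Mellin--Laplace representation in Section \ref{sec: methodology}:
\begin{equation*}
\IntOI e^{-qt}\Ebb{I_\xi^{-a}(t)\ind{I_\xi(t)\leq x}}\D t = -\frac{1}{2\pi i\,\phiplus{q,0}}\int_{\Rez=b}\frac{x^{-z}}{z}\MBG{q}{z+1-a}\D z,
\end{equation*}
valid for $b\in\Cb_{(a-1,0)}$ (with the $x=\infty$ case reducing to the moments \eqref{eq:LT}). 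Since $\Ebb{\xi_1}\in(-\infty,0)$, the left-hand side is finite at $q=0$ by Theorem \ref{thm:upperB}, so a direct Tauberian argument is unavailable; instead I would pass to the $n$-th repeated tail $V_x(t)=\int_t^\infty\!\int_{s_1}^\infty\!\cdots\!\int_{s_{n-1}}^\infty\Ebb{I_\xi^{-a}(s_n)\ind{I_\xi(s_n)\leq x}}\D s_n\cdots\D s_1$, whose Laplace transform is, up to lower-order terms, governed by $q^{-n}$ times the $n$-th $q$-derivative at $0$ of the right-hand side above. Differentiating $n$ times under the integral sign is legitimate thanks to the growth estimates of Theorem \ref{thm:derW} and Corollary \ref{cor:der_Mq} for the derivatives of bivariate \BG functions along vertical lines (with the extra refinement of item \textit{(\ref{it:cor_der_Mq_iii})} when $\xi$ is a CPP with positive drift). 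The key observation, as noted in Section \ref{sec: methodology}, is that the leading contribution comes from $\big(\phiplus{q,0}\big)^{(n)}$ at zero: using $\Pbb{\xi_1>t}\simi\ell(t)/t^\alpha$ one gets $\Pbb{\xi_t>0}$ regularly varying of index $\alpha-1$, hence $\phi_+^{(n)}(q,0)$ is regularly varying at $0$ of order $\{\alpha\}-1$, which feeds the Tauberian input.

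Next I would run the Tauberian/monotone-density machinery. When $\alpha\notin\Zb$, Karamata's Tauberian theorem gives the regular variation of $V_x(t)$, and then the monotone density theorem (Theorem \ref{thm:Monotone density, 1.7.2}, applied $n$ times through the nested integrals) yields
\begin{equation*}
\Ebb{I_\xi^{-a}(t)\ind{I_\xi(t)\leq x}}\simi \nu_a\lbrb{(0,x]}\cdot\frac{\ell(t)}{t^\alpha},
\end{equation*}
where $\nu_a((0,x])$ is exactly the Mellin-inversion integral in \eqref{eq: nu_a} with the normalising constant $C=-2\pi i\,\phi_+(0,0)(-\Ebb{\xi_1})^\alpha$; here $\Ebb{\xi_1}=-\phi_+'(0,0)\phi_-(0,0)$ and the powers of $(-\Ebb{\xi_1})$ arise from the regularly-varying-at-zero behaviour of $\phi_+^{(n)}(q,0)$ together with the constants produced by the repeated Tauberian passages. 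When $\alpha=n+1\in\Zb$, the monotone density theorem fails because $\int_0^tV_x(s)\D s$ is then slowly varying; this is the delicate case. There I would first establish the preliminary bound $\Ebb{I_\xi^{-a}(t)}=\bo{\Pbb{\xi_1>t}}$ by purely probabilistic means — the one-large-jump principle (the proof of Lemma \ref{lemma:bound_E}) — and then apply the de Haan generalised monotone density theorem (Theorem \ref{thm:de Haan monotone density, 3.6.8}) in place of the classical one; the rest of the argument parallels the non-integer case and delivers the same limit $\nu_a((0,x])$.

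Finally I would upgrade the pointwise-in-$x$ convergence to the weak convergence \eqref{eq:result_thm}. For $0<x_1<x_2<\infty$, subtracting gives
\begin{equation*}
\frac{t^\alpha}{\ell(t)}\int_{(x_1,x_2]} y^{-a}\,\Pbb{I_\xi(t)\in\D y}\xrightarrow[t\to\infty]{}\nu_a((0,x_2])-\nu_a((0,x_1]),
\end{equation*}
which identifies the limits of the finite measures $t^\alpha y^{-a}\Pbb{I_\xi(t)\in\D y}/\ell(t)$ on every interval with endpoints in a dense set of continuity points of $\nu_a$; since the total masses converge to $\nu_a((0,\infty))<\infty$ (the $x=\infty$ case), there is no escape of mass, and we conclude $\nu_a$ is finite, supported on $(0,\infty)$, and that $t^\alpha y^{-a}\Pbb{I_\xi(t)\in\D y}/\ell(t)\Rightarrow\nu_a(\D y)$, whence multiplying by $y^a$ yields \eqref{eq:result_thm}. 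I expect the main obstacle to be the integer case $\alpha=n+1$: justifying differentiation under the integral sign with sharp enough control on $\MBG{q}{z+1-a}$ and its $q$-derivatives along $\Rez=b$ (especially in the CPP-with-drift subcase), obtaining the probabilistic $\bo{\Pbb{\xi_1>t}}$ bound, and correctly threading these through the de Haan theory so that the constant $C$ comes out precisely as in \eqref{eq: nu_a}.
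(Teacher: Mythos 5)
Your proposal follows the paper's strategy essentially step for step: the Mellin--Laplace representation of $\Ebb{I_\xi^{-a}(t)\ind{I_\xi(t)\leq x}}$, the passage to $n$-th repeated tails, the regular variation of $\phi_+^{(n)}(q,0)$ of order $\{\alpha\}-1$ at $0$ driving Karamata's Tauberian theorem and the monotone density theorem in the non-integer case, the preliminary probabilistic $\bo{\Pbb{\xi_1>t}}$ bound from the one-large-jump principle feeding the de Haan generalized monotone density theorem when $\alpha\in\Nb$, and the use of Theorem \ref{thm:derW} and Corollary \ref{cor:der_Mq} to justify differentiating $n$ times under the contour integral (with the extra refinement \textit{(\ref{it:cor_der_Mq_iii})} in the CPP-with-drift subcase). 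You also correctly isolate the integer case as the delicate one.

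There is, however, one genuine gap in the passage from pointwise convergence of distribution functions to the weak convergence \eqref{eq:result_thm}. You assert that ``since the total masses converge to $\nu_a((0,\infty))<\infty$ (the $x=\infty$ case), there is no escape of mass'', but this does not follow from what precedes it. At that point you have two separate limits: for each finite $x$, the Mellin-inversion limit $\nu_{a,t}((0,x])\to c_1\,h_a^x(0)/\phi_+(0,0)$ with $h_a^x(0)=-\frac{1}{2\pi i}\int_{\Rez=b}\frac{x^{-z}}{z}\MBG{0}{z+1-a}\D z$, and, from the moment asymptotics \eqref{eq:LT}, the total-mass limit $\nu_{a,t}((0,\infty))\to c_1\MBG{0}{1-a}/\phi_+(0,0)$. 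Monotonicity only gives $\lim_{x\to\infty}h_a^x(0)\leq\MBG{0}{1-a}$; strict inequality would mean mass escapes to infinity and the Portmanteau criterion fails. The paper closes this by a residue-theorem argument: $z\mapsto x^{-z}\MBG{0}{z+1-a}/z$ is meromorphic on $\Cb_{(a-1,a)}$ with a single simple pole at $z=0$ of residue $\MBG{0}{1-a}$; shifting the contour from $\Rez=b<0$ to $\Rez=b_1>0$ picks up exactly this residue, and the shifted integral along $\Rez=b_1$ is $\bo{x^{-b_1}}\to 0$ as $x\to\infty$, giving $\lim_{x\to\infty}h_a^x(0)=\MBG{0}{1-a}$. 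Without this (or an equivalent tightness argument) your appeal to ``no escape of mass'' is unjustified.
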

     \begin{remark}\label{rem:main}
         We note that the proof of \cite[Theorem 2.20]{PatieSavov2018} which deals with the oscillating case has a minor gap regarding the tightness of the measures in \eqref{eq:result_thm}. It is amended precisely as the proof of this theorem.
     \end{remark}
    Relations of the statement of Theorem \ref{thm:main} to previous results can be found in subsection \ref{subsec:back}.
A direct consequence of the
last theorem are the following results.
\begin{corollary} \label{cor:limit functions}
     Under the conditions of Theorem
     \ref{thm:main}, for every  function $F : (0, \infty) \to \R$ such that,
     for some $a\in (0,1)$,
     $x \mapsto x^a F(x)$ is
     bounded and continuous,
     \begin{equation}\label{eq:weak}
         \frac{t^\alpha}{\ell(t)}
     \Ebb{F\lbrb{I_\xi(t)}}
     \xrightarrow[t \to \infty]{} \IntOIo y^a F(y) \nu_a(\D y)
     < \infty.
     \end{equation}
\end{corollary}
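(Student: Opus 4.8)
The plan is to deduce Corollary~\ref{cor:limit functions} directly from the weak convergence in Theorem~\ref{thm:main} together with a uniform-integrability-type control coming from the bound in the case $a' > a$. First I would write
\[
\frac{t^\alpha}{\ell(t)}\Ebb{F\lbrb{I_\xi(t)}}
= \IntOIo \lbrb{y^a F(y)}\,\frac{y^{-a}\Pbb{I_\xi(t)\in\D y}\,t^\alpha}{\ell(t)},
\]
and observe that by \eqref{eq:result_thm} the family of measures $\mu_t(\D y) := y^{-a}\Pbb{I_\xi(t)\in\D y}t^\alpha/\ell(t)$ converges weakly to the \emph{finite} measure $\nu_a$ on $(0,\infty)$. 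Since $g(y) := y^a F(y)$ is bounded and continuous by hypothesis, if the $\mu_t$ converged weakly as finite measures (i.e.\ with no escape of mass to $0$ or $\infty$), the portmanteau theorem would immediately give $\int g\,\D\mu_t \to \int g\,\D\nu_a$, which is exactly \eqref{eq:weak}, and finiteness of the limit is clear because $g$ is bounded and $\nu_a$ is finite.

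The one subtlety — and the step I expect to be the main obstacle — is that weak convergence of $\mu_t$ to $\nu_a$ in the sense of \eqref{eq:result_thm} is a priori only convergence tested against continuous compactly supported functions on $(0,\infty)$, so $g$ being merely bounded and continuous (not vanishing at $0$ and $\infty$) is not automatically a legitimate test function; one must rule out loss of mass at the two ends. At the upper end $y\to\infty$ this is harmless: $g$ is bounded, so $\int_{(R,\infty)} g\,\D\mu_t \le \|g\|_\infty \mu_t(R,\infty)$, and $\mu_t(R,\infty)$ is controlled by taking $R$ large since $\nu_a$ is finite and, crucially, the total masses $\mu_t(0,\infty) = t^\alpha\Ebb{I_\xi^{-a}(t)}/\ell(t)$ converge (to $\nu_a(0,\infty) = C^{-1}\cdot(\text{value of the Mellin integral})$, or equivalently one reads this off by letting $x\to\infty$ in \eqref{eq: nu_a}, which is the $x=\infty$ case referenced in the methodology). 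The lower end $y\to 0$ requires more care: there $g(y)=y^aF(y)$ is bounded but $y^{-a}$ blows up in $\mu_t$, so I would instead go back to $\Pbb{I_\xi(t)\in\D y}$ itself and use $\int_{(0,\varepsilon)} g\,\D\mu_t = \int_{(0,\varepsilon)} F(y)\,t^\alpha\Pbb{I_\xi(t)\in\D y}/\ell(t)$, which, since $\sup_x x^aF(x)<\infty$, is bounded by $\text{const}\cdot\int_{(0,\varepsilon)} y^{-a}\,t^\alpha\Pbb{I_\xi(t)\in\D y}/\ell(t) = \text{const}\cdot\mu_t(0,\varepsilon)$; and $\mu_t(0,\varepsilon)$ is made small uniformly in $t$ by combining convergence of the total masses with convergence of $\mu_t(\delta,R)\to\nu_a(\delta,R)$ for continuity points $\delta,R$ of $\nu_a$, a standard $3\varepsilon$ argument.

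So the argument reduces to: (i) invoke Theorem~\ref{thm:main} for the weak convergence of $\mu_t$ restricted to compact subsets of $(0,\infty)$; (ii) establish convergence of the total masses $\mu_t(0,\infty)\to\nu_a(0,\infty)$, which is the $x\to\infty$ specialization of \eqref{eq: nu_a} (equivalently, the asymptotics of $\Ebb{I_\xi^{-a}(t)}$ obtained en route to Theorem~\ref{thm:main}); (iii) a tightness/no-escape-of-mass $3\varepsilon$ argument at $0$ and $\infty$ using that $y^aF(y)$ is bounded; (iv) apply portmanteau to the genuinely continuous bounded function $g=y^aF(y)$ on the compact middle piece and control the tails by (iii). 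Finiteness of the right-hand side of \eqref{eq:weak} then follows since $\IntOIo y^aF(y)\,\nu_a(\D y)\le \|y^aF\|_\infty\,\nu_a(0,\infty)<\infty$. I would expect the authors' proof to be a few lines, essentially citing Theorem~\ref{thm:main} and the already-derived asymptotics of $\Ebb{I_\xi^{-a}(t)}$ and noting the portmanteau-type passage; the only thing worth spelling out is the uniform control of $\mu_t$ near $0$, for which the hypothesis $\sup_x x^aF(x)<\infty$ is exactly what is needed.
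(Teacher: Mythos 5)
Your proposal is correct, and it is essentially the route the paper intends: the paper gives no separate proof, asserting the corollary is a ``direct consequence'' of Theorem~\ref{thm:main}, and the reduction you make — write $\frac{t^\alpha}{\ell(t)}\Ebb{F(I_\xi(t))} = \int_{(0,\infty)} y^a F(y)\,\nu_{a,t}(\D y)$ with $\nu_{a,t}(\D y)=y^{-a}\Pbb{I_\xi(t)\in\D y}t^\alpha/\ell(t)$, observe $g(y)=y^aF(y)\in C_b((0,\infty))$, and invoke the weak convergence — is exactly the intended one. The only place where you do more than necessary is the tightness / $3\varepsilon$ argument near $0$ and $\infty$: the weak convergence $\nu_{a,t}\xrightarrow{w}\nu_a$ established in the proof of Theorem~\ref{thm:main} is already the finite-measure version on $(0,\infty)$, obtained there by applying the Portmanteau theorem to the normalized probability measures $\widetilde\nu_{a,t}$ together with convergence of the total masses (the second relation in \eqref{eq:distribution functions}); that combination already yields $\int g\,\D\nu_{a,t}\to\int g\,\D\nu_a$ for every bounded continuous $g$ on $(0,\infty)$, with no separate ruling-out of escaping mass. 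You explicitly anticipate this in your last sentence, so the extra care is harmless — it simply makes explicit the tightness that the paper's Portmanteau-plus-mass-convergence step already delivers — and your handling of the lower tail via $|F(y)|\leq \text{const}\cdot y^{-a}$ is the right way to see that the boundedness hypothesis on $x\mapsto x^aF(x)$ is exactly what keeps the integrand in $C_b$.
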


\begin{corollary} \label{cor:limit moments}
     Under the conditions of Theorem
     \ref{thm:main}, 
     for any $a\in (0,1)$,\begin{equation}\label{eq:moments}
         \frac{t^\alpha}{\ell(t)}
     \Ebb{I_\xi^{-a}(t)}
     \xrightarrow[t \to \infty]{} \frac{
     \MBG{0}{1-a}}{
     \phi_+(0,0)
     \lbrb{-\Ebb{\xi_1}}^{\alpha}}
     < \infty.
     \end{equation}
\end{corollary}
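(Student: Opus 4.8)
\textbf{Proof proposal for Corollary \ref{cor:limit moments}.}
The plan is to deduce the statement from Corollary \ref{cor:limit functions} by a judicious choice of $F$, and then to identify the resulting constant $\nu_a\lbrb{(0,\infty)}$ with the right-hand side of \eqref{eq:moments} via a contour shift in the Mellin--Barnes representation \eqref{eq: nu_a}. First I would apply Corollary \ref{cor:limit functions} with $F(x)=x^{-a}$; since then $x\mapsto x^aF(x)\equiv 1$ is bounded and continuous on $(0,\infty)$, the hypothesis of that corollary is met and $\Ebb{F\lbrb{I_\xi(t)}}=\Ebb{I^{-a}_\xi(t)}$, so \eqref{eq:weak} immediately yields
\begin{equation*}
\frac{t^\alpha}{\ell(t)}\Ebb{I^{-a}_\xi(t)}\xrightarrow[t\to\infty]{}\IntOIo y^a\,y^{-a}\,\nu_a(\D y)=\nu_a\lbrb{(0,\infty)}<\infty .
\end{equation*}
Thus the convergence and finiteness are already in hand, and it only remains to compute the total mass $\nu_a\lbrb{(0,\infty)}$.

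To this end I would use that, $\nu_a$ being a positive finite measure, $\nu_a\lbrb{(0,\infty)}=\lim_{x\to\infty}\nu_a\lbrb{(0,x]}$ by monotone convergence, and evaluate the limit from \eqref{eq: nu_a}. Fix $b\in\Cb_{(a-1,0)}$ as in the theorem and choose $b'\in\Cb_{(0,a)}$. The map $z\mapsto\MBG{0}{z+1-a}$ is holomorphic on the strip $\Cb_{(a-1,a)}$: by \eqref{eq:M_I_psi} it equals $\Gamma(z+1-a)W_{\phi_{0,-}}(a-z)/W_{\phi_{0,+}}(z+1-a)$, the $\Gamma$-factor has its rightmost pole at $z=a-1$, while $W_{\phi_{0,\pm}}$ are zero-free and holomorphic on $\Cb_{(0,\infty)}$; hence on the rectangle with vertical sides $\Re z=b$ and $\Re z=b'$ the integrand $z\mapsto x^{-z}\MBG{0}{z+1-a}/z$ has a single (simple) pole, at $z=0$, with residue $\MBG{0}{1-a}$. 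The decay of $\MBG{0}{\cdot}$ along vertical lines supplied by Corollary \ref{cor:der_Mq} makes the horizontal sides of the rectangle negligible, so Cauchy's theorem gives
\begin{equation*}
\nu_a\lbrb{(0,x]}=\frac1C\lbrb{\int_{\Re z=b'}\frac{x^{-z}}{z}\MBG{0}{z+1-a}\,\D z-2\pi i\,\MBG{0}{1-a}} .
\end{equation*}
On the line $\Re z=b'$ one has $\abs{x^{-z}}=x^{-b'}\to0$ as $x\to\infty$, and since $\abs{x^{-z}}\le1$ for $x\ge1$ while $z\mapsto\MBG{0}{z+1-a}/z$ is absolutely integrable there (again by Corollary \ref{cor:der_Mq}), dominated convergence kills the first term. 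Letting $x\to\infty$ and inserting $C=-2\pi i\,\phi_+(0,0)\lbrb{-\Ebb{\xi_1}}^{\alpha}$ yields $\nu_a\lbrb{(0,\infty)}=\MBG{0}{1-a}/\lbrb{\phi_+(0,0)\lbrb{-\Ebb{\xi_1}}^{\alpha}}$, which is exactly the limit in \eqref{eq:moments}.

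I expect the only genuine obstacle to be the contour-shift step, namely verifying holomorphy of $z\mapsto\MBG{0}{z+1-a}$ throughout $\Cb_{(a-1,a)}$ (so that $z=0$ is the unique singularity crossed) together with sufficiently fast decay along vertical lines to license both Cauchy's theorem on the rectangle and the dominated-convergence passage $x\to\infty$. Both are provided by the already-established analytic structure of bivariate \BG functions through the recursion \eqref{eq:rec}, the factorisation \eqref{eq:M_I_psi}, and the complex-line estimates of Theorem \ref{thm:derW} and Corollary \ref{cor:der_Mq}; everything else is a routine residue computation.
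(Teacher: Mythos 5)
Your proposal is correct and, modulo logical ordering, uses the same key ingredients as the paper. The paper does not give a separate proof of this corollary: instead, the moment asymptotic
\[
\Ebb{I^{-a}_\xi(t)}\simi\frac{c_1\,\MBG{0}{1-a}}{\phi_+(0,0)}\,t^{-\alpha}\ell(t),\qquad c_1=\lbrb{-\Ebb{\xi_1}}^{-\alpha},
\]
is established \emph{directly} as \eqref{eq:asymp_g} (and \eqref{eq:g_a_integer} in the integer case) inside the proof of Theorem~\ref{thm:main}, before the weak convergence is even assembled, so the corollary is simply a re-reading of that displayed line. You instead take Theorem~\ref{thm:main} as a black box, feed $F(x)=x^{-a}$ into Corollary~\ref{cor:limit functions} to identify the limit as $\nu_a(0,\infty)$, and then recover the constant by shifting the Mellin--Barnes contour across the pole at $z=0$. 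That residue computation is precisely the argument the paper performs to show $\lim_{x\to\infty} h_a^x(0)=\MBG{0}{1-a}$ (the passage from \eqref{eq:contour_int} onwards), so the essential analytic content is identical; your route is merely a deduction in the opposite direction, trading the direct Tauberian derivation of \eqref{eq:asymp_g} for an appeal to the already-stated weak convergence.

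One small attribution slip: the polynomial (resp. super-polynomial) decay of $z\mapsto\MBG{0}{z}$ along vertical lines in $\Cb_{(0,1)}$, which you invoke to justify Cauchy's theorem and the $x\to\infty$ limit, is not supplied by Theorem~\ref{thm:derW} or Corollary~\ref{cor:der_Mq} (those control $q$-derivatives of $M_\Psi$ relative to $|\MBG{q}{z}|$). It comes from \cite[Theorem 2.3]{PatieSavov2018}, as the paper cites at \eqref{eq:decay_M}--\eqref{eq:decay_M_CPP}. The argument itself is unaffected once the correct reference is used.
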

 Relations of the statements of Theorem \ref{thm:main} and Corollaries \ref{cor:limit functions}-\ref{cor:limit moments}  to previous results can be found in subsection \ref{subsec:back}.
 
Theorem \ref{thm:main} and Corollary \ref{cor:limit moments} lead to a neat probabilistic reformulation that extends the classical factorisation of exponential functionals; see Remark \ref{rem:mainPr} for further details. Following \cite[Theorem 2.22]{PatieSavov2018}, for a positive random variable $X$ and $a \in \mathbb{R}$ such that $\mathbb{E}[X^a] < \infty$, we define the size-biased transform by
\begin{equation}\label{eq:sizeBiased}
    \Ebb{f(\Bc_a X)}=\frac{\Ebb{X^af(X)}}{\Ebb{X^a}}
\end{equation}
for any bounded, continuous function $f$. Then we have the following factorisation, proven in
Section \hyperref[proof 3.5]{\ref*{sec: proof thm main}}.
\begin{theorem}\label{thm:mainPr}
   Under the conditions of Theorem \ref{thm:main}, for any $a\in\lbrb{0,1}$, 
   \begin{equation}\label{eq:mainPr}
       \Bc_{-a}I_{\xi}(t)\stackrel{d}{\xrightarrow[t \to \infty]{}}\Bc_{-a}I_{\phi_+}\times \Bc_{1-a}Y^{-1}_{\phi_-},
   \end{equation}
 where $\times$ stands for product of independent random variables, $I_{\phi_+}$ is the classical exponential functional pertaining to the killed subordinator with Bernstein function $\phi_+:=\phi_{0,+}$, and $Y_{\phi_-}$ is the positive random variable whose Mellin transform is $W_{\phi_-}$ with Bernstein function $\phi_-:=\phi_{0,-}$, see \eqref{eq:Wk=Wp}. 
 
 Moreover, the law of $\Bc_{-a}I_{\phi_+}\times \Bc_{1-a}Y^{-1}_{\phi_-}$, and therefore $\nu_a$, has infinitely differentiable bounded densities as long as $\xi$ is not a compound Poisson process with positive drift.
\end{theorem}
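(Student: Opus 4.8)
The plan is to prove Theorem~\ref{thm:mainPr} in three stages: first translate the weak convergence \eqref{eq:result_thm} into the size-biased statement \eqref{eq:mainPr}, then identify the limit law via its Mellin transform, and finally extract the smoothness of the density. For the first stage, note that $\Bc_{-a}I_\xi(t)$ is, by definition \eqref{eq:sizeBiased}, the law with density proportional to $y^{-a}\P(I_\xi(t)\in\D y)$, provided $\Ebb{I_\xi^{-a}(t)}<\infty$ (which holds by Theorem~\ref{thm:upperB} in our drift-to-$-\infty$ regime). Dividing the weak convergence in \eqref{eq:result_thm} by the convergence of the total masses in Corollary~\ref{cor:limit moments} gives that the probability measures $y^{-a}\P(I_\xi(t)\in\D y)/\Ebb{I_\xi^{-a}(t)}$ converge weakly to the normalised measure $\tilde\nu_a(\D y):=\nu_a(\D y)/\nu_a(0,\infty)$; the continuity of $y\mapsto y^{-a}$ on $(0,\infty)$ and the fact that the limit $\nu_a$ is supported on $(0,\infty)$ (so no mass escapes to $0$ or $\infty$) make this division legitimate. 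Hence it remains to identify $\tilde\nu_a$ as the law of $\Bc_{-a}I_{\phi_+}\times\Bc_{1-a}Y_{\phi_-}^{-1}$.

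For the identification I would compute the Mellin transform of $\tilde\nu_a$ from \eqref{eq: nu_a}. Recognising the contour integral $\frac1{2\pi i}\int_{\Re z=b}\frac{x^{-z}}{z}\MBG{0}{z+1-a}\D z$ as a Mellin--Barnes representation, one reads off (by the standard Mellin inversion/convolution dictionary, exactly as in the derivation of \eqref{eq:LT}) that for $s$ in an appropriate strip
\begin{equation*}
\int_{(0,\infty)}y^{s-1}\nu_a(\D y)=\text{const}\cdot\MBG{0}{s-a}
=\text{const}\cdot\frac{\Gamma(s-a)}{W_{\phi_+}(s-a)}\,W_{\phi_-}(1-(s-a)),
\end{equation*}
using \eqref{eq:M_I_psi} with $q=0$ and the choice $\phi_\pm=\phi_{0,\pm}$. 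On the other hand, the Mellin transform of $\Bc_{-a}I_{\phi_+}$ at $s$ is $\Ebb{I_{\phi_+}^{s-1-a}}/\Ebb{I_{\phi_+}^{-a}}=\big(\Gamma(s-a)/W_{\phi_+}(s-a)\big)/\big(\Gamma(1-a)/W_{\phi_+}(1-a)\big)$ by the classical Mellin formula for exponential functionals of subordinators (the $t=\infty$, $q=0$ case of \eqref{eq:M_I_psi}), and the Mellin transform of $\Bc_{1-a}Y_{\phi_-}^{-1}$ at $s$ is $\Ebb{Y_{\phi_-}^{-(s-1)+(1-a)}}/\Ebb{Y_{\phi_-}^{1-a}}=W_{\phi_-}(1-(s-a))/W_{\phi_-}(2-a)$ since the Mellin transform of $Y_{\phi_-}$ is $W_{\phi_-}$. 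Multiplying (the product of independent variables corresponds to the product of Mellin transforms) reproduces $\MBG{0}{s-a}$ up to the normalising constant; matching at $s=1$ (where both sides equal $1$) pins down the constant and shows the two laws coincide on a vertical line, hence everywhere by analytic continuation and injectivity of the Mellin transform. One should check that the Bernstein functions $\phi_\pm$ here are genuinely (univariate) Bernstein functions so that $I_{\phi_+}$ and $Y_{\phi_-}$ are well defined: when $\xi$ is not a CPP this is automatic since $h\equiv 1$ and $\phi_\pm=\kappa_\pm$, and when $h\not\equiv 1$ the remark after \eqref{eq:phi_+} guarantees $\phi_{0,\pm}$ are univariate Laplace exponents.

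For the final assertion about smooth bounded densities, I would argue on the Mellin side: $\MBG{0}{s-a}$ decays along vertical lines $\Re s=\sigma$ fast enough that $\int_{\Re s=\sigma}|s|^k|\MBG{0}{s-a}|\,|\D s|<\infty$ for every $k$, whence by Mellin inversion $\tilde\nu_a$ has a density which is $C^\infty$ with bounded derivatives of all orders. The requisite decay is exactly the content of the estimates on bivariate \BG functions and their behaviour along complex lines established in Theorem~\ref{thm:derW} / Corollary~\ref{cor:der_Mq}: the Gamma factor contributes exponential decay in $|\Im s|$, the factor $1/W_{\phi_+}(s-a)$ is harmless, and $W_{\phi_-}(1-(s-a))$ grows at most polynomially along verticals, so the product is integrable against any power of $|s|$ — unless the exceptional polynomial/near-constant growth occurs, which is precisely the CPP-with-positive-drift case that must be excluded, matching item \textit{(\ref{it:cor_der_Mq_iii})} of Corollary~\ref{cor:der_Mq}. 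The main obstacle is this last point: controlling the growth of $W_{\phi_-}(1-(s-a))$ (equivalently the descending-ladder \BG function) along vertical lines uniformly enough to get the $C^\infty$ conclusion, and correctly isolating the CPP-with-drift degeneracy where $\phi_-$ behaves like an affine function and the Gamma decay is no longer reinforced; everything else is bookkeeping with the Mellin dictionary and the already-established Corollaries.
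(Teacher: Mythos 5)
Your three-stage plan mirrors the paper's own: (i) translate the weak convergence of Theorem~\ref{thm:main} and Corollary~\ref{cor:limit moments} into distributional convergence of the size-biased law $\Bc_{-a}I_\xi(t)$, (ii) identify the normalised limit $\tilde\nu_a$ via its Mellin transform using \eqref{eq: nu_a}, \eqref{eq:M_I_psi} and the multiplicativity of Mellin transforms for independent products, (iii) obtain $C^\infty$ boundedness of the density from decay of $M_\Psi(0,\cdot)$ along vertical lines. Stages (i) and (ii) are sound; the paper does the same thing but cites \cite[(7.12)]{PatieSavov2018} to pass from the Mellin transform of the CDF $F$ to that of the underlying random variable. (Minor slip in your stage (ii): for $\Bc_{1-a}Y_{\phi_-}^{-1}$ the exponent is $-(s-1)-(1-a)=a-s$, not $-(s-1)+(1-a)$, so the normaliser is $W_{\phi_-}(a)$, not $W_{\phi_-}(2-a)$; your "match at $s=1$" step rescues the final identity since these are $s$-independent constants, but the displayed intermediate is inconsistent.)

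The genuine gap is in stage (iii). You attribute the required decay of $\MBG{0}{b+iy}$ in $|y|$ to Theorem~\ref{thm:derW} and Corollary~\ref{cor:der_Mq}, but those results concern the derivatives of $W_{\kappa_\pm}(q,z)$ and $\MBG{q}{z}$ in the \emph{first} variable $q$ and say nothing about the behaviour of $\MBG{0}{\cdot}$ along vertical lines in the \emph{second} variable. The estimate you actually need is \eqref{eq:decay_M}/\eqref{eq:decay_M_CPP}, quoted from \cite[Theorem~2.3]{PatieSavov2018}, and this is what the paper invokes (packaged as \cite[Theorem~2.4 item~(3)]{PatieSavov2018} with $N_\Psi=\infty$). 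Moreover, the factor-by-factor reasoning ("the Gamma factor contributes exponential decay\dots $1/W_{\phi_+}(s-a)$ is harmless\dots $W_{\phi_-}(1-(s-a))$ grows at most polynomially") is not correct as a general mechanism: $W_{\phi_+}(b+iy)$ can itself decay like $e^{-\pi|y|/2}$ when $\phi_+$ carries a drift component, so $1/W_{\phi_+}$ then \emph{grows} exponentially and can cancel the Gamma decay entirely --- this is precisely the mechanism behind the degenerate CPP-with-positive-drift case, not an innocuous side remark. The decay of $\MBG{0}{\cdot}$ is a joint phenomenon controlled by $N_\Psi$ and cannot be read off term by term; your sketch needs to be replaced by the citation to \cite[Theorems~2.3--2.4]{PatieSavov2018} to be a proof.
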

\begin{remark}\label{rem: factor}
  We prove Theorem \ref{thm:mainPr} by calculating explicitly the Mellin transforms of the respective quantities:
let $W$ be a random variable whose law is the normalised measure $\nu_a$, that is $F_W(x) := \nu_a((0,x])/\nu_a(0,\infty)$. In
  \eqref{eq:M_W} we establish that
\[
        \Mcc_{F_W}(z) =-\frac{1}{z}\frac{\Ebb{I^{z}_{\phi_+}I^{-a}_{\phi_+}}}{\Ebb{I^{-a}_{\phi_+}}}\frac{\Ebb{(Y^{-1}_{\phi_-})^{z}(Y^{-1}_{\phi_-})^{1-a}}}{\Ebb{(Y^{-1}_{\phi_-})^{1-a}}},
\]
and use it later to identify
$W\stackrel{d}{=}\Bc_{-a}I_{\phi_+}\times \Bc_{1-a}Y^{-1}_{\phi_-}$.
\end{remark}  
\begin{remark}\label{rem:mainPr}
   $Y_{\phi_-}$ is known as the remainder random variable and $\Bc_1 Y^{-1}_{\phi_-}$ is a factor in the general factorisation of exponential functionals, see \cite[Theorem 2.22]{PatieSavov2018}. This corresponds to $a=0$, which is not applicable here as the process $\xi$ drifts to $-\infty$. Nonetheless, if $\xi$ drifts to $\infty$, then $I_\Psi=I_{\xi}(\infty)=I_{\phi_+}\times  \Bc_{1}Y^{-1}_{\phi_-}<\infty$, see \cite[Theorem 2.22]{PatieSavov2018}, and clearly then \eqref{eq:mainPr} holds for all $a\in\lbbrb{0,1}$. In this sense, the result above is an extension of the factorisation for exponential functionals, which in this case is valid for the limit and only for $a\in (0,1)$. This raises the question whether a result as in \eqref{eq:mainPr} may hold beyond the assumptions of this theorem. Although, typically, for such factorisations, we cannot offer path-wise interpretation, to the best of knowledge we present some partial interpretation of $\Bc_{-a}I_{\phi_+}, \Bc_{1-a}Y^{-1}_{\phi_-}$. Assuming that $\psi_-(z):=z\phi_-(z)$ is the \LLK exponent of oscillating spectrally negative \LL process ($\phi_-(0)=0$), conditions for which exist e.g. in \cite[Chapter 9]{Doney2007}, then according to \cite[(1.8)]{Patie-2011}, $Y_{\phi_-}\stackrel{d}{=}J_{\psi_-}=I^{-1}_{\psi_{-,2}}$, where $J_{\psi_-}$ is the entrance law of the positive self-similar Markov processes associated to the \LLP pertaining to $\psi_-$ and $\psi_{-,2}$ is the $\mathcal{T}_1$ transform of $\psi_-$, see \cite[Proposition 2.4]{Patie-2011}. Also, in this case, under further mild conditions, $\Bc_{1-a}Y^{-1}_{\phi_-}=\Bc_{1-a}I_{\psi_{-,2}}\stackrel{d}{=}I_{\mathcal{T}_{1-a}\psi_{-,2}},$ where $\mathcal{T}_{1-a}\psi_{-,2}$ is the \LLK exponent of a \LL process, see \cite[Theorem 2.4 (2.8)]{PatieSavov2012}. Therefore, this component is related to an entrance law and exponential functional which is induced by the size-bias. Similar argument can be made for $\Bc_{-a}I_{\phi_+}$.
   Finally, we note that the Mellin transform of $\nu_a$ in Theorem \ref{thm:main} is proportional to $M_{\Psi}(z-a)$ and from there and the representation \eqref{eq:M_I_psi} combined with the results in \cite{PatieSavov2018} one can proceed to establish a number of functional-analytic properties of $\nu_a$. Due to the already lengthy manuscript, we leave this aside.
\end{remark}

The next result requires no general assumptions on $\xi$ beyond transience. Although it provides only an $\so{\cdot}$-type estimate, this is often sufficient for applications, as illustrated by results in the study of branching processes in Lévy random environments; see, for example, \cite{BanParSm21, PalauPardoSmadi2016, Xu2021}.
First, we introduce the quantity, for $x > 0$
and the linear term $\gamma$, defined in \eqref{eq:LLK},
\begin{equation}\label{eq:A}
    A(x):=\gamma+\Pi\lbrb{1,\infty}-
    \Pi\lbrb{-\infty,-1}+\int_{1}^x\lbrb{\Pi\lbrb{y,\infty}-\Pi\lbrb{-\infty,-y}} \D y.
\end{equation}
\begin{remark}
    \label{rem: DoneyMaller}
Let $T_{-x}$ denote the passage time below $-x<0$. The \textit{truncated mean} $A(x)$, introduced by Doney and Maller \cite{DonMal04}, provides equivalent conditions for the existence of moments of $T_{-x}$. The integral condition from \eqref{condi:upperB} or \eqref{eq:derW} originates from this framework and ensures the existence of $\mathbb{E}(T_{-x}^{n+1})$. Passage times play a crucial role in the estimates of the potential measures; see \eqref{eq:Ub}, \eqref{eq:sigmaInf}, or the compact inequality \eqref{eq:Ub1}, which yields
\[
U^{*n}([-x,0)) = O\left(\max\{1, \mathbb{E}(T_{-x}^n)\}\right).
\]
Naturally, $A(x)$ determining the finiteness of $\mathbb{E}(T_{-x}^{n+1})$, also plays a role in the study of exponential functionals. From a probabilistic perspective, large values of $I_\xi(t)$ occur when $\xi$ remains ``too low'' for a long time, keeping $e^{-\xi_s}$ large. Since $\lim_{t \to \infty} \xi_t = -\infty$, reaching very low levels ensures, roughly speaking, a sustained large value of $e^{-\xi_s}$. Thus, the behaviour of $I_\xi(t)$ is related to the speed of reaching negative levels, governed by $A(x)$. Inspection of the proof (see \eqref{eq:I_2+}) shows that the main contribution to $U^{*n}([-x,0))$ comes from the time $T_{-x}$, partially resembling a Laplace principle, as the whole interval $[0, T_{-x}]$ contributes to the maximum.

\end{remark}

Let us note some basic facts about $A(\cdot)$ for the case we consider:
from \cite[Lemma 13 $(ii)$]{DonMal04}, in the case where 
$\limi{t}\xi_t=-\infty$, if $\Ebb{|\xi_1| }< \infty$, then $\limi{x}A(x)$ $=\Ebb{\xi_1}<0$; and
if $\Ebb{\xi_1}$ is not finite, $\limi{x}A(x)=-\infty$.  Since $\limi{x}$ $\abs{A(x)}$ $=$ $\abs{A(\infty)}>0$, there
exists $x_0$ such that
$A(x)$ is non-zero for $x \geq x_0$.
Formally
it is possible that $A(x) = 0$ for some $x\in (1, x_0]$. However,
 the integral criterion \cite[(1.14)]{DonMal04},  requires integrability only at infinity, 
we may
 redefine $A(x)$ to be a positive constant
 over $(1, x_0]$. 
From
 now on
 we work as if $x_0 = 1$.
 
 We are ready to state our final result for this section.
See Section \hyperref[proof 3.8]{\ref*{sec: proof thm main}} for its proof.
\begin{theorem}\label{thm:upperB}
    Let $\xi$ be a \LLP such that $\limi{t}\xi_t=-\infty$. If
    \begin{equation}\label{condi:upperB}
        \int_{(1, \infty)} \lbrb{\frac{x}
{\abs{A(x)}}}^{n+1}\Pi(\D x)<\infty 
    \end{equation}
    for some $n\geq 1$, then, for every measurable function $F : (0, \infty) \to \R$ such that
     for some $a\in (0,1)$,
     $x \mapsto x^a F(x)$ is
     bounded, we have that at infinity
    \begin{equation}\label{res:upperB}
        \Ebb{\abs{F(I_{\xi}(t))}}=\so{t^{-n}},
    \end{equation}
    and $\IntOI t^n\Ebb{\abs{F(I_{\xi}(t))}} \D t<\infty$.
    Finally, assuming only that $\limi{t}\xi_t=-\infty$  and  $x \mapsto x^a F(x)$ is
     bounded, it holds that $\IntOI  \Ebb{\abs{F(I_{\xi}(t))}}\D t<\infty$.
\end{theorem}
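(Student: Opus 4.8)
\emph{Proof plan.} The plan is to reduce all three assertions to the single function $g(t):=\Ebb{I_\xi^{-a}(t)}$, which is finite for $t>0$ by \eqref{eq:M_I_psi} and \emph{nonincreasing} because $t\mapsto I_\xi(t)$ is a.s.\ nondecreasing, and to read off its decay at infinity from the behaviour at $q=0$ of its Laplace transform. Since $\Ctt:=\sup_{x>0}x^a\abs{F(x)}<\infty$ we have $\abs{F(I_\xi(t))}\leq\Ctt\,I_\xi^{-a}(t)$ pointwise, hence $\Ebb{\abs{F(I_\xi(t))}}\leq\Ctt\,g(t)$, so it suffices to prove the three statements with $g$ in place of $\Ebb{\abs{F(I_\xi(t))}}$. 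By \eqref{eq:LT} together with \eqref{eq:M_I_psi}, for $q>0$,
\begin{equation*}
\widehat g(q)=\IntOI e^{-qt}g(t)\,\D t=\frac{\MBG{q}{1-a}}{\phi_+(q,0)}=\frac{\Gamma(1-a)\,W_{\phi_{q,-}}(a)}{\phi_+(q,0)\,W_{\phi_{q,+}}(1-a)}<\infty .
\end{equation*}

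The crucial point is that, under \eqref{condi:upperB}, the $q$-derivatives of $\widehat g$ up to order $n$ are finite and continuous on $[0,\varepsilon)$ for some $\varepsilon>0$. Because $\phi_+(0,0)>0$ (the ascending ladder process is killed since $\xi$ drifts to $-\infty$) and $W_{\phi_{0,+}}(1-a)>0$ (bivariate \BG functions are strictly positive on $(0,\infty)$), by the quotient rule it is enough to establish this regularity separately for $q\mapsto W_{\phi_{q,+}}(1-a)$, for $q\mapsto W_{\phi_{q,-}}(a)$ and for $q\mapsto\phi_+(q,0)$. For the ascending bivariate \BG factor this is precisely Theorem \ref{thm:derW} at $z=1-a$; moreover Lemma \ref{lem:repW}, which represents the $q$-derivatives by convolutions and derivatives of the $q$-potential measures of $\xi$, shows that the $n$-th derivative is continuous in $q$ down to $0$ (cf.\ also Corollary \ref{cor:der_Mq}). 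For $\phi_+(\cdot,0)$ one reads off from \eqref{eq:phi_+} that its $q$-derivatives at $0$ are controlled by $\IntOI t^{k-1}\Pbb{\xi_t\geq 0}\,\D t$ with $k\leq n$, and for $W_{\phi_{q,-}}(a)$, $a\in(0,1)$, the relevant integrals are dominated, via $\Ebb{e^{a\xi_t}\ind{\xi_t\leq 0}}\leq e^{-ac t}+\Pbb{\xi_t>-ct}$ with $0<c<\abs{\Ebb{\xi_1}}$ (any $c>0$ if $\Ebb{\xi_1}=-\infty$), by $\IntOI t^{k-1}\Pbb{\xi_t+ct>0}\,\D t$. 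As $\xi+c\,\id$ still drifts to $-\infty$ and has the same upper tail as $\xi$, all these integrals are finite whenever \eqref{condi:upperB} holds (cf.\ \cite{DonMal04}); hence each of the three factors, and therefore $\widehat g$, has finite and continuous $q$-derivatives on $[0,\varepsilon)$ up to order $n$. This is the step I expect to be the main obstacle: while Theorem \ref{thm:derW} handles the ascending factor, the descending factor and $\phi_+(\cdot,0)$ a priori look as though they need lower-tail information on $\xi$, the resolution being that $\Ebb{\xi_1}<0$ forces them back onto the single upper-tail criterion \eqref{condi:upperB}.

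Given this, the conclusion is routine. For $q>0$ and $k\leq n$, differentiating under the integral sign gives $(-1)^k\widehat g^{(k)}(q)=\IntOI t^k e^{-qt}g(t)\,\D t$, so by monotone convergence $\IntOI t^k g(t)\,\D t=\lim_{q\downarrow 0}(-1)^k\widehat g^{(k)}(q)=(-1)^k\widehat g^{(k)}(0)<\infty$; in particular $\IntOI t^n g(t)\,\D t<\infty$ and thus $\IntOI t^n\Ebb{\abs{F(I_\xi(t))}}\,\D t\leq\Ctt\IntOI t^n g(t)\,\D t<\infty$. Since $g$ is nonincreasing, $\int_{t/2}^{t}s^n g(s)\,\D s\geq g(t)\,(t/2)^{n+1}$ and the left-hand side tends to $0$, so $t^{n}g(t)\to 0$, i.e.\ $\Ebb{\abs{F(I_\xi(t))}}\leq\Ctt\,g(t)=\so{t^{-n}}$. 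Finally, for the last assertion (only $\limi{t}\xi_t=-\infty$ assumed) we use merely that $\phi_+(\cdot,0)$ is nondecreasing with $\phi_+(0,0)>0$ and that $q\mapsto\MBG{q}{1-a}$ is continuous at $0$ with finite positive value (elementary continuity and positivity of bivariate \BG functions); then $\IntOI g(t)\,\D t=\lim_{q\downarrow 0}\widehat g(q)=\MBG{0}{1-a}/\phi_+(0,0)<\infty$ by monotone convergence, whence $\IntOI\Ebb{\abs{F(I_\xi(t))}}\,\D t\leq\Ctt\IntOI g(t)\,\D t<\infty$.
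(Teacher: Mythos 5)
Your proof is essentially correct and follows the same route as the paper: reduce to $g(t)=\Ebb{I_\xi^{-a}(t)}$, use \eqref{eq:LT} to write $\widehat g(q)=\MBG{q}{1-a}/\phi_+(q,0)$, obtain finiteness and right-continuity at $q=0$ of the $q$-derivatives from the results of Section \ref{sec:W}, identify $(-1)^n\widehat g^{(n)}(0)=\int_0^\infty t^n g(t)\,\D t<\infty$ by monotone convergence, and extract $g(t)=\so{t^{-n}}$ from the monotonicity of $g$. Your tail argument $\int_{t/2}^t s^n g(s)\,\D s\ge g(t)(t/2)^{n+1}\to 0$ is clean and even gives the stronger $g(t)=\so{t^{-n-1}}$.

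The one thing worth flagging is your detour on the descending factor $W_{\phi_{q,-}}(a)$. You write that Theorem~\ref{thm:derW} ``handles the ascending factor'' and then devise a separate bound using $\Ebb{e^{a\xi_t}\ind{\xi_t\leq 0}}\leq e^{-act}+\Pbb{\xi_t>-ct}$ and the shifted process $\xi+c\,\mathrm{id}$. This is unnecessary: Theorem~\ref{thm:derW} is stated for \emph{both} signs $\pm$, so finiteness and right-continuity of $\partialder{k}{q}{W_{\phi_{q,-}}(z+1)}$ for $z\in\Cb_{(-1,\infty)}$ (hence at $z=a-1$) follows from \eqref{condi:upperB} alone; still more directly, Corollary~\ref{cor:der_Mq}(\ref{it:cor_der_Mq_i}) packages exactly the finiteness and right-continuity of $\partialder{k}{q}{\MBG{q}{z}}$ that you need. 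Your side argument is also not airtight as written: the passage from $\Ebb{e^{a\xi_t}\ind{\xi_t\leq 0}}$ to the $q$-derivatives of $W_{\phi_{q,-}}$ is not spelled out (those derivatives are controlled by tails and convolutions of the potential measure on $(-\infty,0]$, as in Lemma~\ref{lem:repW}, not by this one-step moment bound), and perturbing to $\xi+c\,\mathrm{id}$ needs care in the case where $\Ebb{\xi_1}$ is not defined (which is allowed under $\limi{t}\xi_t=-\infty$). None of this is fatal, because the correct invocation of Theorem~\ref{thm:derW} or Corollary~\ref{cor:der_Mq}(\ref{it:cor_der_Mq_i}) closes the gap immediately; the paper's proof does precisely that and then uses \eqref{eq:PandInt1} for $\phi_+(\cdot,0)$.
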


\section{Derivatives of bivariate Bernstein--gamma functions}\label{sec:W}
In this section, we study the finiteness of the derivatives of 
$W_{\kappa_\pm}$ and
related quantities, which play a 
key role in the proofs in Section \ref{sec: proof thm main}. However, since  they are of independent interest, we state them in this separate section. We start 
with an integral representation of $W_{\kappa_\pm}$ recalling that, for $q \geq 0$,
\begin{equation}\label{def:pot}
    U_q(\D x)=\IntOI e^{-qt}\Pbb{\xi_t\in \D x} \D t
\end{equation}
are the $q$-potential measures of $\xi$ and $U^{*n}_{q}$ are their respective formal convolutions. $U:=U_0$ is called the potential measure which is a well-defined Radon measure when $\xi$ is transient.
\begin{lemma}\label{lem:repW}
   Let $\xi$ be a \LLP with Laplace exponent $\Psi$, and $\kappa_\pm$ be the bivariate Laplace exponents associated with $\Psi$, introduced in \eqref{eq:WH}. Then, for any $z\in\Cb_{(-1,\infty)}$ and $q > 0$,
   \begin{equation}\label{eq:repW}
       \begin{split}
           &\log\lbrb{W_{\kappa_\pm}(q,z+1)}=z\ln\kappa_\pm(q,1)+\int_{[0,\infty)}\lbrb{e^{-zy}-1-z(e^{-y}-1)}\frac{\Wc_{\pm}(q,\D y)}{e^y-1},
       \end{split}
   \end{equation}
   where $\log$ is a complex logarithm, and the measures $\Wc_{\pm}(q,\D y)$ do not charge $0$ and are defined on $\R\setminus\{0\}$ as
   \begin{equation}\label{eq:Wc}
       \Wc_{\pm}(q,\D y)=\IntOI \frac{e^{-qt}}{t}\Pbb{\xi_t\in\pm \D y}\D t.
   \end{equation}
Also, for any $z\in\Cb_{(-1,\infty)}$ and  $q > 0$, we have the representation
\begin{equation}\label{eq:repW2_1}
    \begin{split}
        \frac{\partialder{}{q}{W_{\kappa_\pm}(q,1+z)}}{W_{\kappa_\pm}(q,1+z)}
        =
        \IntOIc \frac{1- e^{-zy}}{e^y-1}U_q(\pm \D y).
    \end{split}
\end{equation}
  Finally, for any $q_0>0$, all derivatives of $ \Wc_{\pm}(q,\D y)$ in $q$ exist at $q_0$ and are equal to
   \begin{equation}
   \begin{split}
\label{eq:derWq}
       \Wc^{(n)}_{\pm}(q_0,\D y)&=\minusone^{n} \IntOI t^{n-1}e^{-q_0t}\Pbb{\xi_t\in \pm \D y}
       \D t\\
       &=
       \minusone^{n}(n-1)!U^{*n}_{q_0}(\pm \D y)
       \\
       &=-U^{(n-1)}_{q_0}(\pm \D y),
   \end{split}
   \end{equation}
   where $\lbrb{\partial^{n-1}/\partial q^{n-1}} U_{q_0}=U^{(n-1)}_{q_0}$ are the measure derivatives of $U_{q}$ at $q=q_0$.
\end{lemma}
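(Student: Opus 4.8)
The plan is to derive the three displayed formulas in turn, starting from the exponential representations \eqref{eq:kap} of $\kappa_{\pm}$ and building up. First I would compute $\log\kappa_{\pm}(q,z)$ from \eqref{eq:kap}: one gets, for $q>0$ and $\Re(z)\geq 0$,
\[
\log\kappa_{\pm}(q,z)-\log c_{\pm}=\IntOI\int_{[0,\infty)}\frac{e^{-t}-e^{-qt-zx}}{t}\,\Pbb{\pm\xi_t\in\D x}\,\D t=\int_{[0,\infty)}\bigl(e^{-x}-e^{-zx}\bigr)\,\Wc_{\pm}(q,\D x)+c(q),
\]
where $\Wc_{\pm}(q,\D x)=\IntOI\tfrac{e^{-qt}}{t}\Pbb{\pm\xi_t\in\D x}\,\D t$ off $0$ and the $e^{-t}$-term together with the atom at $0$ produces a $z$-independent constant; the positivity of $q$ is what makes $\Wc_{\pm}(q,\cdot)$ a locally finite measure (the $1/t$ singularity at $t=0$ is controlled by $e^{-qt}$ only near $t=\infty$, while near $0$ it is controlled by $\Pbb{\pm\xi_t\in\D x}\to\delta_0$ away from $0$). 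Then I would feed this into the defining recursion \eqref{eq:rec}: taking logarithms in $\Wkap(\zeta,z+1)=\kappa(\zeta,z)\Wkap(\zeta,z)$ and iterating, or more cleanly verifying directly that the right-hand side of \eqref{eq:repW} satisfies the recursion together with the normalization $W_{\kappa_{\pm}}(q,1)=1$ at $z=0$, invoking the uniqueness part of \cite[Theorem 2.8]{BarkerSavov2021}. Concretely, if $g_{\pm}(q,z+1)$ denotes the claimed right-hand side, one checks $g_{\pm}(q,z+2)-g_{\pm}(q,z+1)=\log\kappa_{\pm}(q,z+1)$ by a short computation using $\tfrac{e^{-(z+1)y}-e^{-zy}}{e^y-1}=-e^{-zy}$ and the linear correction term, matching the constant $z\ln\kappa_{\pm}(q,1)$ against the linear-in-$z$ part. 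The extension of the domain from $\Re(z)\geq 0$ to $\Cb_{(-1,\infty)}$ is where the regularizing term $-z(e^{-y}-1)$ earns its keep: it makes the integrand $O(y)$ as $y\to 0$, hence integrable against $\Wc_{\pm}(q,\D y)/(e^y-1)$ which near $0$ behaves like $\Wc_{\pm}(q,\D y)/y$, for all $\Re(z)>-1$.

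For \eqref{eq:repW2_1} I would differentiate \eqref{eq:kap}, or rather its logarithm, in $q$. From $\log\kappa_{\pm}(q,z)$ one has $\partial_q\log\kappa_{\pm}(q,z)=\IntOI\bigl(e^{-qt-zx}\cdot\text{(as measure in }x)\bigr)=\int_{[0,\infty)}e^{-zx}U_q(\pm\D x)$ minus the $z=0$ value coming from differentiating under the $t$-integral (the $e^{-t}/t$ piece is $q$-independent), so $\partial_q\log\kappa_{\pm}(q,z)=\int_{[0,\infty)}(e^{-zx}-e^{-0\cdot x})\,(-1)\cdots$ — I need to be careful with signs, but the upshot is $\partial_q\log\kappa_{\pm}(q,z)=-\int_{[0,\infty)}(1-e^{-zx})U_q(\pm\D x)$ after using $\partial_q\int_0^\infty \tfrac{-e^{-qt-zx}}{t}\D t\,\Pbb{\cdots}=\int_0^\infty e^{-qt-zx}\D t\,\Pbb{\cdots}$. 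Then I differentiate the representation \eqref{eq:repW} of $\log W_{\kappa_{\pm}}(q,z+1)$ in $q$, justifying interchange of $\partial_q$ and the $y$-integral by dominated convergence using \eqref{eq:derWq} for $n=1$ together with local boundedness near $q_0>0$; the term $z\ln\kappa_{\pm}(q,1)$ contributes $z\,\partial_q\log\kappa_{\pm}(q,1)$, and a telescoping/Fubini argument collapses the $y$-integral of $\partial_q\Wc_{\pm}(q,\D y)/(e^y-1)$ against $(e^{-zy}-1-z(e^{-y}-1))$ back into $\int(1-e^{-zy})/(e^y-1)\,U_q(\pm\D y)$ — the $-1$ and $-z(e^{-y}-1)$ pieces combine with the $z\partial_q\log\kappa_\pm(q,1)$ term and cancel, leaving exactly \eqref{eq:repW2_1}. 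Finally \eqref{eq:derWq} is the most mechanical: differentiating $\Wc_{\pm}(q,\D y)=\IntOI \tfrac{e^{-qt}}{t}\Pbb{\pm\xi_t\in\D y}\,\D t$ under the integral sign $n$ times gives $(-1)^n\IntOI t^{n-1}e^{-q_0 t}\Pbb{\pm\xi_t\in\D y}\,\D t$ (legitimate for $q_0>0$ since $t^{n-1}e^{-q_0 t/2}$ is bounded and $\tfrac{e^{-q_0 t/2}}{t}$ is integrable in $t$ against the subprobability kernel, away from the atom at $0$); the identity $t^{n-1}=\tfrac{1}{(n-1)!}\partial$-free rewriting as an $(n-1)$-fold iterated convolution of $e^{-q_0 t}\D t$ on $(0,\infty)$, i.e. $\IntOI t^{n-1}e^{-q_0 t}f(t)\D t=(n-1)!\,(e^{-q_0\cdot}\D\cdot)^{*(n-1)}* (e^{-q_0\cdot}f\,\D\cdot)$ evaluated appropriately, together with the definition of $U_q$ and the semigroup property $\Pbb{\xi_{t+s}\in\D y}$ under the measure convolution, yields the middle line $(-1)^n(n-1)!\,U_{q_0}^{*n}(\pm\D y)$; and the last line is just the definition of the measure-derivative $U^{(n-1)}_{q_0}=\partial_q^{n-1}U_q|_{q=q_0}$, which by the same differentiation-under-the-integral equals $(-1)^{n-1}\IntOI t^{n-1}e^{-q_0 t}\Pbb{\pm\xi_t\in\D y}\,\D t$.

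The main obstacle I anticipate is not any single computation but the careful bookkeeping of the $z$-independent constants and the domain of validity in the first part: one must separate the genuinely divergent $e^{-t}/t$-contribution (and the atom of $\Pbb{\xi_t\in\D x}$ at $x=0$, which is where $h(q)$ and the compound-Poisson subtleties live) from the convergent part, show that what is discarded is exactly absorbed into the normalization $W_{\kappa_\pm}(q,1)=1$, and then analytically continue in $z$ across the strip $\Re(z)\in(-1,0]$ using the regularized integrand — this is precisely the place where the double subtraction $e^{-zy}-1-z(e^{-y}-1)$ is essential rather than cosmetic, and where the hypothesis $q>0$ (guaranteeing $\Wc_{\pm}(q,\cdot)$ is Radon) cannot be relaxed to $q=0$ without transience. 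A secondary technical point is justifying all the differentiations under the integral sign uniformly for $q$ near a fixed $q_0>0$, but this is routine given the explicit bounds in \eqref{eq:derWq}.
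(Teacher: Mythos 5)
Your proposal for the first identity \eqref{eq:repW} takes a genuinely different route from the paper's. The paper imports the integral representation of $\log_0(W_{\phi_{q,\pm}}(1+z))$ wholesale from \cite[Theorem 4.7 (4.15)]{PatieSavov2018} in terms of abstract measures $k_\pm(q,\D y)$, and then \emph{identifies} those measures as $k_\pm(q,\D y)=y\,\Wc_\pm(q,\D y)$ by matching Laplace transforms of $\partial_\lambda\log\phi_\pm(q,\lambda)$ with what comes out of \eqref{eq:kap}; the regularization $e^{-zy}-1-z(e^{-y}-1)$ and the analytic continuation to $\Re(z)>-1$ are inherited for free. You instead propose to bypass that citation entirely: write down the candidate right-hand side $g_\pm(q,z+1)$, verify that it satisfies $g_\pm(q,z+2)-g_\pm(q,z+1)=\log\kappa_\pm(q,z+1)$ (a direct computation from \eqref{eq:kap}) together with $g_\pm(q,1)=0$, and invoke the uniqueness part of \cite[Theorem 2.8]{BarkerSavov2021}. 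This is more self-contained and transparently exhibits where the normalization $W_{\kappa_\pm}(q,1)=1$ absorbs the $z$-independent divergent pieces (the $e^{-t}/t$-term and the atom of $\P(\xi_t\in\cdot)$ at $0$), at the price of redoing some of the analytic-continuation bookkeeping that PatieSavov2018 already carried out. Your treatments of \eqref{eq:repW2_1} and \eqref{eq:derWq} --- differentiate the logarithmic representation in $q$, collapse via the algebraic identity $v_z(y)=ze^{-y}-u_z(y)$, and differentiate $\Wc_\pm$ under the integral sign together with the Chapman--Kolmogorov convolution identity underlying Proposition \ref{prop:conv} --- are essentially what the paper does.

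One computational slip to flag: you assert $\frac{e^{-(z+1)y}-e^{-zy}}{e^y-1}=-e^{-zy}$, but factoring $e^{-zy}$ and using $e^{-y}-1=-(e^y-1)/e^y$ gives $-e^{-(z+1)y}$, not $-e^{-zy}$. With your stated identity the recursion check would \emph{fail} to match $\log\kappa_\pm(q,z+1)$ (you would instead get $\log\kappa_\pm(q,z)$ up to constants); with the corrected identity the difference becomes $\ln\kappa_\pm(q,1)+\int(e^{-y}-e^{-(z+1)y})\Wc_\pm(q,\D y)$, which does equal $\log\kappa_\pm(q,z+1)$ as read off \eqref{eq:kap}. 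This is a fixable typo rather than a conceptual gap, but it sits exactly at the point where your argument replaces the paper's citation, so it is worth getting right.
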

    See  Section \hyperref[proof lem:repW]{\ref*{sec:proofs BG}} for the proof of the last lemma.
\begin{remark}\label{rem:repW}
Note that, given the integral representation in \cite[Theorem 4.7 (4.15)]{PatieSavov2018}, the new contribution in \eqref{eq:repW} lies in the representation \eqref{eq:Wc} in terms of the harmonic measure of the Lévy process $\xi$, and in the connection established in \eqref{eq:repW2_1} with the $q$-potential measures. Although this refinement is minor, it proves quite useful when working with $W_{\kappa_\pm}$, since the available input data are the Lévy process $\xi$ and its Laplace exponent.
 The differentiation of the
measures is in the sense of weak limits of
the respective difference ratios,
i.e. $\lbrb{\Wc_\pm\lbrb{q_0+h,\D y}-\Wc_\pm\lbrb{q_0, \D y}}/h$ 
as $h\to 0$.
We note that the measures in \eqref{eq:Wc} are
known as the $q$-harmonic measures of $\xi$. A
nice link between the harmonic measures and the
law of exponential functionals of subordinators
has been established in \cite{AliJedRiv14}. 
Here, it is clear that this link extends to
general exponential functionals of \LL 
processes.
\end{remark}
Lemma \ref{lem:repW} allows us to study the derivatives of $W_{\kappa_\pm}$.
We have the following result, which is proved in Section
\hyperref[proof 4.3]{\ref*{sec:proofs BG}}.
\begin{theorem}\label{thm:derW}
Let $\xi$ be a \LLP with \LLK 
exponent $\Psi$ such that
$\limi{t}\xi_t=-\infty$. Then,
for any $n\geq0$, $z \in 
\Cb_{(-1,\infty)}$, and $q\geq 0$,
\begin{equation}\label{eq:derW}
  \begin{split}
    &\int_{(1, \infty)} \lbrb{\frac{x}{\abs{A(x)}}}^{n+1}\Pi(\D x)<\infty \\
    &\qquad\iff \partialder{n+1}{q}{W_{\kappa_\pm}}(q,z+1)\text{ and }\partialder{n+1}{q}{W_{\phi_{q, \pm}}}(z+1)\text{ are finite},
  \end{split}
\end{equation}
where for $q=0$ the derivatives are understood as right derivatives. Upon the finiteness of the integral in \eqref{eq:derW}, the derivatives are right-continuous at $q=0$.
\end{theorem}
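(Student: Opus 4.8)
\textbf{Proof proposal for Theorem \ref{thm:derW}.}

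The plan is to reduce everything to the representation in Lemma \ref{lem:repW}, differentiate under the integral sign, and then control the resulting measure integrals via the Donati-Martin--type asymptotics of the potential (or harmonic) measures of a transient \LLP. First, by \eqref{eq:Wk=Wp} the functions $W_{\phi_{q,\pm}}$ differ from $W_{\kappa_\pm}(q,\cdot)$ only by the smooth factors $c_\pm^{1-z}$ and $h^{z-1}(q)$; since $h(q)$ is manifestly $C^\infty$ on $(0,\infty)$ (and, for a transient $\xi$, its behaviour near $q=0$ is governed by $\Pbb{\xi_t=0}$, which is harmless here — indeed $h\equiv1$ unless $\xi$ is a CPP), the finiteness of the $(n+1)$st $q$-derivative of $W_{\kappa_\pm}(q,z+1)$ is equivalent to that of $W_{\phi_{q,\pm}}(z+1)$. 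So I would work exclusively with $W_{\kappa_\pm}$. Next, I take the logarithmic representation \eqref{eq:repW}. Differentiating once in $q$ gives \eqref{eq:repW2_1}, and iterating we obtain, for $q>0$,
\begin{equation*}
\partialder{m}{q}{\log W_{\kappa_\pm}(q,z+1)} = \IntOIc \lbrb{1-e^{-zy}}\,\frac{U_q^{(m-1)}(\pm\D y)}{e^y-1}\cdot(-1)^{m-1}(m-1)!\,\frac{1}{(m-1)!}
\end{equation*}
— more precisely, using the last display of Lemma \ref{lem:repW}, $\partial_q^m \log W_{\kappa_\pm}(q,z+1) = (-1)^{m-1}(m-1)!\IntOIc (1-e^{-zy})(e^y-1)^{-1}U^{*m}_q(\pm\D y)$, and then by Faà di Bruno the $(n+1)$st $q$-derivative of $W_{\kappa_\pm}$ itself is a finite sum of products of such terms times $W_{\kappa_\pm}$ — so the whole question collapses to: \emph{for which $n$ is $\IntOIc (1-e^{-zy})(e^y-1)^{-1}U^{*(n+1)}_q(\pm\D y)$ finite?} Since $(1-e^{-zy})(e^y-1)^{-1}$ is bounded on $[1,\infty)$ and behaves like a constant times $z$ near $0$, and since $U^{*(n+1)}_q$ is a finite measure away from the origin for $q>0$, finiteness is really only a statement about the mass of $U^{*(n+1)}_q$ (equivalently $U^{(n)}_q$, by \eqref{eq:derWq}) near $+\infty$ on the appropriate half-line.

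The core estimate, then, is to pin down the tail behaviour of the potential measure $U_q$ of a \LLP drifting to $-\infty$. This is exactly where \cite{DonMal04} enters: writing $R(x)=\int_{[0,x]}U(\D y)$ for the renewal-type function associated with the positive half-line (and, for $q>0$, the exponentially discounted analogue), one has from \cite{DonMal04} sharp two-sided bounds relating $U$ to the function $1/|A(x)|$; in particular the $(n+1)$st moment of the jump measure $\Pi$ against $(x/|A(x)|)^{n+1}$ controls whether $\int y^{n}\,U_q(\D y)$ (equivalently the mass of $U_q^{*(n+1)}$ on compacts of scale $\sim t^n$) is finite. I would therefore: (i) recall/derive from \cite{DonMal04} the estimate $U_q([0,x]) \asymp x/|A(x)|$ as $x\to\infty$ on the half-line where $\xi$ is transient, uniformly for $q$ in a neighbourhood of a fixed $q_0>0$, and similarly that the $q_0$-derivatives $U^{(n)}_{q_0}$ (which by \eqref{eq:derWq} equal $(-1)^{n}\IntOI t^n e^{-q_0 t}\Pbb{\xi_t\in\pm\D y}\D t$ up to sign and factorial) have total mass comparable to $\IntOI t^n e^{-q_0 t}\Pbb{\pm\xi_t>0}\D t$; (ii) translate, via the classical identity $\IntOI t^n\Pbb{\xi_t>0}\D t \asymp \int_1^\infty (x/|A(x)|)^{n+1}\Pi(\D x)$ (the one-large-jump / Donati-Martin heuristic, rigorised exactly as in the proof of Theorem \ref{thm:upperB} which borrows \eqref{condi:upperB} from \cite[(1.14)]{DonMal04}), the finiteness of the integral criterion in \eqref{eq:derW} into finiteness of $\partial_q^{n+1} W_{\kappa_\pm}$ for $q>0$; (iii) for the converse, note that all the measures and their convolutions are nonnegative, so the integral representation of $\partial_q^{n+1}\log W_{\kappa_\pm}$ as an integral against $U^{*(n+1)}_q$ is a genuine (positive) equality, and its finiteness forces finiteness of the mass of $U^{*(n+1)}_q$, hence of the $\Pi$-integral, running the chain of comparisons backwards.

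For the endpoint $q=0$ and right-continuity there, I would use monotonicity in $q$: each $U_q(\pm\D y)$ is decreasing in $q$ (by \eqref{def:pot}, since $e^{-qt}$ decreases), so $U_0$ dominates all $U_q$, and the finiteness of the criterion at level $n+1$ gives a $q$-uniform integrable majorant $(1-e^{-zy})(e^y-1)^{-1}U_0^{*(n+1)}(\pm\D y)$ on $[0,\infty)$; dominated convergence then yields that $\partial_q^{n+1}W_{\kappa_\pm}(q,z+1)$ extends continuously to $q=0$ from the right, and that the right derivative there is the expected integral against $U_0^{*(n+1)}$. Differentiation under the integral sign throughout is justified by the same domination, valid for $\Rez>-1$ because $(1-e^{-zy})/(e^y-1)$ stays bounded (using $\Rez>-1$ to control the behaviour at $y\to\infty$ and the cancellation $1-z(e^{-y}-1)$ near $y=0$ for the logarithmic form). \textbf{The main obstacle} I anticipate is step (ii): making the comparison $\IntOI t^n e^{-q_0 t}\Pbb{\pm\xi_t>0}\D t \asymp \int_1^\infty (x/|A(x)|)^{n+1}\Pi(\D x)$ genuinely \emph{two-sided} and uniform in $q_0$ near $0$. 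The upper bound is the Donati-Martin estimate applied more or less directly, but the matching lower bound — needed for the "only if" direction — requires the one-large-jump lower estimate for $\Pbb{\xi_t>0}$ at the right scale, exactly the probabilistic ingredient the authors flag (cf. the discussion around Lemma \ref{lemma:bound_E} and the open-problems section); threading it through the convolution $U^{*(n+1)}_q$ rather than a single $U_q$, and keeping track of the $e^{-q_0 t}$ weight so that the $q\downarrow 0$ limit survives, is the delicate part.
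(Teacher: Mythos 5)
Your high-level framework is right — work with $W_{\kappa_\pm}$, use the logarithmic representation of Lemma \ref{lem:repW}, differentiate to land on integrals of $v_z$ against $U^{*(n+1)}_q(\pm\D y)$, and deduce everything from Faà di Bruno plus Doney--Maller. But the core estimate is mis-characterised in a way that hides where the real work happens, and the proposal contains a concrete false step.

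First, the estimate in your step (i), $U_q([0,x]) \asymp x/|A(x)|$, is false: because $\xi$ drifts to $-\infty$, the renewal theorem (Theorem \ref{thm:Revuz}) gives $\lim_{x\to\infty}U(x-\D y)=0$ vaguely, so $U([0,x])$ is \emph{bounded} in $x$, not of order $x/|A(x)|$. The growth $x/|A(x)|$ appears in the paper only for the $n$-fold convolution on the negative half-line: $U^{*n}([-x,0)) \lesssim \Ebb{T_{-x}^n} \lesssim (x/|A(x)|)^n$, via \eqref{eq:Ub}--\eqref{eq:U*bound} and \cite[(1.25)]{DonMal04}. Second, you locate the difficulty at "the mass of $U^{*(n+1)}_q$ near $+\infty$", but the weight $v_z(y)$ decays like $e^{-\epsilon y}$ (Lemma \ref{lem:uz}), so the tail is precisely what is \emph{not} the problem. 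What can fail is the total weighted mass: $U^{*(n+1)}$ need not even be $\sigma$-finite at $q=0$, and the integral $\int_{[0,\infty)} e^{-\epsilon y}U^{*(n+1)}(\pm\D y)$ can blow up because the convolution $U^{*(n+1)} = U * U^{*n}$ accumulates mass from pairs $(x,y)$ with $x<0$, $y\geq 0$. This is exactly the term $I_2^{\pm}$ in \eqref{eq:repW5}, and controlling it is the heart of the proof: it requires the first-passage moment bound above, the ladder-height inequality $U([m+x,m+x+1))\lesssim\Pbb{\Hc_\tau\geq x}$ of \eqref{eq:Ubound}, and, when $\Ebb{\xi_1}=-\infty$, a substantial one-large-jump argument on the ladder height process ($\Hc=\Hc^s+\Hc^l$, Poisson tail bounds, and $\PiH(x)\lesssim\int_{(x,\infty)}g(y)\Pi(\D y)$ from \eqref{eq:PiH inequality}). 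None of that appears in the proposal, nor is it routine; and your reference to "exactly as in the proof of Theorem \ref{thm:upperB}" is circular, since Theorem \ref{thm:upperB} is proved \emph{from} Theorem \ref{thm:derW}. Finally, for the "only if" direction, positivity and "running the chain backwards" does not suffice as stated, because $v_z$ decays exponentially so finiteness of the weighted integral does not by itself force finiteness of the mass; the paper instead shows directly that when the integral criterion fails, $U^{*(n+1)}(a,b)=\infty$ on every interval, via $\Ebb{T_a^n\ind{T_a<\infty}}=\infty$ from \cite[Theorem 4]{DonMal04} and a strong Markov property argument at $T_a$, see \eqref{eq:sigmaInf}--\eqref{eq:sigmaInf1}. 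These are the steps to supply; the reduction and the dominated-convergence argument at $q=0$ you give are fine.
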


\begin{corollary} \label{cor:der_Mq}
Let $\xi$ be a \LLP with \LLK 
exponent $\Psi$ such that 
$\limi{t}\xi_t=-\infty$, and assume that,
\[
\MBG{q}{z} :=\frac{\Gamma(z)}{W_{\phi_{q,+}}(z)}W_{\phi_{q,-}}(1-z),
\]
and assume that, for some $n \geq 0$, 
\begin{equation*}
   \int_{(1,\infty)} \lbrb{\frac{x}{\abs{A(x)}}}^{n+1}\Pi(\D x)<\infty. 
\end{equation*}
Then, for any $0 \leq k \leq n+1$, $z
\in \Cb_{(0,1)}$, and $q \geq 0$:
\begin{enumerate}
    \item \label{it:cor_der_Mq_i}
    the derivatives $\partialder{k}{q}
{ \MBG{q}{z}}$ are finite and right-continuous at zero;
    \item \label{it:cor_der_Mq_ii}
    there exist polynomials $P_{\Re(z), k}$ of degree $k$ such that
    $$\labsrabs{\partialder{k}{q}
{ \MBG{q}{z}}} \leq P_{ \Re(z), k}
    \lbrb{\labsrabs{z}}\labsrabs{ \MBG{q}{z}};$$
    \item \label{it:cor_der_Mq_iii}
    if the potential measure of $\xi$ has a bounded density with respect
    to the Lebesgue measure, there exist polynomials
    $P_{\Re(z), k}$ of degree $k$ such that $$\labsrabs{\partialder{k}{q}
{ \MBG{q}{z}}} \leq P_{ \Re(z), k}
    \lbrb{\labsrabs{\ln |z|}}\labsrabs{ \MBG{q}{z}}.$$ 
\end{enumerate}
Moreover, if in addition $\Ebb{\xi_1}\in\lbrb{-\infty, 0}$, $n\geq 1$,  and for any $0< \beta<n$,
\begin{equation}\label{eq:R}
    \int_{(1,\infty)} x^{\beta+1}\Pi(\D x)<\infty=\int_{(1,\infty)} x^{n+1}\Pi(\D x),
\end{equation}
then 
\begin{enumerate}
\myitem{($1'$)}\label{it:cor_der_Mq_i'}
items ($\ref{it:cor_der_Mq_i}-(\ref{it:cor_der_Mq_iii})$ hold for any $0 \leq k\leq n$ and $q\geq 0$.
For $k>n$, 
the derivatives $\partialder{k}{q}
{ \MBG{q}{z}}$ exist for $q>0$.
\end{enumerate}
Further, for $k=n+1$, $q>0$, and $z
\in \Cb_{(0,1)}$, items (\ref{it:cor_der_Mq_ii})-(\ref{it:cor_der_Mq_iii}) are modified to
\begin{enumerate}
\myitem{($2'$)}\label{it:cor_der_Mq_ii'}
    for any $\delta>0$, there exist  polynomials $P_{\Re(z), n, \delta}$ of
    degree $n+1$ such that,  for all $q>0$,
    $$\labsrabs{\partialder{n+1}{q}
{ \MBG{q}{z}}} \leq \lbrb{U^{*n}_q(\Oiclosed) + q^{-\delta}}P_{ \Re(z), n, \delta}\lbrb{\labsrabs{z}}\labsrabs{ \MBG{q}{z}};$$
    \myitem{($3'$)} \label{it:cor_der_Mq_iii'}
    if the potential measure of $\xi$ has a bounded density with respect
    to the Lebesgue measure,  for any $\delta>0$, there exist  polynomials $P_{\Re(z), n, \delta}$ of degree $n+1$ such that, for all $q>0$,
    $$\labsrabs{\partialder{n+1}{q}
{ \MBG{q}{z}}} \leq \lbrb{U^{*n}_q(\Oiclosed) + q^{-\delta}}P_{ \Re(z), n, \delta}\lbrb{\labsrabs{\ln|z|}}\labsrabs{ \MBG{q}{z}}.$$
    \myitem{($4'$)} \label{it:cor_der_Mq_iv'} it holds true that $\limo{q}U^{*n}_q(\Oiclosed)=\infty$.
\end{enumerate}

\end{corollary}
The last result is proved in Section
\hyperref[proof 5.10]{\ref*{sec:proofs BG}}.
\begin{remark}\label{rem:derW}
We note that according to \cite[Theorem 1]{DonMal04},
\begin{equation}\label{eq:PandInt}
    \int_1^{\infty} t^{n-1}\Pbb{\xi_t\geq 0} \D t<\infty\iff \int_{(1, \infty)} \lbrb{\frac{x}{\abs{A(x)}}}^{n+1}\Pi(\D x)<\infty.
\end{equation}
Furthermore, from \eqref{eq:derWq}, \eqref{eq:U^*}, and \eqref{eq:ln_kappa}, we see that
\begin{equation}\label{eq:PandInt1}
\begin{split}
    &\int_{(1, \infty)} \lbrb{\frac{x}{\abs{A(x)}}}^{n+1}\Pi(\D x)<\infty\\
    &\qquad\iff U^{*n}\lbrb{\Rb_+}<\infty
    \iff \abs{U^{(n-1)}\lbrb{\Rb_+}}
    \iff \abs{\phi^{(n)}_+(0)}<\infty.
\end{split}
\end{equation}
In the case where $-\infty<\Ebb{\xi_1}<0$, as noted, we have that $\limi{x}A(x)=\Ebb{\xi_1}<0$, and the integral criterion from $\eqref{eq:derW}$ simplifies further to
\begin{equation}\label{eq:PandInt2}
\begin{split}
    \int_1^{\infty}t^{n-1}\Pbb{\xi_t\geq 0} \D t<\infty&\iff \int_{(1, \infty)}  x^{n+1}\Pi(\D x)<\infty\iff \abs{\phi^{(n)}_+(0)}<\infty\\
    &\iff \Ebb{\xi^{n+1}_1\ind{\xi_1>0}}<\infty\iff U^{*n}\lbrb{\Rb_+}<\infty,
\end{split}  
\end{equation}
where the equivalence before the last one is due to \cite[p.159]{Sato1999}. 
\end{remark}

The last result is of independent interest and concerns the densities of the potential measures $U_q$ under the condition of Corollary \ref{cor:der_Mq} and in the setting of item $(\ref{it:cor_der_Mq_iii})$.
We note that conditions for 
the existence of such densities can be found in \cite[Chapter I.3]{Bertoin96}, \cite[Chapter 12.5]{Kallenberg-17}, and \cite[Section 2]{Knopova-Schilling-13}.

\begin{theorem}\label{thm:convo}
    Let $\xi$ be a \LLP with \LLK 
    exponent $\Psi$ such that 
$\limi{t}\xi_t=-\infty$. Also, assume that $U$ has a bounded density $u$ on $\Rb$ with $C:=\sup_{x\in\Rb}u(x)$. Then, for any $k \geq 1$ and $q>0$, the potential densities of $U_q^{*k}$ exist and are locally bounded, since they satisfy, for any $x\in\Rb$ and $q>0$,
\begin{equation}\label{eq:convo1}
  u_q^{*k}(x) \leq kCU^{*(k-1)}_q\lbrb{\min\{0, x\}, \infty}
\end{equation}
with the convention $U^{*0}\equiv1$.
Next, if for some $n \geq 1$,
\begin{equation*}
   \int_{(1,\infty)} \lbrb{\frac{x}{\abs{A(x)}}}^{n+1}\Pi(\D x)<\infty, 
\end{equation*}
then, for any $1 \leq k\leq n+1$, we have that the potential densities of $U^{*k}$ exist and are locally bounded since they satisfy \eqref{eq:convo1} for any $x\in\Rb$, and also $U^{*l}(0,\infty)<\infty$ for any $1\leq l\leq n$. 
\end{theorem}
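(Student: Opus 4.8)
The plan is to establish the convolution density bound \eqref{eq:convo1} by induction on $k$, using the representation of $U_q^{*k}$ as the renewal-type measure of a sum of independent copies of the potential and exploiting boundedness of the single potential density. First I would note that $U_q$, for $q>0$, is a finite measure on $\R$ (indeed $U_q(\R)=1/q$), so all the formal convolutions $U_q^{*k}$ are genuine finite measures. For $k=1$ there is nothing to prove: the hypothesis gives $u_q=u$ bounded by $C$, and the right-hand side of \eqref{eq:convo1} with $k=1$ is $C\,U^{*0}_q(\min\{0,x\},\infty)=C$ by the stated convention. For the inductive step, I would write $U_q^{*k}=U_q*U_q^{*(k-1)}$ and split the convolution according to which factor carries the bounded density: symmetrising, $u_q^{*k}(x)=\int_{\R} u_q(x-y)\,U_q^{*(k-1)}(\D y)$. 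Bounding $u_q(x-y)\le C$ pointwise is too lossy; instead I would keep $u_q(x-y)\le C$ only on the range of $y$ that actually contributes and use the support structure. The cleaner route is to write $u_q^{*k}(x)=\int u_q^{*(k-1)}(x-y)\,U_q(\D y)$, apply the inductive bound $u_q^{*(k-1)}(x-y)\le (k-1)C\,U_q^{*(k-2)}(\min\{0,x-y\},\infty)$, and then integrate in $y$ against $U_q(\D y)$, observing that $U_q^{*(k-2)}(\min\{0,x-y\},\infty)$ is nonincreasing in $y$ on $(-\infty,0]$ and constant (equal to $U_q^{*(k-2)}(\R_+)$-type quantity) for $y$ beyond, so the integral telescopes into $(k-1)C\,U_q^{*(k-1)}(\min\{0,x\},\infty)$ plus one extra $C\,U_q^{*(k-1)}(\min\{0,x\},\infty)$ term coming from splitting off the contribution where the bounded factor is evaluated near its origin; together these give the factor $k$. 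I would organise this bookkeeping via the identity $\int_{(-\infty,0]} g(x-y)\,U_q(\D y)+g(x)\,U_q(0,\infty)$ type manipulation to land exactly on $kC\,U_q^{*(k-1)}(\min\{0,x\},\infty)$.

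The main obstacle is making the telescoping precise while tracking the $\min\{0,\cdot\}$ truncation correctly, i.e. verifying that the combinatorial constant is exactly $k$ and not something larger, and handling the two regimes $x\le 0$ and $x>0$ uniformly. I expect this to be where care is needed: one must argue that for $x>0$ the bound is simply $kC\,U_q^{*(k-1)}(\R_+)$ and that local boundedness (not global) is all that is claimed, which is automatic since $U_q^{*(k-1)}$ is a finite measure. Local boundedness of $u_q^{*k}$ then follows from \eqref{eq:convo1} because the right-hand side is a nonincreasing function of $x$, hence bounded on any set bounded below.

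For the second part, I would feed in Theorem \ref{thm:derW} together with the equivalences recorded in Remark \ref{rem:derW}: the integral criterion $\int_{(1,\infty)}(x/|A(x)|)^{n+1}\Pi(\D x)<\infty$ is equivalent to $U^{*n}(\R_+)<\infty$ (via \eqref{eq:PandInt1}), which in turn forces $U^{*l}(0,\infty)<\infty$ for all $1\le l\le n$ since $U^{*l}(0,\infty)\le U^{*n}(\R_+)$ after noting $U$ is a positive measure and the lower-order convolutions restricted to $(0,\infty)$ are dominated by the $n$-fold one on $\R_+$ (using that $U$ puts no mass issues at $0$ after the transient normalisation). Then I would pass from $q>0$ to $q=0$ by monotone convergence: $U_q\uparrow U$ vaguely and $U_q^{*k}\uparrow U^{*k}$ as $q\downarrow 0$, and since the single potential density $u$ is bounded by $C$ uniformly in $q$ (because $u_q\le u$ pointwise, as $U_q(\D x)\le U(\D x)$), the bound \eqref{eq:convo1} is stable under this limit: the left side converges to $u^{*k}(x)$ (or bounds the limiting density, which exists because the limit of the measures has a density dominated by the monotone limit of the right-hand sides) and the right side converges to $kC\,U^{*(k-1)}(\min\{0,x\},\infty)$, which is finite for $k\le n+1$ by the first sentence of this paragraph. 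Existence of the density of $U^{*k}$ for $1\le k\le n+1$ follows since it is the monotone limit of the absolutely continuous measures $U_q^{*k}$ with uniformly (locally) bounded densities, so the limit is absolutely continuous with locally bounded density satisfying \eqref{eq:convo1}. This completes the argument.
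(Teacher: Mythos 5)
Your overall strategy — induction on $k$, passing to $q=0$ by monotone convergence, and invoking the integral criterion via \eqref{eq:PandInt1} for the finiteness of $U^{*l}(0,\infty)$ — is the same as the paper's, and the base case and the $q\to 0$ limiting argument are stated correctly. However, the inductive step as you sketch it does not yield the claimed bound. If you apply the inductive bound $u_q^{*(k-1)}(x-y)\le (k-1)C\,U_q^{*(k-2)}\lbrb{\min\{0,x-y\},\infty}$ uniformly in $y$ and then integrate against $U_q(\D y)$, the region $y\le x$ (where $\min\{0,x-y\}=0$) contributes $(k-1)C\,U_q^{*(k-2)}(0,\infty)\,U_q(-\infty,x]$, and there is no reason for this quantity to be dominated by $C\,U_q^{*(k-1)}\lbrb{\min\{0,x\},\infty}$; so the claimed ``telescope into $(k-1)C\,U_q^{*(k-1)}(\ldots)$ plus one extra $C\,U_q^{*(k-1)}(\ldots)$'' does not materialise. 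Moreover, the monotonicity claim is backwards: for fixed $x$, the map $y\mapsto U_q^{*(k-2)}\lbrb{\min\{0,x-y\},\infty}$ is \emph{constant} on $(-\infty,x]$ and \emph{nondecreasing} on $(x,\infty)$, not nonincreasing on $(-\infty,0]$ and constant beyond.

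The decomposition that works is a hybrid one, and this is what the paper actually does: write $u_q^{*(l+1)}(x)=\int_{\Rb}u_q(y)\,u_q^{*l}(x-y)\,\D y$ and split at $y=x$. On $y\le x$ (so $x-y\ge 0$) you bound the \emph{single} density $u_q(y)\le C$ and keep the convolution density, giving $C\int_{-\infty}^x u_q^{*l}(x-y)\,\D y = C\,U_q^{*l}(0,\infty)$. On $y>x$ (so $x-y<0$) you apply the inductive bound to $u_q^{*l}(x-y)$ and write $u_q(y)\,\D y = U_q(\D y)$, giving $\le lC\int_{(x,\infty)}U_q^{*(l-1)}(x-y,\infty)\,U_q(\D y)\le lC\,U_q^{*l}(x,\infty)$. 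The sum $C\,U_q^{*l}(0,\infty)+lC\,U_q^{*l}(x,\infty)$ is then $\le (l+1)C\,U_q^{*l}\lbrb{\min\{0,x\},\infty}$ by checking $x\ge 0$ and $x<0$ separately. The point you are missing is that the inductive bound should \emph{not} be applied on the half-line $y\le x$, precisely because there the inductive bound only gives a constant in $y$ and the integral over an unbounded region would blow up unless you retain $u_q(y)$ as a genuine density; the only role of $C$ on that region is to bound $u_q(y)$ itself.
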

The theorem is proved in Section
\hyperref[proof 4.6]{\ref*{sec:proofs BG}}.
\begin{remark}\label{rmk:convo}To be precise, we note that in the last theorem we work with the canonical $q$-excessive versions of the respective densities, see \cite[Proposition I.12]{Bertoin96}. This allows us to state universal and not almost everywhere results.
The proof of \eqref{eq:convo1} can be easily modified to
improve the bound to
\[
  u_q^{*k}(x)\leq C\sum_{j=0}^{k-1}U^{*j}_q\lbrb{x,\infty}U^{*(k-1-j)}_q(0,\infty),
\]
but we use the former as it is more compact. Further, 
we note that Theorems \ref{thm:derW}
and
\ref{thm:convo}
    can be extended to include the case
    $\limi{t}\xi_t = \infty$:
    by considering the process
    $\widehat{\xi} = -\xi$, which 
    drifts to $-\infty$, and has $\widehat{\kappa}_\pm=
    \kappa_\mp$, $\widehat{A} = -A$,
    we see that the corresponding
    integral criterion would be
    \[
     \int_{(1, \infty)} \lbrb{\frac{x}
{\abs{A(x)}}}^{n+1}\Pi(-\D x)<\infty, 
    \]
    and in \eqref{eq:convo1},
    $U_q^{*j}\lbrb{\min\{0, x\}, \infty}$
    is changed to 
    $U_q^{*j}\lbrb{-\infty, \max\{0, x\}}$. Similarly, Corollary \ref{cor:der_Mq} can be adjusted accordingly for the case where $\xi$ drifts to infinity.
\end{remark}

\section{Proofs on Bernstein-gamma functions} \label{sec:proofs BG}
We proceed with the proofs for Section \ref{sec:W} by stating a preliminary proposition, which
links the derivatives of the $q$-potentials, see \eqref{def:pot}, 
with their convolutions. We have not been able to detect this result in the literature, but it is natural in view of the resolvent
equation and even clearer in the case of \LL processes.
\begin{proposition}\label{prop:conv}
Let $U$ be the potential measure of a \LL process. Then, for every $q\geq 0$ and $n\geq1$,
\begin{equation}\label{eq:conv}
    U_q^{*n} (\D x) =\frac{1}{(n-1)!}\minusone^{n-1}U^{(n-1)}_q(\D x)= \frac{1}{(n-1)!}\IntOI e^{-qt}t^{n-1}
\P(\xi_t \in \D x) \D t,
\end{equation}
with the measures necessarily finite for $q>0$ and possibly infinite for some or all $n$ when $q=0$.
\end{proposition}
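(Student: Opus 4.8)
The plan is to establish \eqref{eq:conv} by first proving the second equality --- the identification of $U_q^{*n}$ with the $t^{n-1}$-weighted occupation integral --- and then deducing the first equality by differentiating the $q$-potential $n-1$ times. The natural route to the second equality is a direct computation using the Markov property (or, equivalently, the convolution structure of the occupation densities of a \LLP). Concretely, for a \LLP $\xi$ we have $\P(\xi_{t+s}\in \cdot) = \P(\xi_t\in\cdot)*\P(\xi_s\in\cdot)$ by stationarity and independence of increments, and hence, formally,
\begin{equation*}
    U_q^{*2}(\D x) = \IntOI\IntOI e^{-q(t+s)}\P(\xi_t\in\cdot)*\P(\xi_s\in\cdot)(\D x)\,\D t\,\D s = \IntOI e^{-qr}\lbrb{\int_0^r \D t}\P(\xi_r\in\D x)\,\D r = \IntOI e^{-qr} r\,\P(\xi_r\in\D x)\,\D r,
\end{equation*}
after the change of variables $r=t+s$. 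Iterating this, the $n$-fold convolution produces the simplex volume $\int_{\{t_1+\dots+t_n=r,\, t_i\ge 0\}} \D t_1\cdots\D t_{n-1} = r^{n-1}/(n-1)!$, which is exactly the claimed weight. I would make this rigorous by induction on $n$: assuming $U_q^{*(n-1)}(\D x) = \frac{1}{(n-2)!}\IntOI e^{-qt}t^{n-2}\P(\xi_t\in\D x)\,\D t$, convolve with $U_q$ and apply Fubini (justified because all integrands are nonnegative, so Tonelli applies with no integrability hypothesis) together with the semigroup identity for the one-dimensional marginals.

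For the first equality, the idea is simply that $\partial_q \IntOI e^{-qt}\P(\xi_t\in\D x)\,\D t = -\IntOI e^{-qt}t\,\P(\xi_t\in\D x)\,\D t$, and more generally $U_q^{(n-1)}(\D x) = \minusone^{n-1}\IntOI e^{-qt}t^{n-1}\P(\xi_t\in\D x)\,\D t$, where the derivative is taken in the sense of weak limits of difference quotients of measures, as clarified in Remark \ref{rem:repW}. For $q>0$ this differentiation under the integral sign is legitimate: the extra factor $t^{n-1}$ is absorbed by $e^{-qt}$ uniformly on compact $q$-neighbourhoods bounded away from zero, so testing against any bounded continuous function and differentiating is routine via dominated convergence. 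Comparing with the integral representation of $U_q^{*n}$ just obtained yields $U_q^{*n}(\D x) = \frac{\minusone^{n-1}}{(n-1)!}U_q^{(n-1)}(\D x)$. The finiteness remark at the end of the statement is immediate from the integral formula: $U_q^{*n}(\Rb) = \frac{1}{(n-1)!}\IntOI e^{-qt}t^{n-1}\,\D t = q^{-n}<\infty$ for $q>0$, whereas at $q=0$ the integral $\IntOI t^{n-1}\P(\xi_t\in\Rb)\,\D t = \IntOI t^{n-1}\,\D t$ diverges, though the convolution measure itself can still be locally finite (Radon) --- consistency with transience being handled by the condition $\limi{t}\xi_t=-\infty$ in the surrounding results rather than needed here.

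The main obstacle I anticipate is not the computation but the bookkeeping around the $q=0$ case and the precise sense in which the measure derivatives are taken. For $q=0$ the measures $U_0^{*n}$ need not be finite, so "differentiating under the integral" must be phrased as a statement about local (Radon) convergence of difference quotients tested against compactly supported continuous functions, and one must check that the dominated-convergence argument still goes through locally --- i.e. that for $x$ in a compact set the relevant tail contributions are controlled. One clean way around this is to prove \eqref{eq:conv} first for all $q>0$, where everything is finite and the manipulations are unambiguous, and then, if needed, pass to $q=0$ by monotone convergence on the integral side (the integrands $e^{-qt}t^{n-1}\P(\xi_t\in\D x)$ increase to $t^{n-1}\P(\xi_t\in\D x)$ as $q\downarrow 0$) while separately noting that the left-hand side $U_q^{*n}$ converges vaguely to $U_0^{*n}$. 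A secondary subtlety is making the semigroup convolution identity $\P(\xi_{t+s}\in\cdot)=\P(\xi_t\in\cdot)*\P(\xi_s\in\cdot)$ airtight when $\xi$ is possibly killed or a CPP (so that $\P(\xi_t\in\cdot)$ may have an atom at $0$ or mass defect), but this causes no real trouble: the identity holds for the sub-probability marginals of any \LLP, and Tonelli does the rest.
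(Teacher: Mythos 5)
Your proposal is correct and follows essentially the same route as the paper: an induction on $n$ for $q>0$ using the convolution identity $\P(\xi_{t_1}\in\cdot)*\P(\xi_{t_2}\in\cdot)=\P(\xi_{t_1+t_2}\in\cdot)$, Tonelli, and the change of variables $t=t_1+t_2$, with the case $q=0$ obtained by letting $q\downarrow 0$ and invoking monotone convergence. The only difference is cosmetic: you spell out the identification $U_q^{(n-1)}(\D x)=\minusone^{n-1}\IntOI e^{-qt}t^{n-1}\P(\xi_t\in\D x)\D t$ via dominated convergence on difference quotients, which the paper treats as immediate from the definition of $U_q$ (cf.\ the argument around \eqref{eq: deriv Wc}), and this does not change the substance of the proof.
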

We prove Proposition \ref{prop:conv}, as well as the next technical lemma, in Section \ref{sec: auxiliary}. 
\begin{lemma}\label{lem:uz}
For $z \in \Cb_{(-1, \infty)}$, the functions 
\begin{equation}\label{eq:uz}
  u_z(y):=  
  \frac{e^{-zy}-1-z(e^{-y}-1)}{e^y-1}
  \quad\text{and}
  \quad
  v_z(y):=  
  ze^{-y} -
  u_z(y)
  =
  \frac{1- e^{-zy}}{e^y-1}
\end{equation}
are bounded and continuous on $\lbbrb{0,\infty}$. Furthermore, there exist $C_{\Re(z)},$ $ C_{1, \Re(z)},$ $ C_{2, \Re(z)},$ $\epsilon_{1, \Re(z)}$, $\epsilon_{2, \Re(z)}  > 0$ such that, for all $y 
\geq 0$ and $x \geq C_{\Re(z)} $,
    \begin{equation}\label{eq:uz_bound}
        |v_z(y)| \leq C_{1, \Re(z)}|z|e^{-\epsilon_{1, \Re(z)}y}
        \quad
        \text{and}
        \quad
        |v_z(x)| \leq C_{2, \Re(z)}e^{-\epsilon_{2, \Re(z)}x}.
    \end{equation}
\end{lemma}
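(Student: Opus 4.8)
The plan is to prove Lemma~\ref{lem:uz} by a direct elementary analysis of the two explicit functions $u_z$ and $v_z$, distinguishing the behaviour near $y = 0$ from the behaviour as $y \to \infty$. First I would establish boundedness and continuity on $[0,\infty)$. Continuity on $(0,\infty)$ is immediate since $e^y - 1 \neq 0$ there and numerator and denominator are entire; the only delicate point is $y = 0$, where both numerator and denominator of $v_z(y) = (1 - e^{-zy})/(e^y-1)$ vanish. Expanding $1 - e^{-zy} = zy + \bo{y^2}$ and $e^y - 1 = y + \bo{y^2}$ as $y \to 0$ gives $\lim_{y \to 0} v_z(y) = z$, so $v_z$ extends continuously to $[0,\infty)$ with $v_z(0) = z$. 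Since $u_z(y) = z e^{-y} - v_z(y)$, it follows that $u_z$ also extends continuously with $u_z(0) = z - z = 0$ (consistent with the numerator $e^{-zy} - 1 - z(e^{-y}-1) = \bo{y^2}$ vanishing to second order). For boundedness, one only needs to control the tails: as $y \to \infty$, $|e^{-zy}| = e^{-\Rez\, y} \to 0$ when $\Rez > 0$, stays bounded when $\Rez = 0$, and grows like $e^{|\Rez| y}$ when $-1 < \Rez < 0$, while the denominator $e^y - 1 \sima e^y$; in every case $\Rez > -1$ guarantees $|v_z(y)| = \bo{e^{(\max\{0,-\Rez\}-1)y}} \to 0$, hence $v_z$ (and therefore $u_z$) is bounded on $[0,\infty)$ by compactness of $[0,M]$ plus decay beyond.

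The second part, the quantitative bound \eqref{eq:uz_bound}, is really a sharpening of this tail estimate together with a bound near zero. For the decay factor, set $\epsilon_{1,\Rez} := \tfrac12\min\{1, 1+\Rez\} > 0$ (this is where $\Rez > -1$ is used); then for all $y$ bounded away from $0$, $|1 - e^{-zy}| \le 1 + e^{-\Rez y} \le 2 e^{\max\{0,-\Rez\} y}$ and $e^y - 1 \ge c\, e^y$ for $y \ge 1$, say, giving $|v_z(y)| \le C e^{-\epsilon_{1,\Rez} y}$ on $[1,\infty)$. For small $y$, I would use the elementary inequality $|1 - e^{-zy}| \le |z| y\, e^{|z| y \vee 0}$... more cleanly, $|1 - e^{-zy}| \le |z|\, y\, \sup_{0 \le s \le y} e^{-\Rez s} \le |z| y\, e^{|\Rez| y}$, combined with $e^y - 1 \ge y$, to get $|v_z(y)| \le |z|\, e^{|\Rez| y} \le |z|\, e^{|\Rez|}$ on $[0,1]$, which is of the form $C_{1,\Rez}|z| e^{-\epsilon_{1,\Rez} y}$ after absorbing the bounded factor $e^{\epsilon_{1,\Rez} y}$ on the compact $[0,1]$. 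Patching the two regimes yields the first inequality in \eqref{eq:uz_bound} with a constant $C_{1,\Rez}$ that is linear in $|z|$ but depends only on $\Rez$. The second inequality is then the special case for $x$ large: once $x \ge C_{\Rez}$ (chosen so that $e^x - 1 \ge \tfrac12 e^x$ and $e^{-\Rez x}$ dominates the constant $1$), one reads off $|v_z(x)| \le C_{2,\Rez} e^{-\epsilon_{2,\Rez} x}$ with $\epsilon_{2,\Rez} = \epsilon_{1,\Rez}$ and a constant no longer carrying the linear $|z|$ dependence, since for $x \ge C_{\Rez}$ depending on $z$ one can absorb $|z|$ — though to keep $C_{\Rez}$ independent of $z$ one should rather note $|v_z(x)| \le 2 e^{-(1+\Rez)x} + 2 e^{-x}$ directly and bound crudely.

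I anticipate the main obstacle to be purely bookkeeping: making sure every constant and exponent depends only on $\Rez = \Re(z)$ and not on $\Im(z)$, and that the linear-in-$|z|$ factor appears in exactly the first bound and not the second. The analytic content is trivial — it is two Taylor expansions at $0$ and two geometric-decay estimates at $\infty$ — so no genuinely hard step arises; the only place a hypothesis is essential is $\Rez > -1$, which is precisely what keeps $\max\{0,-\Rez\} - 1 < 0$ so that $v_z$ decays rather than blows up. I would present the argument in the order: (1) continuity and the removable singularity at $0$ with limit values $v_z(0)=z$, $u_z(0)=0$; (2) boundedness via compactness plus tail decay; (3) the small-$y$ estimate; (4) the large-$y$ estimate; (5) assembling \eqref{eq:uz_bound}.
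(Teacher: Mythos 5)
Your plan---continuity via Taylor at $0$, decay at $\infty$, and a split at $y=1$ for the quantitative bound---runs into one genuine problem in the patching step for the first inequality of \eqref{eq:uz_bound}. On $[1,\infty)$ you bound $|1-e^{-zy}|\le 1+e^{-\Re z\,y}$, which \emph{discards the $|z|$ factor}: you obtain $|v_z(y)|\le C'e^{-\epsilon y}$ with $C'$ depending only on $\Re z$, while the target is $C_{1,\Re z}|z|e^{-\epsilon y}$ with $C_{1,\Re z}$ also depending only on $\Re z$. These are compatible only if $C'\le C_{1,\Re z}|z|$, and this cannot hold uniformly in $z$ for a fixed $\Re z$ unless $|\Re z|>0$ keeps $|z|$ away from $0$. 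At $\Re z=0$, where $|z|=|\Im z|$ can be arbitrarily small, $v_{i\tau}(y)=(1-e^{-i\tau y})/(e^y-1)$ is genuinely of order $|\tau|$ for each fixed $y\ge 1$, so the $|z|$ factor in the claimed bound is not decorative; your $C'e^{-\epsilon y}$ does not vanish with $|z|$, and no $C_{1,0}$ depending only on $\Re z=0$ makes the patch consistent. The lemma is stated for all $z\in\Cb_{(-1,\infty)}$, so $\Re z=0$ is not excluded. This is more than "bookkeeping": as written, the large-$y$ half of your argument proves the wrong bound.

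The fix is already in your hands, just not deployed globally. The identity $1-e^{-zy}=\int_0^{zy}e^{-x}\,\D x$ gives $|1-e^{-zy}|\le|z|\,y\,\max\{1,e^{-\Re z\,y}\}$ for \emph{every} $y\ge 0$, not only small $y$; this is the paper's starting point (the "estimation lemma"), and it carries $|z|$ through. The paper then never needs a separate large-$y$ estimate: it bounds the $z$-free quotient $y\max\{1,e^{-\Re z\,y\}}/(e^y-1)$ directly, using $y/(e^y-1)\le e^{-y/2}$ when $\Re z\ge 0$, and when $\Re z\in(-1,0)$ splitting at a cut point $C'_{\Re z}$ beyond which $e^y-1\ge y\,e^{(-\Re z+(1+\Re z)/2)y}$ and taking the supremum of the continuous ratio on the remaining compact interval. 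In your small-$y$ step the decay disappeared only because you used the cruder $e^y-1\ge y$; keeping $e^y-1$ intact and bounding the whole quotient recovers both the $|z|$ linearity and the exponential decay over all of $[0,\infty)$ at once. Your continuity and boundedness discussion and the derivation of the second inequality of \eqref{eq:uz_bound} are fine.
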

In the next theorem we also outline some basic facts about
potential measures, which can be found in
\cite{Revuz84}. Its first part is
\cite[p. 101, Corrolary 3.3.6]{Revuz84},
and the second one is a
combination of \cite[p. 169, Theorem 5.3.1]{Revuz84},
\cite[p. 171, Theorem 5.3.4]{Revuz84}, and
 \cite[p. 173, Theorem 5.3.8]{Revuz84}.
\begin{theorem}\label{thm:Revuz}
Let $\xi$ be a \LLP which drifts to $-\infty$.
\begin{enumerate}
    \item 
    The measure $U$ is a Radon measures, i.e., for every compact $K \subset \R$,
    $U(K) < \infty$.
\item (renewal theorem)
If $\xi$ is non-lattice, then  it holds that 
vaguely\\
${\limi{x}U(x-\D y)=0 \D y}$, and $\lim_{x \to -\infty}U(x-\D y)=- \D 
y/\Ebb{\xi_1}$ with the convention $1/\infty = 0$.
In the case of a lattice process, $\D y$ should be replaced by the
counting measure on the respective lattice.
\end{enumerate}
\end{theorem}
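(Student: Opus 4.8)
The theorem collects classical facts, so the plan is to read off each item from the cited results of \cite{Revuz84}, applied to $\xi$ (or, in Revuz's discrete framework, to a skeleton of it), after recording that $\limi{t}\xi_t=-\infty$ forces transience. \emph{For part (i):} the hypothesis means that $\xi$ leaves every compact set after a finite (random) time, so $\IntOI\ind{|\xi_s|\leq M}\,\D s<\infty$ almost surely for each $M>0$; by the recurrence/transience dichotomy for \LLPs this is equivalent to transience and hence, as already recorded above, to ``$U$ being a well-defined Radon measure'', i.e.\ $U(K)<\infty$ for every compact $K\subset\Rb$. In Revuz's language this is the statement that the potential kernel of a transient chain is proper, \cite[p.~101, Corollary~3.3.6]{Revuz84}, applied to the unit-step skeleton $(\xi_n)_{n\geq0}$.

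\emph{For part (ii):} this is the renewal (Blackwell) theorem for $\xi$. Testing the image measure $U(x-\D y)$ against a compactly supported continuous $\phi$ and writing $\int\phi(y)\,U(x-\D y)=\int\phi(x-z)\,U(\D z)$, one sees that the limit $x\to+\infty$ reads off the behaviour of $U$ near $+\infty$ while the limit $x\to-\infty$ reads it off near $-\infty$. Since $\xi$ drifts to $-\infty$ it visits high levels only transiently, whence the first vague limit is $0\,\D y$; near $-\infty$ the potential stabilises at the renewal density $1/\abs{\Ebb{\xi_1}}$, equal to $0$ precisely when $\Ebb{\xi_1}=-\infty$ (the convention $1/\infty=0$), so that $\lim_{x\to-\infty}U(x-\D y)=-\D y/\Ebb{\xi_1}$. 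Quantitatively this follows by combining \cite[p.~169, Theorem~5.3.1]{Revuz84} (existence of the vague limits) with \cite[p.~171, Theorem~5.3.4]{Revuz84} and \cite[p.~173, Theorem~5.3.8]{Revuz84} (their identification, including the Choquet--Deny step separating the non-arithmetic from the arithmetic case), applied to $\xi$, which is transient by (i); in the lattice case Lebesgue measure is replaced throughout by the counting measure on the supporting lattice. Alternatively one may discretise directly: the skeleton $(\xi_n)_{n\geq0}$ is a random walk with mean $\Ebb{\xi_1}\in[-\infty,0)$ to which the classical renewal theorem applies, and one transfers to $U(\D x)=\sum_{n\geq0}\int_0^1\Pbb{\xi_{n+s}\in\D x}\,\D s$ by convolving with the probability measure $\int_0^1\Pbb{\xi_s\in\D y}\,\D s$.

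No new mathematics is involved, so there is no genuine obstacle; the only point requiring care is the bookkeeping — verifying that an arbitrary transient \LLP (or its skeleton) meets the hypotheses under which \cite{Revuz84} states the renewal theorem (essentially non-arithmeticity of the increments in the non-lattice case and existence, finite or $-\infty$, of the mean), handling the lattice and non-lattice cases uniformly, and matching Revuz's sign and normalisation conventions to those in the statement, in particular $1/\infty=0$ when $\Ebb{\xi_1}=-\infty$.
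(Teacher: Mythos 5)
Your proposal matches the paper's treatment exactly: the paper does not prove this theorem but simply cites the same results from Revuz's book, \cite[p.~101, Corollary~3.3.6]{Revuz84} for part (i) and the combination \cite[Theorem~5.3.1, Theorem~5.3.4, Theorem~5.3.8]{Revuz84} for part (ii), after observing that $\limi{t}\xi_t=-\infty$ forces transience. Your added remark on discretisation — relating $U(\D x)=\sum_{n\geq 0}\int_0^1\Pbb{\xi_{n+s}\in\D x}\,\D s$ to the renewal measure of the unit-step skeleton convolved with $\int_0^1\Pbb{\xi_s\in\cdot}\,\D s$ — is a correct and useful clarification of how Revuz's discrete-time framework transfers to the continuous-time \LLP, a point the paper leaves implicit.
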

We start with the proof of Lemma \ref{lem:repW}.
\begin{proof}[Proof of Lemma \ref{lem:repW}]\label{proof lem:repW}
Recall that $\phi_{q,\pm}(z):=\phi_\pm(q,z)$. From \cite[Theorem 4.7 (4.15)]{PatieSavov2018}, we know that, for
$z \in \Cb_{(-1,\infty)}$,
\begin{equation}\label{eq:logWW}
     \log_0\lbrb{W_{\phi_{q,\pm}}(1+z)}
    =
    z\ln\phi_{q,\pm}(1)+\IntOIc
    \lbrb{e^{-zy}-1-z(e^{-y}-1)}\frac{k_\pm(q,\D y)}{y(e^y-1)},
\end{equation}
where the measures $k_\pm(q,\D y)$ are defined in \cite[Theorem 4.7 (4.15)]{PatieSavov2018} for any $q 
> 0$, and, from \cite[(5.37)]{PatieSavov2018}, 
have Laplace transforms, for $\lambda > 0$,
\begin{equation*}
    \begin{split}
        \IntOIc e^{-\lambda y}k_\pm(q,\D y)=\frac{\partialder{}{\lambda}{\phi_{\pm}(q, \lambda)}}
        {\phi_{\pm}(q, \lambda)}.
    \end{split}
\end{equation*}
On the other hand,
for $q> 0$ and $\lambda >  0$, by construction, $\phi_{+}(q, \lambda) = \kappa_{+}\lbrb{q, \lambda}/c_+$; $\phi_{-}(q,
\lambda)$ $=$ $ h(q) \kappa_{-}\lbrb{q, \lambda}/c_-$, and differentiating with respect to (w.r.t.) $z $ the logarithm of \eqref{eq:kap}, we get
\begin{align*}
\frac{\partialder{}{\lambda}{\phi_{\pm}(q, \lambda)}}
        {\phi_{\pm}(q, \lambda)}
    =
\frac{\partialder{}{\lambda}{\kappa_{\pm}(q, \lambda)}}
        {\kappa_{\pm}(q, \lambda)}
    &=
    \IntOI \IntOIc e^{-qt}ye^{-\lambda y}\frac{\Pbb{\xi_t\in \pm \D y}}{t}\D t
    \\
    &=
    \IntOIc e^{-\lambda y}y\IntOI e^{-qt}\frac{\Pbb{\xi_t\in\pm \D y}}{t}\D t.
\end{align*}
Identifying the two Laplace transforms above, we arrive at
\begin{equation}\label{eq:W+-}
  k_\pm(q,\D y)
  =
  y\IntOI e^{-qt}\frac{\Pbb{\xi_t\in\pm \D y}}{t}\D t
  =:
  y\Wc_\pm\lbrb{q, \D y},  
\end{equation}
as defined in
\eqref{eq:Wc} of the lemma itself. Plugging this into
\eqref{eq:logWW} and using the relations \eqref{eq:Wk=Wp},
we demonstrate the validity of \eqref{eq:repW}.
Next, we prove \eqref{eq:derWq} noting that we understand
the derivatives as measures,
which are the weak limit of the respective difference ratios, i.e. $(\Wc_\pm\lbrb{q_0+h,\D y}$
$-$ $\Wc_\pm\lbrb{q_0,\D y})/h$.
From \eqref{eq:W+-},
it is immediately seen that, for $q>0$ and any
$n\geq 0$, by monotone convergence,
\begin{equation}
    \label{eq: deriv Wc}
\Wc^{(n)}_\pm\lbrb{q,\D y}=(-1)^n\IntOI t^{n-1} e^{-qt}
\Pbb{\xi_t\in\pm \D y}\D t=-\partialder{n-1}{q}{U^{(n-1)}_{q}}(\pm \D y),
\end{equation}
where the last expression holds by the very definition of the $q$-potential measure, see \eqref{def:pot}.
Using \eqref{eq:conv} of Proposition \ref{prop:conv}, we derive all identities in \eqref{eq:derWq}. Finally,
we prove \eqref{eq:repW2_1}: first we note that from
\cite[Theorem 4.1]{PatieSavov2018}, for each $q>0$, $W_{\phi_{q,\pm}}$ are analytic and zero-free on $\Cb_{(0,\infty)}$, and according to \cite[Theorem 2.8]{BarkerSavov2021},  
$W_{\kappa_\pm}(q,z) $ are analytic on $\CbOI\times\CbOI$. Using the defined in
 Lemma \ref{lem:uz}
 bounded and continuous
 function $u_z$,
see \eqref{eq:uz}, by differentiating \eqref{eq:logWW} and using \eqref{eq:W+-}, we get that, for $q >0$ and $z \in \Cb_{(-1,\infty)}$,
\begin{equation} \label{eq: deriv W_phi}
    \begin{split}
        \frac{\partialder{}{q}{W_{\phi_{q,\pm}}(1+z)}}{W_{\phi_{q,\pm}}(1+z)}
        =
        z\frac{\partialder{}{q}{\phi_{q,\pm}}(1)}{\phi_{q,\pm}(1)}+\int_{[0, \infty)} u_z(y)\Wc'_{\pm}(q,\D y).
    \end{split}
\end{equation}
Substituting the relations \eqref{eq:Wk=Wp}, we get that
\begin{equation}\label{eq:repW1}
    \begin{split}
        \frac{\partialder{}{q}{W_{\kappa_\pm}(q,1+z)}}{W_{\kappa_\pm}(q,1+z)}
        =
        z\frac{\partialder{}{q}{\kappa_\pm}(q,1)}{\kappa_\pm(q,1)}+\int_{[0, \infty)} u_z(y)\Wc'_{\pm}(q,\D y).
    \end{split}
\end{equation}
For the first term in the sum above, employing \eqref{eq:kap}, we arrive at 
\begin{equation}\label{eq:logK}
   \frac{\partialder{}{q}{\kappa_\pm}(q,1)}{\kappa_\pm(q,1)}
   =
   \IntOI\int_{[0, \infty)} e^{-qt}e^{-y}\Pbb{\xi_t\in \pm \D y}\D t=
    \int_{[0, \infty)}e^{-y}U_q(\pm \D y).
\end{equation}
The finiteness of the last quantity for $U_0 = U$ is guaranteed by the vague convergence stated in Theorem \ref{thm:Revuz},
and also for $q > 0$, because in the sense
of measures $U_q(\pm \D y)\leq U(\pm \D y)$, see \eqref{def:pot}. Also, from \eqref{eq: deriv Wc}, we see that $\Wc'_{\pm}(q,\D y)=-U_q(\pm \D y)$,
so once again by the renewal Theorem \ref{thm:Revuz} and the exponential decay
of $u_z(y)$ as $y\to \infty$, see Lemma \ref{lem:uz}, we conclude that
\begin{equation*}
    \IntOIc u_z(y)\Wc'_{\pm}(q,\D y)=-\IntOIc u_z(y)U_q(\pm \D y)
\end{equation*}
is well-defined for $q\geq 0$. Plugging the last expression and \eqref{eq:logK} into \eqref{eq:repW1}, we arrive at 
\begin{equation}\label{eq:deriv W_kappa end of proof}
    \begin{split}
        \frac{\partialder{}{q}{W_{\kappa_\pm}(q,1+z)}}{W_{\kappa_\pm}(q,1+z)}
        =
        \IntOIc \lbrb{ze^{-y}-u_z(y)}U_q(\pm \D y)
        = 
        \IntOIc  \frac{1- e^{-zy}}{e^y-1} U_q(\pm \D y),
    \end{split}
\end{equation}
which is \eqref{eq:repW2_1}.
This concludes the proof of the lemma. 
\end{proof}
Now we proceed with the proof of the main result of Section
\ref{sec:W}, Theorem \ref{thm:derW}.
\begin{proof}[Proof of Theorem \ref{thm:derW}]\label{proof 4.3}
We first prove the forward direction
for the derivatives
of
$W_{\kappa_\pm}$.

Let us recall again that from \cite[Theorem 2.8]{BarkerSavov2021},  
$W_{\kappa_\pm}(q,z) $ are analytic on $\CbOI\times\CbOI$,
and thus all derivatives of $W_{\kappa_\pm}(q,1+z)$
exist at any $q>0$ and any $z \in \Cb_{(-1,\infty)}$.
For each such pair $\lbrb{q,z}$, using the defined in
 Lemma \ref{lem:uz}
 bounded and continuous
 function $v_z$,
see \eqref{eq:uz},
\eqref{eq:repW2_1} of Lemma \ref{lem:repW} gives us
\begin{equation}\label{eq:repW2}
    \begin{split}
        \frac{\partialder{}{q}{W_{\kappa_\pm}(q,1+z)}}{W_{\kappa_\pm}(q,1+z)}
        =
        \IntOIc \frac{1- e^{-zy}}{e^y-1}U_q(\pm \D y)
        = : 
        \IntOIc v_z(y) U_q(\pm \D y).
    \end{split}
\end{equation}
From \eqref{eq:uz_bound}, we have that for any fixed $z$ such that $\Re(z)>-1$, there exist
$\epsilon:=\epsilon_{\Re(z)}>0$ and $C:=C_z > 0$ such that 
\begin{equation}\label{eq:vzB}
    \abs{v_z(y)}\leq Ce^{-\epsilon y}. 
    \end{equation}
From \cite[Theorem 4.1 (1)]{PatieSavov2018}, it is known that $W_{\kappa_\pm}(q,z)$ are zero free on $q\geq 0$, $\Re(z)>0$.    Therefore, for $q > 0$, we can use the dominated convergence theorem to get from \eqref{eq:repW2}
    \begin{equation}\label{eq:repW3}
    \begin{split}
       \partialder{n}{q}{} \frac{\partialder{}{q}{W_{\kappa_\pm}(q,1+z)}}{W_{\kappa_\pm}(q,1+z)}&=\frac{\partial^n}{\partial q^n}\IntOIc v_z(y) \IntOI e^{-qt}\Pbb{\xi_t\in \pm \D y}\D t\\
       &=\IntOIc v_z(y)U^{(n)}_q(\pm \D y).
    \end{split}
\end{equation}
Taking limit as $q\to 0$, from \eqref{eq:vzB}
and using $|U_q^{(n)}(\pm \D y)|
\leq |U^{(n)}(\pm \D y)|$ in the sense of measures, we observe the two implications
\begin{equation}\label{eq:implication U_derivative}
\begin{split}
    \IntOIc e^{-\epsilon y}\abs{U^{(n)}(\pm \D y)}<\infty  &\implies 
  \labsrabs{\partialder{n+1}{q}{W_{\kappa_\pm}(0,1+z)}}<\infty\\ 
  &\implies\limo{q}\partialder{n+1}{q}{W_{\kappa_\pm}(q,1+z)}=\partialder{n+1}{q}{W_{\kappa_\pm}\!(0,1+z)}.
\end{split}
\end{equation}
However, from \eqref{eq:derWq}, we know that $n$th derivative of $U$ is a constant multiple of
the $(n+1)$-st convolution of $U$, so to obtain the finiteness of the considered derivatives of $W_{\kappa_\pm}$ at $0$, it
would be enough to prove that 
\begin{equation}\label{eq:repW4}
 \IntOIc e^{-\epsilon y}U^{*k}(\pm \D y)<\infty, \quad
1 \leq k \leq n+1.
\end{equation}
By  \eqref{eq:derWq}, 
\[
U^{*k}(\pm \D y) = 
\IntOI t^{k-1}\P(\xi_t \in
\pm \D y) \D t,
\]
so
if we check
\eqref{eq:repW4} for
$k = n+1$, this would
imply the result for all $k \leq n$. If $n = 0$, 
\eqref{eq:repW4} follows by the same
application of the renewal theorem as for \eqref{eq:logK}, since
Theorem \ref{thm:Revuz} holds. Assume now that $n \geq 1$.
From the assumption \eqref{eq:derW}, 
its equivalent statement \eqref{eq:PandInt},  and
\eqref{eq:derWq}, we see that
\begin{equation}\label{eq:U^*}
\begin{split}
    &\int_{(1, \infty)}\lbrb{\frac{x}{\abs{A(x)}}}^{n+1}\Pi(\D x)<\infty\\
    &\qquad\iff U^{*n}\lbrb{\Rb_+}
    =\frac{1}{(n-1)!}\IntOI t^{n-1}\Pbb{\xi_t\geq 0}\D t<\infty.
\end{split}
\end{equation}
Plugging the expression
\begin{equation}\label{eq:Un+1}
    U^{*(n+1)}(\pm \D y)=\int_{-\infty}^{\infty}U(\pm \D y-x)U^{*n}( \D x)
\end{equation}
into \eqref{eq:repW4} for $k = n+1$, we derive 
\begin{equation}\label{eq:repW5}
\begin{split}
    &\int_{[0,\infty)} e^{-\epsilon y}\int_{-\infty}^{\infty}U(\pm \D y-x)U^{*n}( \D x)
    =
    \int_{-\infty}^{\infty}\IntOIc e^{-\epsilon y}U(\pm \D y-x)U^{*n}(\D x)\\
    &\,=\IntOIc\IntOIc e^{-\epsilon y}U(\pm \D y-x)U^{*n}(\D x)
    +
    \int_{(-\infty,0)}\IntOIc e^{-\epsilon y}U(\pm \D y-x)U^{*n}(\D x)\\
    &\,=: I^\pm_1+I^\pm_2.
    \end{split}
\end{equation}
To establish \eqref{eq:repW4}, we will check that $I^\pm_1$ and $I^\pm_2$ are finite. 
For $I^\pm_1$, we have that
\begin{equation*}
    \begin{split}
        I^\pm_1 &=
        \IntOIc \sum_{m\geq 0} 
        \int_{[m, m+1)}
        e^{-\epsilon y
        }U(\pm \D y-x)U^{*n}(\D x) 
        \\
        &\leq 
        \sum_{m\geq 0}e^{-\epsilon m}
        \IntOIc\int_{[m, m+1)}U(\pm \D y-x)U^{*n}(\D x)
        \\
        &\leq 
        \sum_{m\geq 0}e^{-\epsilon m}
        \IntOIc U{\lbrb{[\pm m -x,\pm(m+1)-x)}}U^{*n}(\D x) \\
        &\leq \sup_{v\in\Rb}U\lbrb{[v,v+1)}U^{*n}\lbrb{\Rb^+}\frac{1}{1-e^{-\epsilon}}.
    \end{split}
\end{equation*}
Therefore, from \eqref{eq:U^*} and 
the finiteness of the last supremum by
Theorem \ref{thm:Revuz}, 
we conclude that $I^\pm_1<\infty$.
We proceed to study $I^\pm_2$ for which we need to introduce some more notation:  
let, for $x>0$,
\[T_{-x}:=\inf\curly{t\geq 0: \xi_t<-x},\]
which is a.s. finite because $\limi{t}\xi_t=-\infty$.
We thus obtain that
\begin{equation}\label{eq:Ub}
\begin{split}
     U^{*n}\lbrb{[-x,0)} 
     &= 
     \IntOI t^{n-1}\Pbb{\xi_t\in\lbbrb{-x,0}}
     \D t\\
     &     \leq \Ebb{T^{n}_{-x}}+\IntOI 
     \Ebb{(t+T_{-x})^{n-1}}\Pbb{\xi_t\geq 0}\D t.
\end{split}
\end{equation}
However, from the finiteness of the integrals in \eqref{eq:U^*} and \cite[Theorem 1 (1.15)]{DonMal04}, which triggers $\Ebb{T^n_{-x}}<\infty$, we arrive via H\"{o}lder's inequality at
\begin{equation}\label{eq:Ub1}
\begin{split}
     U^{*n}\lbrb{
     [-x,0)}&
     \leq \Ebb{T^{n}_{-x}}+ \sum_{j=0}^{n-1}\binom{n-1}{j}\Ebb{T^{n-1-j}_{-x}}\IntOI t^j\Pbb{\xi_t\geq 0}\D t\\
     &\leq C_n\max\lbcurlyrbcurly{1,\Ebb{T^n_{-x}}},
\end{split}
\end{equation}
where $C_n>0$ is some finite constant, so we see also that
$U^{*n}$ is $\sigma$-finite. We note that as $T_{-x}$ goes to infinity as $x$ increases, since by assumption $\limi{t}\xi_t=-\infty$, the last inequality implies that, for some $C' >1$, if $x\geq C'$, then  $U^{*n}\lbrb{
     [-x,0)} \leq C_n\Ebb{T^n_{-x}}. $
     Therefore, by \cite[(1.25)]{DonMal04},
for $x\geq C'$ and some $D_n>0$,
\begin{equation}
    \label{eq:U*bound}
 U^{*n}\lbrb{
     [-x,0)}
     \leq
     C_n\Ebb{T^n_{-x}} \leq D_n
\lbrb{\frac{x}{|A(x)|}}^n,
\end{equation}
where we recall that $A$ may have been suitably redefined to be positive for $x\geq 1$, see below \eqref{eq:A}.
Next, since $\xi$ drifts to $-\infty$, we can apply \cite[Lemma 2.2]{Yamamuro1998},
to get \[U([m + x, m+ x + 1))\leq c_1\Pbb{\rho_{m+x,m+x+1}<\infty}\] for some $c_1 > 0$ and,
for
$a<b$, $\rho_{a,b}:=\inf\curly{t\geq 0: \xi_t\in\lbbrbb{a,b}}$. 
Therefore, we immediately deduct that, for $m \geq 0$,
\begin{equation}\label{eq:Ubound}
  U([m + x, m + x + 1))\leq c_1\Pbb{\Hc_\tau
  \geq
  m+x}\leq
  c_1\Pbb{\Hc_\tau
  \geq 
  x},  
\end{equation}
where $\lbrb{\Hc_t}_{t\geq 0}$ is the conservative ascending ladder 
height process, which,  due to the fact that $\limi{t}\xi_t=-\infty$,
is killed at non-zero rate $\lambda_\kappa:=\kappa_+(0,0)>0$, that is, after independent of $\xi$ exponential time $\tau \sim Exp(\lambda_\kappa)$, which entails $\Hc_{\tau-}=\sup_{t\geq 0}\xi_t$. We note
that by quasi-left-continuity, $\Hc_{\tau-}
= \Hc_{\tau}$ a.s., 
see \cite[Proposition I.7]{Bertoin96},
and we will use the latter for smoother
presentation.
Hence, employing \eqref{eq:U*bound} and \eqref{eq:Ubound} in \eqref{eq:repW5}, we next get
\begin{equation}\label{eq:I_2+}
	\begin{split}
I^+_2&=\int_{(-\infty,0)}\!\IntOIc e^{-\epsilon y}U( \D y-x)U^{*n}(\D x)=\int_{(0,\infty)}\!\IntOIc e^{-\epsilon y}U( \D y+x)U^{*n}(-\D x)\\
&\leq \sum_{m\geq 0}e^{-\epsilon
		m}\IntOIo U([m+ x, m + x + 1)
  ) U^{*n}(-\D x)\\
		&\leq
  c_1\sum_{m\geq 0}e^{-\epsilon
		m}
		\IntOIo \P( \Hc_\tau 
  \geq
  x)U^{*n}(- \D x) \\
		&= \frac{c_1}{1-e^{-\epsilon}}
		\IntOIo
  \int_{[x, \infty)}\P( \Hc_\tau \in \D y)U^{*n}(-\D x)\\
  &= \frac{c_1}{1-e^{-\epsilon}}
		\IntOIo  U^{*n}\lbrb{[-
  x,0)} \P( \Hc_\tau \in \D x) \\
		&\leq
		\frac{c_1D_n}{1-e^{-\epsilon}}
  \lbrb{c_2 + 
		\int_{(1, \infty)} 
  \lbrb{\frac{x}{|A(x)|}}^n
  \P( \Hc_\tau \in \D x)}
		 \end{split}
\end{equation} 
for  $c_2:=U^{*n}\lbrb{[-
  C',0)}<\infty$ with $C'$ defined prior to \eqref{eq:U*bound}. 
If $\Ebb{\xi_1}$ is finite, then $\abs{A(\infty)}=-\Ebb{\xi_1}\in\lbrb{0,\infty}$, so it is enough
to prove that
\[
\IntOIo x^n \P( \Hc_\tau \in \D x) =
\Ebb{ H_\tau^n}=\Ebb{\lbrb{\sup_{t\geq 0}\xi_t}^n} < \infty,
\]
which is true by \cite[Theorem 5, (1.33)] {DonMal04} and assumption \eqref{eq:derW}. 
 Next, let us consider the
case where $\Ebb{\xi_1} = - \infty$,
so $|A(x)|$ is unbounded as well. Let us define 
$\Pi_\Hc$  to be
the \LL measure of $\Hc$ and
$A_\Hc(x) := \int_0^x \PiH(y)\D y$,
for which we
know from \cite[(2.15)]{DonMal04}
and \cite[p.34]{Kluppelberg-Kyprianou-Maller-2004} that
\begin{equation}
    \label{eq:equiv A AH}
|A(x)| \asymp \int_1^x \PiMinus(y)\D y
\asymp A_\Hc(x)
\end{equation}
with ``$\asymp$" denoting that the ratio of the two sides is bounded
away from zero and infinity.
Hence, our assumption
\eqref{eq:derW} entails that
\begin{equation}
    \label{eq:g_finite}
\int_{(1,\infty)} \lbrb{\frac{x}{\AH(x)
}}^{n+1} \Pi(\D x) =:
\int_{(1,\infty)}\lbrb{g(x)}^{n+1}\Pi(\D x)
< \infty,
\end{equation}
where we have used once again the
convention that if
$x/A_\Hc(x)$ is not
well-defined over
$(1,x_0]$ for some
$x_0 > 1$, we define
it there as some
constant.
Moreover, as $g(x) = x/\AH(x) = 1/\int_0^1\PiH(xy)\D y$, the function
$g$ is increasing, tends to
infinity at infinity, and has a finite limit at 0. Also, by
\eqref{eq:equiv A AH},
$\AH(\infty)$ is a positive constant
or infinity, so
\begin{equation}
    \label{eq:g = O(x^n)}
    g^n(x) = \bo{x^n},
\end{equation}
and, also from
\cite[(7.21)]{Kluppelberg-Kyprianou-Maller-2004}, we have the following
connection between $\PiH$ and $\Pi$:
for large $x$, there exist a constant
$c_3 > 0$ such that
\begin{equation} \label{eq:PiH inequality}
\PiH(x) \leq c_3 \int_{(x,\infty)}
g(y) \Pi(\D y).
\end{equation}
We are ready to prove that $I_2^+$ 
is finite in the considered case:
by \eqref{eq:I_2+}, \eqref{eq:equiv A AH}, and the last
discussion, it would be enough
to  prove that
$\int_{(1,\infty)} g^n(x) \P(\Hc_\tau
\in \D x)$
  is finite. To do this, recall that $\Hc_\tau \sim Exp(\lambda_\kappa)$, $g$ is increasing and note that because $g^n(x) \leq \int_{(1,x)}
\D \lbrb{g^n(y)} +g^n(2)$, we have to check that
\begin{equation}\label{eq:dgn}
	\begin{split}
		\int_{(1, \infty)}\lbrb{
 \int_{(1,x)}
  \D \lbrb{g^n(y)} + g^{n}(2) }
    &\P( \Hc_\tau \in \D x)
  \leq g^{n}(2) + \int_{(1,\infty)}
  \P(\Hc_\tau > y) \D \lbrb{g^n(y)}
  \\
  &\!\!\!\!= g^{n}(2)+ \int_{(1,\infty)} \IntOI \lambda_\kappa
  e^{-\lambda_\kappa t} \P(\Hc_t > y) 
  \D t\D \lbrb{g^n(y)}
		 \end{split}
\end{equation} 
is finite. To bound the integral, let us split the process into
two components, according to the size of the jumps:
$\Hc = \Hc^s+ \Hc^l$ for $\Hc^l$ the process of the jumps
larger than some $c>0$ and $\Hc^s$, the component with jumps
at most $c$, and then use the bound
$\P(\Hc_t > y) \leq \P(\Hc^s_t > y/2) + \P(\Hc^l_t > y/2)$ in \eqref{eq:dgn}.
For $\Hc^s$, observe that by Markov's inequality, for $\beta > 0$,
\[
\P(\Hc^s_t > y/2) \leq e^{-\beta y/2}\lbrb{\Ebb{e^{\beta\Hc^s_1}}}^t,
\]
where the expectation is finite, because the \LL
measure has bounded support, see
\cite[Theorem 25.3]{Sato1999}. Choosing $\beta=\beta(c,\lambda_\kappa)$ small enough, we
 have $\Ebb{e^{\beta\Hc^s_1}} \leq e^{\lambda_\kappa/2}$, so
by \eqref{eq:g = O(x^n)},
\begin{equation*}
    \begin{split}
      \IntOI \int_{(1,\infty)} \lambda_\kappa
  e^{-\lambda_\kappa t} &\P(\Hc^s_t > y/2) \D \lbrb{g^n(y)}
  \D t \\
  &\qquad\leq \IntOI \lambda_\kappa e^{-\lambda_\kappa t/2 } 
   \int_{(1,\infty)} e^{-\beta y/2} \D \lbrb{g^n(y)}
  \D t < \infty.  
    \end{split}
\end{equation*}
For the process of large jumps, first observe that
\begin{equation*}
	\begin{split}
\int_{(1,\infty)} \int_{ c_4 \ln y}^\infty \lambda_\kappa
  e^{-\lambda_\kappa t} \P(\Hc^l_t > y/2)
   \D t \D \lbrb{g^n(y)}
   \leq
   \int_{(1,\infty)} y^{-\lambda_\kappa c_4}\D \lbrb{g^n(y)},
		 \end{split}
\end{equation*} 
where we used $\P(\Hc^l_t > y/2) \leq 1$,
so again by \eqref{eq:g = O(x^n)}, choosing sufficiently large $c_4$,
the last integral is finite.  For the
remaining integral, with $N_t := 
\#$ jumps of $\Hc^l$ in $[0,t]
\sim Poi(t\PiH(c))$,
we can choose $c>0$ as large as we wish so that $e^{\PiH(c)}-1\leq \lambda_\kappa/2$. Fix $D>0$ such that for $x\geq1/(8D)>c$ relation \eqref{eq:PiH inequality} holds. Then, we have
\begin{equation}\label{eq:rep}
    \begin{split}
        &\int_{(1,\infty)} \int_{ 0 }^{c_4 \ln y }
 \lambda_\kappa e^{-\lambda_\kappa t} \P(\Hc^l_t > y/2)
   \D t \D \lbrb{g^n(y)}\\
   &\qquad= \int_0^\infty \lambda_\kappa e^{-\lambda_\kappa t}
  \int_{\lbrb{e^{t/c_4}, \infty}}
  \P\lbrb{\Hc^l_t > y/2}\D \lbrb{g^n(y)}
 \D t\\
 &\qquad=\int_0^\infty \lambda_\kappa e^{-\lambda_\kappa t}
  \int_{\lbrb{e^{t/c_4}, \infty}}
  \P\lbrb{\Hc^l_t > y/2;N_t\leq 2Dy}\D \lbrb{g^n(y)}
 \D t\\
 &\qquad\qquad
+\int_0^\infty \lambda_\kappa e^{-\lambda_\kappa t}
  \int_{\lbrb{e^{t/c_4}, \infty}}
  \P\lbrb{\Hc^l_t > y/2;N_t>2Dy}\D \lbrb{g^n(y)}
 \D t=:J_1+J_2.
    \end{split}
\end{equation}
First, we have by the classical Markov inequality and the choice of $c$ 
\begin{equation}\label{eq:rep1}
    \begin{split}
        J_2&\leq \int_0^\infty \lambda_\kappa e^{-\lambda_\kappa t}
  \int_{\lbrb{e^{t/c_4}, \infty}}
  \P\lbrb{N_t>2Dy}\D \lbrb{g^n(y)}\D t\\
  &\leq \int_0^\infty \lambda_\kappa e^{-\lambda_\kappa t}
  \int_{\lbrb{e^{t/c_4}, \infty}}
  e^{-2Dy}e^{t\lbrb{e^{\PiH(c)}-1}}\D \lbrb{g^n(y)}\D t\\
  &\leq \int_0^\infty \lambda_\kappa e^{-\lambda_\kappa t/2}\D t\int_{({1},\infty)} e^{-2Dy}\D \lbrb{g^n(y)}<\infty,
    \end{split}
\end{equation}
because $e^{-2Dy}\leq y^{-\lambda_\kappa c_4}$ for large $y$ and the finiteness discussed above. Also, with the quantity $k(y)=\max\curly{l\geq 1: l\leq 2Dy}$,
\begin{equation}\label{eq:up to c3ln}
	\begin{split}
 J_1&\leq 
 \int_0^\infty \lambda_\kappa e^{-\lambda_\kappa t}
  \int_{\lbrb{e^{t/c_4}, \infty}}
  \sum_{k = 1}^{k(y)}
  \P\lbrb{\Hc^l_t > y/2\middle|N_t = k}
  \P(N_t = k)\D \lbrb{g^n(y)}
 \D t \\
 &\leq
  \int_0^\infty \lambda_\kappa e^{-\lambda_\kappa t}
  \sum_{k = 1}^\infty
  \int_{\lbrb{e^{t/c_4}, \infty}} \ind{k\leq 2Dy}
  \P(N_t = k)
  k \frac{\PiH\lbrb{y/(2k)}}{\PiH\lbrb{c}}\D \lbrb{g^n(y)}
 \D t 
 \\
&\leq \frac{c_2}{\PiH\lbrb{c}}
  \int_0^\infty \lambda_\kappa e^{-\lambda_\kappa t}
  \sum_{k = 1}^\infty
   k\P(N_t = k)\\
      &\hspace{8em}\times
   \int_{\lbrb{e^{t/c_4}, \infty}}\ind{k< 4Dy}
   \int_{(y/(2k),\infty)}g(x) \Pi(\D x)
 \D \lbrb{g^n(y)}
 \D t\\
 &\leq \frac{c_2}{\PiH\lbrb{c}}
  \int_0^\infty \lambda_\kappa e^{-\lambda_\kappa t}
  \sum_{k = 1}^\infty
   k\P(N_t = k)\\
   &\hspace{8em}\times
   \int_{\left(\max\curly{e^{t/c_4}, k/(4D)}, \infty\right)}
   \int_{(y/(2k),\infty)}g(x) \Pi(\D x)
 \D \lbrb{g^n(y)}
 \D t,
 \end{split}
\end{equation} 
where, since $y/2k\geq 1/(4D) > 1/(8D)>c$, by the choice of $D$ we apply \eqref{eq:PiH inequality} for the third
inequality,
and for the second inequality we utilize  the fact that if $\Hc^l_t> y/2$  and
there are exactly $k$ jumps up to time $t$, then at least one of them has to be
larger than $y/(2k)$ with the probability of such jump given by $\PiH\lbrb{y/(2k)}/\PiH\lbrb{c}$.
For clarity, let us separate the following 
term from \eqref{eq:up to c3ln}:
\begin{equation*}
	\begin{split}
   &\int_{\left(\max\curly{e^{t/c_4}, k/(4D)}, \infty\right)}
   \int_{(y/(2k),\infty)}g(x) \Pi(\D x)
 \D \lbrb{g^n(y)}\\
 &\quad\quad=
\int_{\lbrb{\max\curly{e^{t/c_4}/(2k), 1/(8D)}, \infty}} 
 g(x)\int_{\lbrb{\max\curly{e^{t/c_4}, k/(4D)}, 2kx}}
  \D \lbrb{g^n(y)}\Pi(\D x)\\
  & \quad\quad\leq \int_{\lbrb{1/(8D), \infty}}
   g(x)\lbrb{\frac{2kx}{\AH(2kx)
}}^{n} \Pi(\D x)\\
&\qquad
  \leq (2k)^n \int_{(1/(8D), \infty)} g^{n+1}(x)
  \Pi(\D x) = \bo{k^n},
 \end{split}
\end{equation*} 
where we have used that $\AH$ is decreasing, the definition of $g$, and the finiteness in \eqref{eq:g_finite}. Substituting the last
in \eqref{eq:up to c3ln}, because, e.g.
by Hölder's inequality, $\Ebb{ N_t^{n+1}}
=\bo{t^{n+1}}$, we can conclude that
$J_1<\infty$. From this and \eqref{eq:rep1}, we can deduce that the integral in \eqref{eq:rep} is finite which yields that
$I_2^+$ is finite as well.

Thus, it remains
to demonstrate that $I^-_2<\infty$. Note that upon change
of variables $x\to -x$ and
$-y+x\to -z$ we get that
\begin{equation}\label{eq:I2-}
\begin{split}
    I^-_2&=\int_{(-\infty,
    0)}\IntOIc e^{-\epsilon y}U(- \D y-x)U^{*n}(\D x)
   \\
   &=
    \IntOIo e^{-\epsilon x}
    \int_{[-x,
    \infty)} e^{-\epsilon z}U(- \D z) U^{*n}(-\D x)\\
    &=
    \IntOIo e^{-\epsilon x}
    \IntOIc e^{-\epsilon z}U(- \D z)
    U^{*n}(-\D x)\\&\hspace{10em}+\IntOIo
    e^{-\epsilon x}\int_{[-x,0)} 
    e^{-\epsilon z}U(- \D z) U^{*n}(-\D x)=:J_1+J_2.
\end{split}
\end{equation}
Clearly, from \eqref{eq:U*bound},
\begin{equation}\label{eq:J1}
    \begin{split}
       &J_1= \IntOIc
       e^{-\epsilon z}U(- \D z)\IntOIo
       e^{-\epsilon x} U^{*n}
       (-\D x)\\
       &=
       \epsilon\IntOIc
       e^{-\epsilon z}U(- \D z)\IntOI  
       U^{*n}\lbrb{-y,0} e^{-\epsilon y}
       \D y\\
       &\leq \epsilon\IntOIo e^{-\epsilon z}U(- \D z)\IntOI  \lbrb{U^{*n}\lbrb{-C',0} +D_n
\lbrb{\frac{y}{|A(y)|}}^n \ind{y> 1}}e^{-\epsilon y}\D y<\infty,
    \end{split}
\end{equation}
since the integral with respect to $U(- \D z)$ is finite thanks to Theorem \ref{thm:Revuz}, and the other one, from \eqref{eq:equiv A AH}, $g(x)\asymp x/|A(x)|$ and the properties of $g$, see \eqref{eq:g = O(x^n)}. For $J_2$,  by splitting the inner integration at $x/2$, we get
\begin{equation*}
    \begin{split}
        &J_2=\IntOIo e^{-\epsilon x}\int_{(0,x]} e^{\epsilon z}U( \D z) U^{*n}(-\D x)\\
        &\,\,\leq \IntOIo e^{-\epsilon x/2} U\lbrb{\lbrbb{0,x/2}}U^{*n}(-\D x)
        +
        \IntOIo e^{-\epsilon x}\int_{(x/2,x)
        }e^{\epsilon z}U( \D z) U^{*n}(-\D x)\\
        &\,\,\leq
        c_5\IntOIo \max\curly{x,1}e^{-\epsilon x/2}U^{*n}(-\D x)
        +
        \IntOIo e^{-\epsilon x}\int_{(x/2,x)
        }e^{\epsilon z}U( \D z) U^{*n}(-\D x),
    \end{split}
\end{equation*}
where $U\lbrb{\lbrbb{0,x/2}}\leq c_5\max\curly{x,1}$ for $x>0$ and some $c_5>0$ follows easily from Theorem \ref{thm:Revuz}.
The first integral in the
last expression is finite since $xe^{-\epsilon x/2}$
decays faster to $0$ than $e^{-\epsilon x/3}$, and we can then proceed as in \eqref{eq:J1}. Next, we estimate the second integral, using 
$\lfloor\cdot\rfloor$ for the floor function, and employing 
\eqref{eq:Ubound} in the second inequality, as follows
\begingroup
\allowdisplaybreaks
\begin{align*}
     \IntOIo e^{-\epsilon x}\int_{(x/2,x)}
        &e^{\epsilon z}U( \D z) U^{*n}(-\D x) 
        \\&\leq 
     \IntOIo e^{-\epsilon x} \sum_{m = \lrfloor{x/2}}^{
\lrfloor{x}}e^{\epsilon (m+1)}U([m, m+1))U^{*n} (-\D x)\\
&\leq c_1e^{\epsilon}
     \IntOIo e^{-\epsilon x}\sum_{m = \lrfloor{x/2}}^{
\lrfloor{x}}e^{\epsilon m} \P\lbrb{\Hc_\tau \geq m}U^{*n} (-\D x)\\
&\leq c_1e^{\epsilon}
     \IntOIo e^{-\epsilon x}
     \sum_{m =0}^{
\lrfloor{x}}e^{\epsilon m}
\P\lbrb{\Hc_\tau \geq  \lrfloor{x/2}} U^{*n} (-\D x)\\
&\leq 
c_6
\IntOIo\P\lbrb{\Hc_\tau \geq  \lrfloor{x/2}} U^{*n} (-\D x)\\
&\leq 
c_6
\lbrb{\int_{(0,6]} U^{*n} (-\D x)
+
\IntOIo \P\lbrb{\Hc_\tau >  x/3}U^{*n} (-\D x)}
\\
&=c_6\lbrb{U^{*n}\lbrb{[-6,0)}+ \IntOIo U^{*n} (-3x,0) \P\lbrb{\Hc_\tau \in \D x}},
\end{align*}
\endgroup
where we have defined $c_6:=c_1e^{2\epsilon}/(e^\epsilon-1)$. The last is expression is finite because $U^{*n}\lbrb{[-6,0)}$ is finite by
\eqref{eq:Ub1}, and for the remaining, we can proceed exactly as in \eqref{eq:I_2+}: if $A(\infty) < \infty$, we have a direct
connection with $\Ebb{H_\tau^{n}}<\infty$, and in the other case, we can work with $A_\Hc$, which is
monotone and therefore, by \eqref{eq:U*bound}, for large $x$,
\[
U^{*n} (-3x,0) \leq D_n\lbrb{\frac{3x}{|\AH(3x)|}}^n \leq D_n3^n \lbrb{\frac{x}{|\AH(x)|}}^n,
\]
and we reduce to \eqref{eq:I_2+}.

Therefore, $J_2<\infty$,
and from \eqref{eq:J1} and \eqref{eq:I2-},
we get that $I^-_2<\infty$.
From the previous arguments, all $I^{\pm}_1,$ $I^{\pm}_{2}$
are finite, and from \eqref{eq:repW5}, we obtain that \eqref{eq:repW4} holds true.
Thus far we have proved that
the considered
derivatives of 
$W_{\kappa_\pm}$ are finite.
To conclude that this leads to the
same conclusion for
the derivatives of the same order of $W_{\phi_{q,\pm}}$,
 we recall \eqref{eq:Wk=Wp}:
\begin{equation*}
    W_{\phi_{q,+}}(z) = c_+^{1-z}\WkapP(q,z),
    \quad
    \text{and}
    \quad
    W_{\phi_{q,-}}(z)=c_-^{1-z} h^{z-1}(q)\WkapN(q,z),
\end{equation*}
with $h$ defined in \eqref{eq:h} as
\begin{equation*}
	\begin{split}
		&h(q) := \exp
  \lbrb{-\int_{0}^{\infty}
  \lbrb{\frac{e^{-t} -
  e^{-qt}}{t}}\Pbb{\xi_t=0}\D t}.
	\end{split}
\end{equation*}
If $\xi$ is not a CPP, $h \equiv 1$  and our claim is obvious. Let us assume now that
$\P(\xi_t = 0) \neq 0$ on some Lebesgue non-negligible set. Since $\xi$ is transient, the integral above is finite for every $q \geq 0$ by \cite[Theorem VI.12]{Bertoin96}, and therefore $h$ is strictly positive for each $q
\geq 0$. For higher derivatives of $h$, note that by the definition of $h$ and \eqref{eq:conv},
\begin{equation}
\label{eq:ln(h(q))}
\begin{split}
(\ln h(q))^{(k)}
&= (-1)^{k+1}\IntOIc
e^{-qt}t^{k-1}\P(\xi_t = 0)
\D t \\
&= U^{(k-1)}_q
\lbrb{\curly{0}}=
(-1)^{k-1}(k-1)!U_q^{*k}(\{0\}).
\end{split}
\end{equation}

The last is valid
for $1 \leq k \leq n+1$, as we have already proven \eqref{eq:repW4} which implies  for any $\epsilon>0$ that
\[U_q^{*k}(\{0\})\leq \IntOIc e^{-\epsilon y}U^{*k}(\pm \D y)<\infty,\] and therefore, for these $k$, $(\ln h(q))^{(k)}$ are
finite and
non-zero for each $q \geq 0$. Therefore $h^{(k)}(q)$ are finite as well, which 
concludes the proof of
the forward part
of the theorem.

For the backward part of the theorem, see \eqref{eq:derW}, by \eqref{eq:repW3}
and \eqref{eq:Wk=Wp},
it would suffice to show that if the integral in \eqref{eq:derW} is infinite,
then $U^{*(n+1)}$, and, respectively, $U^{(n)}$, do not define a 
$\sigma$-finite measure.
Under the assumption that 
\[\int_1^\infty \lbrb{\frac{x}{\abs{A(x)}}}^{n+1}\Pi(\D x)=\infty,\]
\cite[Theorem 4]{DonMal04} yields that, with $T_{a}:=\inf\curly{t\geq 0: \xi_t>a}$ for $a > 0$, it holds that
\begin{equation}\label{eq:rho}
    \Ebb{T^n_{a}\ind{T_{a}<\infty}}=\infty.
\end{equation}
Thus, picking $a<b$, with $\xi'_t=\xi_{t+T_{a}}-\xi_{T_{a}}$ on $T_{b}<\infty$,
\begin{equation}\label{eq:sigmaInf}
    \begin{split}
        U^{*(n+1)}\lbrb{a,b}&=\IntOI t^{n}\Pbb{\xi_t\in\lbrb{a,b}}\D t =\Ebb{\int_{T_{a}}^{\infty} t^n \ind{\xi_t\in\lbrb{a,b}}\D t}\\
        & = \Ebb{\ind{T_{a}<\infty}\IntOI \lbrb{t+T_{a}}^n \ind{\xi'_t\in\lbrb{a-\xi_{T_{a}},b-\xi_{T_{a}}}}\D t}\\
        &\geq \Ebb{T^n_{a}\ind{T_{a}<\infty}\IntOI  \ind{\xi'_t\in\lbrb{a-\xi_{T_{a}},b-\xi_{T_{a}}}}\D t}.
    \end{split}
\end{equation}
Assume, temporarily, that $\xi$ does not live on a lattice. Then clearly there exists $K>0$ such that $\Pbb{a \leq \xi_{T_{a}}\leq K+a}\geq 1/2$, and therefore from \eqref{eq:sigmaInf}, we get that
\begin{equation}\label{eq:sigmaInf1}
    \begin{split}
      U^{*(n+1)}\lbrb{a,b}&\geq \frac{1}{2}\inf\curly{a\leq y\leq K+a:U\lbrb{a-y,b-y}}\Ebb{T^n_{a}\ind{T_{a}<\infty}}.
    \end{split}
\end{equation}
Since $\inf\curly{a\leq y\leq K+a:U\lbrb{a-y,b-y}}>0$, we get from 
\eqref{eq:rho} that the quantity $U^{*(n+1)}\lbrb{a,b}$ is infinite. If $\xi$ lives on a
lattice, then the result is proved in the case where $\lbrb{a,b}$
contains a point of this lattice. This suffices. Therefore, our theorem is
proved.
\end{proof} 

\begin{proof}[Proof of Corollary \ref{cor:der_Mq}]
The following proof establishes bounds on the \(q\)-derivatives of Bernstein–Gamma functions in the contexts of interest. Although the underlying ideas are similar across cases, each situation requires specific arguments, which is why the full proof occupies several pages. The main approach is:
\begin{itemize}
    \item Express a logarithm $I(q,z)$ of a Bernstein-Gamma $\MBG{q}{z}$ function as a sum of integrals w.r.t. the $q$-potential measures $U_q$;
    \item Differentiate w.r.t. $q$ and
    use that successive derivatives of the $q$-measure are linked with its convolutions (Proposition \ref{prop:conv});
    \item Bound the obtained integrals;
    \item For the subtle cases
    \ref{it:cor_der_Mq_i'}-\ref{it:cor_der_Mq_iii'}, bound the
    convolutions of the potential measure
    via its link with passage times.  
\end{itemize}
Below, we also tried to outline key steps to help guide the reader through the argument. 

\textit{Proof of item 
(\ref{it:cor_der_Mq_i}):}\label{proof 5.10}
First, let us recall that \eqref{eq:M_I_psi} defines
\[
\MBG{q}{z} := 
\frac{\Gamma(z)}{W_{\phi_{q,+}}(z)}W_{\phi_{q,-}}(1-z).
\]
By Theorem \ref{thm:derW}, 
$\partialder{k}{q}{W_{\phi_{q,\pm}}(z)}$ are
right-continuous and finite for $q\geq 0$, $\Re(z)> 0$, 
and $0\leq k \leq n + 1$. Moreover, from 
\cite[Theorem 4.1 (1)]{PatieSavov2018},
$W_{\phi_{q,+}}(z)$ is non-zero for $q \geq 0$ and $\Re(z)> 0$, so differentiating the equality above, we obtain the desired result.

\textit{Proof of item (\ref{it:cor_der_Mq_ii}):}
The claim is trivial for 
$k = 0$. 

{\textbf{{Introduce $I(q,z)$}}: fix $k \in \{1, \dots, n+1\}$, and write, for 
$z \in \Cb_{(0,1)}$, $\MBG{q}{z} = e^{I(q, z)}$
with
\begin{equation}\label{eq:def I(q,z)}
\begin{split}
   I(q, z) &:= 
   \log_0(\Gamma(z)) +
   \log_0\lbrb{W_{\phi_{q,-}}(1-z) }
   -
   \log_0\lbrb{W_{\phi_{q,+}}(z) },  
\end{split}
\end{equation}
which is in fact equal to $\log_0( \MBG{q}{z})$ or
$\log_0( \MBG{q}{z}) + 2\pi i$, which gives the same
value after applying the exponential function. 

\textbf{Link the $q$-derivatives of $I$ and $U_q$}: substituting $\phi_{q,+}(1) = \kappa_+(q,1)/c_+$ and
\eqref{eq:logK} into \eqref{eq: deriv W_phi},
we obtain similar to \eqref{eq:deriv W_kappa end of proof} that
\[
  \frac{\partialder{}{q}{W_{\phi_{q,+}}(1+z)}}{W_{\phi_{q,+}}(1+z)}
        =
        z\frac{\partialder{}{q}{\kappa_+(q, 1)}}{\kappa_+(q, 1)}+\int_{[0, \infty)} u_z(y)\Wc'_{+}(q,\D y)=
        \IntOIc v_z(y) U_q(\D y)
\]
with $v_z$ is defined in
\eqref{eq:uz} and is bounded and continuous by Lemma \ref{lem:uz}. Similarly, because
$\phi_{q,-}(1) = h(q)\kappa_-(q,1)/c_-$, the relations \eqref{eq: deriv W_phi}, \eqref{eq:logK}, and \eqref{eq:ln(h(q))} gives us
\begin{equation*}
    \begin{split}
  \frac{\partialder{}{q}{W_{\phi_{q,-}}(1+z)}}{W_{\phi_{q,-}}(1+z)}
        &=
        z
        \lbrb{\frac{h'(q)}{h(q)}+\frac{\partialder{}{q}{\kappa_-(q, 1)}}{\kappa_-(q, 1)}}+\int_{[0, \infty)} u_z(y)\Wc'_{-}(q,\D y)\\
        &=
        zU_q\lbrb{\lbcurlyrbcurly{0}}+
        \IntOIc v_z(y) U_q(-\D y).     
    \end{split}
\end{equation*}
Using the last two equations in \eqref{eq:def I(q,z)}, we get that
\begin{equation}
\label{eq:der_I}
\partialder{k}{q}{I}
(q,z)
=\IntOIc v_{-z}(y) U^{(k-1)}_q
\lbrb{- \D y}
- \IntOIc v_{z-1}(y)
U^{(k-1)}_q\lbrb{\D y}
-z
U_q^{(k-1)}(\{0\}),
\end{equation}
where $v_z$ is defined in 
\eqref{eq:uz}.

\textbf{Estimate the integrals w.r.t. $U_q$}: we will
bound the quantities
in the last equality to obtain in \eqref{eq:bound_I}
the polynomial bound
\[\labsrabs{ 
  \partialder{k}{q}{I}
(q,z)} 
\leq 
 Q_{k, \Re(z)}\lbrb{\labsrabs{z}}.\]
 First note
that
from Proposition 
\ref{prop:conv},
\[\labsrabs{U^{(k-1)}_{q}(\D v)} \leq \labsrabs{U^{(k-1)}(\D v)},\]
and as the conditions of Theorem \ref{thm:derW}
are fulfilled by assumption, we know
from its proof, see \eqref{eq:repW4},
and
\eqref{eq:conv},
that, for each $\epsilon > 0$ and any $
1 \leq k \leq n+1$,
\begin{equation}
\label{eq:U_k
finite} 
\IntOIc
e^{-\epsilon y} \labsrabs{U^{(k-1)}(\pm \D y)}
=
(k-1)!
\IntOIc e^{-\epsilon y} \labsrabs{U^{*k}(\pm \D y)} < \infty.
\end{equation}
Therefore, 
by \eqref{eq:U_k
finite} and 
with some
$\epsilon > 0$,
we
have that
\begin{equation}\label{eq:hh}
 \labsrabs{
 zU^{(k-1)}\lbrb{\{0\}}
 }
 \leq|z|\IntOIc
e^{-\epsilon y} \labsrabs{U^{(k-1)}(\pm \D y)}
  \leq c_k |z|,
\end{equation}
with $c_k \geq0$. 

\textbf{Bound the integrated functions:}: by Lemma \ref{lem:uz}, there exist strictly positive constants $c_{1, \Re(z)},$ $c_{2, \Re(z)},$ $\epsilon_{1,
\Re(z)},$ and  $\epsilon_{2, \Re(z)}$ such that
for $z \in \Cb_{(0,1)}$,
\begin{equation}
    \label{eq: vz bounds in proof}
\labsrabs{v_{-z}(y)} \leq c_{1, \Re(z)} \labsrabs{z}e^{-\epsilon_{1,
\Re(z)} y}, 
\quad \text{and} \quad \labsrabs{v_{z-1}(y)} \leq c_{2, \Re(z)} \labsrabs{1-z}
e^{-\epsilon_{2, \Re(z)} y}.
\end{equation}

\textbf{Infer for $I$}: substituting $\eqref{eq:hh}$
and
$\eqref{eq: vz bounds in proof}$ in \eqref{eq:der_I},
we get that
\begin{equation}\label{eq:bound_I}
    \begin{split}
    &\labsrabs{ 
  \partialder{k}{q}{I}
(q,z)} 
\leq 
\IntOIc 
\labsrabs{ v_{-z}(y)}
\labsrabs{ U^{(k-1)}_q
\lbrb{- \D y}}
+ \IntOIc \!
\labsrabs{ v_{z-1}(y)}
\labsrabs{ 
U^{(k-1)}_q\lbrb{\D y}}
+ c_k |z|
\\
&\quad\leq
c_{1, \Re(z)} \labsrabs{z}
\IntOIc e^{-\epsilon_{1,
\Re(z)} y}\labsrabs{ U^{(k-1)}
\lbrb{- \D y}} \\
&\hspace{5em}
+
c_{2, \Re(z)} 
\lbrb{1 + \labsrabs{z}}
\IntOIc e^{-\epsilon_{2,
\Re(z)} y}\labsrabs{ U^{(k-1)}
\lbrb{\D y}} 
+ c_k |z|
=: Q_{k, \Re(z)}\lbrb{\labsrabs{z}},
    \end{split} 
\end{equation}
where $Q_{k, \Re(z)}$ is a polynomial of first degree, which is the desired bound for $I$.

\textbf{Transfer to $\MBG{q}{z}$}: to conclude, let us note that
 by Faà di Bruno's formula,
\begin{equation}
\label{eq:FdB exponent}
\lbrb{e^{ f}}^{(k)}=
e^{f}
\sum_{m_1+2m_2+\cdots+km_k=k} \frac{k!}{m_1!\,m_2!\,\,\cdots\,m_k!}
\prod_{j=1}^k\left(\frac{f^{(j)}}{j!}\right)^{m_j},
\end{equation}
and since we have $\MBG{q}{z} = e^{I(q, z)}$, by
using \eqref{eq:bound_I}, we obtain that
\[
\labsrabs{\partialder{k}{q}{\MBG{q}{z}}}
=\labsrabs{\partialder{k}{q}{e^{I(q, z)}}}
\leq \labsrabs{e^{I(q, z)}}
P_{ k, \Re(z)}
    \lbrb{\labsrabs{z}}
    =\labsrabs{ \MBG{q}{z}}P_{ k, \Re(z)}
    \lbrb{\labsrabs{z}}
\]
with $P_{ k, \Re(z)}$ a polynomial of degree $k$, which
is exactly the statement of the lemma.

\textit{Proof of item 
(\ref{it:cor_der_Mq_iii}):}
Under the assumption that there exists a bounded $u$ such that $U(\D x) = u(x)\D x$, we will
obtain a stronger bound in \eqref{eq:bound_I}, namely polynomial in polynomial in $\ln|z|$, and not $|z|$. 

Since the integral criterion from Theorem \ref{thm:convo}
holds, we know that the densities of $U^{*k}(\D x)$, that is $u^{*k}$, exist and are locally bounded for $1\leq k\leq n+1$. 
Next, by \eqref{eq:conv} of Proposition \ref{prop:conv},
we conclude that for $1\leq k\leq n+1$, $U^{(k-1)}(\D x)$ has  also a density $u_{k-1}(x)=k!(-1)^{k-1}u^{*k}(x)$,
which is therefore locally bounded.
Note that
$|U^{(k-1)}\lbrb{\curly{0}}| = \int_{0}^{\infty} t^{k-1}\Pbb{\xi_t=0}\D t = 0$.

Moreover, since $\Re(-z) = - \Re(z)\in(-1,0)$, by Lemma \ref{lem:uz}
there exist constants $
C_\Re(z) > 0, c_{3, \Re(z)},\epsilon_{3, \Re(z)}$
such that for $y > C_{\Re(z)}$, $|v_{-z}(y)| \leq c_{3, \Re(z)} e^{-y\epsilon_{3, \Re(z)}}$, so if $1/|z| < C_{\Re(z)}$

\textbf{Estimate the first integral  in \eqref{eq:der_I}:} therefore,
\begin{equation}\label{eq:v_z_y_ln}
    \begin{split}
\IntOI 
&\labsrabs{ v_{-z}(y)}
\labsrabs{ U^{(k-1)}_q
\lbrb{- \D y}}\!=\!
\int_{[0, 1/|z|)\cup [1/|z|, C_{\Re(z)}) \cup
[C_{\Re(z)}, \infty)}\!
\labsrabs{ v_{-z}(y)}
\labsrabs{ U^{(k-1)}_q
\lbrb{- \D y}}\\
&\leq
\frac{1}{|z|}c_{1, \Re(z)} \labsrabs{z}
 \sup_{x
\in (-C_{\Rez},0)}\abs{ u_{k-1}(x)}
+
\int_{1/|z|}^{C_{\Re(z)}} \labsrabs{\frac{1- e^{zy}}{e^y - 1}
u_{k-1}(-y)\D y }\\
&\hphantom{{}=
\leq
\frac{1}{|z|}c_{1, \Re(z)} \labsrabs{z}
\IntOI e^{-\epsilon_{1,
\Re(z)} y}\qquad\quad
}
+c_{3,\Re(z)}\IntOI e^{-\epsilon_{3,
\Re(z)} y}\labsrabs{ U^{(k-1)}
\lbrb{-\D y}}
\\
&\leq c_{4, \Re(z)} + 
\int_{1/|z|}^{C_{\Re(z)}} \labsrabs{\frac{1- e^{zy}}{e^y - 1}
u_{k-1}(-y)}\D y ,
    \end{split} 
\end{equation}
for some $c_{4, \Re(z)}>0$ by \eqref{eq:U_k
finite}, and in the first bound on the region $0\leq y< 1/|z|$, we have used the first bound of \eqref{eq: vz bounds in proof}.
We note that if $1/|z| \geq C_{\Re(z)}$, we can bound the quantity of interest as above, but without having the integral 
over $[1/|z|,C_{\Re(z)})$, resulting in a constant bound by $c_{4, \Re(z)}$. 

For the last integral in \eqref{eq:v_z_y_ln},
because $|e^{zy} - 1| \leq e^{y\Re(z)} + 1 \leq 2e^{y\Re(z)}$ for the considered $z\in\Cb_{(0,1)}$, and $u_{k-1}$ is locally bounded, we get that
\begin{equation}\label{eq:bound integral u}
    \begin{split}
\int_{1/|z|}^{C_\Re(z)} \labsrabs{\frac{1- e^{zy}}{e^y - 1}
u_{k-1}(-y)}\D y 
&\leq \sup_{x\in\lbbrbb{-C_{\Re(z)},0} } \lbcurlyrbcurly{|u_{k-1}(x)|}\int_{1/|z|}^{C_{\Re(z)}}\frac{2e^{y\Re(z)}}{y}\D y
\\
&\leq c_1 \int_{1/|z|}^{C_{\Re(z)}}\frac{1}{y}\D y= c_1\lbrb{\ln\lbrb{C_{\Re(z)}} + \ln|z|},
    \end{split} 
\end{equation}
for $c_1:=c_1(\Re(z))  >  0$, so \eqref{eq:v_z_y_ln} gives a linear bound in $\ln|z|$ for the first summand on the first line of \eqref{eq:bound_I},
with coefficients that depend only on $\Re(z)$.

\textbf{Estimate the second integral in \eqref{eq:der_I}}: for the second one, as $\Re(z-1) \in (-1,0)$, with the same approach we can obtain a linear bound in $\ln(|1-z|)$ and because
$|1-z| \leq |z| + 1 \leq |z|(1 + 1/\Re(z))$, this gives a linear bound in $\ln|z|$, with coefficients depending only on $\Re(z)$,
and not $|z|$.

\textbf{Transfer from $I(q,z)$ to $\MBG{q}{z}$}: repeating the arguments which prove $(\ref{it:cor_der_Mq_ii})$ from \eqref{eq:bound_I}, we get the
desired polynomial bound in $\ln|z|$.

We proceed to the proofs of the last four items, 
\textit{\ref{it:cor_der_Mq_i'}}-\textit{\ref{it:cor_der_Mq_iv'}}, under the
assumptions that $\Ebb{\xi_1}$ is finite and,
for any $\beta \in (0,n)$,
\[   R(\beta):= \int_{(1,\infty)} x^{\beta+1}\Pi(\D x)<\infty=\int_{(1,\infty)} x^{n+1}\Pi(\D x) = R(n).\]
From \eqref{eq:PandInt2}, the last is equivalent to
\begin{equation}
    \label{eq: t^beta fnite}
\int_1^{\infty}t^{\beta-1}\Pbb{\xi_t\geq 0} \D t<\infty=\int_1^{\infty}t^{n-1}\Pbb{\xi_t\geq 0} \D t.
\end{equation}

\textit{Proof of item
\ref{it:cor_der_Mq_i'}:} 
note that if $\Ebb{\xi_1}$ is finite, then $A(\infty) = \Ebb{\xi_1}$ is finite, as discussed
above $\eqref{eq:A}$.
Therefore, because by assumption $R(n-1)$
is finite, we can apply \textit{(\ref{it:cor_der_Mq_i})},
\textit{(\ref{it:cor_der_Mq_ii})},
and \textit{(\ref{it:cor_der_Mq_iii})}
with $n-1$ instead of $n$,
which gives the desired result.

We continue further. Note that, from \eqref{eq:PandInt1}, 
in the considered case
$$\limo{q}U_q^{*n}(\R_+)=\limo{q}\IntOI t^{n-1}e^{-qt}\Pbb{\xi_t\geq 0} \D t=\infty,$$
which is item
\textit{\ref{it:cor_der_Mq_iv'}}. This is in contrast to \eqref{eq:U^*} in the
previous case.

\textit{Proof of item \ref{it:cor_der_Mq_ii'}:} We try be economical due to the similarities with the proofs of
(\ref{it:cor_der_Mq_ii}) and (\ref{it:cor_der_Mq_iii}}). 

\textbf{Linking passage times and $U_q^{*n}$:} using \cite[(1.25)]{DonMal04}, applied to $-\xi$, and because $|A(\infty)|\in\lbrb{0,\infty}$ since $\Ebb{\xi_1}\in\lbrb{-\infty,0}$, 
for each $\beta \in(0,n)$
\begin{equation}\label{eq:Tbound}
    \Ebb{T^{\beta}_{-x}}\leq C_{\beta, 1}\max\curly{ x^{\beta}, 1}
\end{equation}
for some finite constant $C_{\beta,1}\geq 0$ and with $T_{-x}$ the first passage time below $-x$. Then, a slight modification of \eqref{eq:Ub} yields,  for $x>0$,
\begin{equation}\label{eq:bound Uq n+1}
\begin{split}
     U_q^{*n}&\lbrb{[-x,0)} 
     = 
     \IntOI t^{n-1}e^{-qt}\Pbb{\xi_t\in\lbbrb{-x,0}}
     \D t\\
     &\leq \Ebb{\int_{0}^{T_{-x}}t^{n-1}e^{-qt}\D t}+\Ebb{e^{-qT_{-x}}\IntOI 
     e^{-qt}(t+T_{-x})^{n-1}\ind{\tilde{\xi}_t\geq 0}\D t},
\end{split}
\end{equation}
where $T_{-x}<\infty$ a.s., and by the strong Markov property $\tilde{\xi}:=\lbrb{\xi_{t+T_{-x}}-\xi_{T_{-x}}}_{t\geq 0}$ is independent of $T_{-x}$ and has the law of $\xi$.
Next, using the obvious inequality that for any $\delta>0$ small enough. there is $D_{\delta,1}<\infty$ such that for any $x>0$,
\[\int_{0}^x t^{n-1}e^{-t}\D t\leq  D_{\delta,1} x^{n-\delta},\]
we conclude that
\[\Ebb{\int_{0}^{T_{-x}}t^{n-1}e^{-qt}\D t}=q^{-n}\Ebb{\int_{0}^{qT_{-x}}t^{n-1}e^{-t}\D t}\leq D_{\delta,1} q^{-\delta}\Ebb{T^{n-\delta}_{-x}}.\]
Applying the latter and \eqref{eq:Tbound} in
\eqref{eq:bound Uq n+1}, we arrive with some $D_{\delta,2}<\infty$ at
\[U_q^{*n}\lbrb{[-x,0)}\leq D_{\delta,2} q^{-\delta} \max\curly{x^{n-\delta},1}+\Ebb{\IntOI 
     e^{-qt}(t+T_{-x})^{n-1}\ind{\tilde{\xi}_t\geq 0}\D t}.\]
    Expanding the last brackets, and treating terms like in \eqref{eq:Ub1}, we deduct 
    \begin{equation*}
\begin{split}
     U_q^{*n}\lbrb{
     [-x,0)}&
     \leq   D_{\delta,2} q^{-\delta}\max\curly{x^{n-\delta}, 1}\\
     &\qquad\qquad+ \sum_{j=0}^{n-1}\binom{n-1}{j}\Ebb{T^{n-1-j}_{-x}}\IntOI e^{-qt}t^j\Pbb{\xi_t\geq 0}\D t.
\end{split}
\end{equation*}
From \eqref{def:pot} and \eqref{eq:PandInt1}, and like in \eqref{eq:Ub1}, applying H\"{o}lder's inequality, we get 
\begin{equation}\label{eq:Ub2}
    \begin{split}
     &U_q^{*n}\lbrb{
     [-x,0)}\\
    & \quad
     \leq   D_{\delta,2} q^{-\delta}\max\curly{x^{n-\delta},1}+\IntOI e^{-qt}t^{n-1} \Pbb{\xi_t\geq 0}
     \D t+C_n
     \max\lbcurlyrbcurly{1,\Ebb{T^{n-\delta}_{-x}}}\\
     & \quad\leq C_{\delta, n}q^{-\delta}\max\curly{x^{n-\delta}, 1}+U^{*n}_q(\Oiclosed),
\end{split}
\end{equation}
where in the first inequality we have taken out from the sum the summand with index $n-1$, we have estimated the rest with $q=0$, and we have applied that by \eqref{eq:PandInt1}, for $j<n$, $U^{*j}(\Oiclosed)<\infty$ since $R(j)<\infty$.
For the second inequality, we have employed once again \eqref{eq:Tbound} for the expectation term,
and from \eqref{eq:conv} we have related the free integral to $U_q^{*n}(\Oiclosed)$.

\textbf{Estimating the derivatives of $I$}: since $R(k)<\infty$ for $k<n,$  all estimates for $\partialder{k}{q}{I}
(q,z)$ for $k\leq n$ can be reused from the previous part.
Therefore, we need to estimate only
the $(n+1)$-st derivative, which from
\eqref{eq:der_I} is equal to
\[\partialder{n+1}{q}{I}
(q,z)
=\IntOIc v_{-z}(y) U^{(n)}_q
\lbrb{- \D y}
- \IntOIc v_{z-1}(y)
U^{(n)}_q\lbrb{\D y}
-
z
U_q^{(n)}(\{0\}).\]
  Substituting \eqref{eq:conv} to relate $U^{(n)}_q$ to $U^{*(n+1)}_q$, we get
  \begin{equation*}
      \begin{split}
          \abs{\partialder{n+1}{q}{I}
(q,z)}\leq n!&\Bigg(\IntOIc \abs{v_{-z}(y)} U^{*(n+1)}_q
\lbrb{- \D y}\\
&\qquad\qquad+ \IntOIc \abs{v_{z-1}(y)}
U^{*(n+1)}_q\lbrb{\D y}+ |z|U^{*(n+1)}_q(\{0\})\Bigg).
      \end{split}
  \end{equation*}
  From Lemma \ref{lem:uz}, there exist constants $c_{1, \Re(z)}, c_{2, \Re(z)}, \epsilon_{1,
\Re(z)}, \epsilon_{2, \Re(z)} > 0$ such that,
for $z \in \Cb_{(0,1)}$,
\[
\labsrabs{v_{-z}(y)} \leq c_{1, \Re(z)} \labsrabs{z}e^{-\epsilon_{1,
\Re(z)} y},
\quad \text{and} \quad \labsrabs{v_{z-1}(y)} \leq c_{2, \Re(z)} \labsrabs{1-z}
e^{-\epsilon_{2, \Re(z)} y}.
\]
Therefore, for some constants $c_1 := c_1(\Re(z))>0$ and  $\epsilon:=\min\curly{\epsilon_{1, \Re(z)},\epsilon_{2, \Re(z)}}>0$, we arrive at
\begin{equation}\label{eq:In}
      \begin{split}
          &\abs{\partialder{n+1}{q}{I}
(q,z)}\\&\qquad\leq c_1\lbrb{2|z|\IntOIc e^{-\epsilon y} U^{*(n+1)}_q
\lbrb{- \D y}
+ |1-z|\IntOIc  e^{-\epsilon y}
U^{*(n+1)}_q\lbrb{\D y}
|z| }\\
&\qquad\leq 
c_1\lbrb{2|z|\lbrb{I^-_1(q) +I_2^-(q)}+|1-z|\lbrb{I^+_1(q) + I^+_2(q)}},
      \end{split}
  \end{equation}
  where we have borrowed the notation from \eqref{eq:repW5}, and the multiplier term $2$ comes from bounding the atom at zero by an integral
  over $[0,\infty)$. 
  
  \textbf{Bounding $I_1^\pm$ using the renewal theorem}: $I^{\pm}_1(q)$ can be estimated precisely as  below \eqref{eq:repW5} to get
  \begin{equation}
      \label{eq: I1_pm}\begin{split}
I^{\pm}_{1}(q)&\leq \sup_{v\in\Rb}U_q\lbrb{[v,v+1)}U_q^{*n}\lbrb{\Oiclosed}\frac{1}{1-e^{-\epsilon}}\\&\leq \sup_{v\in\Rb}U\lbrb{[v,v+1)}U_q^{*n}\lbrb{\Oiclosed}\frac{1}{1-e^{-\epsilon}}\leq
  c_2 U_q^{*n}\lbrb{\Oiclosed},
        \end{split}
    \end{equation}
  for $c_2:=\sup_{v\in\Rb}U\lbrb{[v,v+1)}/(1-e^{-\epsilon})<\infty$ since the supremum is finite
  by the renewal
Th  eorem \ref{thm:Revuz}.
 
  \textbf{Bounding $I_2^\pm$ using the specific for the case bounds}: next, we get as in \eqref{eq:I_2+}
  \begin{equation*}
	\begin{split}
I^+_2(q)&=\int_{(-\infty,0)}\IntOIc e^{-\epsilon y}U_q( \D y-x)U_q^{*n}(\D x)\\
&=\int_{(0,\infty)}\IntOIc e^{-\epsilon y}U_q( \D y+x)U_q^{*n}(-\D x)\\
&\leq \sum_{m\geq 0}e^{-\epsilon
		m}\IntOIo U([m+ x, m + x + 1)
  ) U^{*n}_q(-\D x)\\
  &\leq 
		 \frac{c_3}{1-e^{-\epsilon}}
		\IntOIo  U^{*n}\lbrb{[-
  x,0)} \P( \Hc_\tau \in \D x)
		 \end{split}
\end{equation*} 
for some $c_3>0$, where in the first inequality we have used that in terms of measures $U_q\leq U$. Substituting the bound from \eqref{eq:Ub2}, we find that
\begin{equation}\label{eq:I2_1}
	\begin{split}
I^+_2(q)&\leq 
		 \frac{c_3}{1-e^{-\epsilon}}
		\IntOIo\lbrb{ C_{\delta, n}q^{-\delta}\max\curly{x^{n-\delta}, 1}+U^{*n}_q(\Oiclosed) }\P( \Hc_\tau \in \D x)\\
  &\leq c_4\lbrb{q^{-\delta}+U^{*n}_q(\Oiclosed)},
		 \end{split}
\end{equation} 
for
$c_4:=c_4(\Re(z), n, 
\delta)>0$, since $R(n-\delta)<\infty$ implies via \cite[1.33]{DonMal04} applied to $-\xi$ that $\Ebb{\lbrb{\sup_{t\geq 0}\xi_t}^{n-\delta}}<\infty$, and, as already used, $\Hc_\tau=\sup_{t\geq 0}\xi_t$ a.s. Finally, as in \eqref{eq:I2-},
\begin{equation}\label{eq:I2-q}
\begin{split}
    I^-_2(q)&=
    \IntOIo e^{-\epsilon x}
    \IntOIc e^{-\epsilon z}U_q(- \D z)
    U_q^{*n}(-\D x)\\
    &\hspace{4em}+\IntOIo
    e^{-\epsilon x}\int_{[-x,0)} 
    e^{-\epsilon z}U_q(- \D z) U_q^{*n}(-\D x):=J_1(q)+J_2(q).
\end{split}
\end{equation}
For $J_1$, following \eqref{eq:J1}, we get that
\begin{equation}\label{eq:J1q}
    \begin{split}
       J_1(q)&= \IntOIc
       e^{-\epsilon z}U_q(- \D z)\IntOIo
       e^{-\epsilon x} U_q^{*n}
       (-\D x)\\
       &=
       \epsilon\IntOIc
       e^{-\epsilon z}U_q(- \D z)\IntOI  
       U_q^{*n}\lbrb{-y,0} e^{-\epsilon y}
       \D y\\
       &\leq \epsilon\IntOIo e^{-\epsilon z}U(- \D z)\IntOI   \lbrb{C_{\delta,n}q^{-\delta}\max\curly{y^{n-\delta}, 1}+U^{*n}_q(\Oiclosed) }e^{-\epsilon y}\D y\\
       &\leq c_5\lbrb{q^{-\delta}+U^{*n}_q(\Oiclosed)},
    \end{split}
\end{equation}
where in the first inequality we have used the bound in \eqref{eq:Ub2} and $U_q\leq U$. The finite constant $c_5:=c_5(\Re(z), n ,\delta)$ incorporates 
the finiteness of
the first integral
by the renewal Theorem \ref{thm:Revuz} as
well as the others involved explicit constants. Next,
for $J_2$, as below \eqref{eq:J1} we have 
\begin{equation*}
    \begin{split}
        &J_2(q)=\IntOIo e^{-\epsilon x}\int_{(0,x]} e^{\epsilon z}U_q( \D z) U_q^{*n}(-\D x)\\
        &\quad\leq
       \IntOIo e^{-\epsilon x/2}U((0,x/2])U_q^{*n}(-\D x)
        +
        \IntOIo e^{-\epsilon x}\int_{(x/2,x)
        }e^{\epsilon z}U_q( \D z) U_q^{*n}(-\D x)\\
                &\quad\leq c_6
       \IntOIo e^{-\epsilon x/3}U_q^{*n}(-\D x)
        +
        \IntOIo e^{-\epsilon x}\int_{(x/2,x)
        }e^{\epsilon z}U_q( \D z) U_q^{*n}(-\D x)\\
          &\quad\leq c_7\lbrb{q^{-\delta}+U^{*n}_q(\Oiclosed)}+
        \IntOIo e^{-\epsilon x}\int_{(x/2,x)
        }e^{\epsilon z}U_q( \D z) U_q^{*n}(-\D x),
    \end{split}
\end{equation*}
for $c_6>0, c_7 := c_7(\Re(z), n ,\d) < \infty$, 
where using the arguments below \eqref{eq:J1}, we have bounded $U_q\lbrb{\lbrbb{0,x/2}}
\leq U((0,x/2]) \leq c_8 \max\{x,1\} \leq c_9e^{\epsilon x/6}$ for some finite $c_8, c_9$ and  we have treated $\IntOIo e^{-\epsilon x/3}U_q^{*n}(-\D x)$ precisely as in \eqref{eq:J1q}, possibly at the cost of a different constant.
Third, again precisely as for $J_2$ below \eqref{eq:J1},  we can obtain 
\begin{equation*}
    \begin{split}
       \IntOIo e^{-\epsilon x}\int_{(x/2,x)
        }&e^{\epsilon z}U_q( \D z) U_q^{*n}(-\D x)\\
        &\leq c_{10}\lbrb{U_q^{*n}\lbrb{[-6,0)}+ \IntOIo U_q^{*n} (-3x,0) \P\lbrb{\Hc_\tau \in \D x}}
    \end{split}
\end{equation*}
   for $c_{10}:=c_{10}(\Re(z))\geq 0$, so using the bound \eqref{eq:Ub2}, and repeating the estimate as in \eqref{eq:I2_1}, we get by substitution above  in $J_2(q)$ that
    \[
    J_2(q) \leq c_{11}\lbrb{q^{-\delta}+U^{*n}_q(\Oiclosed)}, 
    \]for
    $c_{11}:=c_{11}(\Re(z),n ,\d)\geq 0$. Feeding the latter together with 
    the bound for $J_1(q)$, see \eqref{eq:J1q}, into 
    $I_2^-(q)$,
 \eqref{eq:I2-q}, we get with some $c_{12} := c_{12}(\Re(z), n , \delta) <\infty$,
\[I^-_2(q)\leq c_{12}\lbrb{q^{-\delta}+U^{*n}_q(\Oiclosed)}.\]

\textbf{Combining estimates for $I^\pm$:}
using the latter with \eqref{eq:I2_1}
and \eqref{eq: I1_pm}, 
and adding them in \eqref{eq:In}, we obtain
\begin{equation*}
      \begin{split}
          \abs{\partialder{n+1}{q}{I}
(q,z)}&\leq 
c_{13}\lbrb{q^{-\delta}+U^{*n}_q(\Oiclosed)}\lbrb{|1-z|+|z|}
      \end{split}
  \end{equation*}
  for $c_{13} := c_{13}(\Re(z), n, \delta) \geq 0$.
  Combining the latter with the bounds for the lower
  order derivatives of $I(q,z)$, \eqref{eq:bound_I},
  and Faa di Bruno's formula, 
  \eqref{eq:FdB exponent}, we obtain
\ref{it:cor_der_Mq_ii'}. 

\textit{Proof of item \ref{it:cor_der_Mq_iii'}:} again, we only need to estimate the $(n+1)$-st 
$q$-derivative of $I$, as for lower orders we can use the estimates
  from the proof of (\ref{it:cor_der_Mq_iii}), see
  \eqref{eq:v_z_y_ln} and below.   Adapting suitably  \eqref{eq:v_z_y_ln}
  and
  \eqref{eq:bound integral u}, we get that,
  for some
  finite
  $\epsilon, c_{14}, c_{15}>0$,
  which
  depend on $\Re(z)$,
\begin{equation}
\label{eq: der iii'}
    \begin{split}
\IntOI 
\labsrabs{ v_{-z}(y)}
\labsrabs{ U^{(n)}_q
\lbrb{- \D y}}
\leq
c_{14} 
&\IntOI e^{-\epsilon y}\labsrabs{ U_q^{*(n+1)}
\lbrb{-\D y}} \\
&\quad+
\sup_{x\in\lbbrbb{-c_{15},0} } \lbcurlyrbcurly{|u_q^{*(n+1)}(x)|}\lbrb{\ln\lbrb{c_{15}} + \ln|z|}.
    \end{split} 
\end{equation}
We have already seen in the proof of \ref{it:cor_der_Mq_ii'},
see \eqref{eq:In} and below, that
\begin{equation}
    \label{eq:der iii' part 1}
\IntOI e^{-\epsilon y}  U^{*(n+1)}_q
\lbrb{- \D y}\leq
c_{16}\lbrb{q^{-\delta}+U^{*n}_q(\Oiclosed)}
\end{equation}
with $c_{16}:=c_{16}(\Re(z), n , \delta)$,
and for the supremum,
by \eqref{eq:convo1}
from Theorem \ref{thm:convo}, we have that, for
$C:=\sup_{x \in \R} |u(x)| < \infty$,
\begin{equation*}
    \begin{split}
\sup_{x\in\lbrb{[-c_{15},0)} } \lbcurlyrbcurly{|u_q^{*(n+1)}(x)|}
&\leq (n+1)C U_q^{*n} \lbrb{-c_{15}, \infty}.
    \end{split}
\end{equation*}
Substituting
\eqref{eq:Ub2} in the former, we get that
\begin{equation*}
    \begin{split}
\sup_{x\in\lbrb{[-c_{15},0)} } \lbcurlyrbcurly{|u_q^{*(n+1)}(x)|}\leq 
(n+1)C\lbrb{C_{\delta, n}q^{-\delta} 
\max\lbcurlyrbcurly{c_{15}^\delta, 1} + 2U_q^{*n}\lbrb{\R_+}},
    \end{split}
\end{equation*}
so, for some finite
$c_{17} :=c_{17}\lbrb{\Re(z) ,n, \delta}$,
\[
\sup_{x\in\lbrb{[-c_{15},0)} } \lbcurlyrbcurly{|u_q^{*(n+1)}(x)|}\leq
c_{17}\lbrb{q^{-\delta} + U_q(\R_+)}.
\]
Substituting the last inequality and
\eqref{eq:der iii' part 1} into \eqref{eq: der iii'},
 the desired result,
\ref{it:cor_der_Mq_iii'},
follows as in the proof of
(\ref{it:cor_der_Mq_iii}), see below \eqref{eq:bound integral u}.
\end{proof}
\begin{proof}[Proof of Theorem \ref{thm:convo}]
\label{proof 4.6}
We prove by induction that for $k = 1, 2, \dots, n+1$,
\begin{equation}
    \label{eq:induc11}
u_q^{*k}(x) \leq kCU^{*(k-1)}_q(\min\{0, x\}, \infty).
\end{equation}
The result in the base case $k=1$ holds because
$U_q(\D y) \leq U(\D y)$ implies that
$\sup_{x\in\Rb}u_q(x)\leq \sup_{x\in\Rb}u(x)=:C$ a.e. which leads to inequality everywhere when working with the canonical $u_q$, see \cite[Proposition I.11.(ii)]{Bertoin96}.
Let \eqref{eq:induc11} hold for some $l\geq 1$. Then, 
\begin{equation*}
    \begin{split}
        u^{*(l+1)}_q(x)&=\int_{-\infty}^\infty u_q(y)u^{*l}_q(x-y) \D y\\
        &\leq C\int_{-\infty}^x u_q^{*l}(x-y)\D y
        +
        lC\int_{x}^\infty u_q(y)U^{*(l-1)}_q\lbrb{\min\lbcurlyrbcurly{0, x-y},\infty}\D y\\
&=CU_q^{*l}\lbrb{0,\infty}+lC\int_{x}^\infty U_q^{*(l-1)}\lbrb{x-y,\infty}U_q\lbrb{\D y}\\
&\leq CU_q^{*l}\lbrb{0,\infty}+lC\int_{-\infty}^\infty U_q^{*(l-1)}\lbrb{x-y,\infty}U_q\lbrb{\D y}\\
        &= CU_q^{*l}\lbrb{0,\infty}+lCU_q^{*l}\lbrb{x,\infty}
        \leq(l+1)CU^{*l}_q\lbrb{\min\lbcurlyrbcurly{0,x},\infty},
    \end{split}
\end{equation*}
as needed,
which settles \eqref{eq:convo1}. Next, under the integral condition in the theorem's statement, by taking a limit $q\to 0$ in \eqref{eq:induc11} and using the monotone convergence theorem, we conclude the proof for the case $q=0$ since from \eqref{eq:U^*} and \eqref{eq:U*bound} we have that $U^{*k}(x,\infty)<\infty$ for any $x\in\Rb$ and any $1\leq k\leq n$. 

\end{proof}

\section{Proofs on exponential functionals} \label{sec: proof thm main}
We state three auxiliary results before we commence
the proof of Theorem \ref{thm:main}. Their proofs can be found in Section \ref{sec: auxiliary}.
\begin{proposition} 
\label{prop:LaplaceT}
Let $f:[0, \infty) \to [0, \infty)$, and define $f_0 := f$. For $x\geq 0$  and  $n \geq 1$, define
\begin{equation*}
    f_{n}(x) :=
  \int_x^\infty \int_{s_1}^\infty \dots \int_{s_{n-1}}^ \infty 
  f(s_n) \D s_n \dots \D s_1.
\end{equation*}
Then, for $q \geq 0$,
\begin{equation} \label{eq:LaplaceT}
    \widehat{f_n}(q) = \frac{(-1)^n}{(n-1)!}\int_0^1 \widehat{f}^{(n)}(qv)(1-v)^{n-1} \D v
\end{equation}
in the extended sense, meaning that if one side is infinite, the other is infinite as well.
\end{proposition}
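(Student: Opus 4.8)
The plan is to collapse the $n$-fold iterated tail integral into a single weighted tail integral by Cauchy's formula for repeated integration, then take the Laplace transform, swap the order of integration twice by Tonelli's theorem, and recognise the outcome as the right-hand side of \eqref{eq:LaplaceT}. A guiding principle throughout: every integrand that appears is non-negative (here one uses $f\ge 0$), so each interchange is an instance of Tonelli and each displayed identity is valid in $[0,\infty]$; this is exactly what makes the ``$\infty=\infty$'' reading of the statement legitimate and lets me ignore questions of convergence.

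First I would prove, by induction on $n\ge 1$, the identity $f_n(x)=\frac{1}{(n-1)!}\int_x^\infty (u-x)^{n-1}f(u)\,\D u$ for $x\ge 0$. The case $n=1$ is just the definition of $f_1$; for the inductive step one writes $f_{n+1}(x)=\int_x^\infty f_n(s)\,\D s$, substitutes the inductive formula, interchanges the $s$- and $u$-integrals over the region $\{x\le s\le u\}$, and evaluates $\int_x^u(u-s)^{n-1}\,\D s=(u-x)^n/n$.

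Next, inserting this into the definition of the Laplace transform and interchanging the $t$- and $u$-integrals gives
\[
\widehat{f_n}(q)=\frac{1}{(n-1)!}\int_0^\infty f(u)\Bigl(\int_0^u e^{-qt}(u-t)^{n-1}\,\D t\Bigr)\D u .
\]
In the inner integral I substitute $t=uv$ with $v\in[0,1]$, so that $u-t=u(1-v)$ and $\D t=u\,\D v$; this turns it into $u^n\int_0^1(1-v)^{n-1}e^{-quv}\,\D v$. Plugging back and swapping the $u$- and $v$-integrals once more by Tonelli yields
\[
\widehat{f_n}(q)=\frac{1}{(n-1)!}\int_0^1(1-v)^{n-1}\Bigl(\int_0^\infty u^n e^{-quv}f(u)\,\D u\Bigr)\D v .
\]
Since $\widehat f^{(n)}(s)=(-1)^n\int_0^\infty u^n e^{-su}f(u)\,\D u$ — the standard formula for the derivatives of a Laplace transform on the interior of its domain of convergence, which I take as the definition of $\widehat f^{(n)}(s)$, with value in $[0,\infty]$ up to the sign $(-1)^n$, outside it — the inner integral equals $(-1)^n\widehat f^{(n)}(qv)$, and \eqref{eq:LaplaceT} follows (reading the exponent $k-1$ in the statement as the intended $n-1$).

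I do not expect a genuine obstacle; the only thing needing care is the bookkeeping of possibly infinite quantities. The cleanest way to handle it is to note, as above, that all three interchanges are Tonelli for non-negative integrands, so the chain of equalities holds verbatim in $[0,\infty]$ — in particular $\widehat{f_n}(q)=\infty$ exactly when the right-hand side of \eqref{eq:LaplaceT} is $\infty$ — together with the remark that ``$\widehat f^{(n)}$'' here means the moment-type transform $(-1)^n\int_0^\infty u^n e^{-su}f(u)\,\D u$, which coincides with the honest $n$-th derivative of $\widehat f$ wherever the latter is finite in a neighbourhood.
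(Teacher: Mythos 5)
Your proof is correct, and it takes a genuinely different route from the paper's. The paper proceeds by induction directly on the identity \eqref{eq:LaplaceT}: it establishes the case $n=1$ via Tonelli and the fundamental theorem of calculus, and for the inductive step applies the $n=1$ formula to $f_k$, differentiates the inductive identity for $\widehat{f_k}$ under the integral sign, and then collapses the resulting double integral with the substitution $y=vw$. You instead first prove the Cauchy formula for repeated integration, $f_n(x)=\frac{1}{(n-1)!}\int_x^\infty (u-x)^{n-1}f(u)\,\D u$, by a self-contained induction, and then obtain the Laplace-transform identity in one non-inductive computation: two Tonelli swaps, the scaling $t=uv$, and the recognition of $\int_0^\infty u^n e^{-quv}f(u)\,\D u$ as $(-1)^n\widehat f^{(n)}(qv)$. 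Your route buys transparency about the "$\infty=\infty$" reading, since after the initial induction every manipulation is a Tonelli interchange of non-negative integrands, whereas the paper's inductive step differentiates the previously obtained identity in $q$, which is a slightly less visibly robust operation when values may be infinite; the paper's approach, on the other hand, stays entirely within the logic of Laplace transforms and never needs the explicit Cauchy kernel. You also correctly flag that the exponent $k-1$ in \eqref{eq:LaplaceT} is a typo for $n-1$, and your convention that $\widehat f^{(n)}(s):=(-1)^n\int_0^\infty u^n e^{-su}f(u)\,\D u$ is exactly what makes the extended-value reading work.
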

\begin{proposition}\label{prop:RV}
    Let $f$ be a regularly varying function at zero with index $\beta\in\lbrbb{-1,0}$ and $n\geq 1$ fixed. Then
    \begin{equation}\label{eq:RV}
    \int_{0}^1 f(qv)(1-v)^{n-1} \D v
\simo {f(q)}\int_{0}^1 v^{\beta}(1-v)^{n-1} \D v=f(q)\frac{\Gamma(n)\Gamma(\beta+1)}{\Gamma(n+\beta+1)}.
    \end{equation}
\end{proposition}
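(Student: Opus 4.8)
The plan is to divide both sides of \eqref{eq:RV} by $f(q)$ and to recognise the resulting limit as an instance of dominated convergence. Since $f$ is regularly varying at $0+$ with index $\beta$, it is positive near the origin and, by the very definition of regular variation, $f(qv)/f(q)\to v^{\beta}$ as $q\to 0$ for every fixed $v\in(0,1]$. Consequently, for all small $q>0$,
\begin{equation*}
\frac{1}{f(q)}\int_0^1 f(qv)(1-v)^{n-1}\,\D v=\int_0^1 \frac{f(qv)}{f(q)}(1-v)^{n-1}\,\D v ,
\end{equation*}
and the integrand converges pointwise on $(0,1)$ to $v^{\beta}(1-v)^{n-1}$, whose integral over $(0,1)$ equals the Beta integral $B(\beta+1,n)=\Gamma(n)\Gamma(\beta+1)/\Gamma(n+\beta+1)$, which is finite because $\beta>-1$.

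To interchange limit and integral I would invoke Potter's bounds for regularly varying functions, which are among the classical facts recalled in Appendix~\ref{sec:append}. Fix $\delta\in(0,\beta+1)$, which is possible precisely because $\beta>-1$. Since $qv\leq q$ for $v\in(0,1]$, Potter's inequality furnishes a $q_0>0$ such that $f(qv)/f(q)\leq 2\max\{v^{\beta+\delta},v^{\beta-\delta}\}=2v^{\beta-\delta}$ for all $q\in(0,q_0]$ and all $v\in(0,1]$, the last equality holding because $v\leq 1$. Hence the integrand is dominated, uniformly in $q\in(0,q_0]$, by $2v^{\beta-\delta}(1-v)^{n-1}$, which lies in $L^1(0,1)$ as $\beta-\delta>-1$. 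Dominated convergence then yields
\begin{equation*}
\lim_{q\to 0}\int_0^1 \frac{f(qv)}{f(q)}(1-v)^{n-1}\,\D v=\int_0^1 v^{\beta}(1-v)^{n-1}\,\D v=\frac{\Gamma(n)\Gamma(\beta+1)}{\Gamma(n+\beta+1)},
\end{equation*}
which is exactly \eqref{eq:RV}; the borderline index $\beta=0$ (slowly varying $f$) is covered with no modification, the right-hand side then reducing to $1/n=B(1,n)$.

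The one step requiring care is the uniform control of $f(qv)/f(q)$ as $v\downarrow 0$: Potter's inequality must be applied with a constant and an exponent that do not depend on $v$ over the full range $(0,1]$, and the exponent $\delta$ must be chosen strictly below $\beta+1$ so that the envelope $2v^{\beta-\delta}(1-v)^{n-1}$ stays integrable near $v=0$; everything else in the argument is routine.
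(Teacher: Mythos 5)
Your proof is correct, but it takes a genuinely different route from the paper's. The paper splits $\int_0^1 = \int_0^\varepsilon + \int_\varepsilon^1$: on $[\varepsilon,1]$ it invokes the Uniform Convergence Theorem for the slowly varying part of $f$ to pass to the limit, and for $\int_0^\varepsilon$ it changes variables to move the singularity to infinity, applies Karamata's integral theorem (Theorem \ref{thm:Karamata_1.5.10.}) to show $\limsup_{q\to 0}\labsrabs{\int_0^\varepsilon f(qv)(1-v)^{n-1}\D v/f(q)}\leq \varepsilon^{1+\beta}/(1+\beta)$, and then lets $\varepsilon\downarrow 0$. Your argument replaces this two-region analysis with a single dominated-convergence step, using the \emph{ratio} form of Potter's theorem to produce an $L^1$ envelope $2v^{\beta-\delta}(1-v)^{n-1}$ that is uniform in small $q$. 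Both are sound, and yours is more compact; the one caveat is that Proposition \ref{prop:Potter's bounds} in Appendix \ref{sec:append} only records the pointwise growth bound $\ell(x)\leq Ax^{\delta}$, not the two-argument ratio inequality $\ell(y)/\ell(x)\leq A\max\{(y/x)^{\delta},(y/x)^{-\delta}\}$ that your domination step actually needs. That ratio inequality is part of the same Theorem 1.5.6 in Bingham--Goldie--Teugels and transfers to regular variation at $0$ exactly as you use it (via $g(u):=f(1/u)$, applied with $x=1/q$, $y=1/(qv)$, $v\in(0,1]$), so your proof is correct — it simply relies on a slightly stronger formulation of Potter's bounds than the one quoted in the appendix, which may well be why the authors opted for the split-plus-Karamata route.
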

\begin{lemma} \label{lemma:bound_E}
    Let $\xi$ be a \LL process with a finite negative mean and $\P(\xi_1 > t) \simi \ell(t)/t^\alpha$ for 
    some $\alpha > 1$.
    Then
    \begin{equation} \label{eq: lemma upper bound}
        \Ebb{I_\xi^{-a}(t)} = \bo{\P(\xi_1 > t)} =  \bo{ \frac{\ell(t)}{t^\alpha}}.
    \end{equation}
\end{lemma}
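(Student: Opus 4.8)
The plan is to prove $\Ebb{I_\xi^{-a}(t)} = \bo{\Pbb{\xi_1>t}}$ by the one-large-jump heuristic: on the "good" event that $\xi$ has not made a big upward jump by time $t$, the process stays well below a linear drift, so $I_\xi^{-a}(t)$ is bounded by a constant; on the "bad" event a large jump has occurred, whose probability is controlled by $\Pbb{\xi_1>t}$. Since $\Ebb{\xi_1}<0$, fix a constant $c>0$ with $c<-\Ebb{\xi_1}$ and consider the event $\{\sup_{s\leq t}\xi_s \leq -ct\}$ or, more robustly, the event that $\xi$ restricted to jumps bounded by some level $M$ satisfies $\xi_s^{(M)} \leq -cs$ for all $s\in[t/2,t]$. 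On this event $e^{-\xi_s} \geq e^{cs}$ for $s$ near $t$, hence $I_\xi(t) \geq \int_{t/2}^t e^{cs}\D s \geq (1/c)e^{ct/2}(e^{ct/2}-1)$, which grows; so $I_\xi^{-a}(t)$ is bounded (indeed tends to $0$) there, and its contribution to the expectation is $\bo{1}$ — actually even better, but $\bo{1}$ would not suffice, so we need the complementary event to carry the $\Pbb{\xi_1>t}$ order and the good event's contribution to in fact be $\so{\Pbb{\xi_1>t}}$ or at worst also $\bo{\Pbb{\xi_1>t}}$. The cleaner route: split $\xi = \xi^{(M)} + \xi^{l}$ where $\xi^{l}$ collects jumps exceeding $M$, choose $M$ large enough that $\Ebb{\xi_1^{(M)}}<0$ still (possible since $\Ebb{\xi_1}<\infty$), and note $I_\xi(t)\geq \int_0^t e^{-\xi_s^{(M)}}\ind{N_t=0}\D s$ where $N_t$ counts the big jumps; on $\{N_t=0\}$, $\xi=\xi^{(M)}$, and by the strong law / standard maximal estimates for spectrally-controlled Lévy processes with negative mean, $\Pbb{\sup_{s\leq t}\xi_s^{(M)} > \eta t} $ decays superpolynomially, so $\Ebb{(I_\xi^{(M)}(t))^{-a}\ind{N_t=0}} = \bo{e^{-\delta t}} = \so{\Pbb{\xi_1>t}}$ for some $\delta>0$.

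The main work is then bounding $\Ebb{I_\xi^{-a}(t)\ind{N_t\geq 1}}$. Here I would use $I_\xi^{-a}(t) \leq (\int_0^{\epsilon} e^{-\xi_s}\D s)^{-a}$ for any fixed small $\epsilon\leq t$ — more precisely, $I_\xi(t)\geq I_\xi(s_0\wedge t)$ for any $s_0$, and one can localize: with probability bounded below (uniformly, by right-continuity at $0$) we have $I_\xi(\epsilon)\geq \delta_0$ for fixed $\epsilon,\delta_0>0$. But this is not independent of $\{N_t\geq 1\}$ in a convenient way; better is to observe $I_\xi^{-a}(t)\leq t^{-a}\cdot t^a I_\xi^{-a}(t)$ is not bounded pathwise. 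So instead I'd proceed: on $\{N_t\geq 1\}$ use the crude deterministic bound $I_\xi^{-a}(t) \leq (\int_0^{U_1} e^{-\xi_s}\D s)^{-a}$ where $U_1$ is the time of the first big jump, and before $U_1$ the process is $\xi^{(M)}$, which has finite exponential moments of all orders (truncated Lévy measure), so $\Ebb{(\int_0^{u}e^{-\xi_s^{(M)}}\D s)^{-a}}$ is finite and bounded in $u\in(0,\infty]$ — in fact bounded by a constant depending only on the law of $\xi^{(M)}$, using that $\xi^{(M)}$ drifts to $-\infty$ so $\int_0^\infty e^{-\xi_s^{(M)}}\D s<\infty$ a.s. with negative moments of all orders by the classical theory of exponential functionals. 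Then $\Ebb{I_\xi^{-a}(t)\ind{N_t\geq 1}} \leq \Ebb{(\int_0^{U_1}e^{-\xi_s^{(M)}}\D s)^{-a}\ind{U_1\leq t}}$, and conditioning on $U_1$ (exponential with rate $\lambda_M:=\Pi(M,\infty)$, independent of $\xi^{(M)}$) this is $\int_0^t \lambda_M e^{-\lambda_M u}\Ebb{(\int_0^u e^{-\xi_s^{(M)}}\D s)^{-a}}\D u \leq \Pbb{U_1\leq t}\cdot\sup_{u>0}\Ebb{(I^{(M)}_{\xi}(u))^{-a}} = \bo{\Pbb{U_1\leq t}} = \bo{\lambda_M t}$. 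But $\lambda_M t$ is not $\bo{\Pbb{\xi_1>t}}$ — it is much bigger! So this crude split fails, and the real point is that $N_t\geq 1$ alone is too generous: a single moderate big jump need not make $\xi_1>t$.

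The correct version uses the precise one-large-jump principle: $\Pbb{\xi_1>t}\simi t\,\Pi(t,\infty)\ell$-style, or rather by the regular variation of index $\alpha>1$ one has $\Pbb{\xi_1>t}\asymp \overline{\Pi}(t):=\Pi(t,\infty)$ up to the slowly varying factor, and crucially $\Pbb{\sup_{s\leq 1}\xi_s > t}\sima \Pbb{\xi_1>t}$ and the subexponential/one-jump asymptotics give $\Pbb{N^{>\theta t}_t\geq 1}\asymp t\overline{\Pi}(\theta t)$ for the count of jumps exceeding $\theta t$, which is $\asymp \Pbb{\xi_1>t}$. So I would decompose according to whether the first jump exceeding size $\theta t$ (for a small fixed $\theta$) has occurred: on its complement, $\xi_s \leq \xi_s^{(\theta t)}$ and one shows, via exponential Markov inequality applied to the truncated process together with $\Ebb{\xi_1^{(\theta t)}}\leq \Ebb{\xi_1}<0$ for $t$ large, that $\Ebb{(I^{(\theta t)}_\xi(t))^{-a}} = \bo{\Pbb{\xi_1>t}}$; on the complement, the crude bound gives $\bo{\Pbb{U_1^{>\theta t}\leq t}} = \bo{t\overline{\Pi}(\theta t)} = \bo{\Pbb{\xi_1>t}}$ using regular variation (as $\theta$ is fixed, $\overline{\Pi}(\theta t)\asymp \overline{\Pi}(t)\asymp \Pbb{\xi_1>t}/t$, whence $t\overline{\Pi}(\theta t)\asymp \Pbb{\xi_1>t}$). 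The main obstacle — and the delicate step — is the first part: showing $\Ebb{(I^{(\theta t)}_\xi(t))^{-a}} = \bo{\Pbb{\xi_1>t}}$ uniformly as $t\to\infty$, where the truncation level itself grows with $t$. Here one needs a maximal inequality of the form $\Pbb{\sup_{s\leq t}\xi^{(\theta t)}_s \geq -\eta t} \leq C\Pbb{\xi_1>t}$ for appropriate $\eta>0$, which follows from Kolmogorov/Fuk–Nagaev-type bounds for sums with truncated tails, using that the truncated Lévy measure still has all the mass of $\Pi$ on $(1,\theta t]$ — and the regular variation with index $\alpha>1$ is exactly what makes $\int_1^{\theta t} x\,\Pi(\D x)$ converge (since $\alpha>1$) so the drift stays negative, while the residual tail probability at level $-\eta t$ is governed by $t\overline{\Pi}(\theta t)\asymp\Pbb{\xi_1>t}$ plus a superpolynomially small Gaussian-type term. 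Once that maximal estimate is in hand, on the event $\{\sup_{s\leq t}\xi_s^{(\theta t)} < -\eta t/2\}$ we get $I^{(\theta t)}_\xi(t)\geq \int_{t/2}^t e^{\eta s/2}\D s$ which grows exponentially, so its $(-a)$-power is $\so{\Pbb{\xi_1>t}}$ there, and the complement contributes $\bo{\Pbb{\xi_1>t}}$ by the maximal bound times the uniformly-bounded negative moment $\sup_{u>0}\Ebb{(I^{(\theta t)}_\xi(u\wedge \text{stopped}))^{-a}}$, completing the estimate. I expect the Fuk–Nagaev maximal inequality with a moving truncation level to be the technical heart of the argument.
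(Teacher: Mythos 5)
Your high-level strategy---split on the time $J_1^{(\epsilon t)}$ of the first jump of size $\gtrsim t$, make the ``no large jump'' part exponentially small via a tail/maximal bound for the truncated process, and charge the ``large jump occurred'' part to $\Pbb{\xi_1>t}$---is indeed the paper's strategy. But the step that actually delivers the $\bo{\Pbb{\xi_1>t}}$ bound for the large-jump contribution has two genuine flaws. First, your bound $\Ebb{I_\xi^{-a}(t)\ind{\text{big jump}}}\leq \Pbb{U_1^{>\theta t}\leq t}\cdot\sup_{u>0}\Ebb{\lbrb{I_\xi^{(\theta t)}(u)}^{-a}}$ uses a supremum that is $+\infty$: since $\xi_0=0$ and $\xi$ is c\`adl\`ag, $I_\xi(u)\sim u$ as $u\to 0+$, so $\Ebb{I_\xi^{-a}(u)}\asymp u^{-a}\to\infty$; finiteness of the negative moments of $I_\xi(\infty)$ gives no uniform control near $u=0$. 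Second, your asymptotic identification is off by a factor of $t$: under regular variation, \cite[Theorem 1.1]{Watanabe2008} gives $\PiPlus(t)\simi\Pbb{\xi_1>t}$, not $\Pbb{\xi_1>t}/t$. Hence $\Pbb{U_1^{>\theta t}\leq t}\asymp t\,\PiPlus(\theta t)\asymp t\,\Pbb{\xi_1>t}$, which overshoots the target by a factor of $t$ even before the infinite supremum is brought in.

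The paper fixes both issues at once by \emph{not} taking a supremum: conditioning on $J_1^{(\epsilon t)}=s$ and using $I_\xi(t)\geq I_\xi(s)=I_{\xi^{(\epsilon t)}}(s)$, one has $\Ebb{I_\xi^{-a}(t)\,\big|\,J_1^{(\epsilon t)}=s}\leq\Ebb{I_\xi^{-a}(s)}$, and then
\[
\Ebb{I_\xi^{-a}(t)\ind{J_1^{(\epsilon t)}\leq t}}\leq\PiPlus(\epsilon t)\IntOI\Ebb{I_\xi^{-a}(s)}\,\D s=\bo{\Pbb{\xi_1>t}},
\]
because $\IntOI\Ebb{I_\xi^{-a}(s)}\D s=\MBG{0}{1-a}/\phi_+(0,0)<\infty$ by \eqref{eq:LT}. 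Integrating the conditional expectation against the jump-time density, rather than bounding it by a constant, is precisely what absorbs the extra factor of $t$. For the ``no big jump'' part your Fuk--Nagaev/maximal-inequality idea is sound in principle (truncation at $\theta t$ eliminates the polynomial-tail term), but the paper uses something lighter: a one-time Chernoff bound $\Pbb{\xi^{(\epsilon t)}_{t/4}>t\,\Ebb{\xi^{(\epsilon t)}_1}/8}\leq t^{-\alpha-1}$ with a $t$-dependent exponent $\lambda_t=\Theta(\ln t/t)$, together with an extra split on whether the first jump of size $\geq 1$ occurs before $c_2\ln t$, which you do not anticipate and which is needed to keep a certain integral under control. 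So the missing idea is the integrated bound via $\IntOI\Ebb{I_\xi^{-a}(s)}\D s<\infty$; without it, the split you propose cannot close the estimate.
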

\fakesubsection{Proof of Theorem \ref{thm:main}}\begin{proof}[Proof of Theorem \ref{thm:main}]\label{proof thm 3.1}
We start with the remark that the equivalence in 
\eqref{eq:reg_var_asymp} is due to
\cite[Theorem 1.1]{Watanabe2008}.
In particular,
\begin{equation}
    \label{eq: Pibar}
\PiPlus(t) := \Pi(t, \infty)\simi \frac{\ell(t)}{t^{\alpha}},
\end{equation}
and by \cite[Theorem 4]{Doney-Jones-2012}, 
applied to the  zero mean process $\xi_t + ct$ with $c=-\Ebb{ \xi_1}>0$ therein, this assumption also
leads to the second asymptotic relation in
\begin{equation}
\label{eq:asymp_xi_t_0}
     \P(\xi_t \geq 0) \simi \P(\xi_t > 0)=\P(\xi_t +ct>ct)\simi t\PiPlus(ct)
    \simi
    c_1\frac{\ell(t)}{t^{\alpha-1}},
\end{equation}
with $c_1:= \lbrb{-\Ebb{\xi_1}}^{-\alpha}$. The first is obvious when the \LLP has infinite activity as then $\Pbb{\xi_t=0}=0$. Otherwise, we can apply \cite[Theorem 4]{Doney-Jones-2012} with $\widetilde{c}_\delta:=-\Ebb{ \xi_1}-\delta$, $-\Ebb{ \xi_1}> \delta >0,$ in which case we arrive at
\[\Pbb{\xi_t>0}\leq \Pbb{\xi_t\geq0}\leq \Pbb{\xi_t>-\delta t}\simi (\widetilde{c}_\delta)^{-\alpha}\frac{\ell(t)}{t^{\alpha-1}}\simi(\widetilde{c}_\delta)^{-\alpha}c^{-1}_{1}\Pbb{\xi_t>0}.\]
Setting $\delta\to 0$, we verify the first asymptotic relation in \eqref{eq:asymp_xi_t_0}.

Let us define for $x \geq 0$ the function
$\{x\} := 
\sup\lbcurlyrbcurly{ y \in (0,1]: x - y \in \Zb}$.
This is a slight
variation of the usual fractional part
function with the difference that
it is 1 in the integers and not 0.

Next, let
$\phiplus{q} := \phi_+(q, 0)$ for $q 
\geq 0$. By choosing $z = 0$ in
\eqref{eq:phi_+} and differentiating $n := \alpha - \{\alpha\} \geq 1$ times
w.r.t. $q$,
we get that, for $q>0$,
\begin{equation} \label{eq:ln_kappa}
    \lbrb{\ln \phi_+}^{(n)}(q)
    =(-1)^{n+1}\IntOI e^{-qt}t^{n-1} 
    \P(\xi_t \geq 0) \D t.
\end{equation}
We aim to obtain the asymptotic
behaviour of the
integral on the right 
as $q \to 0$ in order to
extract information about $\phi_+$ via
a Tauberian theorem. For this purpose,
let us observe that 
by 
\eqref{eq:asymp_xi_t_0}
and the choice of $n$, we get that,
for $x \to \infty$,
\begin{equation}
    \label{eq:Karamata_cases}
    \int_0^x t^{n-1} \P(\xi_t \geq 0) \D t
    \simi c_1\int_1^x 
    t^{n-1} t^{-\alpha + 1} \ell(t) \D t 
    =
    c_1\int_1^x t^{-\{\alpha\}}
\ell(t) \D t.
\end{equation}
Now we can differentiate naturally two
cases: if $\alpha$ is non-integer, we
can apply Karamata's theorem, Theorem \ref{thm:Karamata_1.5.8.}, 
in
\eqref{eq:Karamata_cases}, and continue with a standard Tauberian theorem, Theorem \ref{thm:Tauberian_1.7.1}. In
the integer case, we will use a more subtle
method, based on the fact that
\[
x \mapsto \int_1^x t^{-1}\ell(t) \D t
\]
is in the de Haan class $\Pi_\ell$, see the
\hyperref[appn]{Appendix}
and \cite[Chapter 3]{BinGolTeu1989} for
more information on this class.

\textbf{Case 1: $\alpha$ is non-integer, i.e. $\alpha \in (1,\infty) \setminus \N$.}
Take $\alpha\in\lbrb{n,n+1}$ for some $n\geq 1$.
Let us now apply the
aforementioned Karamata's theorem, \ref{thm:Karamata_1.5.8.},
in \eqref{eq:Karamata_cases},
which implies that
\[
\int_0^x t^{n-1} \P(\xi_t \geq 0) \D t
\simi
c_1\frac{x^{1- \{\alpha\}}}{
1- \{\alpha\}} \ell(x),
\]
and so, by a standard Tauberian theorem \ref{thm:Tauberian_1.7.1},
we get from \eqref{eq:ln_kappa} that
\begin{equation} \label{eq:ln_kappa_case1}
\lbrb{\ln \phi_+}^{(n)}(q) \simo
(-1)^{n+1} c_1 \Gamma\lbrb{1 - \{\alpha\}}
q^{\{\alpha\}-1}
\ell(1/q).
\end{equation}
The latter is infinite at $0$ and we can also see
that $\lbrb{\ln \phi_+}^{(k)}(0+)$ are finite for $k<n$ which follows from the integral representation \eqref{eq:ln_kappa} and the finiteness of the non-vanishing integral in \eqref{eq:PandInt1}. 
Thus, as by Faà di Bruno's formula, with non-negative integers $m_i$, with the sum over $n$-tuples
with $m_1+2m_2+\dots+nm_n=n$,
\begin{equation}
    \label{eq:Faa_di_Bruno_ln}
    \begin{split}
&\lbrb{\ln \phi_+}^{(n)}=\\
&\qquad\sum \frac{n!}{m_1!\,m_2!\,\,\cdots\,m_n!}
\frac{ (-1)^{m_1+\cdots+m_n-1} (m_1+\cdots+m_n-1)! }{\phi_+^{m_1+\cdots+m_n}} \prod_{j=1}^n\left(\frac{\phi_+^{(j)}}{j!}\right)^{m_j},
\end{split}
\end{equation}
and the fact that, since the process drifts to
$-\infty$,
$\phiplus{0}$ is finite, we can see
that 
\begin{equation}
    \label{eq:deriv_phi_inf}
\labsrabs{\phiplusderiv{0+}{k}} <
\infty \text{ for }
0\leq k \leq n-1, \quad\text{ and } \quad
\labsrabs{\phiplusderiv{0+}{n}} = \infty.
\end{equation}
From the last three results, as the only term of $\lbrb{\ln \phi_+}^{(n)}$ containing
$\phi_+^{(n)}$ is $\phi_+^{(n)}/\phi_+$, i.e. when $m_n=1$ and $m_j=0$ for $j\leq n-1$ in the sum,
we get that
\begin{equation}\label{eq:asymp_kappa_n}
\phiplusderiv{q}{n}
\simo   (-1)^{n+1}c_1 
\Gamma\lbrb{1 - \{\alpha\}}
\phiplus{0}
q^{\{\alpha\}-1}
\ell\lbrb{1/q}.
\end{equation}
Define for brevity $g_a(t) := \Ebb{I^{-a}_\xi(t)}$, which we note is 
monotone decreasing,
and we recall that from
\eqref{eq:LT}, for $a \in (0,1)$,
\begin{equation}
    \label{eq:der_Mq/k}
    \begin{split}
          \widehat{g_a}^{(n)}(q)& =\partialder{n}{q}\IntOI e^{-qt}g_a(t) \D t=\minusone^{n}\IntOI t^ne^{-qt}g_a(t) \D t\\
&=\lbrb{
\frac{\MBG{q}{1-a}}{\phiplus{q}}}^{(n)}
= \sum_{k = 0}^n
\binom{n}{k}
\lbrb{\frac{1}{\phi_+}}^{(k)}(q)
\partialder{n-k}{q}
{\MBG{q}{1-a}}.
    \end{split} 
\end{equation}
Since $\abs{\Ebb{\xi_1}}<\infty$ and $\alpha\in\lbrb{n,n+1}$, we know from \eqref{eq:PandInt2} that $\int_{(1, \infty)} x^{n-1+1}\Pi(\D x)$ is finite. Hence, by (\ref{it:cor_der_Mq_i}) of Corollary \ref{cor:der_Mq},
all derivatives of
$\MBG{q}{1-a}$ in \eqref{eq:der_Mq/k}
are finite even for $q=0$. Furthermore,
from \cite[(7.64)]{PatieSavov2018}, we know that for
$z \in \C_{(0,1)}$,
$\lim_{q\to 0}\MBG{q}{ z} = \MBG{0}{z}$, so
combining this with
\eqref{eq:deriv_phi_inf},
we obtain that
\begin{equation}\label{eq:gan_interm}
    \widehat{g_a}^{(n)}(q) \simo \lbrb{\frac1{\phi_+}}^{(n)}(q)
    \MBG{0}{1-a},
\end{equation}
and using again Faà di Bruno's formula,
so the sum being over all $n$-tuples of non-negative integers $(m_1, \dots, m_n)$ such that $m_1 + 2m_2 + 
\dots + nm_n = n$, and for $x>0$, 
\begin{equation}
\begin{split}
\label{eq:FdB_1/f} \lbrb{\frac{1}{\phi_+}}^{\!\!(n)}\!\!\!(q) \!=\!\sum \!\frac{n!}{m_1!\,m_2!\,\,\cdots\,m_n!}
\frac{ (-1)^{m_1+\cdots+m_n} (m_1+\cdots+m_n)! }{(\phi_+(q))^{m_1+\cdots+m_n+1}} \!\prod_{j=1}^n\!\left(\frac{\phi_+^{(j)}(q)}{j!}\right)^{m_j}
\end{split}
\end{equation}
we can see that if we expand the derivative in \eqref{eq:gan_interm},
there is only one infinite at zero term, which is exactly the one that contains $\phiplusderiv{q}{n}$.
It is obtained for $m_n=1$ in \eqref{eq:FdB_1/f}, so
substituting \eqref{eq:asymp_kappa_n}, we get that
\begin{equation}\label{eq:g_a}
	\begin{split}
\widehat{g_a}^{(n)}(q) \simo 
-\frac{\phiplusderiv{q}{n}}{\phi_+^2(q)}
    \MBG{0}{1-a}
\simo
 q^{\{\alpha\}-1} \ell_1(q)
	\end{split} 
\end{equation} 
with \begin{equation}
    \label{eq:def l_1}
\ell_1(q) :=
   (-1)^{n}c_1 
\Gamma\lbrb{1 - \{\alpha\}}
\phi^{-1}_+(0)
\MBG{0}{1-a}\ell(1/q)
  \in SV_0,
  \end{equation}
  where $SV_0$ denotes the class of slowly varying at zero functions, see the \hyperref[appn]{Appendix}.
  Now, let us define $g_{a, n}$ in the spirit of Proposition \ref{prop:LaplaceT}:
  \begin{equation}
      \label{eq:def_g_an}
      g_{a,n}(x) :=
  \int_x^\infty \int_{s_1}^\infty \dots \int_{s_{n-1}}^ \infty 
  g_a(s_n)
\D s_{n}   \dots \D s_2 \D s_1,
  \end{equation}
which is well-defined since by Lemma
\ref{lemma:bound_E} and Potter's bounds, see Proposition \ref{prop:Potter's bounds}, we have $g_a(t) = \bo{\ell(t)/t^{\alpha}} = \bo{
1/t^{n + \curly{\alpha}/2}}$. Therefore, from Proposition \ref{prop:LaplaceT} itself,
\[
\widehat{g_{a,n}}(q) = \frac{(-1)^{n}}{(n-1)!}\int_0^1 \widehat{g_a}^{(n)}(qv)(1-v)^{n-1} \D v.
\]
Next from \eqref{eq:g_a} we can invoke Proposition \ref{prop:RV} with $\beta=\{\alpha\}-1$ to obtain the asymptotic behaviour of $\widehat{g_{a,n}}$:
\begin{align*}
\widehat{g_{a,n}}(q) &\simo 
\f{(-1)^{n}}{(n-1)!}
\lbrb{\int_0^1 v^{\{\alpha\}-1}(1-v)^{n-1} \D v}
q^{\{\alpha\}-1} \ell_1(q) \\
&\simo 
\f{(-1)^{n}\Gamma(\{\alpha\})}{\Gamma(\alpha)}
q^{\{\alpha\}-1}
\ell_1(q).
\end{align*}
Therefore, from the standard Tauberian theorem \ref{thm:Tauberian_1.7.1},
\[
\int_0^t g_{a,n}(y) \D y \simi
\f{(-1)^{n}\Gamma(\{\alpha\})}{\Gamma(\alpha) \Gamma(2 - \{\alpha\})}
t^{1- \{\alpha\}}
\ell_1\lbrb{\f{1}{t}}.
\]
Next, we can apply the monotone density theorem \ref{thm:Monotone density, 1.7.2} to get
\[
g_{a,n}(t) \simi \f{(-1)^{n}\Gamma(\{\alpha\})}{\Gamma(\alpha)\Gamma(1-\{\alpha\})} t^{-\{\alpha\}} \ell_1\lbrb{\f{1}{t}} = \frac{c_1 \MBG{0}{1-a}\Gamma(\{\alpha\})}{\phiplus{0}\Gamma(\alpha)}t^{-\{\alpha\}} \ell(t), 
\]
where we have substituted the definition of $\ell_1$, see \eqref{eq:def l_1}. Finally, taking into account the monotonicity of the inner integrals defining $g_{a,n}$ in \eqref{eq:def_g_an}, after $n$ consecutive applications
of the second part of the monotone density theorem, \ref{thm:Monotone density, 1.7.2}, in the last asymptotic relation, we determine, recalling  $\alpha=n+\{\alpha\}$, that
\begin{equation} \label{eq:asymp_g}
g_a(t) = \Ebb{I^{-a}_\xi(t)} 
\simi 
\frac{c_1 \MBG{0}{1-a}}{\phiplus{0}}t^{-\alpha} \ell(t).
\end{equation}
To prove the weak convergence of the normalised quantities $\Ebb{I^{-a}_\xi(t)}$ as $t \to \infty$, let us now consider the distribution functions, for $x > 0$,
\begin{equation}\label{def:gx}
     g_a^x(t):=\Ebb{I_\xi^{-a}(t)\ind{I_\xi(t) \leq x}}.
\end{equation}
 Define the Laplace transform of this function via
$f^q_a(x) := \widehat{g_a^x}(q)$.
From \cite[(7.46)]{PatieSavov2018}, it can be seen that
the Mellin transform of $f_a^q$, for $z \in \Cb_{(a-1, 0)}$, is
\begin{equation}\label{eq:Mellin f}
    \begin{split}
        \mathcal{M}_{\widehat{f_a^q}}(z) &= \IntOI x^{z-1}
\IntOI e^{-qt} \Ebb{I_\xi^{-a}(t)\ind{I_\xi(t) \leq x}}\D t \D x\\
&=
\frac{1}{q} \IntOI x^{z-1} \Ebb{I_{\Psi_q}^{-a}\ind{I_{\Psi_q} \leq x}}
\D x
= -\frac{1}{\phiplus{q}z}\MBG{q}{ z+1-a},
    \end{split}
\end{equation}
where we recall that $I_{\Psi_q}:=I_{\xi}(\mathbf{e}_q)=\int_0^{e_q} e^{-\xi_s}\D s$.
We can try to get a similar result to \eqref{eq:asymp_g} by Mellin inversion and Tauberian theorems. We first argue that the
Mellin transform is well-defined: from \cite[Theorem 2.3]{PatieSavov2018}, the
decay of $M_\Psi$ is at least polynomial
along complex lines in $\C_{(0,1)}$. To be more
precise, writing $z = b + iy$ with
$b \in (a-1,0)$, the aforementioned theorem
implies that if $\xi$ is not a CPP
with a positive drift, for any
$\beta \geq 0$ and $q \geq 0$,
\begin{equation}\label{eq:decay_M}
\lim_{|y| \to \infty} |y|^{\beta} |\MBG{q}{b+1-a+iy}| = 0,
\end{equation}
and otherwise, there exists $\epsilon > 0$
such that
\begin{equation}
\label{eq:decay_M_CPP}
\lim_{|y| \to \infty} |y|^{\epsilon} |\MBG{q}{b+1-a+iy}| = 0.
\end{equation}
Therefore, we can invert the Mellin
transform from \eqref{eq:Mellin f},
for $b \in (a-1, 0)$,
\begin{equation}
\begin{split} \label{eq:g_ax_Laplace}
f_a^q(x)  =
\widehat{g_a^x}(q)&=
-\frac{1}{2\pi i}\int_{\Re(z) = b}\frac{x^{-z}}{\phiplus{q}z}\MBG{q}{ z+1-a} \D z\\
&=-\frac{x^{-b}}{2\pi \phiplus{q}}\int_{-\infty}^\infty \frac{x^{-iy}}{b+iy}\MBG{q}{b+1-a+iy} \D y \\
&=: -\frac{x^{-b}}{2\pi \phiplus{q}}\int_{-\infty}^\infty h(q, y) \D y,
\end{split}
\end{equation}
as $\eqref{eq:decay_M}$ and $\eqref{eq:decay_M_CPP}$ ensure
that for
some $\epsilon>0$, $
\lim_{|y| \to \infty} |y|^{1+\epsilon} |h(q, y)| = 0
$, and thus the last integral is finite. 
Next, 
if we wish to apply again Proposition \ref{prop:LaplaceT} for the $n$th derivative of $\widehat{g_a^x}(q)$, it would be enough to check whether the integral of the
respective
derivatives (right-derivatives for $q=0)$ w.r.t. $q$ of  $h(q, y)$ is absolutely integrable. 

\textbf{Case 1.1: $\xi$ is not a CPP with positive drift.}
Let us
write for simplicity 
$z = b + 1 - a + iy$. Note
that because $\Re(z) \in (0, 1)$ and the considerations below \eqref{eq:der_Mq/k} are valid, then by (\ref{it:cor_der_Mq_ii}) of Corollary \ref{cor:der_Mq},
for each $0 \leq k \leq n+1$,
    there exist polynomials $P_{\Re(z), k}$ of degree $k$ such that
\begin{equation}
\label{eq: finite integral}
\labsrabs{\partialder{k}{q}
{\MBG{q}{z}}} \leq P_{ \Re(z), k}(|z|)| \MBG{q}{ z})|,
\end{equation}
and as we noted earlier, see \eqref{eq:decay_M}, for $q\geq 0$, the decay of $M_\Psi$ is super-polynomial 
along complex lines, so the last 
result proves that $\int_{-\infty}^\infty
|\partialder{k}{q} h(q,y)| \D y$ is finite.

\textbf{Case 1.2: $\xi$ is a CPP with positive drift.}
The difference in this case is that we have only a polynomial decrease of $M_\Psi$, 
see \eqref{eq:decay_M_CPP}. However, now
every point is possible for $\xi$, so \cite[Proposition II.11, Theorem II.16]{Bertoin96}
entails that the potential of the process has a bounded density w.r.t. the Lebesgue measure, and
therefore, by item (\ref{it:cor_der_Mq_iii}) of Corollary \ref{cor:der_Mq},
for each $0 \leq k \leq n+1 $,
there exist polynomials $P_{\Re(z), k}$ of degree $k$ such that
\[\labsrabs{\partialder{k}{q}
{\MBG{q}{z}}} \leq P_{ \Re(z), k}(\ln|z|)| \MBG{q}{ z})|, 
\]
and similarly to the first case, we  obtain that
 $\int_{-\infty}^\infty
|\partialder{k}{q} h(q,y)| \D y$ is finite for $q\geq 0$.

Summing up, in both subcases, 1.1 and 1.2, we can differentiate $n$ times
the equation \eqref{eq:g_ax_Laplace}, to obtain, for $q>0$,
\begin{equation}
    \label{eq:gaxn}
\widehat{g_a^x} ^{(n)}(q)
  = 
  \lbrb{\frac{h_a^x(q)}{\phiplus{q}}}^{(n)}
   \text{\quad with \quad} h_a^x(q) := -\frac{1}{2\pi i}\int_{\Re(z) = b}\frac{x^{-z}}z \MBG{q}{ z+1-a} \D z,
\end{equation}
which is in the form of \eqref{eq:der_Mq/k},
but with $h_a^x(q)$ in  place of $M_\Psi(q,1-a)$.
Next, using the logic below \cite[(7.48)]{PatieSavov2018}, we get from \cite[(7.66)]{PatieSavov2018} that, for any $\epsilon>0$ in subcase 1.1. and for some $\epsilon>0$ in subcase 1.2,
\begin{equation}
    \label{eq:M uniform convergence}
\limsup_{|b| \to \infty} |b|^\epsilon
\sup_{0\leq q \leq 1} |\MBG{q}{ z+1-a}-\MBG{0}{z+1-a}| = 0,
\end{equation}
and from the other claims of \cite[Lemma 7.3]{PatieSavov2018}
that 
\begin{equation*}
\lim_{q \to 0+}h_a^x(q) = h_a^x(0).
\end{equation*}Moreover, using again \cite[(7.66)]{PatieSavov2018}, for any $\epsilon>0$ in subcase 1.1. and for some $\epsilon>0$ for subcase 1.2, we additionally deduce from the bounds in items (\ref{it:cor_der_Mq_ii})-(\ref{it:cor_der_Mq_iii}) of Corollary \ref{cor:der_Mq} that 
\begin{equation}
    \label{eq:h_a^x limsup}
\limsup_{q \to 0+}\abs{\lbrb{h^{x}_a(q)}^{(k)}}<\infty
\end{equation}
for any $k\leq n+1$.
Therefore, as the only infinite term at zero in
\eqref{eq:gaxn} is $\phiplusderiv{0}{n}$, we can repeat the
arguments from \eqref{eq:der_Mq/k} to \eqref{eq:g_a} to get 
\[
\widehat{g_a^x} ^{(n)}(q)
  \simo -\frac{\phiplusderiv{q}{n}}
  {\phi^2_+(q)} h_a^x(0)
  \simo
   \frac{h_a^x(0)}{\MBG{0}{1-a}}q^{\{\alpha\}-1} \ell_1(q).
\]
Using the same chain of arguments as between \eqref{eq:g_a} and \eqref{eq:asymp_g}, we
obtain also an analogue of \eqref{eq:asymp_g} for $g_{a}^x$ simply by replacing $M_\Psi$
with $h_a^x$:
\begin{equation}
    \label{eq:asymp_gax}
g_a^x(t) = \Ebb{I_\xi^{-a}(t) \ind{I_\xi(t) \leq x}}
\simi 
\frac{c_1 h_a^x(0)}{\phiplus{0}}t^{-\alpha} \ell(t).
\end{equation}
Therefore, combining with the similar
\eqref{eq:asymp_g}, we get that, for any $x>0$,
\begin{equation}
    \label{eq:distribution functions}
\limi{t}\frac{\Ebb{I_\xi^{-a}(t) \ind{I_\xi(t) \leq x}}}{t^{\alpha} \ell(t)}=\frac{c_1 h_a^x(0)}{\phiplus{0}},
\quad
\text{and}
\quad
\limi{t}\frac{\Ebb{I_\xi^{-a}(t)}}{t^{\alpha} \ell(t)}=\frac{c_1\MBG{0}{1-a}}{\phiplus{0}}.
\end{equation}
Let us define on
$(0,\infty)$ the measures
$\nu_{a,t}(\D y):= y^{-a}\P(I_\xi^{-a}(t)\in \D y)t^{\alpha}/\ell(t)$. As we see from
\eqref{eq:distribution functions}, these measures
have finite masses, so as we seek to
obtain weak convergence, we will normalise
them and use the classic Portmanteau
theorem, see \cite[Corollary 8.2.10]
{Bogachev2007}. Let
$\widetilde{\nu}_{a,t}(\D y):=\nu_{a,t}(\D y)/\nu_{a,t}(0,\infty)$. From 
\eqref{eq:distribution functions},
\[
\lim_{t \to \infty}
\widetilde{\nu}_{a,t}(\D y)\lbrb{(0,x]}
= \frac{h_a^x(0)}{\MBG{0}{1-a}},
\]
hence we only need to verify that the last
defines a distribution function of a 
probability measure, for which it
would be enough to check, substituting the definition of
$h_a^x$, \eqref{eq:gaxn}, that
\begin{equation}
    \label{eq: lim h_a^x}
\lim_{x \to \infty}h_a^x(0) = 
\lim_{x \to \infty} \frac{-1}{2\pi i}\int_{\Re(z) = b}\frac{x^{-z}}z \MBG{0}{z+1-a} \D z=
\MBG{0}{1-a}.
\end{equation}
We will show that the last holds by using the residue theorem: by \cite[(2.10)]{PatieSavov2018},
the function $z \mapsto x^{-z}\MBG{0}{z+1-a}/z$ is meromorphic on $\Cb_{(a-1,a)}$, with a single simple pole at
0. Its residue there is exactly $\MBG{0}{1-a}$, so integrating by the
negatively oriented contour of the rectangle $\gamma_{R}:=\{z\in \C|-b_1 \leq \Re(z) \leq b_1,-R \leq \Im z \leq R\}$
with the choice $-b = b_1 \in (0, \min\{a,1-a\})$ and $R >0$,
gives us
\begin{equation}\label{eq:contour_int}
-2 \pi i \MBG{0}{1-a} = \lim_{R \to \infty} \oint_{\gamma_{R}}\frac{x^{-z}}z \MBG{0}{z+1-a}\D z.
\end{equation}
Along the horizontal
strips of the contour,
i.e. $\Im z = \pm R$, we use the bound 
\[
|\MBG{q}{ z}| \leq \MBG{q}{\Re(z)},
\]
which holds when the quantities
are well-defined, and is based on the results 
\cite[Theorem 6.1 (3), Proposition 6.8]
{PatieSavov2021},
which state,
respectively,
that
$W_{\phi_{q,-}}(z)$
and
$\Gamma(z)/W_{\phi_{q,+}}(z)$
are Mellin transforms of
 positive random variables.
Indeed, if we call these
random variables
$V_{\phi_{q,-}}$ and
$I_{\phi_{q,+}}$, we can 
see that
\begin{equation}
    	\begin{split}
\labsrabs{\MBG{q}{ z}} &=\labsrabs{\frac{\Gamma(z)}{W_{\phi_{q,+}}(z)}W_{\phi_{q,-}}(1-z)
}
=
\labsrabs{\Ebb{I_{\phi_{q,+}}^{z-1}}\Ebb
{V_{\phi_{q,-}}^{-z}}}
\\&\leq
\Ebb{\labsrabs{I_{\phi_{q,+}}^{z-1}}}\Ebb
{\labsrabs{V_{\phi_{q,-}}
^{-z}}}= 
\Ebb{I_{\phi_{q,+}}^{\Re(z)-1}}\Ebb
{V_{\phi_{q,-}}^{-\Re(z)}}
= \MBG{q}{\Re(z)}.
\end{split}
\end{equation}
This gives us 
\begin{equation*}
    \begin{split}
 &\labsrabs{\int_{-b_1}^{b_1}
\frac{x^{-y - iR}}{y + iR} 
\MBG{0}{y +1 -a  + iR}
-
\frac{x^{-y + iR}}{y - iR} 
\MBG{0}{y +1 -a  - iR} \D y}
\\
&\hspace{10em}
\leq
\frac{2}{R}\int_{-b_1}^{b_1}
x^{-y} \MBG{0}{y +1 -a  }\D y
=
\bo{\frac{1}{R}},      
    \end{split}
\end{equation*}
because
on $[-b_1,b_1]$,
$y \mapsto x^{-y}$ and $y \mapsto$ $\MBG{0}{y+1-a}$ are continuous which is clear for the first and by \cite[(2.10)]{PatieSavov2018}  for the second.
Therefore, by dominated convergence along the horizontal strips of $\gamma_R$,
by \eqref{eq:contour_int},
\begin{equation*}
    \begin{split}
       -2 \pi i &\MBG{0}{1-a} = 
       \lim_{R \to \infty} \oint_{\gamma_{R}}\frac{x^{-z}}z \MBG{0}{z+ 1-a}\D z \\
       &= \int_{\Re(z) = -b_1}
       \frac{x^{-z}}z \MBG{0}{
       z+ 1-a} \D z 
       - 
       \int_{\Re(z) = b_1}\frac{x^{-z}}z \MBG{0}{z+1-a} \D z. 
    \end{split}
\end{equation*}
It remains to prove
that the integral along $\Re(z) = b_1$ goes to zero as $x\to \infty$. 
Using once again the polynomial decay of $\MBG{0}{\cdot}$ along complex lines, see \eqref{eq:decay_M} and \eqref{eq:decay_M_CPP},
and the fact that $b_1>0$, we have that
\begin{equation*}
    	\begin{split}
&\labsrabs {\int_{\Re(z) = b_1}\frac{x^{-z}}z \MBG{0}{z+1-a} \D z}
= \labsrabs {\int_{-\infty}^\infty \frac{x^{-b_1 -iy}}{-b_1 + iy} \MBG{0}{-b_1 +1-a +  iy} \D y}\\
&\hspace{8.7em}\leq 
x^{-b_1}
{\int_{-\infty}^\infty 
\frac{1}{\max\lbcurlyrbcurly{b_1, |y|}}\labsrabs{\MBG{0}{-b_1 +1-a +  iy} }\D y} < \infty,
\end{split}
\end{equation*}
so by dominated convergence, the last tends to $0$ 
when $x \to \infty$ as desired, i.e.
\eqref{eq: lim h_a^x} holds. Therefore, by the Portmanteau
theorem, we have the weak convergence 
\[
\widetilde{\nu}_{a,t} (\D y)
= \frac{\nu_{a,t}(\D y)}{\nu_{a,t}\lbrb{(0, \infty]}}
\xrightarrow[t \to \infty]{w}
\widetilde{\nu}_a(\D y)
\]
with $\widetilde{\nu}_a(\D y)$ a
probability measure with distribution
function $h_a^x(0)/\MBG{0}{1-a}$. Therefore,
as,  by \eqref{eq:distribution functions}, $\limi{t}\nu_{a,t}\lbrb{(0, \infty]}
=c_1\MBG{0}{1-a}/\phi_+(0)$,
we obtain the statement
of the theorem, 
\eqref{eq:result_thm},
\begin{equation}
    \label{eq:result_proof}
\nu_{a,t}(\D y) =
\frac{\P(I_\xi^{-a}(t)\in \D y)t^{\alpha}}{y^a\ell(t)}
\xrightarrow[t \to \infty]{w}
\nu_a(\D y)
\end{equation}
with $\nu_a$ defined by $\nu_a((0,x]) = c_1 h_a^x(0)/\phi_+(0)$.

\textbf{Case 2: $\alpha$ is integer, i.e. $\alpha \in \{2, 3, \dots\}$.}

We note that now $n = \alpha - 1 \geq 1$ and $\P(\xi_t \geq 0) \simi 
c_1\ell(t)/t^n$.
The analogue of
\eqref{eq:deriv_phi_inf} in this
case is that
\begin{equation}  \label{eq:deriv_phi_inf_case2}
\labsrabs{\phiplusderiv{0+}{k}} < \infty, \quad
k = 0, 1, \dots, n-1,
\end{equation} 
and the difference is that
we do not have for sure that
the $n$th derivative of $\phi_+$
is infinite at zero. However,
by \eqref{eq:ln_kappa},
since $t^{n-1} \P(\xi_t\geq 0)
\simi \ell(t)/t$, by
Proposition
\ref{prop: de Haan 1.5.9a},
\begin{equation}
    \label{eq: t^n-1 slowly var}
\lbrb{\ln \phi_+}^{(n)}(q)
=\int_0^\infty e^{-qt}t^{n-1}\P(\xi_t \geq 0) \D t \in SV_0
\end{equation}
and, as before, using  
Faà di Bruno's formula, \eqref{eq:Faa_di_Bruno_ln},
we get that
$\phiplusderiv{q}{n} \in SV_0$, so
for small $q$, by Potter's bounds,
see Proposition \ref{prop:Potter's bounds}, $\labsrabs{\phiplusderiv{q}{n}} = \bo{q^{-1/4}}$. Now, from Faà di Bruno's formula for $1/\phi_+$,
see \eqref{eq:FdB_1/f}, it follows that
\begin{equation}
    \label{eq: 1/phi+ big O}
    \labsrabs{\lbrb{\frac{1}{\phi+}}^{(n)}(q)} = \bo{q^{-1/4}}.
\end{equation}
Next, let us define, for $q \geq 0$, $x \in (0,1)$, and $\lambda\geq1$,
\begin{equation}
    \label{eq: def Mtilde}
    \begin{split}
\widetilde{M}_{n}(q, x) &:= \partialder{n}{q}
{\MBG{q}{x}} - 
\partialder{n}{q}
{\MBG{0}{x}}, \quad\text{and}\\
K_{\lambda, n}(q) &:= 
\lbrb{1/\phi_+}^{(n)}\lbrb{q/\lambda} -
\lbrb{1/\phi_+}^{(n)}(q).
    \end{split}
\end{equation}
By (\ref{it:cor_der_Mq_i}) from Corollary \ref{cor:der_Mq},
$\partialder{k}{q}
{\MBG{0}{x}}$ are finite
for $0 \leq k \leq n$, so
$\widetilde{M}_{k}(q, x)$
are well-defined for  these $k$. Moreover, by the
mean value theorem, for $0 \leq k \leq n-1$, there
exist $\widetilde{\eta}_k \in (0,q)$ such that
\[
\labsrabs{\widetilde{M}_{k}(q, x)} =\labsrabs{ q\partialder{k+1}{q}
{\MBG{\widetilde{\eta}_k}{x}} }= \bo{q} = \bo{q^{3/4}}.
\]
In a similar spirit, by
\eqref{eq:deriv_phi_inf_case2},
$K_{\lambda, k}(q)$ are
well defined for $0 \leq k  \leq n-1$,
and by the mean value theorem, for these $k$
and $q > 0$,
there exist $\eta_k \in (q/\lambda, q)$ such that
\[
\labsrabs{K_{\lambda, k}(q)} = \labsrabs{q\lbrb{\frac{1}{\lambda} -  1}\lbrb{\frac{1}{\phi_+}}^{(k+1)}(\eta_k)}
= \bo{q^{3/4}}
\]
by \eqref{eq:deriv_phi_inf_case2} and
 \eqref{eq: 1/phi+ big O}. To summarise:
\begin{equation}
\label{eq:M_kappa}
    \labsrabs{\widetilde{M}_{k}(q, x) }\!=\! \bo{q^{3/4}},
    \labsrabs{K_{\lambda, k}(q)} \!= \! \bo{q^{3/4}}
    \,\,\text{for}\,\,
    0 \leq k \leq n-1;
    \,\,\,\text{and }
    \widetilde{M}_{n}(q, x) = \so{1}\!.
\end{equation}
Let 
\begin{equation}
    \label{eq:def G_lambda}G_{\lambda,n}(q):=\widehat{g_a}^{(n)}(q/\lambda)
- \widehat{g_a}^{(n)}(q) .
\end{equation}
We can 
use once again
\eqref{eq:der_Mq/k} 
and then
\eqref{eq:deriv_phi_inf_case2} and the terms of order $\bo{q^{3/4}}$ from \eqref{eq:M_kappa} to obtain that, for $q \to 0$,
\begin{equation}
\label{eq:G_lambda calculation}
\begin{split}
 &G_{\lambda,n}(q)=
\sum_{k = 0}^n
\binom{n}{k}
\bigg(
K_{\lambda, k}(q)
\partialder{n-k}{q}
{\MBG{0}{1-a}}
\\
&\hphantom{{}=
\sum_{k = 0}^n
\binom{n}{k}
}
+
\lbrb{1/\phi_+}^{(k)}\lbrb{{q/\lambda}}
\widetilde{M}_{n-k}(q/\lambda, 1-a)
-
\lbrb{1/\phi_+
}^{(k)}(q)
\widetilde{M}_{ n-k}(q,1-a)\bigg)
\\
&=
K_{\lambda, n}(q)\MBG{0}{1-a}
+ \bo{q^{1/2}} \\
&\hphantom{{}=
\sum_{k = 0}^n
\binom{n}{k}
}
+ 
\lbrb{1/\phi_+
}^{(n)}(q/\lambda)
\widetilde{M}_{ 0}(q/\lambda,1-a)
-
\lbrb{1/\phi_+}^{(n)}(q)
\widetilde{M}_{0}(q, 1-a)\\
&\hphantom{{}=
\sum_{k = 0}^n
\binom{n}{k}
}\hspace{5.5em}+
(1/\phi_+)(q/\lambda)
\widetilde{M}_{n}(q/\lambda, 1-a)
-
(1/\phi_+)(q)
\widetilde{M}_{n}(q, 1-a).
\end{split}
\end{equation}
For the last 4 terms above, we
calculate
\begin{equation}
\label{eq:G_lambda 1st step}
\begin{split}
&\lbrb{1/\phi_+
}^{(n)}(q/\lambda)
\widetilde{M}_{ 0}(q/\lambda,1-a)
-
\lbrb{1/\phi_+}^{(n)}(q)
\widetilde{M}_{0}(q, 1-a)\\
&\qquad\qquad+
(1/\phi_+)(q/\lambda)
\widetilde{M}_{n}(q/\lambda, 1-a)
-
(1/\phi_+)(q)
\widetilde{M}_{n}(q, 1-a)\\
&\quad= K_{\lambda, n}(q)\widetilde{M}_{0}(q/\lambda, 1-a)
+ \lbrb{1/\phi_+}^{(n)}(q)\lbrb{
\widetilde{M}_{0}(q/\lambda, 1-a) - \widetilde{M}_{0}(q, 1-a)}\\
&\qquad\qquad
+ K_{\lambda, 0}(q) 
\widetilde{M}_{n}(q/\lambda, 1-a)
+ (1/\phi_+)(q)
\lbrb{\widetilde{M}_{n}(q/\lambda, 1-a) - \widetilde{M}_{n}(q, 1-a)}\\
&\quad=K_{\lambda, n}(q)\bo{q^{1/2}} + \bo{q^{-1/2}}\bo{q} + \bo{q^{1/2}}\so{1}
\\
&\qquad\qquad+\bo{1}\lbrb{\widetilde{M}_{n}(q/\lambda, 1-a) - \widetilde{M}_{n}(q, 1-a)}\\
&\quad= \so{K_{\lambda,n}(q)} + \bo{q^{1/2}}
+\bo{1}\lbrb{\widetilde{M}_{n}(q/\lambda, 1-a)
 - \widetilde{M}_{n}(q, 1-a)},
\end{split}
\end{equation}
where we have substituted
\eqref{eq: 1/phi+ big O}
and  \eqref{eq:M_kappa}, and have
 used the mean value theorem once again for $\widetilde{M}_{0}(\cdot, 1-a)$. For the last difference,
 substituting the definition of 
 $\widetilde{M_n}$, \eqref{eq: def Mtilde},
 by the mean value theorem,
 there exists an $\eta_q \in (q/\lambda, q)$ such that
 \begin{equation*}
     \begin{split}
          \widetilde{M}_{n}\lbrb{
 \frac{q}{\lambda}, 1-a}
 - \widetilde{M}_{n}(q, 1-a)
 &=
 \partialder{n}{q}
{\MBG{\frac{q}{\lambda}}{1-a}} - 
\partialder{n}{q}
{\MBG{q}{1-a}}\\
&= q\lbrb{\frac{1}{\lambda} -  1}\partialder{n+1}{q}
{\MBG{\eta_q}{x}},
     \end{split}
 \end{equation*}
which from item 
\ref{it:cor_der_Mq_ii'} of Corollary \ref{cor:der_Mq} with $\delta=1/4$
is $\bo{q(U^{*n}_q(\R_+) + q^{-1/4})}$. Moreover, 
from Proposition \ref{prop:conv}, we know that
$U^{*n}_q(\R_+)
= \int_0^\infty  e^{-qt}t^{n-1} \P(\xi_t \geq 0)\D t$, which in our case,
see \eqref{eq: t^n-1 slowly var}, is a slowly varying function as $q \to 0$, and is therefore $\bo{q^{-\delta}}$ for every
$\delta > 0$ by Proposition \ref{prop:Potter's bounds}. This leads to
\begin{equation}
\label{eq: M tilde bigO}\widetilde{M}_{n}(q/\lambda, 1-a)
 - \widetilde{M}_{n}(q, 1-a)
\!=\!
\bo{q\lbrb{U^{*n}_q(\R_+)\! +\! q^{-1/4}}} \!=\!
\bo{q\lbrb{q^{-1/4}}} \!=\! \bo{q^{3/4}}\!.
\end{equation}
Substituting the above and
\eqref{eq:G_lambda 1st step} into \eqref{eq:G_lambda calculation}, we get that
\begin{equation}
\label{eq:G_lambda asymp terms}
    G_{\lambda,n}(q) \simo
K_{\lambda, n}(q)\MBG{0}{1-a}
+\so{K_{\lambda, n}(q)} + \bo{q^{3/4}}.
\end{equation}
Now, we try to obtain the asymptotics of $K_{\lambda, n}$ via a link
with the representation \eqref{eq:ln_kappa}: first, let us note that from it, for $q>0$,
\begin{equation}
	\begin{split} 
\lbrb{\ln \phi_+}^{(n)}(q/\lambda)-
\lbrb{\ln \phi_+}^{(n)}(q)
&=
(-1)^{n+1} \int_0^\infty \lbrb{e^{-qt/\lambda} -
e^{-qt}}t^{n-1} \P(\xi_t \geq 0) \D t\\
&= (-1)^{n+1}q \int_0^\infty t^{n-1} \P(\xi_t \geq 0)
\int_{t/\lambda}^t e^{-qs} \D s \D t\\
&=
(-1)^{n+1}q \int_0^\infty  e^{-qs} 
\int_{s}^{\lambda s}
t^{n-1} \P(\xi_t \geq 0)
\D t
\D s.
\end{split} 
\end{equation} 
Moreover, 
by
\eqref{eq:asymp_xi_t_0} and
Proposition \ref{prop: de Haan log p.127},
$\int_s^{\lambda s} t^{n-1}
\P(\xi_t \geq 0) \D t\simi
c_1 \ln(\lambda) \ell(s)$, so, similar to \eqref{eq:ln_kappa_case1},
by Karamata's theorem \ref{thm:Karamata_1.5.8.}
and a standard Tauberian theorem
\ref{thm:Tauberian_1.7.1},
\begin{equation}
\label{eq:dif_ln_asymp}
\lbrb{\ln \phi_+}^{(n)}(q/\lambda)-
\lbrb{\ln \phi_+}^{(n)}(q)
 \simo(-1)^{n+1} c_1
 \ln(\lambda)\ell(1/q).   
\end{equation}
Next, note that we can represent
\begin{equation}
	\begin{split} 
\lbrb{\ln f}^{(n)} = \lbrb{-f (1/f)'}^{(n-1)}
=-
\sum_{k=0}^{n-1}
\binom{n-1}{k}
f^{(k)} (1/f)^{(n-k)},
\end{split} 
\end{equation} 
so we have that
\begin{equation}
	\begin{split} 
&\lbrb{\ln \phi_+}^{(n)}(q/\lambda)
-
\lbrb{\ln \phi_+}^{(n)}(q)
\\&\,\,= -\sum_{k=0}^{n-1}
\binom{n-1}{k}
\lbrb{K_{\lambda,n-k}(q)
\phi^{(k)}_+(q/\lambda)
+
\lbrb{
\phiplusderiv{q/\lambda}{k} - \phiplusderiv{q}{k}
}
\lbrb{1/\phiplus{q}}^{(n-k)}
}\\
&\,\,=
-K_{\lambda, n}(q)\phiplus{q/\lambda}
+ \bo{q^{3/4}},
\end{split} 
\end{equation} 
where we have used \eqref{eq:deriv_phi_inf_case2}, \eqref{eq:M_kappa},
and the mean value theorem, combined with
\eqref{eq:deriv_phi_inf_case2} and
\eqref{eq: 1/phi+ big O}. Substituting in \eqref{eq:dif_ln_asymp}, we get that
\begin{equation}
\label{eq:asymp_kappa_lambda}
K_{\lambda, n}(q) 
\simo 
(-1)^n c_1 \ln(\lambda)\phi^{-1}_+(0)
    \ell(1/q).
\end{equation}
Thus, from \eqref{eq:G_lambda asymp terms}, because slowly
varying functions grow slower
than polynomials, see
again Proposition
\ref{eq:Potter's bounds}, we get that
\begin{equation}
    \label{eq:asymp_K}
    G_{\lambda,n}(q) \simo K_{\lambda,n}(q) \MBG{0}{1-a}
\simo (-1)^n \MBG{0}{1-a} c_1 \ln(\lambda)\phi^{-1}_+(0)
\ell(1/q).
\end{equation}
Let us recall that we defined previously in
\eqref{eq:def_g_an} the function $g_{a, n}$ by
\[ g_{a,n}(x) :=
  \int_x^\infty \int_{s_1}^\infty \dots \int_{s_{n-1}}^ \infty 
  g_a(s_n)
\D s_{n}   \dots \D s_2 \D s_1.\]
We introduce now, for $\lambda \geq 1$,
\begin{equation}
    \label{eq:def R}
    R(t) := \int_0^t g_{a,n}(s) \D s,
\quad
and
\quad R_\lambda(t) :=
\int_t^ {\lambda t} g_{a,n}(s) \D s,
\end{equation}
which are well-defined by Lemma \ref{lemma:bound_E}.
We have that, for $q>0$,
\begin{equation}\label{eq:R_lambda}
	\begin{split}   
q \widehat{R_\lambda}(q)& =
q \IntOI e^{-qt} \int_t^ {\lambda t} g_{a,n}(s) \D s \D t \\
&= 
q \IntOI g_{a,n} (s) \int_{s/\lambda}^s e^{-qt} \D t \D s = 
\widehat{g_{a,n}}\lbrb{q/\lambda} - \widehat{g_{a,n}}(q),
\end{split} 
\end{equation} 
which by Proposition \ref{prop:LaplaceT} and 
the definition of $G_{\lambda,n}$,
see
\eqref{eq:def G_lambda},
is equal to
\begin{equation*}\label{eq:integer_dif}
 \frac{(-1)^{n}}{(n-1)!}\int_0^1
 G_{\lambda,n}(qv)(1-v)^{n-1} \D v,
\end{equation*}
and thus, by \eqref{eq:asymp_K} and Proposition
\ref{prop:RV},
\begin{equation}
    \label{eq:qLaplaceR}
q \widehat{R_\lambda}(q) \simo
\frac{c_1\MBG{0}{1-a}
 \ln(\lambda)}{n!
\phiplus{0}}
\ell\lbrb{
1/q}.
\end{equation}
Next, 
in the last result
we will
use a generalised
Tauberian theorem
for $\widehat{R_\lambda}$,
Theorem \ref{thm:extended Tauberian, 1.7.6},
 for
which we will need
to check the regularity
condition
\eqref{eq:slow decrease condtion}:
\begin{equation*}
    \lim_{\mu\to 
1+}
\liminfi{x}\inf_{1\leq t
\leq \mu}
\frac{R_\lambda(tx)-R_\lambda(x)}{\ell(x)}\geq 0.
\end{equation*}
Since $R$ is increasing
and $g_{a,n}$
is decreasing,
we have that, for 
$\mu \geq t \geq 1$,
\begin{equation*}
    \begin{split}
    R_\lambda(tx)-R_\lambda(x)
    &=
    R(\lambda tx)-R(\lambda x)-(R(tx)-R(x))
    \geq -(R(tx)-R(x))
\\&\geq -(R(\mu x)-R(x))
     = -\int_x^{\mu x}
     g_{a,n}(s)\D s
     \geq (1- \mu)xg_{a,n}(x),
    \end{split}
\end{equation*}
so it would be enough to check that
$xg_{a,n}(x) = \bo{\ell(x)}$
as $x \to \infty$. This is indeed true because
from Lemma \ref{lemma:bound_E},
$\Ebb{I_\xi^{-a}(t)} \leq c_2 \ell(t)/t^{n+1}$ for
some $c_2 >0$, and
using this in the definition of
$g_{a,n}$, \eqref{eq:def_g_an},
we have that, for large $x$,
     \begin{equation}
         \label{eq: xg_a, bigO}
     x g_{a,n}(x) =
  x\int_x^\infty \int_{s_1}^\infty \dots \int_{s_{n-1}}^ \infty 
  \Ebb{I_\xi^{-a}(s_n)} \D s_n \dots \D s_2 \D s_1
  \leq
  c_{3} \ell(x), 
  \end{equation}
where $c_{3} > 0$, and the calculation is
valid by Theorem \ref{thm:Karamata_1.5.10.}.
Therefore, from the extended Tauberian theorem
\ref{thm:extended Tauberian, 1.7.6},
\[R_\lambda(t)=R(\lambda t)-R(t) \simi c_1 \frac{\MBG{0}{1-a}
 \ln(\lambda)}{n!
\phiplus{0}}
\ell\lbrb{
t},\]
which means by Definition \ref{def: Pi_ell} that
$R \in \Pi_\ell$, and
hence, by 
\eqref{eq:def R} and Theorem \ref{thm:de Haan monotone density, 3.6.8},
\[g_{a,n}(t) \simi c_1 \frac{\MBG{0}{1-a}}{n!
\phiplus{0}}
t^{-1}\ell(t). \]
After $n$ applications of the monotone density theorem \ref{thm:Monotone density, 1.7.2} in the last equivalence, we get that
\begin{equation}   \label{eq:g_a_integer}
    g_a(t) = \Ebb{I^{-a}_\xi(t)} 
\simi 
\frac{c_1 \MBG{0}{1-a}}{\phiplus{0}}
t^{-(n+1)}\ell(t),
\end{equation}
which, as $\alpha = n+1$, is the same result as in the first case,
see \eqref{eq:asymp_g}. The remaining arguments are also similar to past ones: first, note that
\eqref{eq:gaxn} holds:
 for $q>0$ and $b \in (a-1, 0)$,
\begin{equation}
\label{eq: gaxn bis}
\widehat{g_a^x} ^{(n)}(q)
  = 
  \lbrb{\frac{h_a^x(q)}{\phiplus{q}}}^{(n)}
   \text{\quad with \quad} h_a^x(q) := -\frac{1}{2\pi i}\int_{\Re(z) = b}\frac{x^{-z}}z \MBG{q}{ z+1-a} \D z,
\end{equation}
and let us define, similar to
$G_{\lambda,n}$, see \eqref{eq:def G_lambda},
\begin{equation*}
G^x_{\lambda, n}(q):=\widehat{g_a^x}^{(n)}(q/\lambda)
- \widehat{g_a^x}^{(n)}(q) 
\end{equation*}
with $g_a^x$ defined in \eqref{def:gx}.
We
will use similar arguments to the ones that starting with
\eqref{eq:der_Mq/k}, lead from
\eqref{eq:def G_lambda} to \eqref{eq:asymp_K}, i.e.
from
\[
\widehat{g_a}^{(n)}(q) =
\lbrb{
\frac{\MBG{q}{1-a}}{\phiplus{q}}
}^{(n)}\quad \text{to}
\quad
G_{\lambda,n}(q) \simo K_{\lambda, n}(q) \MBG{0}{1-a},
\]
to obtain that,
using $h_a^x(q)$ instead of
$\MBG{q}{1-a}$, that  \eqref{eq: gaxn bis} leads to
\[G^x_{\lambda, n}(q) \simo
K_{\lambda, n}(q) h_a^x(0).
\]
We will follow in detail the mentioned chain of arguments: define first an analogue of the defined in \eqref{eq: def Mtilde} $\widetilde{M_k}$: for $k \leq n$,
\[
\widetilde{H}_{n}(q, x) :=
\lbrb{h_a^x}^{(n)}(q) - \lbrb{h_a^x}^{(n)}(0), 
\]
which is well defined by \eqref{eq:h_a^x limsup}. Therefore,
as in \eqref{eq:M_kappa}, by the mean value theorem
\begin{equation*}
    \labsrabs{\widetilde{H}_{k}(q, x) }= \bo{q^{3/4}},
    \labsrabs{K_{\lambda, k}(q)} =  \bo{q^{3/4}}
    \,\,\text{for}\,\,
    0 \leq k \leq n-1;
    \,\text{and}\,
    \widetilde{H}_{n}(q, x)\! =\! \so{1}
\end{equation*}
so \eqref{eq:G_lambda calculation} and
\eqref{eq:G_lambda 1st step} hold with $\widetilde{H}$ instead of $\widetilde{M}$, and we are left again to treat the term
\[\widetilde{H}_{n}(q/\lambda, 1-a)
 - \widetilde{H}_{n}(q, 1-a),\]
 which is equal by definition to
\[
-\frac{1}{2\pi i}\int_{\Rez = b}\frac{(1-a)^{-z}}z\lbrb{\partialder{n}{q}
{\MBG{\frac{q}{\lambda}}{ z+1-a}}-
\partialder{n}{q}
{\MBG{q}{ z+1-a}}}
     \D z.\]
     Before in
\eqref{eq: M tilde bigO}, we 
used the mean value theorem for the $n$th $q$-derivative of $M_\Psi$ to show that
\[
\widetilde{M}_{n}(q/\lambda, 1-a)
 - \widetilde{M}_{n}(q, 1-a)
=
 \bo{q^{3/4}}.
\]
As 
$M_\Psi$ is complex valued, 
we cannot use
the mean value theorem, however we make a similar argument under the sign of the integral. We sketch the proof in the case where
$\xi$ is not a CPP with positive drift, as the arguments in both cases have similar nature. Full details are available in \cite[p.137]{Minchev-2024}. Write, for $x_a := 1-a\in(0,1)$,
\begin{equation}\label{eq:In sup}
    \begin{split}
       \labsrabs{ I_n}&:=\labsrabs{\frac{1}{2\pi i}\int_{\Rez = b}\frac{x_a^{-z}}z\lbrb{\partialder{n}{q}
{\MBG{\frac{q}{\lambda}}{ z+1-a}}-
\partialder{n}{q}
{\MBG{q}{ z+1-a}}}
     \D z}\\
     &\,=\frac{1}{2\pi }\labsrabs{\int_{\Rez = b}\frac{x_a^{-z}}z\int_{q}^{q/\lambda}\partialder{n+1}{q}
{\MBG{w}{z+1-a}}\D w
     \D z}\\
     &\,\leq\frac{1}{2\pi }\int_{-\infty}^\infty\frac{x_a^{-b}}{|b+iy|}q\lbrb{\lambda-1}\sup_{y \in (q/\lambda,q)}\labsrabs{\partialder{n+1}{q}
{\MBG{y}{b+1-a + iy}}}
     \D y.
    \end{split}
\end{equation} 
Using logic similar to the one
above \eqref{eq: M tilde bigO}, we will show that,
for sufficiently small $q$, there exists a 
polynomial $Q$ of 
degree $n+1$ such that
\[
\sup_{y \in (q,q/\lambda)}\labsrabs{\partialder{n+1}{q}
{ \MBG{q}{z}}} 
\leq
q^{-1/4}Q_{ \Rez, n}\lbrb{\labsrabs{z}}\labsrabs{ \MBG{q}{z}},
\]
First, by \textit{\ref{it:cor_der_Mq_ii'}} of Corollary \ref{cor:der_Mq} with $\delta=1/4$, we have that
there exist a polynomial $P_{\Rez, n}$ of
    degree $n+1$ such that,  for all $q>0$
    and $\Rez \in (0,1)$,
    $$\labsrabs{\partialder{n+1}{q}
{ \MBG{q}{z}}} \leq \lbrb{U^{*n}_q(\Oiclosed) + q^{-1/4}}P_{ \Rez, n}\lbrb{\labsrabs{z}}\labsrabs{ \MBG{q}{z}}.$$
Next, 
as before $
U^{*n}_q(\R_+)
=
\bo{q^{-1/4}}
$,
so for sufficiently small $q$,
    $$\labsrabs{\partialder{n+1}{q}
{ \MBG{q}{z}}} \leq 2q^{-1/4}P_{ \Rez, n}\lbrb{\labsrabs{z}}\labsrabs{ \MBG{q}{z}}.$$
Using this bound for over
$( q/\lambda, q)$, we get that, for small $q$,
\begin{align*}
    \sup_{y \in (q,q/\lambda)}\labsrabs{\partialder{n+1}{q}
{ \MBG{q}{z}}} &
\leq
2\lambda^{1/4}
 q^{-1/4}P_{ \Rez, n}\lbrb{\labsrabs{z}}\labsrabs{ \MBG{q}{z}}\\
&=: q^{-1/4}Q_{ \Rez, n}\lbrb{\labsrabs{z}}\labsrabs{ \MBG{q}{z}},
\end{align*}
with $Q$ a polynomial of
degree $n+1$ as desired.
Substituting in \eqref{eq:In sup}, we get for small $q$ that
\begin{equation} \label{eq: In case}
    \begin{split}
       \labsrabs{ I_n}&\leq\frac{q^{3/4}(\lambda-1)}{2\pi }\int_{-\infty}^\infty\frac{x_a^{-b}}{|b+iy|}Q_{ \Rez, n}\lbrb{\labsrabs{b+1-a+iy}}\labsrabs{ \MBG{q}{b+1-a+iy}}
     \D y.
    \end{split}
\end{equation} 
We have already proven that integrals of the last type are finite,
see below \eqref{eq: finite integral}, giving the desired
\[
  \widetilde{H}_{n}\lbrb{
 \frac{q}{\lambda}, 1-a}
 - \widetilde{H}_{n}(q, 1-a)
=\bo{q^{3/4}},
\] and as a conseqience the modified
\eqref{eq:G_lambda asymp terms}:
\begin{equation*}
    G_{\lambda,n}^x(q) \simo
K_{\lambda, n}(q)h_a^x(0)
+\so{K_{\lambda, n}(q)} + \bo{q^{3/4}},
\end{equation*}
which, by \eqref{eq:asymp_kappa_lambda}, 
gives 
\[G^x_{\lambda, n}(q) \simo
K_{\lambda, n}(q) h_a^x(0).
\]
Continuing the analogy, following
   \eqref{eq:def R},
let us define
\[R^x(t) := \int_0^t g^x_{a,n}(s) \D s, \quad
\text{and}
\quad R^x_\lambda(t) :=
\int_t^ {\lambda t} g^x_{a,n}(s) \D s,\] 
which are well-defined because
$g^x_{a,n} \leq g_{a,n}$. The same inequality also gives that $y g^x_{a,n}(y)\leq y g_{a,n}(y) = \bo{\ell(y)}$ as $y \to \infty$, see \eqref{eq: xg_a, bigO}, so repeating the
arguments from \eqref{eq:R_lambda} to \eqref{eq:g_a_integer}, we obtain the similar to \eqref{eq:g_a_integer} result,
\begin{equation*}
    g^x_a(t) = \Ebb{I_\xi^{-a}(t) \ind{I_\xi(t) \leq x}}
\simi 
\frac{c_1 h_a^x(0)}{\phiplus{0}}t^{-(n+1)} \ell(t).
\end{equation*}
Then, in the same way as from \eqref{eq:asymp_gax}
to \eqref{eq:result_proof}, we obtain the same result:
\begin{equation*}
\nu_{a,t}(\D y) =
\frac{\P(I_\xi^{-a}(t)\in \D y)t^{\alpha}}{y^a\ell(t)}
\xrightarrow[t \to \infty]{w}
\nu_a(\D y).
\end{equation*}
\end{proof}     
\fakesubsection{Proof of Theorem \ref{thm:mainPr}}
\begin{proof}[Proof of Theorem \ref{thm:mainPr}]\label{proof 3.5} Set in the notation of Theorem \ref{thm:main} and the proof above $\tilde{\nu}_{a}((0,x]):=\nu_a((0,x])/\nu_a(0,\infty)$, and let $W$ be the positive random variable with law $\tilde{\nu}_{a}$.
Then, from \eqref{eq: nu_a} of Theorem \ref{thm:main} and \eqref{eq:moments} of Corollary \ref{eq:moments}, we see that, for any $x\in\lbrb{0,\infty}$,
\[F_W(x):=\tilde{\nu}_{a}((0,x])=-\frac{1}{2\pi i}\int_{\Re z = b}\frac{x^{-z}}z \frac{\MBG{0}{ z+1-a}}{\MBG{0}{ 1-a}} \D z\]
for $b \in \Cb_{(a-1,0)}$. Since this formula is precisely the formula for Mellin inversion, we conclude that
\begin{equation*}\label{eq:M_W2}
 \Mcc_{F_W}(z) =-\frac{1}{z} \frac{\MBG{0}{ z+1-a}}{\MBG{0}{ 1-a}}=-\frac{1}{z}\frac{\Gamma(z+1-a)W_{\phi_{0,+}}(1-a)}{W_{\phi_{0,+}}(z+1-a)\Gamma(1-a)}\frac{W_{\phi_{0,-}}(a-z)}{W_{\phi_{0,-}}(a)},  
\end{equation*}
where we have substituted the definition of $M_\Psi$, \eqref{eq:M_I_psi}. Now, setting $\phi_{\pm}:=\phi_{0,\pm}$, we further derive from above
    \begin{equation}\label{eq:M_W}
        \Mcc_{F_W}(z) =-\frac{1}{z}\frac{\Ebb{I^{z}_{\phi_+}I^{-a}_{\phi_+}}}{\Ebb{I^{-a}_{\phi_+}}}\frac{\Ebb{(Y^{-1}_{\phi_-})^{z}(Y^{-1}_{\phi_-})^{1-a}}}{\Ebb{(Y^{-1}_{\phi_-})^{1-a}}},
        \end{equation}
        where $Y_{\phi_-}$ is the random variable for $q=0$ in the definition of the bivariate \BG function, see \eqref{eq:rec}, or just below \cite[(4.2)]{PatieSavov2018}, and $I_{\phi_+}$ is the exponential functional of the killed subordinator (because $\phi_+(0)>0$) pertaining to $\phi_+$, see \cite[Theorem 2.22]{PatieSavov2018}. The latter arguments are common knowledge in the area of exponential functionals. Clearly, then \eqref{eq:M_W} can be further rewritten as 
         \begin{equation}\label{eq:M_W1}
        \Mcc_{F_W}(z) =-\frac{1}{z}\Mcc_{\Bc_{-a}I_{\phi_+}}(z+1)\Mcc_{\Bc_{1-a}Y^{-1}_{\phi_-}}(z+1)=-\frac{1}{z}\Mcc_{\Bc_{-a}I_{\phi_+}\times \Bc_{1-a}Y^{-1}_{\phi_-}}(z+1),
        \end{equation}
        with the operator $\Bc_{\cdot}$ standing for the size-biased versions of the respective random variables, see \eqref{eq:sizeBiased}. Recalling that the relation between the Mellin transform of the cumulative distribution function and the law of random variable, see \cite[(7.12)]{PatieSavov2018}, matches the expression at the right-hand side in \eqref{eq:M_W1}, we conclude that $W\stackrel{d}{=}\Bc_{-a}I_{\phi_+}\times \Bc_{1-a}Y^{-1}_{\phi_-}$ and \eqref{eq:mainPr} is established. 
        Given that \eqref{eq:M_W1} holds, then the infinite differentiability of $F$ follows immediately from \cite[item (3) from Theorem 2.4]{PatieSavov2018} since it applies with $N_\Psi=\infty$ therein, see \cite[(2.18) of Theorem 2.3]{PatieSavov2018}. This concludes the proof of the theorem.
\end{proof}
\fakesubsection{Proof of Theorem \ref{thm:upperB}}
\begin{proof}[Proof of Theorem \ref{thm:upperB}]
\label{proof 3.8}
    Obviously, it suffices to prove the theorem for $F(x)=x^{-a}$, $a\in\lbrb{0,1}$. Then, \eqref{eq:LT} reads off
    \begin{equation*}
    \begin{string}
     \IntOI e^{-qt}\Ebb{I_\xi^{-a}(t)}\D t=\frac{\MBG{q}{1-a}}{\phi_+(q,0)}.
    \end{string}
\end{equation*}
    If \eqref{condi:upperB} holds for some $n\geq 1$, then, from item (\ref{it:cor_der_Mq_i}) of Corollary \ref{cor:der_Mq}, we have that, for $0 \leq k \leq n+1$,  $\partialder{k}{q}
{ \MBG{q}{z}}$ are finite and right-continuous at $q=0$, and from \eqref{eq:PandInt1} we know that $\abs{\phi^{(n)}_+(0)}<\infty$. From the dominated convergence theorem, we immediately deduce that, for $q>0$,
\[ (-1)^n\IntOI t^ne^{-qt}\Ebb{I_\xi^{-a}(t)}\D t=\partialder{n}{q}{\frac{\MBG{q}{1-a}}{\phi_+(q,0)}},\]
which augmented by the monotone convergence theorem and the finiteness and the right-continuity of the derivatives yields
\[(-1)^n\IntOI t^n\Ebb{I_\xi^{-a}(t)}\D t=\left.\partialder{n}{q}{\frac{\MBG{q}{1-a}}{\phi_+(q,0)}}\right|_{q=0}.\]
The finiteness of the right-hand side and the fact that $\Ebb{I_\xi^{-a}(t)}$ is monotone decreasing leads to
\[\sum_{m\geq 1}\Ebb{I_\xi^{-a}(m+1)}m^n\leq\IntOI t^n\Ebb{I_\xi^{-a}(t)}\D t<\infty ,\]
which settles the claim in \eqref{res:upperB}. The final claim follows from the expression for the derivatives above with $n=0$, the finiteness of $\phi_+(0,0)$, and the monotone convergence theorem.
\end{proof}

\section{Proofs of auxiliary results } \label{sec: auxiliary}

 \begin{proof}[Proof of Proposition \ref{prop:conv}]
We proceed by induction for the case $q>0$. The base case $n=1$ is valid from the definition of potential measures. For $n \in \N$, we calculate
\begin{equation*}
	\begin{split}
U_q^{*(n+1)} &(\D x) = \int_{-\infty}^{\infty}U_q^{*n}(\D y) U_q(\D x - y) \\
&= \frac{1}{(n-1)!}\int_{-\infty}^{\infty} \IntOI \IntOI e^{-q(t_1 + t_2)}t_1^{n-1} \P(\xi_{t_1} \in \D y)
\P(\xi_{t_2} \in \D x -y) \D t_1 \D t_2\\
&= \frac{1}{(n-1)!} \IntOI \int_0^t e^{-qt}t_1^{n-1}\P(\xi_t \in \D x) \D t_1 \D t
\\
&= \frac{1}{n!} \IntOI e^{-qt} t^n \P(\xi_t \in \D x)\D t,
	\end{split} 
\end{equation*} 
where we have used the induction hypotheses 
for $U_q^{*n}$, and afterwards,
we have employed that \[\int_{-\infty}^{\infty} 
\P(\xi_{t_2} \in \D x -y)\Pbb{\xi_{t_1}\in \D y}=\Pbb{\xi_{t_1+t_2}\in \D x},\]
and changed variables as follows $t_1+t_2=t,t_1=t_1$,
and finally we have exchanged the order of integration. The case $q=0$ is valid by taking a limit and using the monotone convergence theorem.
\end{proof}

\begin{proof}[Proof of Lemma \ref{lem:uz}]
For $\Re(z)>-1$, we can see that $u$ is bounded and continuous on $[0,\infty)$ by the Taylor estimates
\begin{equation}\label{eq:uzAsymp}
    \begin{split}
        u_z(y)\simo \frac{z^2-z}{2}y
         \quad\text{and}
  \quad
  u_z(y)\simi \frac{(z-1)\ind{\Re(z)\geq 
  0}
  +\ind{\Re(z)<0}}
  {e^{y+\Re(z)y\ind{\Re(z)<0}}}.
    \end{split}
\end{equation}
As $v_z(y) := ze^{-y} - u_z(y)$, the same
holds for $v_z$.
For the precise bound, we start by using
the estimation lemma
for complex integrals:
\begin{equation}\label{eq: vz integral bound}
    \begin{split}
        |v_z(y)| = \labsrabs{\frac{1 -e^{-zy}}{e^y - 1}} 
        =
        \labsrabs{\frac{
        \int_0^{zy}
        e^{-x} \D x}{e^y - 1}}
        \leq \frac{|zy|
        \max\curly{ 1,  e^{-y\Re(z)}}
        }{e^y - 1} .
    \end{split} 
\end{equation}
If $\Re(z)\geq 0$, as $y/(e^y-1) \leq e^{-y/2}$, we immediately obtain
the first part of \eqref{eq:uz_bound} with $C_{1, \Re(z)} = 1$, and  
$\epsilon_{1, \Re(z)} = 1/2$. For the second part, bounding, for $y>1=:C_\Re(z)$,
\[
 |v_z(y)| = \labsrabs{\frac{1 -e^{-zy}}{e^y - 1}} 
        \leq
\frac{
        1 + e^{-\Re(z) y}}{e^y - 1}
        \leq \frac{2
        }{e^y - 1}
        \leq
        \frac{2
        }{e^{y/2}},
\]
checking directly the last inequality, we obtain the result with  $C_{2, \Re(z)} = 2$, and  
$\epsilon_{2, \Re(z)} = 1/2$.

Next, if $\Re(z)\in (-1,0)$, \eqref{eq: vz integral bound} gives
\[
 |v_z(y)|\leq |z|\frac{
         ye^{-y\Re(z)}
        }{e^y - 1}.
\]We see that there exists $C'_{\Re(z)} > 0$, such that for $y > C'_{\Re(z)}$, $e^y - 1$ is at least $ye^{(-\Re(z)+ (1+\Re(z))/2)y}$,
and
for $0\leq y \leq C'_{\Re(z)}$, we can just bound from above $y\mapsto  ye^{-y\Re(z)}/(e^y - 1)$ with its
supremum, which is finite since the limit at 0 is finite and
the function is continuous. We obtain that for
some $C''_{\Re(z)}>0$, and
$\epsilon_{\Re(z)}=
(1 + \Re(z))/2 > 0$,
we have that
\[\f{ye^{-y\Re(z)}}{e^y - 1}\leq\begin{cases}
			C''_{\Re(z)}, & \text{if $y < C'_{\Re(z)}$};\\
            e^{-\epsilon_{\Re(z)}y}, & \text{if $y \geq C'_{\Re(z)}$}.
		 \end{cases}
   \]
This leads to the desired result with $C_{1, \Re(z)} =\max\curly{1, C''_{\Re(z)}e^{\epsilon_{\Re(z)} C'_{\Re(z)}}}$ and
$\epsilon_{1,\Re(z)} = \epsilon_\Re(z)$. For the second
part of \eqref{eq:uz_bound},
\[
 |v_z(y)| = \labsrabs{\frac{1 -e^{-zy}}{e^y - 1}} 
        \leq
\frac{
        1 + e^{-\Re(z) y}}{e^y - 1}
        \leq \frac{2e^{-\Re(z) y}
        }{e^y - 1}
        \leq
        \frac{2e^{-\Re(z) y}
        }{2e^{(-\Re(z) + \epsilon_\Re(z))y}},
\]
for sufficiently large $y > C_\Re(z)$ for some $C_\Re(z)$,
as $-\Re(z) + \epsilon_\Re(z) = (1-\Re(z))/2<1$, so the second part of \eqref{eq:uz_bound} 
is true with $C_{2, \Re(z)}
= 1$ and $\epsilon_{2,\Re(z)} = \epsilon_\Re(z)$.
\end{proof}

\begin{proof}[Proof of Proposition \ref{prop:LaplaceT}]
We proceed by induction. For $k=1$, by Tonelli's
theorem, we have that
\begin{equation*}
	\begin{split}
\widehat{f_1}(q)  &= \IntOI\! e^{-qs} \!\int_{s}^\infty f(y) \D y \D s =  \IntOI \!f(y) \!\int_{0}^y e^{-qs} \D s \D y = \frac{1}{q}  \IntOI\! f(y)  \lbrb{1 - e^{-qy}} \D y \\&=  \frac{1}{q}\lbrb{\widehat{f}(0) - \widehat{f}(q)  }= -\frac{1}{q}\int_0^q \widehat{f}'(u) \D u = -\int_0^1 \widehat{f}'(qv) \D v .
	\end{split} 
\end{equation*} 

For the induction step, we use similar logic: assume that the result is true for $n = 1, 2, \dots, k$. Then, using it for $n = 1$ and the function $f_k$, we get
\begin{equation*}
	\begin{split}
\widehat{f_{k+1}}(q)  &= \IntOI e^{-qs} \int_{s}^\infty f_k(y) \D y \D s =  -\int_0^1 \widehat{f_k}'(qv) \D v.
	\end{split} 
\end{equation*} 
Now differentiating \eqref{eq:LaplaceT} for $n=k$ in $q$, we substitute in the last equality 
\begin{equation*}
	\begin{split}
-\int_0^1 \widehat{f_k}'(qv) \D v &= \frac{(-1)^{k+1}}{(k-1)!}\int_0^1 \int_0^1  y\widehat{f}^{(k+1)}(qvy)(1-y)^{k-1} \D v \\
&=  \frac{(-1)^{k+1}}{(k-1)!}\int_0^1 \int_0^1  w\widehat{f}^{(k+1)}(qvw)(1-w)^{k-1} \D w \D v\\
&= \frac{(-1)^{k+1}}{(k-1)!}\int_0^1  \widehat{f}^{(k+1)}(qy) \int_y^1 (1-w)^{k-1} \D w \D y\\
&= \frac{(-1)^{k+1}}{k!}\int_0^1  \widehat{f}^{(k+1)}(qy) (1-y)^{k}\D y,
	\end{split} 
\end{equation*} 
where we used the substitution $y = vw$. This concludes the induction step and, respectively, establishes the
claim of the
proposition.
\end{proof}
\begin{proof}[Proof of Proposition \ref{prop:RV}]
    Fix $\varepsilon\in\lbrb{0,1}$ and from the assumptions let $|f(q)|\simo q^{\beta}\ell(q)$ with the slowly varying function $\ell$ being positive in a neighbourhood of zero. Then, from Theorem \ref{thm:UCT_1.2.1} applied to $\ell$ for the compact interval $\lbbrbb{\varepsilon,1}$, we have that
    \[\limo{q}\frac{\int_{\varepsilon}^1f(qv)(1-v)^{n-1}\D v}{f(q)}=\int_{\varepsilon}^1v^{\beta}(1-v)^{n-1} \D v.\]
     With $g(w):=f(w^{-1})\simi w^{-\beta}\ell(w^{-1})$ and $\ell'(w):=\ell(w^{-1})$, we compute   that 
    \begin{equation*}
        \begin{split}
            &\limsupo{q}\abs{\frac{\int_0^{\varepsilon}f(qv)(1-v)^{n-1} \D v}{f(q)}}
            \leq \limsupo{q}\frac{\int_0^{\varepsilon q}\abs{f(w)} \D w}{q\abs{f(q)}}
            \\
            &\quad=\limsupo{q}\frac{\int_{\varepsilon^{-1}  q^{-1}}^\infty\abs{f(w^{-1})}w^{-2}\D w}{q\abs{f(q)}}
            =
            \limsupo{q}\frac{\int_{\varepsilon^{-1} q^{-1}}^\infty\abs{g(w)}w^{-2}\D w}{q\abs{g(q^{-1})}}
            \\&\quad=\limsupi{y}\frac{y\int_{\varepsilon^{-1}  y}^\infty\abs{g(w)}w^{-2}\D w}{\abs{g(\varepsilon^{-1}  y)}}\frac{\abs{g(\varepsilon^{-1}  y)}}{\abs{g(y)}}\\
            &\quad=\limsupi{y}\frac{\int_{\varepsilon^{-1}  y}^\infty\abs{\ell'(w)}w^{-2-\beta}\D w}{\abs{\varepsilon^{\beta} y^{-\beta-1}\ell'(\varepsilon^{-1}y )}}\frac{\abs{g(\varepsilon^{-1}  y)}}{\abs{g(y)}}
            \\&\quad=\varepsilon\limsupi{y}\frac{\int_{\varepsilon^{-1}  y}^\infty\abs{\ell'(w)}w^{-2-\beta}\D w}{\abs{\varepsilon^{\beta+1} y^{-\beta-1}\ell'(\varepsilon^{-1}y )}}\frac{\abs{g(\varepsilon^{-1}  y)}}{\abs{g(y)}}=\frac{\varepsilon^{1+\beta}}{1+\beta},
        \end{split}
    \end{equation*}
    where in the last relation we have employed Theorem \ref{thm:Karamata_1.5.10.} with $\alpha=-\beta-2<-1$, since $\beta\in\lbrbb{-1,0}$, and the fact that $g$ is regularly varying with index $-\beta$.
    From the two relations, we get that, for any $\varepsilon\in\lbrb{0,1}$, 
    \[\limsupo{q}\abs{\frac{\int_{0}^1f(qv)(1-v)^{n-1}\D v}{f(q)}-\int_{0}^1v^{\beta}(1-v)^{n-1}\D v}\leq \frac{\varepsilon^{1+\beta}}{1+\beta}+\int_0^{\varepsilon}v^{\beta}(1-v)^{n-1} \D v.\]
    Since $\varepsilon$ can be set to tend to $0$ and $\beta\in\lbrbb{-1,0}$, we conclude the proof. 
\end{proof}
\begin{proof}[Proof of Lemma \ref{lemma:bound_E}]
Let us note with
 $\Delta \xi_t$ the jump in time $t$ of $\xi$ and define
$\jump{J_1}{\epsilon t}$ $:=$ $\inf\{s\geq 0: \Delta
\xi_s
\geq \epsilon t\}$ for $\epsilon > 0$, i.e. the time of the first jump of size at least $\epsilon t$. We split the considered expectation according to the time of this jump:
\begin{equation}
\label{eq:E_split_Jet}
\Ebb{I^{-a}_\xi(t)}
=
\Ebb{I_\xi^{-a}(t)\ind{\jump{J_1}{\epsilon t}\leq t}}
+
\Ebb{I_\xi^{-a}(t)\ind{\jump{J_1}{\epsilon t}>t}}.
\end{equation}
For the first term, we have that
\begin{equation*}
    \begin{split}
        \Ebb{I_\xi^{-a}(t)\ind{J^{(\epsilon t)}_1\leq t}}&=\int_0^t \Ebb{I_\xi^{-a}(t)\middle|\jump{J_1}{\epsilon t}=s}
        \PiPlus(\epsilon t)
        e^{-\PiPlus(\epsilon t)s} \D s \\
        &\leq \PiPlus(\epsilon t)\IntOI
        \Ebb{I_\xi^{-a}(t)\middle|\jump{J_1}{\epsilon t}=s} \D s.
    \end{split}
\end{equation*}
In the following, we will use the notation $\jump{\xi}{A}$ for the process $\xi$ from
which we have removed jumps larger than $A > 0$. It is immediate that
$\jump{\xi}{A} \leq \xi$, and thus, for each $t > 0$,
\begin{equation}
\label{eq:bounds_small_jumps}
    I_{\jump{\xi}{A}} (t) \geq I_{\xi}(t) \quad 
    \text{and} \quad
I^{-a}_{\jump{\xi}{A}}(t) \leq I_{\xi}^{-a}(t).
\end{equation}
Therefore, for $0<s\leq t$,
we can see that 
\[\Ebb{I_\xi^{-a}(t)\middle| \jump{J_1}{\epsilon t}=s} \leq
\Ebb{I_{\xi}^{-a}(s)\middle| \jump{J_1}{\epsilon t}=s}
= 
\Ebb{I_{\jump{\xi}{\epsilon t}}^{-a}(s)}\leq\Ebb{I_\xi^{-a}(s)}. \]
Moreover, as in \eqref{eq: Pibar}, by \cite[Theorem 1.1]{Watanabe2008} and the assumption of
regularly varying tail, for large $t$, $\overline{\Pi}(\epsilon t)
\leq c_1 \P(\xi_1 > t)$ for some $c_1 > 0$, so we obtain that for large $t$,
\begin{equation}
\label{eq:E_large_jump}
\Ebb{I_\xi^{-a}(t)
\ind{\jump{J_1}
{\epsilon t}\leq t}}
\leq c_1 \P(\xi_1 >  t) \IntOI 
\Ebb{I_\xi^{-a}(s)} \D s = \bo{\P(\xi_1 > t)}, 
\end{equation}
as the integral is finite from \eqref{eq:LT}.

We continue with bounding $\Ebb{I_\xi^{-a}(t)\ind{\jump{J_1}{\epsilon t}>t}}$.
Once again, writing
\begin{align*}
\Ebb{I_\xi^{-a}(t)\ind{\jump{J_1}{\epsilon t}>t}}
&= \P\lbrb{\jump{J_1}{\epsilon t}>t} \Ebb{I_\xi^{-a}(t)\middle|\jump{J_1}{\epsilon t}>t} \\
&\leq \Ebb{I_\xi^{-a}(t)\middle|\jump{J_1}{\epsilon t}>t}
= \Ebb{I_{\jump{\xi}{\epsilon t}}^{-a}(t)},
\end{align*}
we are left with estimating $I_{\jump{\xi}{\epsilon t}}$.
 Let $\jump{J_1}{1} := \inf\{ s \geq 0: \Delta \xi_s \geq 1\} \sim Exp(\PiPlus(1))$,
 and split, for some $c_2 > 0$,
 \begin{equation}
\label{eq:split_J_11}
\begin{split}
      \Ebb{I^{-a}_{\jump{\xi}{\epsilon t}}(t)} 
  =
  \P\lbrb{\jump{J_1}{1}\geq c_2 \ln t}
  &\Ebb{I^{-a}_{\jump{\xi}{\epsilon t}}(t)\middle|\jump{J_1}{1}
  \geq  c_2 \ln t} \\
  &\qquad
  +
  \P\lbrb{\jump{J_1}{1}< c_2 \ln t}
  \Ebb{I^{-a}_{\jump{\xi}{\epsilon t}}(t)\middle|\jump{J_1}{1}< c_2 \ln t}.   
  \end{split}
 \end{equation} 
 As $\jump{J_1}{1} \sim  Exp(\PiPlus(1))$, let us choose $c_2 =(
 \alpha +1)/\PiPlus(1)$, so we have 
\[\P\lbrb{\jump{J_1}{1}\geq c_2 \ln t} = 1/t^{\alpha + 1}.\] Therefore, by \eqref{eq:bounds_small_jumps}, we have for the first term in \eqref{eq:split_J_11} that, for large $t$,
 \[
 \begin{split}
 &\P\lbrb{\jump{J_1}{1}\geq c_2 \ln t}
  \Ebb{I^{-a}_{\jump{\xi}{\epsilon t}}(t)\middle|\jump{J_1}{1} \geq   c_2 \ln t}
    \leq
    \frac{1}{t^{\alpha + 1}}   \Ebb{I^{-a}_{\jump{\xi}{\epsilon t}}(c_2 \ln t)\middle|\jump{J_1}{1} \geq   c_2 \ln t}
    \\
    &\qquad =  \frac{1}{t^{\alpha + 1}} \Ebb{I^{-a}_{\jump{\xi}{1}}(c_2 \ln t)} \leq  \frac{1}{t^{\alpha + 1}} \Ebb{I_\xi^{-a}(1)} = \bo{\frac{\ell(t)}{t^\alpha}}=\bo{\P(\xi_1 \geq t)},
 \end{split}
 \]
 where we used that $1/t = \bo{\ell(t)}$ by Proposition 
\ref{prop:Potter's bounds},
so we have obtained a bound of the needed order.
 
 To work with the second term of \eqref{eq:split_J_11}, we disintegrate
  $\jump{J_1}{1} \sim Exp(\PiPlus(1))$ and use the Markov property at $\jump{J_1}{1}$: for $\eta$
 an independent copy of $\xi$,
\begin{equation}\label{eq:earlyJ_11} 
    \begin{split}
       &\P\lbrb{\jump{J_1}{1}< c_2 \ln t}
  \Ebb{I^{-a}_{\jump{\xi}{\epsilon t}}(t)\middle|\jump{J_1}{1} <   c_2 \ln t}\\
  &\qquad= \P\lbrb{\jump{J_1}{1}< c_2 \ln t}
  \int_0^{c_2 \ln t}
       \Ebb{I^{-a}_{\jump{\xi}{\epsilon t}}(t)\middle|\jump{J_1}{1}=s}  \PiPlus(1)
        e^{-\PiPlus(1)s}
        \D s\\
  &\qquad\leq 
  \PiPlus(1)
  \int_0^{c_2 \ln t}
        \Ebb{\lbrb{
        I_{\jump{\xi}{1}}(s) + e^{-\jump{\xi_s}{1} - 
        \jump{\Delta_s}{\epsilon t}} I_{\jump{\eta}{\epsilon t}}
        (t-s)}^{-a}}
        \D s,
    \end{split}
\end{equation}
where we have used that, conditionally on $\{\jump{J_1}{1}=s\}$,
\[I^{-a}_{\jump{\xi}{\epsilon t}}(t)=I_{\jump{\xi}{1}}(s)+e^{-\jump{\xi_s}{1} - 
        \jump{\Delta_s}{\epsilon t}} \int_{0}^{t-s}e^{-\lbrb{\xi^{(\epsilon t)}_{s+v}-\xi^{(\epsilon t)}_{s}}}\D v\]
        and $\lbrb{\xi^{(\epsilon t)}_{s+v}-\xi^{(\epsilon t)}_{s}}_{v\geq 0}$ is an independent copy of $\jump{\xi}{\epsilon t}$.
Now, let us consider two cases: $\eta^{(\epsilon t)}_{t/4} \leq t\Ebb{ 
\jump{\eta_1}{\epsilon t}} / 8$ and
the opposite. Note that $\Ebb{ 
\jump{\eta_1}{\epsilon t}} \leq  \Ebb{ \eta_1} = \Ebb{ \xi_1} < 0$, so $t\Ebb{ 
\jump{\eta_1}{\epsilon t}} / 8
\to -\infty$
as
$t \to \infty$. We will show that in the first case, the value of $\eta^{(\epsilon t)}_{t/4}$ is small enough to make $I_{\jump{\eta}{\epsilon t}}$
sufficiently large, and afterwards that the probability of the second case is sufficiently
small.

\textbf{Case 1:} 
Under $A_t:= \left\{\eta^{(\epsilon t)}_{t/4} \leq t\Ebb{ 
\jump{\eta_1}{\epsilon t}} / 8\right\}
$. 

In this case, and for $s \leq c_2 \ln t$ and large $t$, we have
by the Markov property at time $t/4$
and \eqref{eq:bounds_small_jumps} that
\begin{equation*}
    \begin{split}
\indset{A_t}I_{\jump{\eta}{\epsilon t}}(t-s) &= \indset{A_t}\int_0^{t-s}
e^{-\jump{\eta}{\epsilon t}_v} \D v
\geq \indset{A_t}\int_{t/4}^{t/2} e^{-\jump{\eta}{\epsilon t}_v} \D v\\
&= \indset{A_t}e^{-\jump{\eta}{\epsilon t}_{t/4}}
\int_{0}^{t/4} e^{-{\widetilde{\eta}}^{(\epsilon t)}_v} \D v
\geq 
e^{-t\Ebb{ 
\jump{\eta_1}{\epsilon t}}/8}
I_{\jump{\widetilde{\eta}}{\epsilon t}}(t/4)\\
&\geq
e^{-t\Ebb{ 
\eta_{1}}/8}
I_{\widetilde{\eta}}(t/4)
\geq e^{-t\Ebb{ 
\eta_{1}}/8}
I_{\widetilde{\eta}}(1),
    \end{split}
\end{equation*}
so
\[
\Ebb{\indset{A_t}I^{-a}_{\jump{\eta}{\epsilon t}}(t-s)}
\leq e^{at\Ebb{ 
\eta_{1}}/8}
\Ebb{I^{-a}_{\widetilde{\eta}}(1)}
=:e^{-c_3  at} c_4,
\]
where $\widetilde{\eta}$ is an independent
copy of $\eta$ and $c_3,c_4 > 0$. We note that the $c_4$ is well-defined, i.e. the last expectation
is finite, 
by \cite[(2.36)]{PatieSavov2018}. Therefore,
\begin{equation}\label{eq:case1int}
    \begin{split}
        \int_0^{c_2 \ln t}&
        \Ebb{\lbrb{
        I_{\jump{\xi}{1}}(s) + e^{-\jump{\xi_s}{1} - 
        \jump{\Delta_s}{\epsilon t}}
        I_{\jump{\eta}{\epsilon t}}
        (t-s)}^{-a}\indset{A_t}}
        \D s \\&\hspace{10em}
        \leq
        \int_0^{c_2 \ln t}
        \Ebb{\lbrb{
        e^{-\jump{\xi_s}{1} - 
        \jump{\Delta_s}{\epsilon t}}
        I_{\jump{\eta}{\epsilon t}}
        (t-s)}^{-a}\indset{A_t}}
        \D s 
        \\&\hspace{10em}
        \leq
        e^{-c_3 at}c_4 \int_0^{c_2 \ln t}
        \Ebb{\lbrb{e^{-\xi^{(1)}_s  - 
        \epsilon t}}^{-a}}
        \D s \\
        &\hspace{10em}:=
        c_4 e^{-c_5 t}\int_0^{c_2 \ln t}
        \Ebb{e^{a\xi^{(1)}_s}} \D s
    \end{split}
\end{equation}
for $c_5 := a(c_3 - \epsilon)$, which is positive
for sufficiently small $\epsilon$ and where we have estimated $\jump{\Delta}{\epsilon t}_s\leq \epsilon t$. As for the last integral
above,
because the \LL measure of  $\jump{\xi}{1}$ is supported on $(-\infty, 1]$, by \cite[Theorem 25.3]{Sato1999}, one can see that
the characteristic function of $\xi^{(1)}$ in any $a\geq0$,
$\Psi^{(1)}(a)$, is finite, so for large $t$,
\[
\int_0^{c_2 \ln t}
        \Ebb{e^{a\xi^{(1)}_s}} \D s
        =
        \int_0^{c_2 \ln t}
        e^{s\Psi^{(1)}(a)} \D s
        \leq
        c_2 \ln t
        e^{c_2 \ln t \labsrabs{\Psi^{(1)}(a)}},
\]
so the behaviour of the last expression in \eqref{eq:case1int}
is dominated by
$e^{-c_5 t}$, and thus is $\bo{\ell(t)/t^\alpha}$.

\textbf{Case 2:} 
Under $A_t^c:= \left\{\eta^{(\epsilon t)}_{t/4} > t\Ebb{ 
\jump{\eta_1}{\epsilon t}} /8\right\}
$. 

First, let us bound the expectation from \eqref{eq:earlyJ_11}, conditioned
on this case:
using once again \eqref{eq:bounds_small_jumps}, we calculate that
 \begin{equation}\label{eq:case2int}
    \begin{split}
    \int_0^{c_2 \ln t}
&\Ebb{\indset{A_t^c}\lbrb{
        I_{\jump{\xi}{1}}(s) + e^{-\jump{\xi_s}{1} - 
        \jump{\Delta_s}{\epsilon t}}
        I_{\jump{\eta}{\epsilon t}}
        (t-s)}^{-a}}
        \D s \\
&= \int_0^{c_2 \ln t}
        \P(A_t^c)\Ebb{\lbrb{
        I_{\jump{\xi}{1}}(s) + e^{-\jump{\xi_s}{1} - 
        \jump{\Delta_s}{\epsilon t}}
        I_{\jump{\eta}{\epsilon t}}
        (t-s)}^{-a}\middle| A_t^c} 
        \D s \\
&\leq
        \P(A_t^c)
        \int_0^{\infty}
        \Ebb{I^{-a}_{\jump{\xi}{1}}(s)}
        \D s
        \leq
        \P(A_t^c)
        \int_0^{\infty}
        \Ebb{I_\xi^{-a}(s)} \D s.
    \end{split}
\end{equation}
Therefore, because, 
as already noted, the last integral is finite from \eqref{eq:LT}, 
a bound of 
$\P(A_t^c)$ of order $\ell(t)/t^\alpha$ would be enough to conclude. We will obtain such a bound via a Chernoff type argument. For $\lambda > 0$, by Markov's inequality for $x\mapsto e^{\lambda x}$ and using
the \LLK formula, we have
\begin{equation}
    \label{eq:Chernoff}
\P(A_t^c) 
=
\P\lbrb{\eta^{(\epsilon t)}_{t/4} > t\Ebb{ 
\jump{\eta_1}{\epsilon t}} / 8}
\leq \exp\lbrb{t\jump{\Psi}{\epsilon t}(\lambda)/4 - \lambda t\Ebb{ 
\jump{\eta_1}{\epsilon t}}/ 8},
\end{equation}
where we recall, see \eqref{eq:LLK}, that
 \begin{equation}\label{eq:Psi_et}
    \begin{split}
        \jump{\Psi}{\epsilon t}(\lambda)=
        \lambda \Ebb{ 
\jump{\eta_1}{\epsilon t}}
+\frac{1}{2}\lambda^2 \sigma^2 + \int_{(-\infty,\epsilon t]} \lbrb{e^{\lambda x} - 1 - \lambda x} \Pi(\D x).
    \end{split}
\end{equation}
We would choose $\lambda$ to depend on $t$ in $\eqref{eq:Chernoff}$ as
\[\lambda_t := \frac{\ln t^{(\alpha-1-\epsilon)/4 }}{\epsilon t},
\]
and we will work with $0<\epsilon<\alpha-1$. For better readability, we will not use the explicit 
expression of $\lambda_t$ if not needed. What would be
of vital importance is that
$\lambda_t \to 0$ when $t \to \infty$ and that the highest order terms in
\eqref{eq:Psi_et} would be of order $\lambda_t$. 

Let us start with bounding the integral in \eqref{eq:Psi_et}: since 
$\Ebb{ \xi_1} = \Ebb{ \eta_1}$ is finite, we know that $\int_{-\infty}^{-1} x \Pi(x) 
\D x$ is finite, see e.g. \cite[Corollary 25.8]{Sato1999},
so we can 
choose $K_1<-1$ 
such 
that 
\[-\int_{(-\infty, K_1]} x\Pi(\D x)
\leq -\Ebb{ 
\eta_1}/4\leq -\Ebb{ 
\jump{\eta_1}{\epsilon t}}/4.\] Next, split the integral from \eqref{eq:Psi_et} into three regions:
\[
\int_{(-\infty,\epsilon t]} \lbrb{e^{\lambda_t x} - 1 - \lambda_t x} \Pi(\D x)= 
\int_{(-\infty, K_1]\cup (K_1, 1] \cup
(1, \epsilon t]}
\lbrb{e^{\lambda_t x} - 1 - \lambda_t x}
\Pi(\D x).
\]
Over the first region, by the choice of $K_1<-1$ and since $e^{\lambda_t x} - 1 \leq 0$
for $x < 0$ and large $t$,
\begin{equation}
    \label{eq:Int_1}
\int_{(-\infty,K_1]} \lbrb{e^{\lambda_t x} - 1 - \lambda_t x }\Pi(\D x) \leq-
\lambda_t\Ebb{ 
\jump{\eta_1}{\epsilon t}}/{4}.
\end{equation}
For bounding over the second region,
note that because $e^x - 1 - x \simo x^2/2$,
it is true that for sufficiently
small $x$, $e^x - 1 - x \leq x^2$. Therefore, as $\lambda_t \to 0$
and $K_1$ is fixed, for large $t$,
\begin{equation}
    \label{eq:Int_2}\begin{split}
\int_{(K_1,1]} \lbrb{e^{\lambda_t x} - 1 - \lambda_t x } \Pi(\D x)
&\leq \lambda_t^2 \int_{(K_1, -1] \cup
(-1,1]}x^2 \Pi(\D x)\\
&\leq \lambda_t^2 K_1^2\int_{(K_1,1]}\min\lbcurlyrbcurly{1,
 x^2 }\Pi (\D x)= \bo{ \lambda_t^2 }= \so{1/t}
    \end{split}
\end{equation}
as by construction $\lambda_t = \bo{\ln t / t}$.
Over the last region, after integration by parts,
\begin{equation}\label{eq:Int_by_parts}
    \begin{split}
&\int_{(1,\epsilon t]} \lbrb{e^{\lambda_t x} - 1 - \lambda_t x } \Pi(\D x)
=
\lambda_t\int_{(1,\epsilon t]} \int_0^x \lbrb{e^{\lambda_t y} - 1} \D  y \Pi(\D x)
\\
&\,= \lambda_t \int_0^1\int_{(1,\epsilon t]}\lbrb{e^{\lambda_t y} - 1} \Pi(\D x) \D  y 
+\lambda_t \int_1^{\epsilon t} \int_{(y,\epsilon t]}\lbrb{e^{\lambda_t y} - 1} \Pi(\D x) \D  y 
\\
&\,=
\lambda_t\int_{0}^{1}\lbrb{e^{\lambda_t y} - 1}\lbrb{
\PiPlus(1) - \PiPlus(\epsilon t) 
} \D  y +
\lambda_t\int_{1}^{\epsilon t}\lbrb{e^{\lambda_t y} - 1}\lbrb{
\PiPlus(y) - \PiPlus(\epsilon t)  
} \D  y.
    \end{split}
\end{equation}
Similarly to a previous argument, as $e^{y} - 1 \simo y$, for large $t$ we have
\begin{equation}
    \label{eq:Int_3_1}
    \begin{split}\lambda_t\int_{0}^{1}\lbrb{e^{\lambda_t y} - 1}\lbrb{
\PiPlus(1) - \PiPlus(\epsilon t) 
} \D  y
&\leq
\lambda_t\PiPlus(1) \int_0^1 2\lambda_t  y\D y\\
&
= \lambda_t^2 \PiPlus(1) = \bo{ \lambda_t^2 }= \so{1/t}
\end{split}
\end{equation}
so our final step would be to bound the last integral from \eqref{eq:Int_by_parts}.
As previously noted,
from
\eqref{eq: Pibar},
there exists $C^\ast >0$
such that,
for large $t$, $\PiPlus(t) \leq
C^\ast\P(\xi_1 > t) \leq 2C^\ast \ell(t)/t^\alpha
\leq 1/t^{\alpha - \epsilon}$ and we
recall that $\alpha - \epsilon > 1$. Let us fix
 $K_2 > 1$ such that for
$t > K_2$, the last inequality holds, i.e. $\PiPlus(t) \leq 1/t^{\alpha - \epsilon}$. Therefore, for large $t$, because
$\sqrt{t}\lambda_t = \so{1}$,
 \begin{equation}\label{eq:Int_3_2}
    \begin{split}
\lambda_t& \int_1^{\epsilon t}\lbrb{e^{\lambda_t y} - 1}\lbrb{
\PiPlus(y) - \PiPlus(\epsilon t)  
} \D  y  \\
&\quad
=
\lambda_t\int_{(1,K_2]
\cup (K_2, 
\sqrt{t}]
\cup (\sqrt{t}, \epsilon t]}
\lbrb{e^{\lambda_t y} - 1}\lbrb{
\PiPlus(y) - \PiPlus(\epsilon t)  
} \D  y \\
&\quad
\leq
\lambda_t \lbrb{\PiPlus(1)  \int_{1}^{K_2}
2 \lambda_t y \D y
+  \int_{K_2}^{\sqrt{t}}2 \lambda_t y \frac{1}{y^{\alpha
-\epsilon}} \D y +  e^{\lambda_t \epsilon t} \int_
{\sqrt{t}} ^ \infty \frac{1}{y^{\alpha -
\epsilon}}\D y }\\
&\quad
= \lambda_t\lbrb{c_6 \lambda_t + c_7 \lambda_t t^{(\epsilon - \alpha + 2)/2}
+ c_8 e^{\lambda_t \epsilon t} t^{(\epsilon -\alpha + 1)/2}}
\\
&\quad
=\lambda_t\lbrb{c_6 \lambda_t + c_{9} t^{(\epsilon - \alpha)/2} \ln t
+ c_8  t^{(\epsilon -\alpha + 1)/4}} = \so{1/t}
    \end{split}
\end{equation}
where we have substituted that $\lambda_t = \ln t^{(\alpha-1-\epsilon)/4}/(\epsilon t)$ and
implicitly have defined $c_6, c_7, c_8, c_{9} \geq 0$.
Using \eqref{eq:Int_3_1} and \eqref{eq:Int_3_2} in \eqref{eq:Int_by_parts},
we get
\begin{equation}
    \label{eq:Int_3}
\int_{(1,\epsilon t]} \lbrb{e^{\lambda_t x} - 1 - \lambda_t x }\Pi(\D x)
=\so{1/t}
\end{equation}
Substituting the bounds \eqref{eq:Int_1},
\eqref{eq:Int_2} and \eqref{eq:Int_3} into
\eqref{eq:Psi_et}, we obtain
\[
\Psi^{(et)}(\lambda_t) \leq 
3\lambda_t\Ebb{ \jump{\eta_1}{\epsilon t}}/{4} +
\so{1/t}
\]
and plugging this into \eqref{eq:Chernoff},
along with $\lambda_t = \ln t^{(\alpha-1-\epsilon)/4}/(\epsilon t)$, we can see that for large $t$
\begin{equation*}
    \begin{split}
\P(A_t^c) &\leq 
\exp\lbrb{
\frac{1}{16}\lambda_t t \Ebb{ 
\jump{\eta_1}{\epsilon t}}+ \so{1}} \\
&\leq
\exp\lbrb{
\frac{1}{32}\lambda_t t \Ebb{ 
\eta_1}} 
=
\exp\lbrb{\frac{(\alpha-1 - \epsilon)\Ebb{ 
\eta_{1}}\ln t}{32\epsilon}}.  
    \end{split}
\end{equation*}
Choosing $\epsilon$ sufficiently small,
explicitly, $\epsilon < (1-\alpha)\Ebb{\eta_1}/(32(\alpha+1)-\Ebb{\eta_1}$), we get that, for large $t$,
\[
\P(A_t^c) \leq 
\exp\lbrb{-(\alpha+1) \ln(t)} = 1/t^{\alpha + 1}
= \bo{\ell(t)/t^\alpha} = \bo{\P\lbrb{\xi_1 > t}},
\]
which concludes the proof that
$\Ebb{I_\xi^{-a}(t)\ind{\jump{J_1}{\epsilon t}>t}}
= \Ebb{I_{\jump{\xi}{\epsilon t}}^{-a}(t)}$ is $\bo{\ell(t)/t^{\alpha}}= \bo{\P\lbrb{\xi_1 > t}}$. Using this in \eqref{eq:E_split_Jet} along with
\eqref{eq:E_large_jump}, we obtain the statement of the lemma, see \eqref{eq: lemma upper bound}.
\end{proof}


\appendix
\section*{Appendix A. Some facts from regular variation theory}\label{appn} 
\label{sec:append}
\renewcommand{\thesection}{A} 

In this appendix, we provide
basic facts about the classical
Karamata theory of
slowly varying functions and
an extension of it over
a class of functions, 
named after de Haan. The presentation
is based on, respectively, Chapter I and Chapter III of the extensive
work \cite{BinGolTeu1989}.
\subsection{Classical Karamata theory}
\begin{definition}Let $\ell$ be a positive measurable
function, defined on some neighbourhood
$[X, \infty)$
of infinity, and satisfying for all
$\lambda > 0$,
\[
\lim_{x\to \infty}\frac{\ell(\lambda x)}{\ell(x)} = 1,
\]
then $\ell$ is said to be
slowly varying at infinity. We note
the class of these functions as $SV_\infty$. In a similar manner, if
$\ell$ is defined in a
positive/negative neighbourhood of
$0$ and satisfying
\[
\lim_{x\to 0+/0-}\frac{\ell(\lambda x)}{\ell(x)} = 1,
\]
$\ell$ is said to be slowly varying
at $0+$/$0-$. We note the class of these
functions as $SV_0$.
\end{definition}
Basic properties, such as
comparison with polynomials and
linear properties, can be found in
\cite[Proposition 1.3.6]{BinGolTeu1989}.

\begin{theorem}[Uniform Convergence Theorem,
{\cite[Theorem 1.2.1]{BinGolTeu1989}}]
\label{thm:UCT_1.2.1}
If $\ell \in SV_\infty$, then for all $\lambda > 0$,
\begin{equation}
    \label{eq:UCT}
\lim_{x\to \infty}\frac{\ell(\lambda x)}{\ell(x)} = 1,
\text{ uniformly on each compact set in $(0,\infty)$}.
\end{equation}
\end{theorem}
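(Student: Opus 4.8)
The plan is to linearise the problem and then prove a uniform-convergence statement for additive functionals by a measure-theoretic pigeonhole argument. First I would substitute $h(x):=\log\ell(e^{x})$; this is a real-valued measurable function on some half-line $[X_{0},\infty)$, and the hypothesis $\ell\in SV_\infty$ is equivalent to: for every fixed $u\in\Rb$, $h(x+u)-h(x)\to 0$ as $x\to\infty$. Since $\ell(\lambda x)/\ell(x)=\exp\big(h(\log x+\log\lambda)-h(\log x)\big)$ and $\log$ maps compact subsets of $(0,\infty)$ onto compact subsets of $\Rb$, the theorem will follow once we show $h(x+u)-h(x)\to 0$ uniformly for $u$ in each bounded interval; by chopping an interval into pieces of length at most $1$ and shifting the base point, it suffices to prove uniformity for $u\in[0,1]$.

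The core step is as follows. Fix $\epsilon>0$ and, for $x\ge X_{0}$, set $E_{x}:=\{u\in[0,2]:|h(x+u)-h(x)|\ge\epsilon\}$, which is measurable because $h$ is. For each fixed $u\in[0,2]$ the pointwise hypothesis gives $\mathbbm{1}_{E_{x}}(u)\to 0$ as $x\to\infty$, so dominated convergence (with dominating function $\mathbbm{1}_{[0,2]}$) yields $|E_{x}|\to 0$ for Lebesgue measure; fix $X_{1}\ge X_{0}$ with $|E_{x}|<\tfrac12$ for all $x\ge X_{1}$. Now take $x\ge X_{1}$ and $t\in[0,1]$ and look for an auxiliary point $x+v$ with $v\in[t,t+1]\subseteq[0,2]$ such that $v\notin E_{x}$ and $v-t\notin E_{x+t}$; since $x+t\ge X_{1}$, the values of $v$ to be avoided form a subset of the unit interval $[t,t+1]$ of measure at most $|E_{x}|+|E_{x+t}|<1$, so a valid $v$ exists. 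For that $v$ one has $|h(x+v)-h(x)|<\epsilon$ and $|h(x+t)-h(x+v)|=|h((x+t)+(v-t))-h(x+t)|<\epsilon$, hence $|h(x+t)-h(x)|<2\epsilon$; as this bound is uniform in $t\in[0,1]$ and $\epsilon$ was arbitrary, $\sup_{t\in[0,1]}|h(x+t)-h(x)|\to 0$ as $x\to\infty$.

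It remains to pass from $[0,1]$ to a general compact $K\subset(0,\infty)$. Writing $\log K\subseteq[-A,A]$ and, for $u\in[-A,A]$ and large $x$, $h(x+u)-h(x)=\big[h(x+u)-h(x-A)\big]-\big[h(x)-h(x-A)\big]$, each bracket is an increment over a base point $\ge x-A$ by an amount in $[0,2A]$, and such an increment is a telescoping sum of at most $\lceil 2A\rceil$ increments of length at most $1$ over base points $\ge x-A$, each bounded by $\sup_{y\ge x-A}\,\sup_{s\in[0,1]}|h(y+s)-h(y)|$, which tends to $0$ by the previous paragraph. Exponentiating and using continuity of $\exp$ at $0$ turns this into $\sup_{\lambda\in K}\big|\ell(\lambda x)/\ell(x)-1\big|\to 0$, i.e.\ \eqref{eq:UCT}.

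The one genuinely non-routine step — and the one I expect to be the main obstacle — is the production of the auxiliary point $v$. The difficulty is structural: the pointwise hypothesis never supplies a single increment that works uniformly in the base point, so one must instead extract (via the dominated-convergence step) an entire set of good increments of almost full measure and then combine two such sets by a counting estimate, and making the bookkeeping close — two excluded sets, each of measure below $\tfrac12$, inside an interval of length exactly $1$ — is precisely the content. By contrast, measurability of $E_{x}$ is automatic, the integrability needed for dominated convergence is free because $E_{x}\subseteq[0,2]$, and the remaining ingredients (the logarithmic change of variable and the telescoping over unit increments) are routine.
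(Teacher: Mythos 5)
The paper does not prove this theorem: it is stated in Appendix~\ref{sec:append} as a cited classical fact from Bingham--Goldie--Teugels, so there is no in-paper argument to compare against. Your proof is correct and is in fact the standard proof of the Uniform Convergence Theorem given in that reference (linearise via $h=\log\ell\circ\exp$, use the bounded-convergence/Egorov-style estimate to show the exceptional set $E_x\subset[0,2]$ has small Lebesgue measure, produce the auxiliary increment $v$ by the counting estimate $|E_x|+|E_{x+t}|<1$ inside a unit interval, and finish by telescoping unit increments over a general compact set); the bookkeeping, including the restriction $v-t\in[0,1]\subseteq[0,2]$ that keeps $E_{x+t}$ applicable and the measurability of $E_x$ coming from measurability of $\ell$, all checks out.
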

As we will often use the fact that slowly varying functions grow slower than polynomials, we state the following easy
consequence of
Potter's theorem, see
{\cite[Theorem 1.5.6]{BinGolTeu1989}}.

\begin{proposition}
    \label{prop:Potter's bounds}
    If $\ell \in SV_\infty$, then for 
    any chosen constants
    $A > 1$, $\delta >0$,
    there exist $X=X(A, \delta)$ such that
    for all $x \geq X$,
    \begin{equation}
    \label{eq:Potter's bounds}
    \ell(x) \leq A x^\delta \quad
    \text{and}
    \quad
    \ell(x) \geq A x^{-\delta}.
    \end{equation}
\end{proposition}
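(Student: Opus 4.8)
The plan is to obtain the proposition as a short corollary of Potter's theorem, which is cited in the statement as \cite[Theorem 1.5.6]{BinGolTeu1989} and asserts that for $\ell\in SV_\infty$ and any chosen $A'>1$, $\delta'>0$ there is a threshold $X_0=X_0(A',\delta')$ with
\[
\frac{\ell(y)}{\ell(x)}\leq A'\max\lbcurlyrbcurly{\lbrb{\frac{y}{x}}^{\delta'},\,\lbrb{\frac{y}{x}}^{-\delta'}}\qquad\text{for all }x,y\geq X_0 .
\]
First I would apply this with $\delta':=\delta/2$ and $A':=2$, fix a reference point $x_1\geq X_0$, and let $y$ range over $[x_1,\infty)$. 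Since $y\geq x_1$, the maximum on the right reduces to $(y/x_1)^{\delta/2}$ in both directions; using the bound once with the pair $(x_1,y)$ and once with the pair $(y,x_1)$ then gives, with $C_1:=2\ell(x_1)x_1^{-\delta/2}$ and $c_1:=\ell(x_1)x_1^{\delta/2}/2$,
\[
c_1\,y^{-\delta/2}\ \leq\ \ell(y)\ \leq\ C_1\,y^{\delta/2}\qquad\text{for all }y\geq x_1 .
\]

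The decisive step is then to absorb the multiplicative constants into a genuine power of $y$, and this is exactly why one runs Potter at the strictly smaller exponent $\delta/2$: writing $\ell(y)\leq C_1 y^{\delta/2}=\lbrb{C_1 y^{-\delta/2}}\,y^{\delta}$ and using $C_1 y^{-\delta/2}\to 0$, there is $X_1\geq x_1$ with $C_1 y^{-\delta/2}\leq A$ for $y\geq X_1$, hence $\ell(y)\leq A y^{\delta}$ on $[X_1,\infty)$. Symmetrically, $\ell(y)\geq c_1 y^{-\delta/2}=\lbrb{c_1 y^{\delta/2}}\,y^{-\delta}$ with $c_1 y^{\delta/2}\to\infty$, so there is $X_2\geq x_1$ with $c_1 y^{\delta/2}\geq A$ for $y\geq X_2$, hence $\ell(y)\geq A y^{-\delta}$ on $[X_2,\infty)$. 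Setting $X:=\max\lbcurlyrbcurly{X_1,X_2}$ — which depends on $A$ and $\delta$ only, through $x_1$, $C_1$ and $c_1$ — establishes \eqref{eq:Potter's bounds}.

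There is no genuine obstacle here; the one point worth stating explicitly is the remark just made, namely that running Potter directly at exponent $\delta$ would only yield $\ell(y)\leq C y^{\delta}$ with a constant $C$ possibly exceeding $A$, and the room to replace $C$ by $A$ is supplied precisely by the extra factor $y^{\delta/2}$ gained from using the exponent $\delta/2$. An equally short alternative avoids Potter altogether and invokes the Karamata representation $\ell(y)=c(y)\exp\lbrb{\int_{a}^{y}\varepsilon(t)\,t^{-1}\,\D t}$ with $c(y)\to c\in(0,\infty)$ and $\varepsilon(t)\to 0$: choosing $a$ large enough that $\abs{\varepsilon(t)}\leq\delta/2$ for $t\geq a$ sandwiches $\ell(y)$ between constant multiples of $y^{-\delta/2}$ and $y^{\delta/2}$, after which one finishes exactly as above.
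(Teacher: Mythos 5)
Your proof is correct and takes essentially the route the paper intends: the proposition is stated there without proof as an "easy consequence" of Potter's theorem \cite[Theorem 1.5.6]{BinGolTeu1989}, and your argument—running Potter at the exponent $\delta/2$ about a fixed reference point and then absorbing the constants $C_1$, $c_1$ into the spare factor $y^{\pm\delta/2}$—is precisely the standard way to fill in that citation (your Karamata-representation alternative is equally valid). Nothing is missing; the only cosmetic remark is that the stated bounds amount to $x^{-\delta}=\mathrm{o}(\ell(x))$ and $\ell(x)=\mathrm{o}(x^{\delta})$, so they also follow from the elementary growth properties of slowly varying functions the paper cites via \cite[Proposition 1.3.6]{BinGolTeu1989}.
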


\begin{remark}
    If $\ell \in SV_0$, applying Theorem
    \ref{thm:UCT_1.2.1} for $\ell'(x) = \ell(1/x) \in SV_\infty$, we can see 
    that \eqref{eq:UCT} holds after the modification $x \to 0$.
    Similarly, \eqref{eq:Potter's bounds} leads to
    $\ell(x) = \bo{x^{-\delta}}$ for any $\d >0$ as $x \to 0$.
\end{remark}

\begin{theorem}[Karamata's Theorem,
{\cite[Proposition 1.5.8]{BinGolTeu1989}}]
\label{thm:Karamata_1.5.8.}
If $\ell \in SV_\infty$, $X$ is so large that
$\ell(x)$ is locally bounded in $[X, \infty)$, and
$\alpha > -1$, then
\[
\int_X^x t^\alpha \ell(t) \D t \simi x^{\alpha + 1}\ell(x)/(\alpha +1).
\]
\end{theorem}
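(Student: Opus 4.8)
Since this is the classical Karamata integral theorem, the paper will simply cite \cite[Proposition~1.5.8]{BinGolTeu1989}; for completeness I record the plan of the standard proof, which uses only slow variation, Potter's theorem (\cite[Theorem~1.5.6]{BinGolTeu1989}, whose one-sided form is Proposition~\ref{prop:Potter's bounds}), and dominated convergence. The first step is to pass to an additive formulation through the substitution $t=e^{s}$. Writing $F(x):=\int_{X}^{x}t^{\alpha}\ell(t)\,\D t$, $\beta:=\alpha+1>0$ and $L(s):=\ell(e^{s})$, this substitution gives $F(x)=\int_{\ln X}^{\ln x}e^{\beta s}L(s)\,\D s$, while the hypothesis $\ell(\lambda x)/\ell(x)\to1$ becomes, for each fixed $v$, $L(s-v)/L(s)=\ell(xe^{-v})/\ell(x)\to1$ as $x=e^{s}\to\infty$; moreover Potter's theorem, applied to this ratio with both arguments beyond a threshold, yields for any prescribed $\delta>0$ a constant $C$ and a level $a$ with $L(s-v)/L(s)\le Ce^{\delta v}$ whenever $s\ge a$ and $0\le v\le s-a$.

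The heart of the matter is then the Abelian lemma
\[
\int_{a}^{S}e^{\beta s}L(s)\,\D s\;\simi\;\frac{1}{\beta}\,e^{\beta S}L(S),\qquad S\to\infty .
\]
To prove it I would substitute $v=S-s$, which turns the left-hand side into $e^{\beta S}\int_{0}^{S-a}e^{-\beta v}L(S-v)\,\D v$; dividing by $e^{\beta S}L(S)$ leaves $\int_{0}^{S-a}e^{-\beta v}\,\frac{L(S-v)}{L(S)}\,\D v$, whose integrand---extended by $0$ for $v>S-a$---converges pointwise to $e^{-\beta v}$ and, by the Potter bound with a fixed $\delta\in(0,\beta)$, is dominated by the integrable function $Ce^{-(\beta-\delta)v}$. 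Dominated convergence then gives $\int_{0}^{S-a}e^{-\beta v}\,\frac{L(S-v)}{L(S)}\,\D v\to\int_{0}^{\infty}e^{-\beta v}\,\D v=1/\beta$, which is the lemma.

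Finally I would note that replacing the lower limit $\ln X$ by $\max\{\ln X,a\}$ changes $F(x)$ only by an additive constant, negligible since $x^{\alpha+1}\ell(x)=x^{\beta}\ell(x)\to\infty$ (by the lower Potter bound $\ell(x)\ge x^{-\delta'}$ eventually, $\delta'\in(0,\beta)$); translating the Abelian lemma back via $S=\ln x$, $L(S)=\ell(x)$ gives $F(x)\simi\beta^{-1}x^{\beta}\ell(x)=x^{\alpha+1}\ell(x)/(\alpha+1)$, as claimed. The only step needing genuine care is the uniform tail bound on $L(S-v)/L(S)$---equivalently, the Potter ratio bound for $\ell$---which is precisely where slow variation enters quantitatively; everything else is bookkeeping. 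A routine alternative that avoids the Abelian lemma is to deduce from the uniform convergence theorem, Theorem~\ref{thm:UCT_1.2.1}, that $\bigl(F(\lambda x)-F(x)\bigr)/\bigl(x^{\alpha+1}\ell(x)\bigr)\to(\lambda^{\alpha+1}-1)/(\alpha+1)$, to check via Potter's bounds that $g(x):=F(x)/\bigl(x^{\alpha+1}\ell(x)\bigr)$ stays bounded away from $0$ and $\infty$, and to pass to subsequential limits in the identity $\lambda^{\alpha+1}\tfrac{\ell(\lambda x)}{\ell(x)}g(\lambda x)-g(x)\to(\lambda^{\alpha+1}-1)/(\alpha+1)$ to force $g(x)\to(\alpha+1)^{-1}$; the Abelian-lemma route seems cleaner.
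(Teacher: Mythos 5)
Your proposal is correct: the paper records this as a classical fact and gives no proof, simply citing \cite[Proposition~1.5.8]{BinGolTeu1989}, exactly as you anticipated. Your sketch (pass to the additive scale $t=e^{s}$, reduce to the Abelian lemma $\int_{a}^{S}e^{\beta s}L(s)\,\D s\simi\beta^{-1}e^{\beta S}L(S)$ via the change of variable $v=S-s$, dominate the ratio $L(S-v)/L(S)$ by $Ce^{\delta v}$ with $\delta<\beta$ using Potter's bound, apply dominated convergence, and absorb the fixed initial segment using the lower Potter bound to see $x^{\beta}\ell(x)\to\infty$) is the standard textbook derivation and is sound; the alternative route through the uniform convergence theorem and a boundedness-plus-subsequential-limits argument that you mention at the end is also a well-known variant and works, though as you say it needs a bit more care to pin down the subsequential limits.
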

\begin{theorem}[Karamata's Theorem,
{\cite[Proposition 1.5.10]{BinGolTeu1989}}]\label{thm:Karamata_1.5.10.}
  If $\ell$ is a slowly varying function  and $\alpha<-1$ then $\int^\infty t^\alpha \ell(t)\D t$ converges and 
  \begin{equation*}
      \limi{x}\frac{x^{\alpha+1}\ell(x)}{\int_x^\infty t^\alpha \ell(t)\D t}=-\alpha-1.
  \end{equation*}
\end{theorem}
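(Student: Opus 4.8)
The plan is to derive the statement from a single application of the dominated convergence theorem, after rewriting the ratio appearing in the conclusion as an integral over a fixed domain. Fix $X$ large enough that $\ell$ is locally bounded (hence locally integrable) on $[X,\infty)$ --- a standard property of measurable slowly varying functions --- and for $x\ge X$ put $V(x):=\int_x^\infty t^\alpha\ell(t)\,\D t$. The substitution $t=xs$ gives the identity
\begin{equation*}
\frac{V(x)}{x^{\alpha+1}\ell(x)}=\int_1^\infty s^\alpha\,\frac{\ell(xs)}{\ell(x)}\,\D s ,
\end{equation*}
whose integrand tends to $s^\alpha$ as $x\to\infty$ for each fixed $s\ge 1$, by the very definition of $\ell\in SV_\infty$; so the whole matter reduces to justifying the passage to the limit under the integral sign.

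First I would produce an integrable majorant. Choose $\delta\in(0,\,-\alpha-1)$ and invoke Potter's theorem in ratio form, \cite[Theorem 1.5.6]{BinGolTeu1989}: there are a constant $A$ and an $X$ such that $\ell(y)/\ell(x)\le A\,(y/x)^\delta$ whenever $y\ge x\ge X$. In particular $\ell(xs)\le A s^\delta\ell(x)$ for $s\ge 1$ and $x\ge X$, hence $s^\alpha\ell(xs)/\ell(x)\le A s^{\alpha+\delta}$; since $\alpha+\delta<-1$ we have $\int_1^\infty s^{\alpha+\delta}\,\D s<\infty$. This single bound does two things at once: fed back through the substitution it yields $V(x)\le A x^{\alpha+1}\ell(x)\int_1^\infty s^{\alpha+\delta}\,\D s<\infty$ for $x\ge X$, which is the convergence assertion, and it supplies the dominating function $A s^{\alpha+\delta}\in L^1([1,\infty))$.

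Then I would finish with dominated convergence. Combining the pointwise limit $s^\alpha\ell(xs)/\ell(x)\to s^\alpha$ with the majorant above,
\begin{equation*}
\limi{x}\frac{V(x)}{x^{\alpha+1}\ell(x)}=\int_1^\infty s^\alpha\,\D s=\frac{1}{-\alpha-1},
\end{equation*}
and, since $V(x)>0$, taking reciprocals gives $\limi{x} x^{\alpha+1}\ell(x)/V(x)=-\alpha-1$, as required. I do not expect a genuine obstacle here: the only delicate point is the interchange of limit and integral, both near $s=1$ and over the infinite tail, and it is disposed of uniformly in $x$ by the one Potter inequality $s^\alpha\ell(xs)/\ell(x)\le A s^{\alpha+\delta}$. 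If one wishes to avoid quoting Potter's theorem, exactly the same majorant follows from the Karamata representation $\ell(t)=c(t)\exp\lbrb{\int_a^t \frac{\varepsilon(u)}{u}\,\D u}$ with $c(t)\to c\in(0,\infty)$ and $\varepsilon(u)\to 0$, by bounding $c(xs)/c(x)$ and $|\varepsilon|$ for large arguments.
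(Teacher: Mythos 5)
Your proof is correct. The paper itself does not prove this statement but simply cites \cite[Proposition 1.5.10]{BinGolTeu1989}, so there is no in-paper argument to compare against; your substitution $t=xs$, followed by Potter's ratio bound $\ell(xs)/\ell(x)\le A s^{\delta}$ with $\delta\in(0,-\alpha-1)$ and dominated convergence, is a standard self-contained route to this half of Karamata's theorem, and you correctly observe that the same single majorant also settles the convergence of $\int^\infty t^\alpha\ell(t)\,\D t$. One small point worth making explicit: taking reciprocals at the end is legitimate because slowly varying functions are eventually positive, so $V(x)>0$ for $x$ large and the limit $1/(-\alpha-1)$ is a finite positive number.
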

\begin{theorem}[Karamata Tauberian Theorem,
{\cite[Theorem 1.7.1]{BinGolTeu1989}}]
\label{thm:Tauberian_1.7.1}
Let $U$ be a non-decreasing right-continuous
function on $\R$ with $U(x) = 0$ for
all $x < 0$. If $\ell \in SV_\infty$
and $c \geq 0, \rho \geq 0$,
the following are equivalent:
\begin{align}
    \label{eq:Taub_Measure}
    U(x) &\simi c x^\rho \ell(x)/\Gamma(1 + \rho),\\
    \label{eq:Taub_Laplace}
    \IntOIc e^{-sx} \D U(x) =:
    \widehat{U}(s) &\simo cs^{-\rho}\ell(1/s).
\end{align}
When $c=0$, \eqref{eq:Taub_Measure} is to be
interpreted as $U(x) = \so{x^\rho\ell(x)}$; similarly
for \eqref{eq:Taub_Laplace}.
\end{theorem}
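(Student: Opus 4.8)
This statement is the classical Karamata Tauberian theorem, so in the paper it is simply invoked from \cite{BinGolTeu1989}; were a self-contained argument required, I would proceed as follows, relying on the Uniform Convergence Theorem (Theorem \ref{thm:UCT_1.2.1}), Potter's bounds (Proposition \ref{prop:Potter's bounds}), and the Weierstrass approximation theorem.

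\textbf{Abelian direction} (\eqref{eq:Taub_Measure}$\,\Rightarrow\,$\eqref{eq:Taub_Laplace}). This is routine. Assuming $U(x)\simi cx^{\rho}\ell(x)/\Gamma(1+\rho)$, Potter's bounds give $U(x)=\bo{x^{\rho+\delta}}$ for any $\delta>0$, so $\widehat{U}(s)=s\IntOI e^{-sx}U(x)\,\D x$ is finite for every $s>0$ and, after the substitution $x=t/s$, equals $\IntOI e^{-t}U(t/s)\,\D t$. Dividing by $s^{-\rho}\ell(1/s)$ and writing the integrand as $e^{-t}t^{\rho}\cdot\frac{U(t/s)}{(t/s)^{\rho}\ell(t/s)}\cdot\frac{\ell(t/s)}{\ell(1/s)}$, the first ratio tends to $c/\Gamma(1+\rho)$ pointwise in $t$, the second to $1$ uniformly on compact $t$-sets by the Uniform Convergence Theorem, while Potter's bounds again furnish an integrable $t$-dominating function; dominated convergence together with $\IntOI e^{-t}t^{\rho}\,\D t=\Gamma(1+\rho)$ yields \eqref{eq:Taub_Laplace}. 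The case $c=0$ is identical, with $\limsup$ in place of $\lim$.

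\textbf{Tauberian direction} (\eqref{eq:Taub_Laplace}$\,\Rightarrow\,$\eqref{eq:Taub_Measure}). This is the substantial half. The first step is to pair $\D U$ with the exponentials $e^{-kst}$, $k\geq1$: since $\widehat{U}(ks)\simo c(ks)^{-\rho}\ell(1/(ks))$ and $\ell$ is slowly varying, $\IntOI e^{-kst}\,\D U(t)\simo ck^{-\rho}s^{-\rho}\ell(1/s)$, whence for every polynomial $P$ with $P(0)=0$,
\[
\lim_{s\to0+}\frac{1}{s^{-\rho}\ell(1/s)}\IntOI P(e^{-st})\,\D U(t)=c\IntOI P(e^{-u})\,\D G(u),\qquad G(u):=\frac{u^{\rho}}{\Gamma(1+\rho)},
\]
because $k^{-\rho}=\IntOI e^{-ku}\,\D G(u)$. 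The second step upgrades this to the discontinuous integrand that reproduces $U$: writing $U(x)=\IntOI\mathbbm{1}_{[0,1]}(t/x)\,\D U(t)$ and putting $s=1/x$, the relevant function of $y=e^{-st}$ is $\phi(y)=\mathbbm{1}_{[e^{-1},1]}(y)$, which vanishes identically near $y=0$. Choose continuous $\phi_{-}\leq\phi\leq\phi_{+}$ on $[0,1]$ that vanish on a neighbourhood of $0$ and satisfy $\IntOI(\phi_{+}-\phi_{-})(e^{-u})\,\D G(u)<\varepsilon$ — possible since $G$ is atomless — and approximate each $\phi_{\pm}$ uniformly by polynomials vanishing at $0$ (apply Weierstrass to $\phi_{\pm}(y)/y$, which is continuous on $[0,1]$, and multiply back by $y$), the approximation error being absorbed into a further shrinking of $\varepsilon$. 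The positivity of the measure $\D U$ then squeezes $\IntOI\phi(e^{-st})\,\D U(t)=U(1/s)$ between the two polynomial integrals; letting $s\to0+$ and then $\varepsilon\to0$ gives $U(1/s)\simo cs^{-\rho}\ell(1/s)\,G(1)=cs^{-\rho}\ell(1/s)/\Gamma(1+\rho)$, which is \eqref{eq:Taub_Measure} after setting $x=1/s$. The degenerate cases $\rho=0$ (where $G$ is a unit mass at $0$, so one works with $P(e^{-st})\to P(1)$ directly) and $c=0$ need only cosmetic changes.

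The main obstacle is precisely this monotonicity-driven passage from convergence of the Laplace-type averages to convergence of the distribution function $U$ itself: this is the genuine Tauberian mechanism, and the only point where the hypothesis that $U$ is non-decreasing is used. The remaining ingredients — finiteness of $\widehat{U}(s)$, the interchange of limits justified by Potter's bounds, the atomlessness of $G$ that renders the discontinuity of $\phi$ immaterial, and the bookkeeping for $\rho=0$ and $c=0$ — are routine once that mechanism is in place.
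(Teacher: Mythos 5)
Since the paper simply invokes Theorem 1.7.1 of Bingham--Goldie--Teugels without reproducing the argument, there is no in-paper proof to compare against; what you have written is a correct and recognisable account of the classical Karamata/Feller argument, and all the essential mechanisms are in place. Two small remarks. First, in the polynomial sandwich one should be explicit that, after Weierstrass-approximating $\phi_{\pm}(y)/y$ within $\eta$ by $q_{\pm}$, the polynomials to use are $P_{+}(y):=y\,q_{+}(y)+\eta y$ and $P_{-}(y):=y\,q_{-}(y)-\eta y$, which vanish at $0$, satisfy $P_{-}\leq\phi\leq P_{+}$ pointwise (essential for exploiting $\D U\geq 0$), and still have $\int(P_{+}-P_{-})(e^{-u})\,\D G(u)<\varepsilon+4\eta$; your ``absorb into a further shrinking of $\varepsilon$'' is correct but this is the step that actually uses the positivity of $\D U$ and deserves to be spelt out. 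Second, the case $\rho=0$ does not really require a separate argument: there $G=\delta_{0}$, concentrated at $u=0$, i.e.\ at $y=1$, where $\phi=\mathbbm{1}_{[e^{-1},1]}$ and the continuous approximants $\phi_{\pm}$ all agree, so $\int(\phi_{+}-\phi_{-})(e^{-u})\,\D G(u)=0$ automatically and the same squeeze goes through verbatim; the only genuine discontinuity of $\phi$ is at $y=e^{-1}$, i.e.\ $u=1$, which $G$ never charges for any $\rho\geq 0$.
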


\begin{theorem}[extended Karamata Tauberian Theorem,
{\cite[Theorem 1.7.6]{BinGolTeu1989}}]
\label{thm:extended Tauberian, 1.7.6}
Assume $U(\cdot) \geq 0, c\geq0, \rho>-1, s\widehat{U}(s):=
s\IntOI e^{-sx}U(x) \D x$ convergent for $s>0$, and $\ell \in SV_\infty$.
Then, under the condition
\begin{equation}
    \label{eq:slow decrease condtion}
     \lim_{\lambda \to 1+} \liminfi{x} \inf_{1\leq t \leq \lambda}
 \frac{U(tx) - U(x)} {x^\rho \ell(x)} \geq 0,
    \end{equation}
it is true that
\[
 s\widehat{U}(s) \simo cs^{-\rho}\ell(1/s)\quad
 \text{and}
 \quad
\]
implies
\[
U(x) \simi cx^\rho \ell(x)/\Gamma(1+\rho).
\]    
\end{theorem}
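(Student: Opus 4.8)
The plan is to reduce the statement to the Karamata Tauberian theorem applied to the (monotone) primitive of $U$, and then to recover $U$ itself from that primitive by using the slowly--decreasing hypothesis \eqref{eq:slow decrease condtion} in place of the monotonicity that the monotone density theorem would otherwise need.

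\emph{Step 1 (integrate).} I would put $V(x):=\int_0^x U(t)\,\D t$ for $x\ge 0$ and $V(x):=0$ for $x<0$. Since $U\ge 0$, the function $V$ is non-decreasing, continuous and vanishes on $(-\infty,0)$, and because $\IntOI e^{-sx}U(x)\,\D x<\infty$ for every $s>0$ one has $e^{-sx}V(x)\to 0$ as $x\to\infty$, which legitimises the integration by parts $\IntOIc e^{-sx}\,\D V(x)=\widehat U(s)$, $s>0$. Thus $s\widehat U(s)\simo cs^{-\rho}\ell(1/s)$ becomes $\IntOIc e^{-sx}\,\D V(x)\simo c\,s^{-(\rho+1)}\ell(1/s)$, and, since $\rho+1\ge 0$, Theorem~\ref{thm:Tauberian_1.7.1} applied to $V$ gives
\[
V(x)=\int_0^x U(t)\,\D t \simi \frac{c}{\Gamma(\rho+2)}\,x^{\rho+1}\ell(x).
\]

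\emph{Step 2 (de-integrate).} For $\lambda>1$ I would set $\delta(\lambda):=-\liminfi{x}\inf_{1\le t\le\lambda}(U(tx)-U(x))/(x^{\rho}\ell(x))$; taking $t=1$ shows $\delta(\lambda)\ge 0$, and \eqref{eq:slow decrease condtion} gives $\delta(\lambda)\to 0$ as $\lambda\to 1+$. For the upper estimate, fix $\lambda>1$: for all large $x$ and every $s\in[x,\lambda x]$ one has $U(s)\ge U(x)-2\delta(\lambda)x^{\rho}\ell(x)$, so integrating over $[x,\lambda x]$, dividing by $(\lambda-1)x^{\rho+1}\ell(x)$, and using $V(\lambda x)-V(x)\simi \tfrac{c}{\Gamma(\rho+2)}(\lambda^{\rho+1}-1)x^{\rho+1}\ell(x)$ (with $\ell(\lambda x)\simi\ell(x)$) yields
\[
\limsupi{x}\frac{U(x)}{x^{\rho}\ell(x)}\le \frac{c}{\Gamma(\rho+2)}\cdot\frac{\lambda^{\rho+1}-1}{\lambda-1}+2\delta(\lambda)\xrightarrow[\lambda\to 1+]{}\frac{c(\rho+1)}{\Gamma(\rho+2)}=\frac{c}{\Gamma(1+\rho)}.
\]
For the lower estimate, I would apply \eqref{eq:slow decrease condtion} with base point $s$ and endpoint $x$ (so $x/s\in[1,\lambda]$): for all large $x$ and $s\in[x/\lambda,x]$ one has $U(s)\le U(x)+C\delta(\lambda)x^{\rho}\ell(x)$, where the passage from $s^{\rho}\ell(s)$ to $x^{\rho}\ell(x)$ uses Theorem~\ref{thm:UCT_1.2.1} / Proposition~\ref{prop:Potter's bounds}; integrating over $[x/\lambda,x]$ and using $V(x)-V(x/\lambda)\simi \tfrac{c}{\Gamma(\rho+2)}(1-\lambda^{-(\rho+1)})x^{\rho+1}\ell(x)$ gives
\[
\liminfi{x}\frac{U(x)}{x^{\rho}\ell(x)}\ge \frac{c}{\Gamma(\rho+2)}\cdot\frac{1-\lambda^{-(\rho+1)}}{1-\lambda^{-1}}-C\delta(\lambda)\xrightarrow[\lambda\to 1+]{}\frac{c}{\Gamma(1+\rho)}.
\]
The two displays give $U(x)\simi c\,x^{\rho}\ell(x)/\Gamma(1+\rho)$; the case $c=0$ follows from the same inequalities, yielding $U(x)=\so{x^{\rho}\ell(x)}$.

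\emph{Main obstacle.} The delicate part will be the iterated limit in Step~2: for each fixed $\lambda$ one only obtains an inequality carrying the error $2\delta(\lambda)$ on top of the $\so{\cdot}$ terms inherited from Step~1 and from the regular-variation comparisons on the compact ratio range $[1/\lambda,\lambda]$, and one must verify that all of these errors are uniform in $s$ over that range before letting $\lambda\downarrow 1$; this is exactly where the Tauberian side condition \eqref{eq:slow decrease condtion} is used, and it cannot be dropped. The remaining ingredients—the integration by parts, the invocation of Karamata's Tauberian theorem, and the elementary limits $\frac{\lambda^{\rho+1}-1}{\lambda-1},\ \frac{1-\lambda^{-(\rho+1)}}{1-\lambda^{-1}}\to\rho+1$—are routine.
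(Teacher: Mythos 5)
This is one of the classical facts quoted in the appendix from \cite[Theorem 1.7.6]{BinGolTeu1989}; the paper does not give its own proof, so there is nothing internal to compare against. Your argument is nevertheless correct and follows the standard route: integrate once to obtain the monotone $V(x)=\int_0^x U$, pass the hypothesis $s\widehat U(s)\simo cs^{-\rho}\ell(1/s)$ to $\IntOIc e^{-sx}\,\D V(x)\simo cs^{-(\rho+1)}\ell(1/s)$ (here the convergence of $\widehat U(s)$ for all $s>0$ together with $U\ge 0$ legitimises the identification $\IntOIc e^{-sx}\,\D V(x)=\widehat U(s)$, with the boundary term $e^{-sx}V(x)\to0$ obtained by comparing with $\widehat U(s')$ for some $0<s'<s$), apply Theorem~\ref{thm:Tauberian_1.7.1} with exponent $\rho+1>0$ to get $V(x)\simi cx^{\rho+1}\ell(x)/\Gamma(\rho+2)$, and then squeeze $U(x)$ between two Cesàro-type averages of $V$ over $[x,\lambda x]$ and $[x/\lambda,x]$, invoking \eqref{eq:slow decrease condtion} precisely where the monotone density theorem would have used monotonicity of $U$.

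One small imprecision worth flagging: the pointwise bounds you write, e.g.\ $U(s)\ge U(x)-2\delta(\lambda)x^\rho\ell(x)$ for all large $x$ and $s\in[x,\lambda x]$, are not what $\liminf$ delivers literally when $\delta(\lambda)=0$. From the definition of $\delta(\lambda)$ one only gets, for each $\varepsilon>0$ and $x$ large, $U(s)\ge U(x)-\lbrb{\delta(\lambda)+\varepsilon}x^\rho\ell(x)$, and likewise with a uniform (in $s/x\in[1/\lambda,1]$, via Theorem~\ref{thm:UCT_1.2.1}) constant on the lower estimate. The fix is cosmetic: carry $\varepsilon$ through, take $\varepsilon\downarrow0$ before $\lambda\downarrow1+$, and the two displayed bounds for $\limsup$ and $\liminf$ of $U(x)/(x^\rho\ell(x))$ go through unchanged, including the $c=0$ case (where non-negativity of $U$ replaces the lower estimate). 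With that small repair the proof is complete.
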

\begin{remark}
    The reverse implication in the last theorem is true without 
    the assumption \eqref{eq:slow decrease condtion}.
\end{remark}

\begin{theorem}[Monotone Denstity theorem,
{\cite[Theorems 1.7.2 - 2']{BinGolTeu1989}}]
\label{thm:Monotone density, 1.7.2}
Let $U(x) = \int_0^x u(y) \D y$. If $U(x) \simi c x^\rho \ell(x)$,
where $c \in \R$, $\rho \in \R$, $\ell \in SV_\infty$,
and if $u$ is ultimately monotone, then
\[
u(x) \simi c\rho x^{\rho - 1} \ell(x).
\]
Let $V(x)=\int_{x}^\infty v(x)dx$ with $v$ integrable at infinity. Let $V(x) \simi c x^\rho \ell(x)$ for $\rho < 0$ and let $v$ be ultimately non-increasing. Then
\[v(x)\simi -c\rho x^{\rho - 1} \ell(x).\]
\end{theorem}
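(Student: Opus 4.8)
The plan is to establish the classical monotone density theorem by the standard sandwiching device: squeeze the density between its values at the two endpoints of a short interval $[x,\lambda x]$, use the regular variation of the integrated quantity, and then let $\lambda\downarrow1$.

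For the first assertion, let $u$ be the ultimately monotone density of $U$ and fix $\lambda>1$. For all large $x$, monotonicity of $u$ on $[x,\lambda x]$ gives, in the eventually non-decreasing case,
\[
(\lambda-1)x\,u(x)\;\le\;\int_x^{\lambda x}u(y)\,\D y\;=\;U(\lambda x)-U(x)\;\le\;(\lambda-1)x\,u(\lambda x),
\]
with the chain reversed if $u$ is eventually non-increasing. Since $U(x)\simi cx^\rho\ell(x)$ with $\ell$ slowly varying, $U$ is regularly varying of index $\rho$, so $(U(\lambda x)-U(x))/(x^\rho\ell(x))\to c(\lambda^\rho-1)$ and, after replacing $x$ by $x/\lambda$, $(U(x)-U(x/\lambda))/(x^\rho\ell(x))\to c(1-\lambda^{-\rho})$. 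Inserting these into the sandwich yields, in the non-decreasing case,
\begin{align*}
\limsup_{x\to\infty}\frac{u(x)}{x^{\rho-1}\ell(x)}&\le\frac{c(\lambda^\rho-1)}{\lambda-1}, &
\liminf_{x\to\infty}\frac{u(x)}{x^{\rho-1}\ell(x)}&\ge\frac{\lambda\,c(1-\lambda^{-\rho})}{\lambda-1},
\end{align*}
with $\limsup$ and $\liminf$ interchanged in the non-increasing case. Both right-hand sides converge to $c\rho$ as $\lambda\downarrow1$, since $\tfrac{\lambda^\rho-1}{\lambda-1}\to\rho$ and $\tfrac{\lambda(1-\lambda^{-\rho})}{\lambda-1}\to\rho$ by a first-order Taylor expansion; hence $u(x)\simi c\rho x^{\rho-1}\ell(x)$.

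The second assertion is handled identically with $V(x)=\int_x^\infty v(y)\,\D y$ in place of $U$: here $V$ is finite because $v$ is assumed integrable at infinity, and $\rho<0$ guarantees that $V$ is regularly varying of index $\rho$. For $v$ eventually non-increasing and $\lambda>1$ one has $(\lambda-1)x\,v(\lambda x)\le V(x)-V(\lambda x)\le(\lambda-1)x\,v(x)$, and dividing by $x^\rho\ell(x)$, using $(V(x)-V(\lambda x))/(x^\rho\ell(x))\to c(1-\lambda^\rho)$ and $(V(x/\lambda)-V(x))/(x^\rho\ell(x))\to c(\lambda^{-\rho}-1)$, gives
\begin{align*}
\liminf_{x\to\infty}\frac{v(x)}{x^{\rho-1}\ell(x)}&\ge\frac{c(1-\lambda^\rho)}{\lambda-1}, &
\limsup_{x\to\infty}\frac{v(x)}{x^{\rho-1}\ell(x)}&\le\frac{\lambda\,c(\lambda^{-\rho}-1)}{\lambda-1}.
\end{align*}
Letting $\lambda\downarrow1$, both bounds tend to $-c\rho$, since $\tfrac{1-\lambda^\rho}{\lambda-1}\to-\rho$; hence $v(x)\simi-c\rho x^{\rho-1}\ell(x)$.

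The substance here is minimal; the only real obstacle is bookkeeping — keeping every inequality oriented correctly across the two monotonicity cases and the two assertions, and justifying the interchange of $x\to\infty$ with $\lambda\downarrow1$. The latter is legitimate because the limits in $x$ are taken first, producing $\limsup$/$\liminf$ inequalities valid for every fixed $\lambda>1$; sending $\lambda\downarrow1$ afterwards is a harmless one-variable limit. No appeal to the uniform convergence theorem (Theorem \ref{thm:UCT_1.2.1}) or Potter's bounds is required — only the elementary fact that asymptotic equivalence to $cx^\rho\ell(x)$ forces regular variation of index $\rho$. (When $c\rho=0$ the conclusion is read, as usual, as $u(x)=\so{x^{\rho-1}\ell(x)}$, which the same bounds give.)
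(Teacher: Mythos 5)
Your proof is correct, and it is the standard sandwiching argument found in Bingham--Goldie--Teugels. The paper does not prove this theorem at all: it is quoted verbatim from BGT in the appendix, so there is no in-paper proof to compare against. Two small caveats, neither of which is a gap: your throwaway remark that asymptotic equivalence to $cx^\rho\ell(x)$ forces regular variation of index $\rho$ is literally false when $c=0$ (then $U$ need not be regularly varying, or even eventually of one sign), but you never actually invoke that; what your displayed limits use is only $U(\lambda x)/(x^\rho\ell(x))\to c\lambda^\rho$ for each fixed $\lambda$, which follows directly from $U(x)\simi cx^\rho\ell(x)$ and the definition of slow variation, and that computation is valid for $c=0$ as well, delivering the $\mathrm{o}$-form of the conclusion exactly as you note. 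Second, in the first assertion you must apply the two endpoint inequalities on intervals where $u$ is already monotone, i.e.\ for $x$ beyond the threshold of ultimate monotonicity; this is implicit in your ``for all large $x$'' but worth stating, since $U(x)=\int_0^xu$ involves the non-monotone initial segment too --- that segment is irrelevant because it is absorbed into a bounded additive constant that vanishes after dividing by $x^\rho\ell(x)$ (using $\rho>0$ or, for $\rho\le 0$, using that $x^\rho\ell(x)\to\infty$ fails but the constant still cancels in the difference $U(\lambda x)-U(x)$, which is all you use).
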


\subsection{de Haan theory}

We begin with a motivating example:
let $\ell \in SV_\infty$ and choose $X$ so that 
$\ell \in L^1_\text{loc}[X, \infty)$.
In Karamata's theorem \ref{thm:Karamata_1.5.8.}, the situation where $\alpha = -1$, i.e. when
considering the
asymptotic behaviour of
\[
m(x) := \int_X^x\ell(t) \D t/t,
\]represents a special boundary case.
We have the following result.
\begin{proposition}[{\cite[Proposition 1.5.9a]{BinGolTeu1989}}]
\label{prop: de Haan 1.5.9a}
The function $m \in SV_\infty$ and
\[
\lim_{x \to \infty} \frac{m(x)}{\ell(x)}=
\infty.\]
\end{proposition}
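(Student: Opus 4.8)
\textbf{Proof plan for Proposition \ref{prop: de Haan 1.5.9a}.}

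The plan is to establish the two claims separately, both by elementary Karamata-type estimates. For the first claim, that $m \in SV_\infty$, I would fix $\lambda > 1$ (the case $\lambda < 1$ is symmetric, and $\lambda = 1$ is trivial) and write
\[
m(\lambda x) - m(x) = \int_x^{\lambda x} \frac{\ell(t)}{t}\,\D t = \int_1^\lambda \frac{\ell(ux)}{u}\,\D u,
\]
after the substitution $t = ux$. By the Uniform Convergence Theorem \ref{thm:UCT_1.2.1}, $\ell(ux)/\ell(x) \to 1$ uniformly for $u \in [1,\lambda]$, so the right-hand side is $\sim \ell(x)\int_1^\lambda u^{-1}\,\D u = \ell(x)\ln\lambda$ as $x \to \infty$. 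Hence $m(\lambda x) - m(x) = \bo{\ell(x)}$, and the key point is that this is $\so{m(x)}$ — which is exactly the content of the second claim, so I would prove that first (or simply note the logical dependence and prove the divergence claim, then deduce slow variation from $m(\lambda x)/m(x) = 1 + (m(\lambda x)-m(x))/m(x) \to 1$).

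For the second claim, $\lim_{x\to\infty} m(x)/\ell(x) = \infty$, the idea is that a slowly varying $\ell$ cannot decay fast enough to make $\int^x \ell(t)/t\,\D t$ stay comparable to $\ell(x)$. Concretely, I would use Potter's bounds, Proposition \ref{prop:Potter's bounds}: fix $\delta \in (0,1)$; then there is $X_0 \geq X$ such that $\ell(t) \geq \tfrac12 \ell(x) (t/x)^{-\delta}$ for all $t,x$ with $X_0 \leq t \leq x$ (this is the standard two-sided Potter estimate, obtainable from \eqref{eq:Potter's bounds} applied to $\ell$ and to $1/\ell$, or cited directly from \cite[Theorem 1.5.6]{BinGolTeu1989}). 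Then, for $x$ large,
\[
m(x) \geq \int_{X_0}^x \frac{\ell(t)}{t}\,\D t \geq \frac{\ell(x) x^{\delta}}{2}\int_{X_0}^x t^{-1-\delta}\,\D t = \frac{\ell(x)x^\delta}{2}\cdot\frac{X_0^{-\delta} - x^{-\delta}}{\delta} = \frac{\ell(x)}{2\delta}\Bigl(\bigl(x/X_0\bigr)^{\delta} - 1\Bigr),
\]
so $m(x)/\ell(x) \geq \tfrac{1}{2\delta}((x/X_0)^\delta - 1) \to \infty$ as $x \to \infty$. This gives the divergence.

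The main obstacle is not conceptual but a matter of care: one must invoke the \emph{two-sided} Potter bound (the lower bound $\ell(t)/\ell(x) \geq \tfrac12(t/x)^{-\delta}$ for $t \leq x$, not merely $\ell(x) \geq A x^{-\delta}$ as literally stated in Proposition \ref{prop:Potter's bounds}), and make sure the local integrability of $\ell$ on $[X,\infty)$ — which is part of the hypothesis ``$\ell \in L^1_{\mathrm{loc}}[X,\infty)$'' — is what guarantees $m$ is finite and well-defined so that the manipulations above make sense. Once the divergence $m/\ell \to \infty$ is in hand, slow variation of $m$ follows immediately from the displayed estimate $m(\lambda x) - m(x) \sim \ell(x)\ln\lambda$ together with $\ell(x) = \so{m(x)}$, completing the proof.
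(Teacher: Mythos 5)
You have the right overall strategy: establish the divergence $m(x)/\ell(x)\to\infty$ first, then deduce slow variation of $m$ from $m(\lambda x)-m(x)\sim\ell(x)\ln\lambda$ together with $\ell=\so{m}$. The second half of this plan is correct. The difficulty is in the first half: the Potter lower bound is quoted with the wrong sign on the exponent, and the error is load-bearing. Potter's theorem gives, for $X_0\leq t\leq x$, the inequality $\ell(t)\geq\frac12\ell(x)(t/x)^{\delta}$, not $(t/x)^{-\delta}$. Since $(t/x)^{-\delta}\geq1$ when $t\leq x$, the inequality as you wrote it would force $\ell(t)/\ell(x)$ to grow polynomially in $x/t$, which fails already for $\ell\equiv1$. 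The same example shows your final conclusion is simply too strong: with $\ell\equiv1$ one has $m(x)=\ln(x/X)$, so $m(x)/\ell(x)$ grows only logarithmically, not like $x^{\delta}$ as your displayed inequality claims.

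With the corrected exponent the identical computation yields
\begin{equation*}
m(x)\geq\frac{\ell(x)}{2x^{\delta}}\int_{X_0}^{x}t^{\delta-1}\,\D t=\frac{\ell(x)}{2\delta}\lbrb{1-\lbrb{X_0/x}^{\delta}},
\end{equation*}
which gives only $\liminf_{x\to\infty} m(x)/\ell(x)\geq1/(2\delta)$ for each fixed $\delta>0$ --- a bounded quantity, not a divergence. The divergence then follows by letting $\delta\downarrow0$ (noting that, although $X_0=X_0(\delta)$ may grow as $\delta$ shrinks, this is harmless for the $\liminf$). That extra limiting step in $\delta$ is precisely what the erroneous exponent let you skip. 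A cleaner route, which avoids Potter's bound altogether, is to substitute $t=xu$ and write $m(x)/\ell(x)=\int_{X/x}^{1}\lbrb{\ell(xu)/\ell(x)}u^{-1}\,\D u$; Fatou's lemma with the pointwise limit $\ell(xu)/\ell(x)\to1$ for each fixed $u\in(0,1]$ gives $\liminf_{x\to\infty}m(x)/\ell(x)\geq\int_0^1 u^{-1}\,\D u=\infty$ directly, with no auxiliary parameter to send to zero.
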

However, it turns out that
there exist more precise links
between $m$ and $\ell$.
\begin{proposition}[{\cite[p. 127]{BinGolTeu1989}}]
\label{prop: de Haan log p.127}
For any $\lambda > 0$,
\[
\frac{1}{\ell(x)}\int_x^{\lambda x}\ell(t) \D t/t = \lim_{x \to \infty} \frac{m(\lambda x) - m(x)}{\ell(x)}
= \ln \lambda.\]
\end{proposition}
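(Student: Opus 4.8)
The plan is to prove Proposition \ref{prop: de Haan log p.127} by direct computation, exploiting the fact that the integrand $\ell(t)/t$ is, up to the slowly varying factor $\ell$, exactly the boundary case $t^{-1}$ for which Karamata's theorem fails but a refined estimate survives. First I would fix $\lambda > 0$ and rewrite the finite integral as
\[
\int_x^{\lambda x} \frac{\ell(t)}{t}\,\D t = \int_1^{\lambda} \frac{\ell(xu)}{xu}\, x\,\D u = \int_1^\lambda \frac{\ell(xu)}{u}\,\D u,
\]
by the substitution $t = xu$. Dividing by $\ell(x)$ gives
\[
\frac{1}{\ell(x)}\int_x^{\lambda x}\frac{\ell(t)}{t}\,\D t = \int_1^\lambda \frac{1}{u}\cdot\frac{\ell(xu)}{\ell(x)}\,\D u.
\]

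Next I would pass to the limit $x \to \infty$ under the integral sign. By the definition of $SV_\infty$, for each fixed $u > 0$ we have $\ell(xu)/\ell(x) \to 1$ as $x \to \infty$; the key point is that this convergence is \emph{uniform} over $u$ in the compact set $[1,\lambda]$ (or $[\lambda,1]$ if $\lambda < 1$) by the Uniform Convergence Theorem, Theorem \ref{thm:UCT_1.2.1}. Hence $\frac{1}{u}\cdot\frac{\ell(xu)}{\ell(x)} \to \frac{1}{u}$ uniformly on that compact interval, so we may interchange limit and integral to obtain
\[
\lim_{x\to\infty}\frac{1}{\ell(x)}\int_x^{\lambda x}\frac{\ell(t)}{t}\,\D t = \int_1^\lambda \frac{\D u}{u} = \ln\lambda.
\]
This also yields the equivalent formulation in terms of $m$, since $m(\lambda x) - m(x) = \int_x^{\lambda x}\ell(t)\,\D t/t$ by the very definition of $m$ in Proposition \ref{prop: de Haan 1.5.9a}.

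The argument is entirely routine; the only point requiring care is the justification of the interchange of limit and integration, and the main (minor) obstacle is ensuring the uniformity hypothesis of Theorem \ref{thm:UCT_1.2.1} applies — namely that $\ell$ is locally bounded and bounded away from zero on the relevant compact set, which is automatic for $\ell \in SV_\infty$ once $x$ is large enough that $xu$ lies in the neighbourhood of infinity where $\ell$ is well-behaved. For $\lambda < 1$ one simply reverses the limits of integration and the sign, with no change in substance. No appeal to the full strength of de Haan theory is needed: Proposition \ref{prop: de Haan log p.127} is precisely the refined increment estimate that \emph{precedes} and motivates the introduction of the class $\Pi_\ell$.
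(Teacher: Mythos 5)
Your argument is correct and is exactly the classical computation behind the cited result: the paper does not prove this proposition but quotes it from \cite[p.~127]{BinGolTeu1989}, where the same substitution $t=xu$ followed by the Uniform Convergence Theorem \ref{thm:UCT_1.2.1} (justifying the interchange of limit and integral on the compact interval between $1$ and $\lambda$) is the standard proof. Your handling of $\lambda<1$ and of the identification $m(\lambda x)-m(x)=\int_x^{\lambda x}\ell(t)\,\D t/t$ is fine, so nothing is missing.
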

The class of all functions which
exhibit similar behaviour to $m$,
constitutes the de Haan class $\Pi_\ell$.
\begin{definition}
\label{def: Pi_ell}
    We say that a measaruble
    function $f$ is in the
    de Haan class $\Pi_\ell$ if for
    any $\lambda >0$,
    \[
 \lim_{x \to \infty} \frac{f(\lambda x) - f(x)}{\ell(x)}
= c\ln \lambda,\]
with some non-zero constant $c$,
called the $\ell$-index of $f$.
\end{definition}
The following theorem from \cite{BinGolTeu1989} refers the work of de Haan, \cite{deHaan1976}, as its backbone. It characterises the class $\Pi_\ell$ via
a monotone density representation.
\begin{theorem}[{\cite[Theorem 3.6.8]{BinGolTeu1989}}]
    \label{thm:de Haan monotone density, 3.6.8}
    Let $U(x) = U(X) + \int_X^x u(t) \D t$, where $u$ is
    monotone on some neighbourhood of infinity,
    and let $\ell \in SV_\infty$. Then $U \in \Pi_\ell$
    with $\ell$-index $c$ if and only if
    \[
    u(x) \simi cx^{-1}\ell(x).
    \]
    
\end{theorem}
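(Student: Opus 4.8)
Both implications rest on a single elementary identity: for $\lambda>0$ and $x$ large enough that $[x,\lambda x]$ (or $[\lambda x,x]$) lies in the neighbourhood of infinity where $u$ is monotone,
\[
U(\lambda x)-U(x)=\int_{x}^{\lambda x}u(t)\,\D t=x\int_{1}^{\lambda}u(xs)\,\D s,
\]
with the obvious sign convention when $\lambda<1$. I will reduce to the case where $u$ is eventually of one sign and eventually monotone; in the ``only if'' direction this monotonicity is a hypothesis, and in the ``if'' direction it is a hypothesis together with the fact that $u(x)\simi cx^{-1}\ell(x)$ with $\ell>0$ and $c\neq0$ forces $u$ to have constant sign near infinity. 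Throughout I use that $\ell\in SV_\infty$ is positive and that, by the Uniform Convergence Theorem \ref{thm:UCT_1.2.1}, $\ell(xs)/\ell(x)\to1$ as $x\to\infty$ uniformly for $s$ in any compact subset of $(0,\infty)$.

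\textbf{The ``if'' direction ($u(x)\simi cx^{-1}\ell(x)\ \Rightarrow\ U\in\Pi_\ell$ with $\ell$-index $c$).} Fix $\lambda>1$ (the case $\lambda<1$ is symmetric, writing $U(\lambda x)-U(x)=-x\int_\lambda^1 u(xs)\,\D s$, and $\lambda=1$ is trivial). From the identity above,
\[
\frac{U(\lambda x)-U(x)}{\ell(x)}=\int_{1}^{\lambda}\frac{x\,u(xs)}{\ell(x)}\,\D s,
\qquad
\frac{x\,u(xs)}{\ell(x)}=\frac{1}{s}\cdot\frac{(xs)\,u(xs)}{\ell(xs)}\cdot\frac{\ell(xs)}{\ell(x)}.
\]
The middle factor tends to $c$ by hypothesis and the last factor tends to $1$ uniformly on $[1,\lambda]$ by Theorem \ref{thm:UCT_1.2.1}, so the integrand converges pointwise to $c/s$. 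Monotonicity of $u$ gives, for $s\in[1,\lambda]$, that $x\,u(xs)/\ell(x)$ lies between $x\,u(x)/\ell(x)\to c$ and $x\,u(\lambda x)/\ell(x)\to c/\lambda$ (in one order or the other), hence is bounded for all large $x$ uniformly in $s$; bounded convergence on the finite interval $[1,\lambda]$ then yields $\int_1^\lambda (c/s)\,\D s=c\ln\lambda$. This is exactly membership in $\Pi_\ell$ with $\ell$-index $c$ in the sense of Definition \ref{def: Pi_ell} (and matches Proposition \ref{prop: de Haan log p.127} in the special case $u(t)=\ell(t)/t$, $c=1$); the convention $c\neq0$ in the definition is precisely what excludes the degenerate case.

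\textbf{The ``only if'' direction ($U\in\Pi_\ell$, $u$ eventually monotone $\ \Rightarrow\ u(x)\simi cx^{-1}\ell(x)$).} Assume $u$ is eventually non-increasing (the non-decreasing case only reverses the inequalities below). For $\lambda>1$ and $x$ large, monotonicity gives the sandwich
\[
(\lambda-1)\,x\,u(\lambda x)\ \le\ U(\lambda x)-U(x)\ \le\ (\lambda-1)\,x\,u(x).
\]
Divide by $(\lambda-1)\ell(x)>0$ and use $\bigl(U(\lambda x)-U(x)\bigr)/\ell(x)\to c\ln\lambda$. From the right inequality, $\liminf_{x\to\infty} x\,u(x)/\ell(x)\ge (c\ln\lambda)/(\lambda-1)$, and letting $\lambda\downarrow1$ gives $\liminf_{x\to\infty} x\,u(x)/\ell(x)\ge c$. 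From the left inequality, $\limsup_{x\to\infty} x\,u(\lambda x)/\ell(x)\le (c\ln\lambda)/(\lambda-1)$; rewriting $x\,u(\lambda x)/\ell(x)=\lambda^{-1}\bigl((\lambda x)u(\lambda x)/\ell(\lambda x)\bigr)\bigl(\ell(\lambda x)/\ell(x)\bigr)$, using slow variation of $\ell$, and substituting $y=\lambda x$ (which ranges over all large reals), we get $\limsup_{y\to\infty} y\,u(y)/\ell(y)\le (\lambda c\ln\lambda)/(\lambda-1)$; letting $\lambda\downarrow1$ gives $\limsup_{y\to\infty} y\,u(y)/\ell(y)\le c$. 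Hence $\lim_{x\to\infty} x\,u(x)/\ell(x)=c$, i.e. $u(x)\simi cx^{-1}\ell(x)$.

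\textbf{Main obstacle.} There is no deep difficulty here: the argument is a careful application of monotonicity sandwiches plus the UCT. The points needing genuine care are all bookkeeping: in the ``if'' direction, correctly identifying the monotone dominating constant ($u(x)/\ell(x)\cdot x$ versus $u(\lambda x)/\ell(x)\cdot x$) according to the direction of monotonicity of $u$ and the sign of $c$, and treating $\lambda>1$ and $\lambda<1$ separately; in the ``only if'' direction, the passage $y=\lambda x$ interchanged with $\limsup$ together with the slow variation $\ell(\lambda x)/\ell(x)\to1$. The structural input that cannot be dispensed with is the eventual monotonicity of $u$: without it the sandwich fails and $U$ may lie in $\Pi_\ell$ while $u$ oscillates.
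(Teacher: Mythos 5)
The paper does not supply a proof of this statement: Theorem~\ref{thm:de Haan monotone density, 3.6.8} is quoted verbatim in Appendix~\ref{sec:append} as \cite[Theorem 3.6.8]{BinGolTeu1989}, and the appendix explicitly defers to Chapter III of that reference. So there is no in-paper argument to compare against, only the cited source.

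Your proof is, however, correct and self-contained, and it follows the standard route for such results. The ``if'' direction correctly reduces $\lbrb{U(\lambda x)-U(x)}/\ell(x)$ to $\int_1^\lambda x\,u(xs)/\ell(x)\,\D s$, identifies the pointwise limit $c/s$ via the Uniform Convergence Theorem~\ref{thm:UCT_1.2.1}, and justifies the exchange of limit and integral by the monotonicity sandwich $x\,u(xs)/\ell(x)\in\lbbrbb{x\,u(x)/\ell(x),\,x\,u(\lambda x)/\ell(x)}$ (in some order), both endpoints converging; bounded convergence then applies. The ``only if'' direction is the usual two-sided squeeze $(\lambda-1)x\,u(\lambda x)\le U(\lambda x)-U(x)\le(\lambda-1)x\,u(x)$, followed by $\liminf/\limsup$ estimates and $\lambda\downarrow1$; the change of variable $y=\lambda x$ together with $\ell(\lambda x)/\ell(x)\to1$ is the step needing the boundedness you implicitly have from the already-established $\liminf\ge c$ and the finiteness of $c\ln\lambda/(\lambda-1)$, so the passage is legitimate. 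The reduction to the non-increasing case, the trivial case $\lambda=1$, and the symmetric treatment of $\lambda<1$ are all handled appropriately, and the remark that $u$ acquires eventual constant sign from $u(x)\simi cx^{-1}\ell(x)$ with $c\neq0$ is a useful observation though not strictly needed. In short: correct, and essentially the argument one would find in BGT, so there is nothing to compare beyond noting that the paper simply cites the result.
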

        
    \addcontentsline{toc}{section}{Acknowledgments}
    \section*{Acknowledgments}
This study is financed by the European Union's NextGenerationEU, through the National Recovery and Resilience Plan of the Republic of Bulgaria, project No BG-RRP-2.004-0008. The authors are thankful to Pierre Patie for his usual unusual resourcefulness in spotting probabilistic relations as in Theorem \ref{thm:mainPr}.  We also thank the anonymous reviewers for their thoughtful questions, comments, and suggestions, which led to improvements throughout the paper, in particular motivating Remarks \ref{rem: factor} and \ref{rem: DoneyMaller} and improving the presentation.

    \addcontentsline{toc}{section}{References}

    \bibliographystyle{plain}
	\bibliography{bibliography.bib}

\begin{thebibliography}{10}

\bibitem{AliJedRiv14}
L.~Alili, W.~Jedidi, and V.~Rivero.
\newblock On exponential functionals, harmonic potential measures and
  undershoots of subordinators.
\newblock {\em ALEA Lat. Am. J. Probab. Math. Stat.}, 11(1):711--735, 2014.

\bibitem{AlMatSh01}
L.~Alili, H.~Matsumoto, and T.~Shiraishi.
\newblock On a triplet of exponential {B}rownian functionals.
\newblock In {\em S\'{e}minaire de {P}robabilit\'{e}s, {XXXV}}, volume 1755 of
  {\em Lecture Notes in Math.}, pages 396--415. Springer, Berlin, 2001.

\bibitem{ArRiv23}
J.~Arista and V.~Rivero.
\newblock Implicit renewal theory for exponential functionals of {L}\'{e}vy
  processes.
\newblock {\em Stochastic Process. Appl.}, 163:262--287, 2023.

\bibitem{BanParSm21}
V.~Bansaye, J.C. Pardo, and C.~Smadi.
\newblock Extinction rate of continuous state branching processes in critical
  {L}{\'{e}}vy environments.
\newblock {\em {ESAIM}: Probability and Statistics}, 25:346--375, 2021.

\bibitem{BarkerSavov2021}
A.~Barker and M.~Savov.
\newblock Bivariate {B}ernstein-gamma functions and moments of exponential
  functionals of subordinators.
\newblock {\em Stochastic Process. Appl.}, 131:454--497, 2021.

\bibitem{Bertoin96}
J.~Bertoin.
\newblock {\em L\'{e}vy processes}, volume 121 of {\em Cambridge Tracts in
  Mathematics}.
\newblock Cambridge University Press, Cambridge, 1996.

\bibitem{BerYor05}
J.~Bertoin and M.~Yor.
\newblock Exponential functionals of {L}\'{e}vy processes.
\newblock {\em Probab. Surv.}, 2:191--212, 2005.

\bibitem{BinGolTeu1989}
N.~H. Bingham, C.~M. Goldie, and J.~L. Teugels.
\newblock {\em Regular variation}, volume~27 of {\em Encyclopedia of
  Mathematics and its Applications}.
\newblock Cambridge University Press, Cambridge, 1989.

\bibitem{Bogachev2007}
V.~I. Bogachev.
\newblock {\em Measure theory. {V}ol. {II}}.
\newblock Springer-Verlag, Berlin, 2007.

\bibitem{deHaan1976}
L.~de~Haan.
\newblock An {A}bel-{T}auber theorem for {L}aplace transforms.
\newblock {\em J. London Math. Soc. (2)}, 13(3):537--542, 1976.

\bibitem{Doney2007}
R.~A. Doney.
\newblock {\em Fluctuation theory for {L}\'{e}vy processes}, volume 1897 of
  {\em Lecture Notes in Mathematics}.
\newblock Springer, Berlin, 2007.

\bibitem{Doney-Jones-2012}
R.~A. Doney and E.~M. Jones.
\newblock Conditioned random walks and {L}\'{e}vy processes.
\newblock {\em Bull. Lond. Math. Soc.}, 44(1):139--150, 2012.

\bibitem{DonMal04}
R.~A. Doney and R.~A. Maller.
\newblock Moments of passage times for {L}\'{e}vy processes.
\newblock {\em Ann. Inst. H. Poincar\'{e} Probab. Statist.}, 40(3):279--297,
  2004.

\bibitem{Doring-Savov-Trottner-Watson-24}
L.~D\"{o}ring, M.~Savov, L.~Trottner, and A.~Watson.
\newblock The uniqueness of the {W}iener-{H}opf factorisation of {L}\'{e}vy
  processes and random walks.
\newblock {\em Bull. Lond. Math. Soc.}, 56(9):2951--2968, 2024.

\bibitem{Doring-Trottner-Watson-2024}
L.~D\"{o}ring, L.~Trottner, and A.~Watson.
\newblock Markov additive friendships.
\newblock {\em Trans. Amer. Math. Soc.}, 377(11):7699--7752, 2024.

\bibitem{Kallenberg-17}
O.~Kallenberg.
\newblock {\em Random measures, theory and applications}, volume~77 of {\em
  Probability Theory and Stochastic Modelling}.
\newblock Springer, Cham, 2017.

\bibitem{Kluppelberg-Kyprianou-Maller-2004}
C.~Kl\"{u}ppelberg, A.~E. Kyprianou, and R.~A. Maller.
\newblock Ruin probabilities and overshoots for general {L}\'{e}vy insurance
  risk processes.
\newblock {\em Ann. Appl. Probab.}, 14(4):1766--1801, 2004.

\bibitem{Knopova-Schilling-13}
V.~Knopova and R.~Schilling.
\newblock A note on the existence of transition probability densities of
  {L}\'{e}vy processes.
\newblock {\em Forum Math.}, 25(1):125--149, 2013.

\bibitem{KuzPar12}
A.~Kuznetsov and J.~C. Pardo.
\newblock Fluctuations of stable processes and exponential functionals of
  hypergeometric {L}\'{e}vy processes.
\newblock {\em Acta Appl. Math.}, 123:113--139, 2013.

\bibitem{Kyprianou-Rivero-Sengul-Yang-2020}
A.~Kyprianou, V.~Rivero, B.~Seng\"{u}l, and T.~Yang.
\newblock Entrance laws at the origin of self-similar {M}arkov processes in
  high dimensions.
\newblock {\em Trans. Amer. Math. Soc.}, 373(9):6227--6299, 2020.

\bibitem{LiXu18}
Z.~Li and W.~Xu.
\newblock Asymptotic results for exponential functionals of {L}\'{e}vy
  processes.
\newblock {\em Stochastic Process. Appl.}, 128(1):108--131, 2018.

\bibitem{LoePatSav19}
R.~Loeffen, P.~Patie, and M.~Savov.
\newblock Extinction time of non-{M}arkovian self-similar processes,
  persistence, annihilation of jumps and the {F}r\'{e}chet distribution.
\newblock {\em J. Stat. Phys.}, 175(5):1022--1041, 2019.

\bibitem{MatYor05}
H.~Matsumoto and M.~Yor.
\newblock Exponential functionals of {B}rownian motion. {I}. {P}robability laws
  at fixed time.
\newblock {\em Probab. Surv.}, 2:312--347, 2005.

\bibitem{Maulik-Zwart-06}
K.~Maulik and B.~Zwart.
\newblock Tail asymptotics for exponential functionals of {L}\'{e}vy processes.
\newblock {\em Stochastic Process. Appl.}, 116(2):156--177, 2006.

\bibitem{Minchev-2024}
M.~Minchev.
\newblock {\em Functionals of L{\'e}vy Processes and Their Applications}.
\newblock Phd thesis, Sofia University “St.\ Kliment Ohridski”, Sofia,
  Bulgaria, 2024.

\bibitem{MinSav23}
M.~Minchev and M.~Savov.
\newblock Asymptotics for densities of exponential functionals of
  subordinators.
\newblock {\em Bernoulli}, 29(4):3307--3333, 2023.

\bibitem{PalauPardoSmadi2016}
S.~Palau, J.C. Pardo, and C.~Smadi.
\newblock Asymptotic behaviour of exponential functionals of {L}\'{e}vy
  processes with applications to random processes in random environment.
\newblock {\em ALEA Lat. Am. J. Probab. Math. Stat.}, 13(2):1235--1258, 2016.

\bibitem{PalSarSav23}
Z.~Palmowski, H.~Sariev, and M.~Savov.
\newblock Moments of exponential functionals of {L}\'{e}vy processes on a
  deterministic horizon---identities and explicit expressions.
\newblock {\em Bernoulli}, 30(4):2547--2571, 2024.

\bibitem{PardoPatieSavov2012}
J.~C. Pardo, P.~Patie, and M.~Savov.
\newblock A {W}iener-{H}opf type factorization for the exponential functional
  of {L}\'{e}vy processes.
\newblock {\em J. Lond. Math. Soc. (2)}, 86(3):930--956, 2012.

\bibitem{Patie-2011}
P.~Patie.
\newblock A refined factorization of the exponential law.
\newblock {\em Bernoulli}, 17(2):814--826, 2011.

\bibitem{Pat_2012}
P.~Patie.
\newblock Law of the absorption time of some positive self-similar {M}arkov
  processes.
\newblock {\em Ann. Probab.}, 40(2):765--787, 2012.

\bibitem{PatieSavov2012}
P.~Patie and M.~Savov.
\newblock Extended factorizations of exponential functionals of {L}\'{e}vy
  processes.
\newblock {\em Electron. J. Probab.}, 17:1--22, 2012.

\bibitem{PatieSavov2013}
P.~Patie and M.~Savov.
\newblock Exponential functional of {L}\'{e}vy processes: generalized
  {W}eierstrass products and {W}iener-{H}opf factorization.
\newblock {\em C. R. Math. Acad. Sci. Paris}, 351(9-10):393--396, 2013.

\bibitem{PatSav17}
P.~Patie and M.~Savov.
\newblock Cauchy problem of the non-self-adjoint {G}auss-{L}aguerre semigroups
  and uniform bounds for generalized {L}aguerre polynomials.
\newblock {\em J. Spectr. Theory}, 7(3):797--846, 2017.

\bibitem{PatieSavov2018}
P.~Patie and M.~Savov.
\newblock Bernstein-gamma functions and exponential functionals of {L}\'{e}vy
  processes.
\newblock {\em Electron. J. Probab.}, 23:1--101, 2018.

\bibitem{PatieSavov2021}
P.~Patie and M.~Savov.
\newblock Spectral expansions of non-self-adjoint generalized {L}aguerre
  semigroups.
\newblock {\em Mem. Amer. Math. Soc.}, 272(1336):vii+182, 2021.

\bibitem{PatSav19}
P.~Patie, M.~Savov, and Y.~Zhao.
\newblock Intertwining, excursion theory and {K}rein theory of strings for
  non-self-adjoint {M}arkov semigroups.
\newblock {\em Ann. Probab.}, 47(5):3231--3277, 2019.

\bibitem{Revuz84}
D.~Revuz.
\newblock {\em Markov chains}, volume~11 of {\em North-Holland Mathematical
  Library}.
\newblock North-Holland Publishing Co., Amsterdam, second edition, 1984.

\bibitem{SalVos18}
P.~Salminen and L.~Vostrikova.
\newblock On exponential functionals of processes with independent increments.
\newblock {\em Teor. Veroyatn. Primen.}, 63(2):330--357, 2018.

\bibitem{Sato1999}
K.~Sato.
\newblock {\em L\'{e}vy processes and infinitely divisible distributions},
  volume~68 of {\em Cambridge Studies in Advanced Mathematics}.
\newblock Cambridge University Press, Cambridge, 1999.

\bibitem{Urban95}
K.~Urbanik.
\newblock Infinite divisibility of some functionals on stochastic processes.
\newblock {\em Probab. Math. Statist.}, 15:493--513, 1995.

\bibitem{Watanabe2008}
T.~Watanabe.
\newblock Convolution equivalence and distributions of random sums.
\newblock {\em Probab. Theory Related Fields}, 142(3-4):367--397, 2008.

\bibitem{Xu2021}
W.~Xu.
\newblock Asymptotic results for heavy-tailed {L}\'{e}vy processes and their
  exponential functionals.
\newblock {\em Bernoulli}, 27(4):2766--2803, 2021.

\bibitem{Xu_23}
W.~Xu.
\newblock Asymptotics for exponential functionals of random walks.
\newblock {\em Stochastic Process. Appl.}, 165:1--42, 2023.

\bibitem{Yamamuro1998}
K.~Yamamuro.
\newblock On transient {M}arkov processes of {O}rnstein-{U}hlenbeck type.
\newblock {\em Nagoya Math. J.}, 149:19--32, 1998.

\bibitem{Yor92}
M.~Yor.
\newblock On some exponential functionals of {B}rownian motion.
\newblock {\em Adv. in Appl. Probab.}, 24(3):509--531, 1992.

\bibitem{Yor01}
M.~Yor.
\newblock {\em Exponential functionals of {B}rownian motion and related
  processes}.
\newblock Springer Finance. Springer-Verlag, Berlin, 2001.

\end{thebibliography}
 
\end{document}